\newcommand{\ch}[1]{{#1}}
\newcommand{\chg}[1]{{#1}}
\def\UseSection{%%
        \numberwithin{equation}{section}
	\theoremstyle{plain}% default theorem style 
        \newtheorem{theorem}    {Theorem}[section]
        \DefineTheorems % Use this to define other environments to be 
        		% numbered as ``theorem.''
}
\def\DefineTheorems{%%
	
	\newtheorem{lemma}      [theorem] {Lemma}
	
	\newtheorem{prop}       [theorem] {Proposition}
	
	\newtheorem{cor}        [theorem] {Corollary}

	\theoremstyle{definition}% ``defn'' theorem style 
	\newtheorem{defn}       [theorem] {Definition}

	\theoremstyle{definition}% ``remark'' theorem style 
	\newtheorem*{rem}	{Remark}
	
}
\newcommand{\bt}   {\begin{theorem}}
\newcommand{\et}   {\end  {theorem}}
\newcommand{\bl}   {\begin{lemma}}
\newcommand{\el}   {\end  {lemma}}
\newcommand{\bp}   {\begin{prop}}
\newcommand{\ep}   {\end  {prop}}
\newcommand{\bc}   {\begin{cor}}
\newcommand{\ec}   {\end  {cor}}
\newcommand{\bd}   {\begin{defn}}
\newcommand{\ed}   {\end  {defn}}
\newcommand{\ba}   {\begin{array}}
\newcommand{\ea}   {\end  {array}}
\newcommand{\be}   {\begin{enumerate}}
\newcommand{\ee}   {\end  {enumerate}}
\newcommand{\bi}   {\begin{itemize}}
\newcommand{\ei}   {\end  {itemize}}
\def\eq#1\en{\begin{equation}#1\end{equation}}  
\def\eqsplit#1\ensplit{
	\begin{equation}\begin{split}#1\end{split}\end{equation}
	}
\def\eqalign#1\enalign{
	\begin{align}#1\end{align}
	}
\def\eqmul#1\enmul{
	\begin{multline}#1\end{multline}
	}
\newcommand{\eqarrstar} {\begin{eqnarray*}} 
\newcommand{\enarrstar} {\end{eqnarray*}} 
\newcommand{\eqarray}   {\begin{eqnarray}} 
\newcommand{\enarray}   {\end{eqnarray}} 
\newcommand{\nnb}	{\nonumber \\} 
\newcommand{\lbeq}[1]  {\label{e:#1}}
\newcommand{\refeq}[1] {\eqref{e:#1}}    % AMS-LaTeX trick!
\newcommand{\labelcounter}[2]{{%
	\stepcounter{#1}%	First, increase the ``countC'' by one.
	\protected@write\@auxout{}%
	{\string\newlabel{#2}{{\csname the#1\endcsname}{\thepage}}}%
		% Then write out the contents of ``countC'' together with 
		% the page number to aux file.  This is what ``label'' 
		% usually does. 
	{\ref{#2}}%	Finally, make sure to refer to this label, 
		%	when defined. 
	}}
\newcommand{\sss}   { \scriptscriptstyle } 
\newcommand{\Ebold} {{\mathbb E}}
\newcommand{\Nbold} {{\mathbb N}}
\newcommand{\Pbold} {{\mathbb P}}
\newcommand{\Rbold} {{\mathbb R}}
\newcommand{\Zbold} {{\mathbb Z}}
\newcommand{\avec}  {\boldsymbol{a}}
\newcommand{\cvec}  {\boldsymbol{c}}
\newcommand{\ovec}  {\boldsymbol{o}}
\newcommand{\uvec}  {\boldsymbol{u}}
\newcommand{\vvec}  {\boldsymbol{v}}
\newcommand{\wvec}  {\boldsymbol{w}}
\newcommand{\xvec}  {\boldsymbol{x}}
\newcommand{\yvec}  {\boldsymbol{y}}
\newcommand{\zvec}  {\boldsymbol{z}}
\newcommand{\Dcal}   {\mathcal{D}} 
\newcommand{\Ecal}   {\mathcal{E}}
\newcommand{\Pcal}   {\mathcal{P}}
\newcommand{\Scal}   {\mathcal{S}}
\newcommand{\Zd}    {{ {\Zbold}^d }}
\newcommand{\spose}[1] {{\hbox to 0pt{#1\hss}} }
\newcommand{\ltapprox} {\mathrel{\spose{\lower 3pt\hbox{$\mathchar"218$}}
 \raise 2.0pt\hbox{$\mathchar"13C$}}}
\newcommand{\gtapprox} {\mathrel{\spose{\lower 3pt\hbox{$\mathchar"218$}}
 \raise 2.0pt\hbox{$\mathchar"13E$}}}
\newcommand{\nin}  {{ \, \not\in \,}}
\theoremstyle{plain}
\newtheorem{lem}[theorem]{Lemma}
\newcommand{\bA}{{\bf A}}
\newcommand{\bB}{{\bf B}}
\newcommand{\bb}{\underline{b}}
\newcommand{\bC}{{\bf C}}
\newcommand{\cL}{{\cal L}}
\newcommand{\daw}{\downarrow}
\newcommand{\BDcup}[1]{~\underset{#1}{\Dot{\bigcup}}~}
\newcommand{\Dcup}[2]{~\Dot{\cup}_{#1}^{#2}~}
\newcommand{\DDcup}{~\Dot{\cup}}
\newcommand{\ddsum}{\sideset{_{}^{}}{_{}^{\bullet}}\sum}
\newcommand{\dsum}{\sum^\bullet}
\newcommand{\dpst}{\displaystyle}
\newcommand{\indic}{\mathbbm{1}}
\newcommand{\ind}[1]{\indic{\scriptstyle\{#1\}}}
\newcommand{\lamb}{\lambda}
\newcommand{\lambc}{\lamb_{\rm c}}
\newcommand{\lambce}{\lambc^{\scscst(\vep)}}
\newcommand{\mE}{{\mathbb E}}
\newcommand{\mN}{{\mathbb N}}
\newcommand{\mP}{{\mathbb P}}
\newcommand{\mR}{{\mathbb R}}
\newcommand{\mZ}{{\mathbb Z}}
\newcommand{\nnmb}{\nonumber}
\newcommand{\scscst}{\scriptscriptstyle}
\newcommand{\tb}{\overline{b}}
\newcommand{\tbp}
 {\tb^{\raisebox{-2pt}{\scriptsize$\prime$}}}
\newcommand{\tbsp}
 {\tb^{\raisebox{-2pt}{$\sss\prime$}}}
\newcommand{\uaw}{\uparrow}
\newcommand{\vep}{\varepsilon}
\newcommand{\vk}{\vec{k}}
\newcommand{\vt}{\vec{t}}
\newcommand{\wD}{\hat{D}}
\newcommand{\wM}{\hat{M}}
\newcommand{\wtau}{\hat{\tau}}
\newcommand{\Zp}{\mZ_+}
\newcommand{\smallsup}[1] {{\scriptscriptstyle{({#1}})}}
\newcommand{\R}{\Rbold}
\newcommand{\Z}{\Zbold}
\newcommand{\N}{\Nbold}
\newcommand{\conn}{\longrightarrow}
\newcommand{\dbc}{\Longrightarrow}
\newcommand{\ct}[1]     { \stackrel{#1}{\conn} }
\newcommand{\ctx}[1]     { \xrightarrow{#1} }
\newcommand{\prob}[2][]   {  {  {\mathbb P}_{#1} ( #2 ) }  }
\newcommand{\lupa}{\begin{rotate}{-10}$\scriptstyle\Longleftarrow$
\end{rotate}}
\newcommand{\rupa}{\begin{rotate}{10}$\scriptstyle\Longrightarrow$
\end{rotate}}
\newcommand{\btr}{$\scriptstyle\blacktriangle$}
\newcommand{\wtr}{$\scriptstyle\vartriangle$}
\newcommand{\bspat}{B_{\tt spat}}
\newcommand{\btemp}{B_{\tt temp}}
\newcommand{\vhi}{\varphi}
\newcommand{\sstar}{{\scriptstyle\,\star\,}}
\newcommand{\sT}{{\sss T}}
\newcommand{\nn}{\nonumber}
\newcommand{\twop}{\eta}
\newcommand{\kt}{k^{\sss (t)}}
\newcommand{\veckt}{\vec{k}^{\sss (t)}}
\newcommand{\veckT}{\vec{k}^{\sss (T)}}
\newcommand{\tildeb}{\tilde{b}}
\newcommand{\zetav}{\zeta}
\newcommand{\alphamin}{\underline{\alpha}}
\newcommand{\alphaminn}{\kappa}
\title{Convergence of the critical finite-range contact process\\
to super-Brownian motion above the upper critical dimension:\\
I. The higher-point functions}
\author{
    Remco van der Hofstad\thanks{Department of Mathematics and
        Computer Science, Eindhoven University of Technology,
        5600 MB Eindhoven, The Netherlands.
        {\tt rhofstad@win.tue.nl}}
    \and
    Akira Sakai\thanks{\chg{Creative Research Initiative ``Sousei",
        Hokkaido University, North 21, West 10, Kita-ku,
        Sapporo 001-0021, Japan.
        {\tt sakai@cris.hokudai.ac.jp}}}
    }
\begin{document}
\maketitle

%\begin{center}
%{\large \bf IN PROGRESS: NOT FOR DISTRIBUTION}
%\end{center}

\begin{abstract}
We consider the critical spread-out contact process in $\Zd$ with
$d\geq 1$, whose infection range is denoted by $L\geq1$.
%The
%two-point function $\tau_t(x)$ is the probability that $x\in\Zd$
%is infected at time $t$ by individual located at the origin $o\in\Zd$ at
%time 0.  In a previous paper, we have proved Gaussian behavior
%for the two-point function $\tau_t(x)$ with $L$ sufficiently large and
%$d>4$.
%, and, when $1\leq d\leq 4$, by a local mean-field limit for
%$\tau_{tT}(x)$ with $0<t\leq \log{T}$, when the infection range depends
%on $T$ and $L_T=LT^b$ for any $b>(4-d)/2d$.
In this paper, we investigate the
higher-point functions $\tau_{\vec t}^{\sss(r)}(\vec x)$ for $r\geq 3$, where $\tau_{\vec t}^{\sss(r)}(\vec x)$ is the probability that, for all $i=1,\dots,r-1$, the individual located at $x_i\in\Zd$ is infected at time $t_i$ by the individual at the origin $o\in\Zd$ at time 0.  Together with the results of the 2-point function in \cite{hsa04}, on which our proofs crucially rely, we prove that the $r$-point functions converge to the moment measures of the canonical measure of super-Brownian motion above the upper critical dimension 4.  We also prove partial results for $d\le4$ in a local mean-field setting.

The proof is based on the lace expansion for the time-discretized contact process, which is a version of oriented percolation in $\Zd\times\vep\Zp$, where $\vep\in(0,1]$ is the time unit.  For ordinary oriented percolation (i.e., $\vep=1$), we thus reprove the results of \cite{hs01}.  The lace expansion coefficients are shown to obey bounds uniformly in $\vep\in(0,1]$, which allows us to establish the scaling results also for the contact process (i.e., $\vep\downarrow0$).
%The continuum
%limit is shown to exist using a simplified argument, which
%requires no pointwise convergence of the lace expansion
%coefficients. This argument can also be used to simplify
%the extension of the results to the contact process in
%our previous paper on the two-point function.
We also show that the main term of the vertex factor $V$, which is one of the non-universal constants in the scaling limit, depends explicitly on the time unit as $2-\vep$, while the main terms of the other non-universal constants are independent of $\vep$.

The lace expansion we develop in this paper is adapted to both the $r$-point function and the survival probability.
%, and the idea of deriving the expansion was already used in \cite{HHS05b}
%for oriented percolation.
This unified approach makes it easier to relate the expansion coefficients derived in this paper and the expansion coefficients for the survival probability, which will be reported in Part II \cite{hsa06}.
\end{abstract}

\tableofcontents

%\input{rpt1}
% rpt1.tex

% June 1, 2007, RvdH
% March 9th, 2005: RvdH adapted after a long pause
% March 15, 2004, RvdH: adapted comments by Akira.
% January 5, 2004, RvdH
% July  17, 2003, RvdH
\section{Introduction and results}

\subsection{Introduction}\label{ss:intro}
The contact process is a model for the spread of an infection among individuals in the $d$-dimensional integer lattice $\mZ^d$.  Suppose that the origin $o\in\Zd$ is the only infected individual at time 0, and assume for now that every infected individual may infect a healthy individual at a distance less than $L\geq1$.  We refer to this type of model as the spread-out contact process.  The rate of infection is denoted by $\lambda$, and it is well known
that there is a phase transition in $\lambda$ at a critical value
$\lambc\in (0,\infty)$ (see, e.g., \cite{Ligg99}).

In the previous paper \cite{hsa04}, and following the idea of \cite{s01}, we proved the 2-point function results for the contact process for $d>4$ via a time discretization, as well as a partial extension to $d\le4$.
The discretized contact process is a version of oriented percolation in
$\Zd\times\vep \Zp$, where $\vep\in(0,1]$ is the time unit.  The proof
is based on the strategy for ordinary oriented percolation
($\vep=1$), i.e., on the application of the lace expansion
and an adaptation of the inductive method so as to deal with
the time discretization.

In this paper, we use the 2-point function results in \cite{hsa04} as a key ingredient to show that, for any $r\ge3$, the $r$-point functions of the critical contact process for $d>4$ converge to those of the canonical measure of super-Brownian motion, as was proved in \cite{hs01} for ordinary oriented percolation.  We follow the strategy in \cite{hs01} to analyze the lace expansion, but derive an expansion which is different from the expansion used in \cite{hs01}.
%and where possible simplify the argument.
%For instance, the expansion in the current paper is simpler \ch{than} the
%one in \cite{hs01}, which also allows us to present simpler bounds
%on the lace expansion \ch{coefficients}.
The lace expansion used in this paper is closely related to the expansion in \cite{HHS05b} for the oriented-percolation survival probability.  The latter was used in \cite{HHS05a} to show that the probability that the oriented-percolation cluster survives up to time $n$ decays proportionally to $1/n$.  Due to this close relation, we can reprove an identity relating the constants arising in the scaling limit of the 3-point function and the survival probability, as was stated in \cite[Theorem~1.5]{hhs01} for oriented percolation.

The main selling points of this paper in comparison to other papers on the topic are the following:

\begin{enumerate}
\item
Our proof yields a simplification of the expansion argument, which is still inherently difficult, but has been simplified as much as possible, making use of and extending the combined insights of \cite{HS90a,HHS05b,hsa04,hs01}.
\item
The expansion for the higher-point functions yields similar expansion coefficients to those for the survival probability in \cite{HHS05b}, thus making the investigation of the contact-process survival probability more efficient and allowing for a direct comparison of the various constants arising in the 2- and 3-point functions and the survival probability.   This was proved for oriented percolation in \cite[Theorem 1.5]{hhs01}, which, on the basis of the expansion in \cite{hs02}, was \emph{not} directly possible.
\item
The extension of the results to certain local mean-field limit type results in low dimensions, as was initiated in \cite{dp99} and taken up again in \cite{hsa04}.
\item
A simplified argument for the continuum limit of the discretized model, which was performed in \cite{hsa04} through an intricate weak convergence argument, and which in the current paper is replaced by a soft argument on the basis of subsequential limits and uniformity of our bounds.
\end{enumerate}

The investigation of the contact-process survival probability is deferred to Part~II of this paper \cite{hsa06}, in which we also discuss the implications of our results for the convergence of the critical spread-out contact process towards super-Brownian motion, in the sense of convergence of finite-dimensional distributions \cite{HolPer07}.  See also \cite{Hofs05} and \cite{Slad02} for more expository discussions of the various results for oriented percolation and the contact process for $d>4$, and \cite{Slad05} for a detailed discussion of
the applications of the lace expansion.

\subsection{Main results}\label{ss:results}
We define the spread-out contact process as follows.
Let $\bC_t\subseteq\mZ^d$ be the set of infected individuals
at time $t\in\mR_+$, and let $\bC_0=\{o\}.$ An infected
site $x$ recovers in a small time interval $[t,t+\vep]$
with probability $\vep+o(\vep)$ independently of $t$,
where $o(\vep)$ is a function that satisfies
$\lim_{\vep\downarrow 0}o(\vep)/\vep=0$. In other words, $x\in\bC_t$
recovers at rate 1.  A healthy site $x$ gets infected, depending
on the status of its neighboring sites, at rate $\lamb\sum_{y\in\bC_t}D(x-y)$,
where $\lamb\geq0$ is the infection rate.  We denote
the associated probability measure by $\mP^\lamb$.  We assume that
the function $D:\Zd\to[0,1]$ is a probability distribution which
is symmetric with respect to the lattice symmetries. Further assumptions
on $D$ involve a parameter $L\geq 1$ which serves to spread out the
infections, and will be taken to be large. In particular, we require
that $D(o)=0$ and $\|D\|_\infty\equiv\sup_{x\in\Zd}D(x)\le CL^{-d}$.  Moreover, with
$\sigma$ defined as
    \eq\lbeq{sgm-def}
    \sigma^2=\sum_x|x|^2D(x),
    \en
where $|\cdot|$ denotes the Euclidean norm on $\R^d$, we require
that $C_1L\leq \sigma \leq C_2L$ and that there exists a
$\Delta>0$ such that
    \eq
    \sum_x|x|^{2+2\Delta} D(x)\leq CL^{2+2\Delta}.
    \lbeq{Deltadef}
    \en
See \cite[Section 5]{hsa04} for the precise assumptions on $D$.
A simple example of $D$ is
\begin{align}\lbeq{Dexample}
D(x)=\frac{\ind{0<\|x\|_\infty\le L}}{(2L+1)^d-1},
\end{align}
which is the uniform distribution on the cube of radius $L$.

For $r\ge2$, $\vec{t}=(t_1,\dots,t_{r-1})\in\mR_+^{r-1}$ and
$\vec{x}=(x_1,\dots,x_{r-1})\in\mZ^{(r-1)d}$, we define the $r$-point function as
\begin{align}\lbeq{2pt-def}
\tau_{\vec t}^\lamb(\vec x)=\mP^\lamb(x_i\in\bC_{t_i}~\forall i=1,\dots,r-1).
\end{align}
%In words, $\tau_t^\lamb(x)$ is the probability that at time
%$t$, the individual located at $x\in \Z^d$ is infected due to
%the infection at time 0 located at $o\in \Z^d$.
%Fourier analysis plays an important role throughout the paper.
For a summable function $f\colon \Z^d \to \R$, we define its Fourier
transform for $k\in[-\pi,\pi]^d$ by
    \eq
    \hat{f}(k) = \sum_{x\in \Z^d} e^{ik\cdot x} f(x).
    \en

By the results in \cite{gh01} and the extension in \cite{bg91} to the spread-out model, there exists a unique critical point $\lambc\in(0,\infty)$ such that
\begin{align}\lbeq{lambc-def}
\int_0^\infty dt\;\wtau_t^\lamb(0)
 \begin{cases}
 <\infty,&\mbox{if }\lamb<\lambc,\\
 =\infty,&\mbox{otherwise},
 \end{cases}&&
\lim_{t\uaw\infty}\mP^\lamb(\bC_t\ne\varnothing)
 \begin{cases}
 =0,&\mbox{if }\lamb\leq\lambc,\\
 >0,&\mbox{otherwise}.
 \end{cases}
\end{align}
We will next investigate the sufficiently spread-out contact process at the critical value $\lambc$ for $d>4$,
as well as a local mean-field limit when $d\leq 4$.

\subsection{Previous results for the 2-point function}
%\subsubsection{Previous results for the two-point function above four dimensions}
%\label{sss:resultsd>4}
We first state the results for the 2-point function proved in \cite{hsa04}. Those results will be crucial for the current paper.  In the statements, $\sigma$ is defined in \refeq{sgm-def} and $\Delta$ in \refeq{Deltadef}.

Besides the high-dimensional setting for $d>4$, we also consider a low-dimensional setting, i.e., $d\le4$.  In this case,  the contact process is {\it not} believed to be in the mean-field regime, and Gaussian asymptotics are thus not expected to hold as long as $L$ remains finite.  However, following the rescaling of Durrett and Perkins in \cite{dp99}, we have proved Gaussian asymptotics when range and time grow simultaneously \cite{hsa04}.  We suppose that the infection range grows as
\begin{align}\lbeq{Lt-def}
L_{\sT}=L_1\,T^b,
\end{align}
where $L_1\ge1$ is the initial infection range and $T\ge1$.  We denote by $\sigma_{\sT}^2$ the variance of $\ch{D=D_{\sT}}$ in this situation.  We will assume that
\begin{align}\lbeq{alphadef}
\alpha=bd+\frac{d-4}2>0.
\end{align}

\begin{theorem}[{\bf Gaussian asymptotics two-point function}]
\label{thm:2pt}
\begin{enumerate}[(i)]
\item
Let $d>4$, $\delta\in(0,1\wedge\Delta\wedge\frac{d-4}2)$ and $L\gg1$.  There
exist positive and finite constants $v,A$ (depending on $d$ and $L$) and
$C_1,C_2$ (depending only on $d$) such that
\begin{gather}
\wtau_t^{\lambc}\big(\tfrac{k}{\sqrt{v\sigma^2t}}\big)=A\,e^{-\frac{|k|^2}
 {2d}}\Big(1+O\big(|k|^2(1+t)^{-\delta}\big)+O\big((1+t)^{-(d-4)/2}\big)
 \Big),\lbeq{tauasy}\\
\frac1{\wtau_t^{\lambc}(0)}\sum_x|x|^2\tau_t^{\lambc}(x)=v\sigma^2t\Big(1+
 O\big((1+t)^{-\delta}\big)\Big),\lbeq{taugyr}\\
C_1L^{-d}(1+t)^{-d/2}\le\|\tau_t^{\lambc}\|_{\sss\infty}\le e^{-t}+C_2L^{-d}(1+
 t)^{-d/2},\lbeq{tausup}
\end{gather}
with the error estimate in \refeq{tauasy} uniform in $k\in\mR^d$
with $\chg{|k|^2/\log(2+t)}$ sufficiently small. Moreover,
\begin{align}\lbeq{estimates}
\lambc=1+O(L^{-d}),&& A=1+O(L^{-d}),&& v=1+O(L^{-d}).
\end{align}
\item
Let $d\le4$, $\delta\in(0,1\wedge\Delta\wedge\alpha)$ and $L_1\gg1$.  There
exist $\lamb_{\sT}=1+O(T^{-\mu})$ for some $\mu\in(0,\alpha-\delta)$ and
$C_1,C_2\in(0,\infty)$ (depending only on $d$) such that, for every
$0<t\le\log T$,
\begin{gather}
\wtau_{Tt}^{\lamb_T}\big(\tfrac{k}{\sqrt{\sigma_{\sT}^2Tt}}\big)=e^{-
 \frac{|k|^2}{2d}}\Big(1+O(T^{-\mu})+O\big(|k|^2(1+Tt)^{-\delta}\big)
 \Big),\lbeq{tauasy-lowdim}\\
\frac1{\wtau_{Tt}^{\lamb_{\sT}}(0)}\sum_x|x|^2\tau_{Tt}^{\lamb_{\sT}}(x)=
 \sigma_{\sT}^2Tt\Big(1+O(T^{-\mu})+O\big((1+Tt)^{-\delta}\big)\Big),
 \lbeq{taugyr-lowdim}\\
C_1L_{\sT}^{-d}(1+Tt)^{-d/2}\le\|\tau_{Tt}^{\lamb_T}\|_{\sss\infty}\le e^{-Tt}+
 C_2L_{\sT}^{-d}(1+Tt)^{-d/2},\lbeq{tausup-lowdim}
\end{gather}
with the error estimate in \refeq{tauasy-lowdim} uniform in $k\in\mR^d$
with $\chg{|k|^2/\log(2+Tt)}$ sufficiently small.
\end{enumerate}
\end{theorem}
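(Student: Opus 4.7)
My plan is to follow the two-step strategy of time discretization plus lace expansion plus induction, with a final continuum limit. First, I would discretize time by introducing a small parameter $\vep\in(0,1]$ and replacing the contact process by an oriented-percolation model on $\Zd\times\vep\Zp$ in which a site $(x,t)$ connects to $(y,t+\vep)$ with probability $\vep\lamb D(y-x)/(1+\vep\lamb)$ (plus a temporal edge with the complementary probability), so that as $\vep\downarrow0$ one recovers the continuous-time contact process (see \cite{hsa04,s01}). The discretized $2$-point function $\tau_{n\vep}^{\lamb,\vep}(x)$ is the probability of an oriented open connection from $(o,0)$ to $(x,n\vep)$. The goal is to prove Gaussian asymptotics for this discrete model with error bounds uniform in $\vep$, then pass to the limit $\vep\downarrow0$.

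Next I would derive a lace expansion of the form
\begin{equation*}
\tau_{n\vep}^{\lamb,\vep}(x)=\sum_{m=0}^{n}\sum_{y}\pi_{m\vep}^{\lamb,\vep}(y)\,\big((p_\vep\sstar\tau_{(n-m-1)\vep}^{\lamb,\vep})(x-y)+\delta_{m,n}\delta_{x,y}\big),
\end{equation*}
where $p_\vep$ is the one-step kernel and the coefficients $\pi_{m\vep}^{\lamb,\vep}$ admit diagrammatic bounds in terms of convolutions of $\tau$'s. On the Fourier side this becomes a convolution recursion that fits directly into the inductive framework of van der Hofstad--Slade. The main technical effort is to prove that the diagrammatic bounds on $\hat\pi_{m\vep}^{\lamb,\vep}(k)$, together with the weighted bounds needed to control $\partial_k^2\hat\pi$, hold uniformly in $\vep\in(0,1]$; this requires replacing sums over times by Riemann sums and extracting factors of $\vep$ carefully so that, after telescoping, one obtains bounds depending only on the spatial triangle diagrams which in turn are controlled by $L\gg1$ (the spread-out condition), exactly as in the $\vep=1$ oriented-percolation case.

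With uniform diagrammatic bounds in hand, the inductive method of \cite{hs01,hsa04} proceeds verbatim: one hypothesizes at step $n$ the Gaussian estimates \refeq{tauasy}--\refeq{tausup} and uses the recursion to propagate them to step $n+1$. The critical value $\lambc=\lambc^{(\vep)}$ is determined by the requirement $\sum_{m=0}^{\infty}\hat\pi_{m\vep}^{\lambc,\vep}(0)=1$, and the constants $A=A(\vep)$ and $v=v(\vep)$ arise as simple derivatives of $\hat\pi$ at $k=0$, yielding \refeq{estimates} from the elementary $\hat\pi_0=1$ plus perturbative smallness of the tail. The spatial asymptotics \refeq{taugyr} and the sup-norm bound \refeq{tausup} follow by differentiating the inductive hypothesis twice in $k$ and by combining a trivial upper bound $e^{-t}$ from the recovery term with an $L^\infty$ convolution estimate, respectively.

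Finally, for part (ii) I would rescale $L\mapsto L_1T^b$ and $t\mapsto Tt$ and rerun the induction in the regime \refeq{alphadef}, where the same diagrammatic bounds become small by powers of $T$ rather than of $L$, giving \refeq{tauasy-lowdim}--\refeq{tausup-lowdim}. For the passage $\vep\downarrow0$ back to the true contact process, the uniformity of all bounds allows one to extract, from any sequence $\vep_j\downarrow 0$, a subsequential limit of $\wtau_t^{\lambc^{(\vep)}}$; the limit satisfies the contact-process Kolmogorov equations, hence is unique, so convergence holds along the full sequence (the soft version of the weak-convergence argument of \cite{hsa04}). The main obstacle throughout is the uniform-in-$\vep$ control of $\hat\pi$ and its second $k$-derivative, since naive bounds diverge like $\vep^{-1}$; managing the temporal loops generated by the recovery term so that the $\vep$'s cancel correctly is what makes the argument work.
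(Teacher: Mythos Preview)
This theorem is not proved in the present paper; it is quoted as a prior result from \cite{hsa04} and used as input for the analysis of the $r$-point functions with $r\ge 3$. Your outline---time discretization, lace expansion for the two-point function yielding a convolution recursion, the inductive scheme of \cite{hs02} with diagrammatic bounds on $\hat\pi$ uniform in $\vep$, and a continuum limit $\vep\downarrow 0$---is precisely the strategy of \cite{hsa04}, so your proposal matches the actual proof in the cited source.

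One small remark on the continuum-limit step: in \cite{hsa04} the passage $\vep\downarrow 0$ was handled via an explicit (and somewhat intricate) weak-convergence argument showing pointwise convergence of $\frac{1}{\vep^2}\hat\pi_{s;\vep}^{\lambc^{(\vep)}}(0)$, rather than the subsequential-limit-plus-uniqueness argument you sketch. The present paper (Section~\ref{sec-outlinedisc}) in fact introduces exactly the kind of soft subsequential argument you describe, and notes that it gives an alternative, simpler proof of the convergence of $A^{(\vep)}$ and $v^{(\vep)}$; so your version of that step is closer to the approach of this paper than to the original proof in \cite{hsa04}.
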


In the rest of the paper, we will always work at the critical value, i.e., we take $\lambda=\lambda_{\rm c}$ for $d>4$ and $\lamb=\lamb_\sT$ as in
Theorem~\ref{thm:2pt}(ii) for $d\le4$.  We will often omit the $\lamb$-dependence and write $\tau_{\vec t}^{\smallsup{r}}(\vec x)=\tau_{\vec t}^{\lamb}(\vec x)$ to emphasize the number of arguments of $\tau_{\vec t}^{\lamb}(\vec x)$.

While $\tau_t^{\lambc}(x)$ tells us what paths in a critical cluster look like, the critical $r$-point functions give us information about the branching structure of critical clusters. Our goal in this paper is to prove that the suitably scaled critical $r$-point functions converge to those of the canonical measure of super-Brownian motion (SBM).

\subsection{The $r$-point function for $r\geq3$}
To state the result for the $r$-point function for $r\geq3$,
we begin by describing the Fourier transforms of the
moment measures of SBM.
These are most easily defined recursively, and will serve
as the limits of the $r$-point functions. We define
    \eq
    \lbeq{M1-def}
    \wM_t^{\smallsup{1}}(k)=e^{-\frac{|k|^2}{2d}t},\qquad
    k\in\mR^d,\,t\in\mR_+,
    \en
and define recursively, for $l\geq2$,
    \eq
    \lbeq{Mr-def}
    \wM_{\vt}^{\smallsup{l}}(\vk)=\int_0^{\underline{t}}dt\,
    \wM_t^{\smallsup{1}}(k_1+\cdots+k_l)\sum_{I\subset J_1:|I|
    \geq1}\wM_{\vt_I-t}^{\smallsup{|I|}}(\vk_I)\,\wM_{\vt_{J
    \setminus I}-t}^{\smallsup{l-|I|}}(\vk_{J\setminus I}),
    \qquad\vk\in\mR^{dl},\,\vt\in\mR_+^l,
    \en
where $J=\{1,\dots,l\}$, $J_1=J\setminus\{1\}$, $\underline
{t}=\min_it_i$, $\vt_I$ is the vector consisting of $t_i$
with $i\in I$, and $\vt_I-t$ is subtraction of $t$ from
each component of $\vt_I$.  The quantity
$\wM_{\vt}^{\smallsup{l}}(\vk)$ is the Fourier transform
of the $l^{\rm th}$ moment measure of the canonical measure
of SBM (see \cite[Sections~1.2.3 and 2.3]{hs01} for more details
on the moment measures of SBM).

The following is the result for the $r$-point function for
$r\geq3$ linking the critical contact process and the canonical measure of
SBM:

\begin{theorem}[{\bf Convergence of $r$-point functions to SBM moment measures}]\label{thm:rpt}
\begin{enumerate}[(i)]
\item
Let $d>4$, $\lamb=\lambc$, $\vk\in\mR^{d(r-1)}$, $\vt\in(0,\infty)^{r-1}$ and $\delta,L,v,A$ be the same as in Theorem~\ref{thm:2pt}(i).  There exists $V=V(d,L)$ such that, for every $r\ge2$ and as $T\uparrow\infty$,
\begin{align}\lbeq{taurasy}
\wtau_{T\vt}^{\sss(r)}\big(\tfrac{\vk}{\sqrt{v\sigma^2T}}\big)=A\,(A^2V
 T)^{r-2}\Big(\wM_{\vt}^{\sss(r-1)}(\vk)+O(T^{-\delta})\Big),
\end{align}
uniformly in $\vk\in\mR^{d(r-1)}$ with $\sum_{i=1}^{r-1}|k_i|^2$ bounded. Moreover, $V=2 +O(L^{-d})$.
\item
Let $d\le4$, $\vk\in\mR^{d(r-1)}$, $\vt\in(0,\infty)^{r-1}$ and let $\delta,L_1,\lamb_{\sT},\mu$ be the same as in Theorem~\ref{thm:2pt}(ii).  For every $r\ge2$, $0<\max_is_i\le\log T$ and as $T\uaw\infty$,
\begin{align}\lbeq{taurasylowd}
\wtau_{T\vt}^{\smallsup{r}}\big(\tfrac{\vk}{\sqrt{\sigma_{\sT}^2T}}\big)=
 (2T)^{r-2}\Big(\wM_{\vt}^{\smallsup{r-1}}(\vk)+O(T^{-\mu\wedge\delta})\Big),
\end{align}
uniformly in $\vk\in\mR^{d(r-1)}$ with $\sum_{i=1}^{r-1}
|k_i|^2$ bounded.
\end{enumerate}
\end{theorem}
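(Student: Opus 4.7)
I would proceed by induction on $r$ on the discretized model (oriented percolation on $\mZ^d\times\vep\Zp$), use the lace expansion to establish Gaussian asymptotics with explicit constants, and pass to the continuum limit $\vep\downarrow0$ only at the end. Theorem~\ref{thm:2pt} furnishes the base case $r=2$ together with the diagrammatic input (infrared bound and \refeq{tausup}) needed to control the lace expansion. The first task is to derive, for each $r\geq3$, a lace-expansion identity of the schematic form
\begin{align*}
\tau_{\vt}^{\smallsup{r}}(\vx)
 =\sum_{I\subset J_1,\,|I|\geq1}\,\sum_{s,y}\pi^{\smallsup{r}}_{s,I}(y)\;
 \tau^{\smallsup{|I|+1}}_{\vt_I-s}(\vx_I-y)\;
 \tau^{\smallsup{r-|I|}}_{\vt_{J\setminus I}-s}(\vx_{J\setminus I}-y)
 +(\text{remainder}),
\end{align*}
where $J=\{1,\dots,r-1\}$ and $J_1=J\setminus\{1\}$. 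The expansion coefficient $\pi^{\smallsup{r}}_{s,I}$ encodes the ``last branching point'' at which the infection trail from $(o,0)$ splits into two disjoint sub-clusters feeding the endpoints indexed by $I$ and $J\setminus I$; the structure of this identity exactly mirrors the SBM recursion \refeq{Mr-def}. I would derive the expansion by an inclusion-exclusion / cluster-cutting scheme set up so that the same coefficients arise in the survival-probability expansion of \cite{HHS05b}, realising selling point~2 of the introduction.

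The second step is to bound $\pi^{\smallsup{r}}_{s,I}(y)$ diagrammatically, \emph{uniformly in} $\vep\in(0,1]$, by sums of triangle- and square-type diagrams built out of the critical 2-point function $\tau_t^{\lambc}$. Using \refeq{tauasy} and \refeq{tausup}, for $d>4$ and $L\gg1$ these diagrams are controlled by a small constant times $L^{-d(r-2)}$ with enough time-decay to yield $\sum_{s,y}(1+s)^a|\pi^{\smallsup{r}}_{s,I}(y)|<\infty$ for some $a>0$. Substituting the inductive Gaussian asymptotics for the lower-order point functions into the expansion at the rescaled arguments $(\vt,\vx)\mapsto(T\vt,\vk/\sqrt{v\sigma^2T})$ and performing the $s$-integration reproduces the SBM recursion \refeq{Mr-def}; the prefactor $A(A^2VT)^{r-2}$ in \refeq{taurasy} is produced by combining one factor $A$ from each of the $r-1$ two-point functions with one branching vertex $V$ at each of the $r-2$ internal nodes. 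The vertex factor $V$ is identified at leading order as $\sum_s\wpi^{\smallsup{r}}_{s,I}(\vec 0)$, which on the discretized model equals $2-\vep+O(L^{-d})$ and hence $V=2+O(L^{-d})$ in the continuum limit, as claimed.

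The final step combines the continuum limit with the low-dimensional extension. The $\vep$-uniform bounds give tightness of the rescaled discretized $r$-point functions as $\vep\downarrow0$; any subsequential limit satisfies the same recursion with the limit of $\wpi^{\smallsup{r}}$, so by uniqueness of the SBM moment measures the full family converges, bypassing the intricate weak-convergence argument of \cite{hsa04} (selling point~4). For $d\leq4$ the same induction applies using Theorem~\ref{thm:2pt}(ii) and the scaling $L_\sT=L_1T^b$ with \refeq{alphadef}: the role of the triangle bound is played by its $T$-dependent analog and the expansion yields \refeq{taurasylowd} with error $T^{-\mu\wedge\delta}$. \textbf{The main obstacle} I expect is making the diagrammatic estimates for $\pi^{\smallsup{r}}_{s,I}$ uniform in $\vep\in(0,1]$ while simultaneously organising the expansion so that the leading constant is the \emph{single} branching factor $V$ independent of $r$ and of the partition $(I,J\setminus I)$: the $r$-point expansion naturally produces many diagrammatic terms, and one must show that their leading-order contributions all collapse onto $V=2-\vep+O(L^{-d})$ while the remainders are $O(T^{-\delta})$. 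The tight linkage to the survival-probability expansion of \cite{HHS05b} is precisely what makes this collapse transparent and allows the constants in the scaling limits of the 2- and 3-point functions and the survival probability to be identified, as required for the identity promised in selling point~2.
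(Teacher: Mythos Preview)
Your overall strategy---induction on $r$ via a lace expansion that mirrors the SBM recursion \refeq{Mr-def}, diagrammatic bounds uniform in $\vep$, and a continuum limit by subsequences---matches the paper's. But the schematic expansion you write down has a structural problem that would block the argument as stated.

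In your identity
\[
\tau^{\sss(r)}_{\vt}(\vx)=\sum_{I}\sum_{s,y}\pi^{\sss(r)}_{s,I}(y)\,
 \tau^{\sss(|I|+1)}_{\vt_I-s}(\vx_I-y)\,\tau^{\sss(r-|I|)}_{\vt_{J\setminus I}-s}(\vx_{J\setminus I}-y)+\cdots,
\]
both sub-$r$-point functions emanate from the single site $(y,s)$. That forces the coefficient $\pi^{\sss(r)}_{s,I}$ to contain the entire path from $(o,0)$ to $(y,s)$, i.e.\ a critical two-point function. Consequently $\hat\pi^{\sss(r)}_{s,I}(0)$ does not decay in $s$; at criticality it behaves like $A\cdot(\text{local vertex})$ for each $s$, and your proposed identification $V=\sum_s\hat\pi^{\sss(r)}_{s,I}(\vec 0)$ diverges rather than equalling $2-\vep+O(L^{-d})$. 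A related symptom is that your $\pi$ carries the indices $r$ and $I$, so the ``collapse onto a single $V$'' that you flag as the main obstacle is not just a bookkeeping issue---it is built into the shape of the expansion.

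The paper resolves this by a \emph{two-stage} expansion. First a linear expansion $\tau^{\sss(r)}=A+B\sstar\tau^{\sss(r)}$ is solved to give $\tau^{\sss(r)}=(\delta+\tau^{\sss(2)}\sstar p_\vep)\sstar A$, which explicitly peels off a two-point function $\tau^{\sss(2)}$ carrying the long path to the first branch. Only then is $A(\vec\xvec_J)$ expanded further, producing a vertex coefficient $C(\yvec_1,\yvec_2)$ (and hence $\psi$) that is \emph{manifestly independent of $I$ and of $r$}, lives on two separate endpoints $\yvec_1,\yvec_2$, and decays rapidly in both time arguments, so that $V^{\sss(\vep)}=\vep^{-1}\sum_{s_1,s_2}\hat\psi_{s_1,s_2}(0,0)$ is a convergent double sum. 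The identity you should aim for is therefore the three-time version
\[
\hat\tau^{\sss(r)}_{\vt}(\vk)=\ddsum_{s_0}\hat\tau^{\sss(2)}_{s_0}(k)\sum_I\ddsum_{s_1,s_2}\hat\psi_{s_1,s_2}(k_{J\setminus I},k_I)\,\hat\tau^{\sss(r_1)}_{\vt_{J\setminus I}-s_0-s_1}\,\hat\tau^{\sss(r_2)}_{\vt_I-s_0-s_2}+\hat\zeta^{\sss(r)}_{\vt}(\vk),
\]
after which the induction and the Riemann-sum argument reproduce \refeq{Mr-def} with the correct prefactor $A(A^2V T)^{r-2}$. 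Your continuum-limit plan and your remarks on the link to the survival-probability coefficients are on target, but they only become usable once $\psi$ is isolated as a short-range, $I$-independent object.
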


Since the statements for $r=2$ in Theorem~\ref{thm:rpt} follow from Theorem~\ref{thm:2pt}, we only need prove Theorem~\ref{thm:rpt} for $r\ge3$.  As described in more detail in Part~II \cite{hsa06}, Theorems~\ref{thm:2pt}--\ref{thm:rpt} can be rephrased to say that, under their hypotheses, the moment measures of the rescaled critical contact process converge to those of the canonical measure of SBM.  The consequences of this result for the convergence of the critical contact process towards SBM will be
deferred to \cite{hsa06}.

Theorem~\ref{thm:rpt} will be proved using the \emph{lace expansion}, which perturbs the $r$-point functions for the critical contact process around those for critical branching random walk.  To derive the lace expansion, we use a
time-discretization. The time-discretized contact process has a parameter $\vep\in (0,1]$. The boundary case $\vep=1$ corresponds to ordinary oriented percolation, while the limit $\vep\downarrow0$ yields the contact process.  We will prove Theorem~\ref{thm:rpt} for the time-discretized contact process and prove that the error terms are uniform in the discretization parameter $\vep$. As a consequence, we will reprove Theorem~\ref{thm:rpt} for oriented percolation.  The first proof of Theorem~\ref{thm:rpt} for oriented percolation appeared in \cite{hs01}.

In \cite{CheSak08a, CheSak08b}, spread-out oriented percolation is investigated in the setting where the finite variance condition \refeq{Deltadef} fails, and it was shown that for certain infinite variance step distributions $D$ in the domain of attraction of an $\alpha$-stable distribution, the Fourier transform
of two-point function converges to the one of an $\alpha$-stable random variable, when $d>2\alpha$ and $\alpha\in (0,2)$. We conjecture that, in this case, the limits of the $r$-point functions satisfy a limiting result similarly to \refeq{taurasy} when the argument in
the $r$-point function in \refeq{taurasy} is replaced by $\tfrac{\vk}{vT^{1/\alpha}}$ for some $v>0$,
and where the limit corresponds to the moment measures of a super-process where the motion is $\alpha$-stable and the branching has finite variance (in the terminology of \cite[Definition 1.33, p.\ 22]{Ethe00}, this corresponds to the
$(\alpha,d,1)$-superprocess and SBM corresponds to $\alpha=2$). These limiting moment measures should satisfy \refeq{Mr-def}, but \refeq{M1-def} is replaced by $e^{-|k|^\alpha t}$, which is the Fourier transform of an $\alpha$-stable motion at time $t$.

\subsection{Organization}
The paper is organised as follows. In Section \ref{sec-outline},
we will describe the time-discretization, state the results for
the time-discretized contact process and give an outline of the proof.
In this outline, the proof of Theorem~\ref{thm:rpt}
will be reduced to Propositions \ref{thm-psivphibd} and \ref{prop-disc}.
In Proposition \ref{thm-psivphibd}, we state the bounds on the
expansion coefficients arising in the expansion for the $r$-point function.
In Proposition \ref{prop-disc}, we state and prove that the sum of these
coefficients converges, when appropriately scaled and as $\vep\downarrow 0$.
The rest of the paper is devoted to the proof of Propositions
\ref{thm-psivphibd} and \ref{prop-disc}. In Sections \ref{s:lace}--\ref{s:exp-applexp},
we derive the lace expansion for the $r$-point function, thus
identifying the lace-expansion coefficients. In Sections \ref{s:bounds}--\ref{ss:ebd},
we prove the bounds on the coefficients and thus prove
Proposition \ref{thm-psivphibd}.

%%%%%%%%%%%%%%%%%%%%%%%%%%%%%%%%%%%%%%%%%%%%%%%%%%%%%%%%%%%%%%%%%%%%%%%%%%%%%%%%%%%%%
%%%%%%%%%%%%%%%%%%%%%%%%%%%%%%%%%%%%%%%%%%%%%%%%%%%%%%%%%%%%%%%%%%%%%%%%%%%%%%%%%%%%%
%%%%%%%%%%%%%%%%%%%%%%%%%%%%%%%%%%%%%%%%%%%%%%%%%%%%%%%%%%%%%%%%%%%%%%%%%%%%%%%%%%%%%
%%%%%%%%%%%%%%%%%%%%%%%%%%%%%%%%%%%%%%%%%%%%%%%%%%%%%%%%%%%%%%%%%%%%%%%%%%%%%%%%%%%%%
%\input{rpt2}
% rpt2.tex

% August 22, 2008, RvdH
% August 20, 2008, RvdH

% June 1, 2007, RvdH
% January  5, 2004, RvdH
% July  17, 2003, RvdH
% July  16, 2003, RvdH

%%%%%%%%%%%%%%%%%%%%%%%%%%%%%%%%%%%%%%%%%%%%%%%%%%%%%%%%%%%%%%%%%%%%%%%%
%%%%%%%%%%%%%%%%%%%%%%%%%%%%%%%%%%%%%%%%%%%%%%%%%%%%%%%%%%%%%%%%%%%%%%%%
%%%%%%%%%%%%%%%%%%%%%%%%%%%%%%%%%%%%%%%%%%%%%%%%%%%%%%%%%%%%%%%%%%%%%%%%
%%%%%%%%%%%%%%%%%%%%%%%%%%%%%%%%%%%%%%%%%%%%%%%%%%%%%%%%%%%%%%%%%%%%%%%%
\section{Outline of the proof}
\label{sec-outline}
In this section, we give an outline of the proof of
Theorem~\ref{thm:rpt}, and reduce this proof to
Propositions~\ref{thm-psivphibd} and \ref{prop-disc}. This section is organized
as follows.  In Section~\ref{ss:discretize}, we describe the
time-discretized contact process.  In Section~\ref{ss:expans}, we outline
the lace expansion for the $r$-point functions and state the bounds on the
coefficients in Proposition~\ref{thm-psivphibd}.  In Section~\ref{sec-proofr=3},
we prove Theorem~\ref{thm:rpt} for the time-discretized
contact process subject to
Propositions~\ref{thm-psivphibd}.
Finally, in Section~\ref{sec-outlinedisc}, we prove
Proposition~\ref{prop-disc}, and complete the proof of Theorem~\ref{thm:rpt}
for the contact process.

\subsection{Discretization}\label{ss:discretize}
In this section, we introduce the discretized contact
process, which is an interpolation between oriented
percolation on the \ch{one} hand, and the contact process on the other.
This section contains the same material as \cite[\ch{Section 2.1}]{hsa04}.
\ch{We shall also use the notation $\N=\{1,2,\ldots\}$,
$\Z_+=\{0\}\Dcup{}{}\N$ and $\R_+=[0,\infty)$.}

The contact process can be constructed using a graphical
representation as follows.  We consider $\Zd\times\mR_+$ as
space-time.  Along each time line $\{x\}\times\mR_+$, we place
points according to a Poisson process with intensity 1,
independently of the other time lines.  For each ordered pair of
distinct time lines from $\{x\}\times\mR_+$ to $\{y\}\times\mR_+$,
we place directed bonds $((x,\,t),(y,\,t))$, $t\geq0$, according to
a Poisson process with intensity $\lamb\,D(y-x)$,
independently of the other Poisson processes.  A site $(x,s)$ is
said to be {\it connected to} $(y,t)$ if either $(x,s)= (y,t)$ or
there is a non-zero path in $\Zd\times\mR_+$ from $(x,s)$ to
$(y,t)$ using the Poisson bonds and time line segments traversed
in the increasing time direction without traversing the Poisson
points.  The law of $\{\bC_t\}_{t\in\R_+}$ defined in Section~\ref{ss:results} is
equal to that of $\big\{\{x\in \Zd:(o,\,0)$ is connected to
$(x,\,t)\}\big\}_{t\in\R_+}$.

We follow \cite{s01} and consider
an oriented percolation process in $\Zd\times\vep\Zp$ with
$\vep \in(0,1]$ being a discretization parameter as follows.
A directed pair $b=((x,t),(y,t+\vep))$ of sites in $\Zd\times\vep\Zp$ is called a
{\it bond}.  In particular, $b$ is said to be {\it temporal} if
$x=y$, otherwise {\it spatial}.  Each bond is either {\it
occupied} or {\it vacant} independently of the other bonds, and a
bond $b=((x,t),(y,t+\vep))$ is occupied with probability
    \eq
    \lbeq{bprob} p_\vep(y-x)=\begin{cases}
    1-\vep,&\mbox{if }x=y,\\
    \lamb\vep\,D(y-x),&\mbox{otherwise},
    \end{cases}
    \en
provided that $\sup_x p_{\vep}(x)\leq1$.  We denote the
associated probability measure by $\mP_\vep^\lamb$.  It has been proved in
\cite{bg91} that $\mP_\vep^\lamb$ weakly converges to $\mP^\lamb$ as
$\vep\daw0$. See Figure \ref{fig-1} for a graphical representation
of the contact process and the discretized contact process.
As explained in more detail in Section \ref{ss:expans},
we prove our main results by proving the results first for the
discretized contact process, and then taking the continuum limit
\ch{$\vep\downarrow 0$}.

%%%%%FIGFIGFIGFIGFIGFIGFIGFIGFIGFIGFIGFIGFIGFIGFIGFIGFIGFIGFIG
%%%%%FIGFIGFIGFIGFIGFIGFIGFIGFIGFIGFIGFIGFIGFIGFIGFIGFIGFIGFIG
\begin{figure}
%\vskip-3.5cm
\begin{center}
\end{center}
\begin{center}
\includegraphics[scale = 0.45]{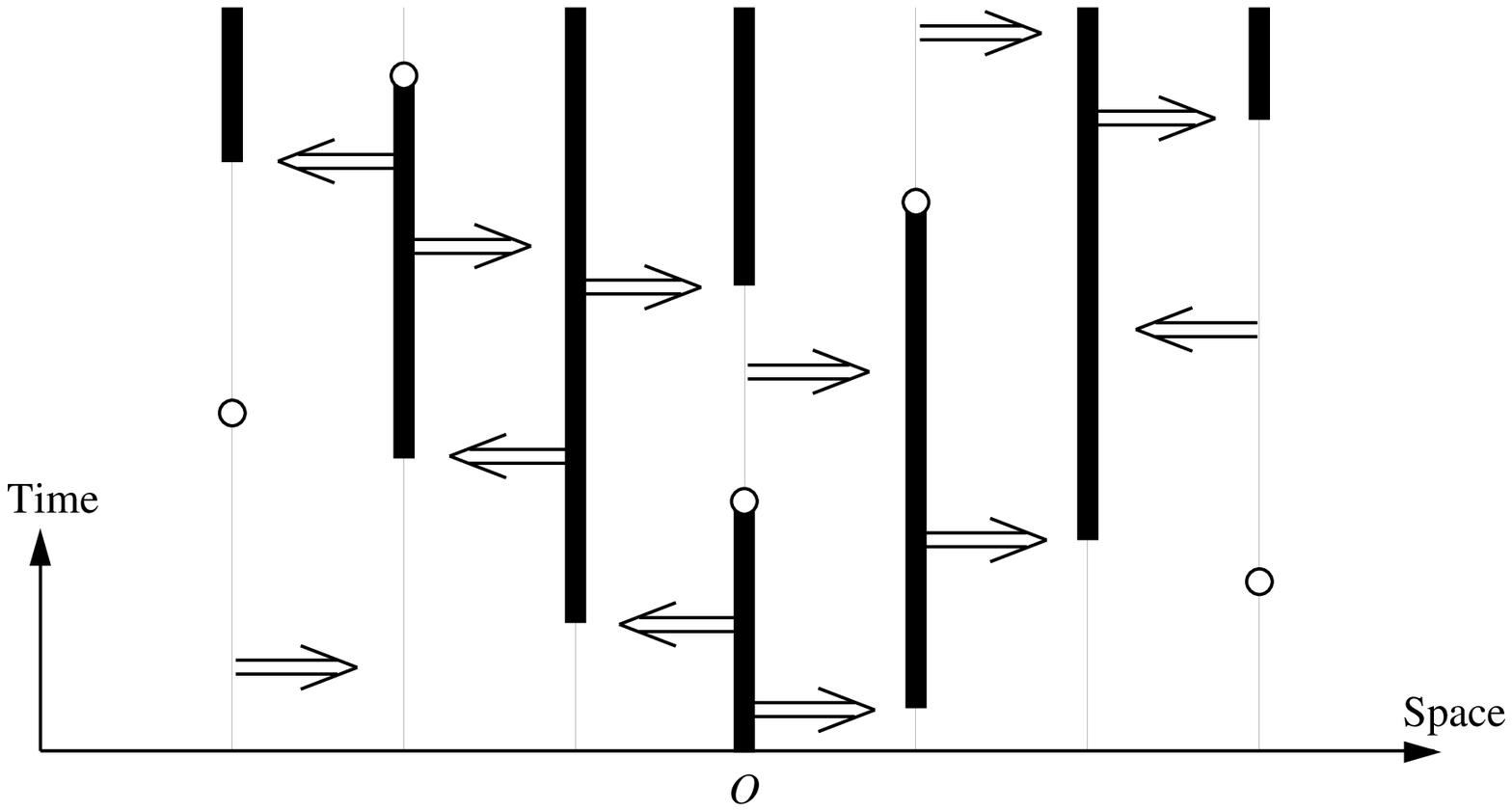}\hskip 1cm
\includegraphics[scale = 0.45]{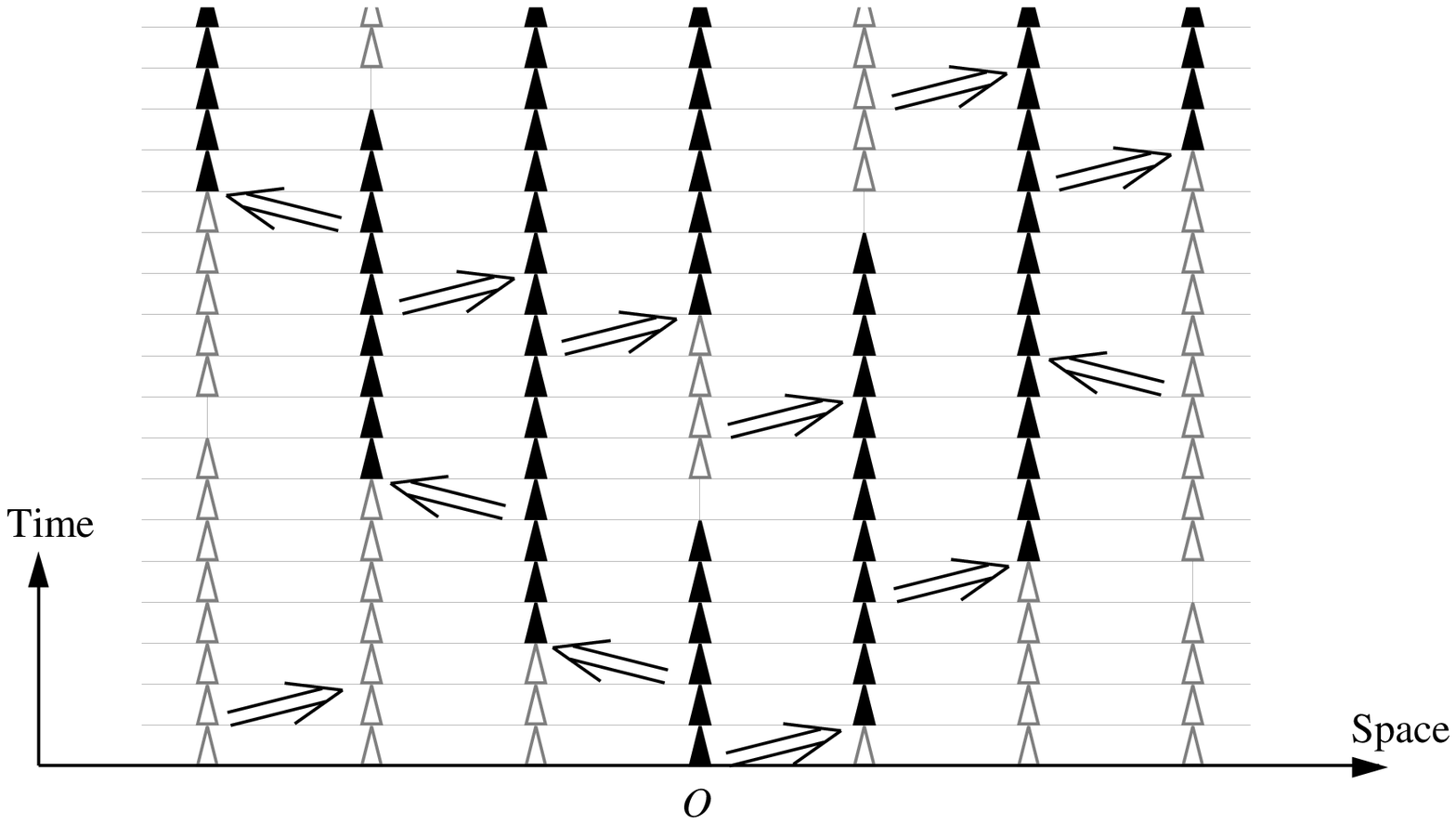}
\label{fig-1}
\caption{Graphical representation of the contact process and the discretized
contact process.}
\end{center}
\end{figure}
%%%%%FIGFIGFIGFIGFIGFIGFIGFIGFIGFIGFIGFIGFIGFIGFIGFIGFIGFIGFIG
%%%%%FIGFIGFIGFIGFIGFIGFIGFIGFIGFIGFIGFIGFIGFIGFIGFIGFIGFIGFIG

We denote by $(x,s)\conn(y,t)$ the event that $(x,s)$ is {\it
connected to} $(y,t)$, i.e., either $(x,s)=(y,t)$ or there is a
non-zero path in $\Zd\times\vep\Zp$ from $(x,s)$ to $(y,t)$
consisting of occupied bonds.  The {\it $r$-point functions},
for $r\ge2$, $\vec t=(t_1,\ldots,t_{r-1})\in\vep\Z_+^{r-1}$ and
$\vec x=(x_1,\ldots,x_{r-1})\in\mZ^{d(r-1)}$, are defined as
\begin{align}\lbeq{disc2pt-def}
\tau_{\vec t;\vep}^{\sss(r)}(\vec x)=\mP_\vep^\lamb\big((o,0)\conn(x_i,t_i)
 ~~\forall i=1,\ldots,r-1\big).
\end{align}
Similarly to \refeq{lambc-def}, the discretized contact process
has a critical value $\lambda_{\rm c}^{\smallsup{\vep}}$ satisfying
\begin{align}\lbeq{lambcvep-def} %\int_0^\infty
\vep\sum_{t\in\vep\Zp}\wtau_{t;\vep}^\lamb(0)
 \begin{cases}
 <\infty,&\mbox{if }\lamb<\lambc^{\sss(\vep)},\\
 =\infty,&\mbox{otherwise},
 \end{cases}&&
\lim_{t\uaw\infty}\mP_\vep^\lamb(\bC_t\ne\varnothing)
\begin{cases}
    =0,&\mbox{if }\lamb\le\lambc^{\sss(\vep)},\\
    >0,&\mbox{otherwise}.
\end{cases}
\end{align}
The discretization procedure will be essential in order to derive the
lace expansion for the $r$-point functions for $r\geq 3$, as it was
for the 2-point function in \cite{hsa04}.

Note that for $\vep=1$ the discretized contact process is simply oriented
percolation. Our main result for the discretized contact process is the following
theorem, similar to Theorem \ref{thm:rpt}:

\begin{theorem}[{\bf Convergence of time-discretized $r$-point functions to SBM moment measures}]
\label{thm:rptvep}
~
\begin{enumerate}[(i)]
\item
Let $d>4$, $\lamb=\lambce$, $\vk\in\mR^{d(r-1)}$, $\vt\in(0,\infty)^{r-1}$,
$\delta\in(0,1\wedge\Delta\wedge\frac{d-4}2)$ and $L\gg1$, as in
Theorem~\ref{thm:2pt}(i).  There exist $A^{\sss(\vep)}=A^{\sss(\vep)}(d,L)$,
$v^{\sss(\vep)}=v^{\sss(\vep)}(d,L)$, $V^{\sss(\vep)}=V^{\sss(\vep)}(d,L)$
such that, for every $r\geq 2$ and as $T\uparrow\infty$,
\begin{align}\lbeq{taurasyvep}
\wtau_{T\vt}^{\sss(r)}\big(\tfrac{\vk}{\sqrt{v\sigma^2T}}\big)=A^{\sss(\vep)}
 \big((A^{\sss(\vep)})^2V^{\sss(\vep)}T\big)^{r-2}\Big(\wM_{\vt}^{\sss(r-1)}
 (\vk)+O(T^{-\delta})\Big),
\end{align}
uniformly in $\vep\in(0,1]$ and $\vk\in\mR^{d(r-1)}$ with
$\sum_{i=1}^{r-1}|k_i|^2$ bounded.  Moreover, for any $\vep\in(0,1]$,
\begin{align}\lbeq{estimates-discr}
\lambce=1+O(L^{-d}),&&&&
A^{\sss(\vep)}=1+O(L^{-d}),&&&&
v^{\sss(\vep)}=1+O(L^{-d}),&&&&
V^{\sss(\vep)}=2-\vep+O(L^{-d}).
\end{align}
\item
Let $d\le4$, $\vk\in\mR^{d(r-1)}$, $\vt\in(0,\infty)^{r-1}$ and
let $\delta,L_1,\lamb_\sT,\mu$ be as in Theorem~\ref{thm:2pt}(ii).
For every $r\ge2$, $0<\max_is_i\le\log T$ and as $T\uparrow\infty$,
    \begin{align}\lbeq{taurasyveplowd}
    \wtau_{T\vt}^{\sss(r)}\big(\tfrac{\vk}{\sqrt{\sigma_{\sT}^2T}}\big)=\big(
     (2-\vep)T\big)^{r-2}\Big(\wM_{\vt}^{\sss(r-1)}(\vk)+O(T^{-\mu\wedge\delta})
     \Big),
    \end{align}
uniformly in $\vep\in(0,1]$ and $\vk\in\mR^{d(r-1)}$ with
$\sum_{i=1}^{r-1}|k_i|^2$ bounded.
\end{enumerate}
\end{theorem}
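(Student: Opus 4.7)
The plan is to prove Theorem~\ref{thm:rptvep} by induction on $r \geq 2$, using the lace expansion stated in Proposition~\ref{thm-psivphibd} to reduce the $r$-point function to convolutions of lower-order $r'$-point functions and the 2-point function. The base case $r=2$ is already provided by the discretized version of Theorem~\ref{thm:2pt} (whose error bounds are uniform in $\vep$, as shown in \cite{hsa04}). For the inductive step with $r \geq 3$, I would assume the result for all $2 \leq r' < r$ and use the expansion to extract the leading-order behavior.

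The first step is to derive the lace expansion that identifies a \emph{branching time} $t^*$ at which the critical cluster splits. Schematically, after expansion, the Fourier-space identity takes the form
\begin{align*}
\wtau_{\vec t;\vep}^{\sss(r)}(\vec k) &= \sum_{\varnothing \neq I \subsetneq J_1} \; \vep \sum_{t^* \in \vep\Z_+} \wtau^{\sss(2)}_{t^*;\vep}(k_1+\cdots+k_{r-1})\; \widehat{\Psi}^{\sss(\vep)}_{t^*}(\vec k) \\
&\qquad\qquad\qquad\qquad \times \wtau^{\sss(|I|+1)}_{\vec t_I - t^*;\vep}(\vec k_I)\, \wtau^{\sss(r-|I|)}_{\vec t_{J\setminus I}-t^*;\vep}(\vec k_{J\setminus I}) \; + \; \widehat{\Phi}^{\sss(\vep)}_{\vec t}(\vec k),
\end{align*}
where $\widehat{\Psi}^{\sss(\vep)}$ is the branching vertex function and $\widehat{\Phi}^{\sss(\vep)}$ collects the remainder. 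This mirrors exactly the recursion~\refeq{Mr-def} for the SBM moment measures. The bounds of Proposition~\ref{thm-psivphibd} then show that $\widehat{\Phi}^{\sss(\vep)}$ contributes only an $O(T^{-\delta})$ error after rescaling, uniformly in $\vep\in(0,1]$.

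Next, I substitute the induction hypothesis into the main term. With $\vec t = T\vec s$ and $\vec k$ replaced by $\vec k/\sqrt{v\sigma^2 T}$, writing $t^* = Ts$, the prefactor for each term in the sum becomes
\[
A^{\sss(\vep)} \cdot \bigl[A^{\sss(\vep)}((A^{\sss(\vep)})^2 V^{\sss(\vep)} T)^{|I|-1}\bigr] \cdot \bigl[A^{\sss(\vep)}((A^{\sss(\vep)})^2 V^{\sss(\vep)} T)^{r-|I|-2}\bigr] \cdot \widehat{\Psi}^{\sss(\vep)}(0),
\]
where the factor of $T$ comes from the Riemann-sum approximation $\vep\sum_{t^* = 0}^{T\underline{s}/\vep} \approx T\int_0^{\underline{s}} ds$. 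Setting $V^{\sss(\vep)} := \widehat{\Psi}^{\sss(\vep)}(0)$, collecting powers gives precisely $A^{\sss(\vep)}\,((A^{\sss(\vep)})^2 V^{\sss(\vep)}T)^{r-2}$, and the combinatorial sum over $I\subsetneq J_1$ together with the $s$-integration reproduces $\wM^{\sss(r-1)}_{\vec s}(\vec k)$. The errors from the 2-point asymptotics~\refeq{tauasy} and from the inductive step combine to $O(T^{-\delta})$, uniformly in $\vec k$ bounded and in $\vep$.

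The hardest step will be identifying the explicit $\vep$-dependence of the vertex factor $V^{\sss(\vep)} = 2-\vep + O(L^{-d})$. At the "tree level" (i.e., dropping all interaction corrections that give the $O(L^{-d})$ piece), the branching contribution comes from the two ways a single occupied bond can give rise to two independent clusters at time $t^*+\vep$: either through a spatial bond plus a temporal bond (contributing $\lambda_{\rm c}^{\sss(\vep)}\vep \cdot (1-\vep) \approx \vep(1-\vep)$), or through two spatial bonds from the same vertex (contributing $\approx (\lambda_{\rm c}^{\sss(\vep)}\vep)^2 \sum_{x,y} D(x)D(y) \approx \vep^2$), and similar contributions from the temporal-spatial ordering. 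Adding these and dividing by $\vep^2$ (the natural scale of the branching rate per unit time) yields $2-\vep$. For the low-dimensional case (ii), the same argument applies with $A^{\sss(\vep)}$ and $v^{\sss(\vep)}$ replaced by $1$ (up to $O(T^{-\mu})$ corrections from \refeq{tauasy-lowdim}), so the prefactor collapses to $((2-\vep)T)^{r-2}$. The bounds needed on $\widehat{\Psi}^{\sss(\vep)}$ and $\widehat{\Phi}^{\sss(\vep)}$ uniformly in $\vep$ are the content of Proposition~\ref{thm-psivphibd}, whose proof (deferred to Sections~\ref{s:bounds}--\ref{ss:ebd}) is where the real technical work lies.
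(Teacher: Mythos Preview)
Your overall strategy --- induction on $r$ using the expansion identity and Proposition~\ref{thm-psivphibd} --- is exactly the paper's approach. However, your schematic identity hides one genuine step: in the actual expansion \refeq{tau3exp} the vertex carries two extra time variables $s_1,s_2$ that also shift the time arguments of the lower-order point functions, i.e.\ one has $\hat\psi_{s_1,s_2}(k_{J\setminus I},k_I)\,\hat\tau^{\sss(r_1)}_{\vec t_{J\setminus I}-s_0-s_1}\,\hat\tau^{\sss(r_2)}_{\vec t_I-s_0-s_2}$ rather than a pointwise vertex $\hat\Psi^{\sss(\vep)}_{t^*}$ multiplying $\hat\tau^{\sss(r_1)}_{\vec t_{J\setminus I}-t^*}\,\hat\tau^{\sss(r_2)}_{\vec t_I-t^*}$. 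Replacing the former by the latter is the content of \refeq{conv3}, which the paper proves via a decomposition $T_1+T_2+T_3$ (the tail of the $(s_1,s_2)$-sum, the $k$-dependence of $\hat\psi$, and the residual time shifts in the lower-order functions) using the summability bounds on $b^{\sss(\vep)}_{s_1,s_2}$ in Lemma~\ref{lem-sumbds}; only after this localization does your Riemann-sum argument in $s_0$ apply. The constant $V^{\sss(\vep)}$ is then defined in \refeq{Vpsi} as $\vep^{-1}$ times the full $(s_1,s_2)$-sum of $\hat\psi_{s_1,s_2}(0,0)$, and $V^{\sss(\vep)}=2-\vep+O(\beta)$ follows directly from the explicit formula \refeq{psimainrep} for $\psi_{2\vep,2\vep}$ together with \refeq{psibd}; your tree-level heuristic is correct in spirit, but the normalization is by $\vep$, not $\vep^2$ (the contributions you list sum to $\vep(2-\vep)$).
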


For $r=2$, the claims in Theorem~\ref{thm:rptvep} were proved in
\cite[Propositions~2.1--2.2]{hsa04}.  We will only prove the
statements for $r\ge3$.

For oriented percolation for which $\vep=1$, Theorem
\ref{thm:rptvep}(i) reproves \cite[Theorem 1.2]{hs02}.
The uniformity in $\vep$ in Theorem \ref{thm:rptvep} is crucial in order for
the continuum limit \ch{$\vep\downarrow 0$}
to be performed, and to extend the results to the contact
process.

%%%%%%%%%%%%%%%%%%%%%%%%%%%%%%%%%%%%%%%%%%%%%%%%%%%%%%%%%%%%%%%%%%%%%%%%
\subsection{Overview of the expansion for the higher-point functions}
\label{ss:expans}

In this section, we give an introduction to the expansion
methods of Sections~\ref{s:lace}--\ref{s:exp-applexp}.  For this, it will be
convenient to introduce new notation for sites in $\Z^d \times
\varepsilon\Z_+$. We write
\begin{align}
\Lambda=\Zd\times\vep\Zp,
\end{align}
and we write a typical element of $\Lambda$ as $\xvec$ rather than
$(x,t)$ as was used until now. We fix $\lambda = \lambce$
throughout Section~\ref{ss:expans} for simplicity,
though the discussion also applies without change when
$\lambda<\lambce$.  We begin by discussing the underlying
philosophy of the expansion. This philosophy is identical to the one
described in \cite[Section 2.2.1]{hs01}.

As explained in more detail in \cite{hsa04}, the basic picture underlying the
expansion for the 2-point function is that a cluster connecting $\ovec$ and
$\xvec$ can be viewed as a string of sausages.  In this picture, the strings
joining sausages are the occupied pivotal bonds for the connection from
$\ovec$ to $\xvec$.  Pivotal bonds are the essential bonds for the connection
from $\ovec$ to $\xvec$, in the sense that each occupied path from $\ovec$ to
$\xvec$ must use all the pivotal bonds.  Naturally, these pivotal bonds are
ordered in time.  Each sausage corresponds to an occupied cluster from the
endpoint of a pivotal bond, containing the starting point of the next pivotal
bond.  Moreover, a sausage consists of two parts: the backbone, which is the
set of sites that are along occupied paths from the top of the lower pivotal
bond to the bottom of the upper pivotal bond, and the hairs, which are the
parts of the cluster that are not connected to the bottom of the upper pivotal
bond.  The backbone may consist of a single site, but may also consist of
sites on at least two bond-disjoint connections.  We say that both these cases
correspond to double connections.  We now extend this picture to the
higher-point functions.

For connections from the origin to multiple points
$\vec\xvec=(\xvec_1,\ldots,\xvec_{r-1})$, the corresponding picture is a
``tree of sausages'' as depicted in Figure~\ref{fig-rpt1}.  In the tree of
sausages, the strings represent the union over $i=1,\ldots,r-1$ of the
occupied pivotal bonds for the connections $\ovec\conn\xvec_i$, and the
sausages are again parts of the cluster between successive pivotal bonds.
Some of them may be pivotal for $\{\ovec\conn\xvec_j~\forall j\in J\}$,
while others are pivotal only for $\{\ovec\conn\xvec_j\}$ for some $j\in J$.

We regard this picture as corresponding to a kind of branching random walk.
In this correspondence, the steps of the walk are the pivotal bonds, while
the sites of the walk are the backbones between subsequent pivotal bonds.
Of course, the pivotal bonds introduce an avoidance interaction on the
branching random walk. Indeed, the sausages are not allowed to share sites
with the later backbones (since otherwise the pivotal bonds in between would
not be pivotal).

When $d>4$ or when $d\le4$ and the range of the contact process is
sufficiently large as described in \refeq{Lt-def}--\refeq{alphadef}, the
interaction is weak and, in particular, the different parts of the backbone in
between different pivotal bonds are small and the steps of the walk are
effectively independent.  Thus, we can think of the higher-point functions of
the critical time-discretized contact process as ``small perturbations" of the
higher-point functions of critical branching random walk.  We will use this
picture now to give an informal overview of the expansions we will derive in
Sections~\ref{s:lace}--\ref{s:exp-applexp}.

%%%%%FIGFIGFIGFIGFIGFIGFIGFIGFIGFIGFIGFIGFIGFIGFIGFIGFIGFIGFIGFIG
%%%%%FIGFIGFIGFIGFIGFIGFIGFIGFIGFIGFIGFIGFIGFIGFIGFIGFIGFIGFIGFIG
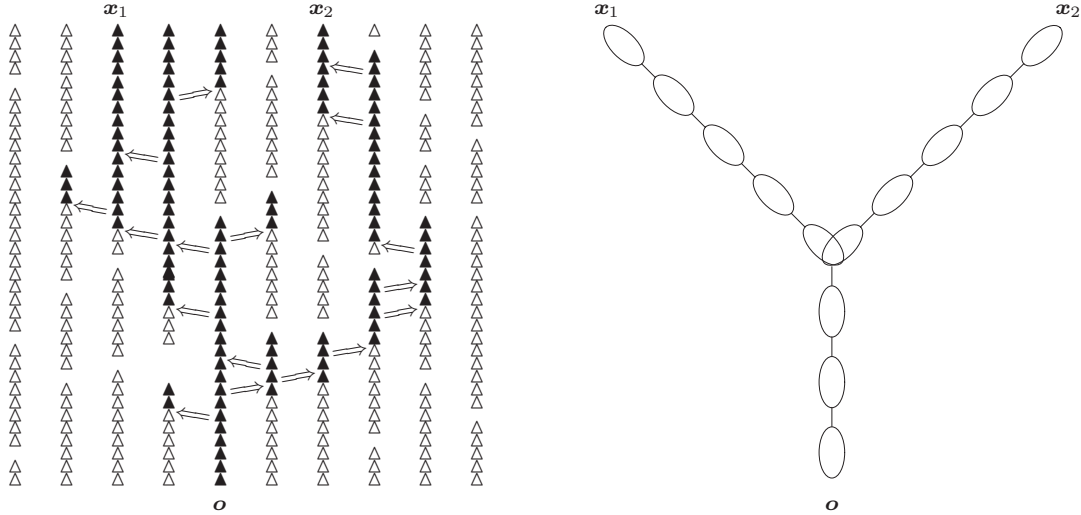
\begin{figure}[t]
\begin{center}

\setlength{\unitlength}{0.00067in}
{
\begin{picture}(-6400,-2300)(1000,2000)
{\put(-6400,-1700){\makebox(0,0)[lb]{\wtr}}
\put(-6400,-1600){\makebox(0,0)[lb]{\wtr}}
%\put(-6400,-1500){\makebox(0,0)[lb]{\wtr}}
\put(-6400,-1400){\makebox(0,0)[lb]{\wtr}}
\put(-6400,-1300){\makebox(0,0)[lb]{\wtr}}
\put(-6400,-1200){\makebox(0,0)[lb]{\wtr}}
\put(-6400,-1100){\makebox(0,0)[lb]{\wtr}}
\put(-6400,-1000){\makebox(0,0)[lb]{\wtr}}
\put(-6400,-900){\makebox(0,0)[lb]{\wtr}}
\put(-6400,-800){\makebox(0,0)[lb]{\wtr}}
\put(-6400,-700){\makebox(0,0)[lb]{\wtr}}
%\put(-6400,-600){\makebox(0,0)[lb]{\wtr}}
\put(-6400,-500){\makebox(0,0)[lb]{\wtr}}
\put(-6400,-400){\makebox(0,0)[lb]{\wtr}}
\put(-6400,-300){\makebox(0,0)[lb]{\wtr}}
\put(-6400,-200){\makebox(0,0)[lb]{\wtr}}
\put(-6400,-100){\makebox(0,0)[lb]{\wtr}}
\put(-6400,0){\makebox(0,0)[lb]{\wtr}}
\put(-6400,100){\makebox(0,0)[lb]{\wtr}}
\put(-6400,200){\makebox(0,0)[lb]{\wtr}}
\put(-6400,300){\makebox(0,0)[lb]{\wtr}}
\put(-6400,400){\makebox(0,0)[lb]{\wtr}}
\put(-6400,500){\makebox(0,0)[lb]{\wtr}}
\put(-6400,600){\makebox(0,0)[lb]{\wtr}}
\put(-6400,700){\makebox(0,0)[lb]{\wtr}}
\put(-6400,800){\makebox(0,0)[lb]{\wtr}}
\put(-6400,900){\makebox(0,0)[lb]{\wtr}}
\put(-6400,1000){\makebox(0,0)[lb]{\wtr}}
\put(-6400,1100){\makebox(0,0)[lb]{\wtr}}
\put(-6400,1200){\makebox(0,0)[lb]{\wtr}}
\put(-6400,1300){\makebox(0,0)[lb]{\wtr}}
%\put(-6400,1400){\makebox(0,0)[lb]{\wtr}}
\put(-6400,1500){\makebox(0,0)[lb]{\wtr}}
\put(-6400,1600){\makebox(0,0)[lb]{\wtr}}
\put(-6400,1700){\makebox(0,0)[lb]{\wtr}}
\put(-6400,1800){\makebox(0,0)[lb]{\wtr}}

\put(-6000,-1700){\makebox(0,0)[lb]{\wtr}}
\put(-6000,-1600){\makebox(0,0)[lb]{\wtr}}
\put(-6000,-1500){\makebox(0,0)[lb]{\wtr}}
\put(-6000,-1400){\makebox(0,0)[lb]{\wtr}}
\put(-6000,-1300){\makebox(0,0)[lb]{\wtr}}
\put(-6000,-1200){\makebox(0,0)[lb]{\wtr}}
\put(-6000,-1100){\makebox(0,0)[lb]{\wtr}}
\put(-6000,-1000){\makebox(0,0)[lb]{\wtr}}
%\put(-6000,-900){\makebox(0,0)[lb]{\wtr}}
\put(-6000,-800){\makebox(0,0)[lb]{\wtr}}
\put(-6000,-700){\makebox(0,0)[lb]{\wtr}}
\put(-6000,-600){\makebox(0,0)[lb]{\wtr}}
\put(-6000,-500){\makebox(0,0)[lb]{\wtr}}
\put(-6000,-400){\makebox(0,0)[lb]{\wtr}}
\put(-6000,-300){\makebox(0,0)[lb]{\wtr}}
%\put(-6000,-200){\makebox(0,0)[lb]{\wtr}}
\put(-6000,-100){\makebox(0,0)[lb]{\wtr}}
\put(-6000,0){\makebox(0,0)[lb]{\wtr}}
\put(-6000,100){\makebox(0,0)[lb]{\wtr}}
\put(-6000,200){\makebox(0,0)[lb]{\wtr}}
\put(-6000,300){\makebox(0,0)[lb]{\wtr}}
\put(-6000,400){\makebox(0,0)[lb]{\wtr}}
\put(-6000,500){\makebox(0,0)[lb]{\btr}}
\put(-6000,600){\makebox(0,0)[lb]{\btr}}
\put(-6000,700){\makebox(0,0)[lb]{\btr}}
%\put(-6000,800){\makebox(0,0)[lb]{\wtr}}
\put(-6000,900){\makebox(0,0)[lb]{\wtr}}
\put(-6000,1000){\makebox(0,0)[lb]{\wtr}}
\put(-6000,1100){\makebox(0,0)[lb]{\wtr}}
\put(-6000,1200){\makebox(0,0)[lb]{\wtr}}
\put(-6000,1300){\makebox(0,0)[lb]{\wtr}}
\put(-6000,1400){\makebox(0,0)[lb]{\wtr}}
\put(-6000,1500){\makebox(0,0)[lb]{\wtr}}
\put(-6000,1600){\makebox(0,0)[lb]{\wtr}}
\put(-6000,1700){\makebox(0,0)[lb]{\wtr}}
\put(-6000,1800){\makebox(0,0)[lb]{\wtr}}

\put(-5600,-1700){\makebox(0,0)[lb]{\wtr}}
\put(-5600,-1600){\makebox(0,0)[lb]{\wtr}}
\put(-5600,-1500){\makebox(0,0)[lb]{\wtr}}
\put(-5600,-1400){\makebox(0,0)[lb]{\wtr}}
\put(-5600,-1300){\makebox(0,0)[lb]{\wtr}}
\put(-5600,-1200){\makebox(0,0)[lb]{\wtr}}
\put(-5600,-1100){\makebox(0,0)[lb]{\wtr}}
\put(-5600,-1000){\makebox(0,0)[lb]{\wtr}}
\put(-5600,-900){\makebox(0,0)[lb]{\wtr}}
%\put(-5600,-800){\makebox(0,0)[lb]{\wtr}}
\put(-5600,-700){\makebox(0,0)[lb]{\wtr}}
\put(-5600,-600){\makebox(0,0)[lb]{\wtr}}
\put(-5600,-500){\makebox(0,0)[lb]{\wtr}}
\put(-5600,-400){\makebox(0,0)[lb]{\wtr}}
\put(-5600,-300){\makebox(0,0)[lb]{\wtr}}
\put(-5600,-200){\makebox(0,0)[lb]{\wtr}}
\put(-5600,-100){\makebox(0,0)[lb]{\wtr}}
%\put(-5600,0){\makebox(0,0)[lb]{\wtr}}
\put(-5600,100){\makebox(0,0)[lb]{\wtr}}
\put(-5600,200){\makebox(0,0)[lb]{\wtr}}
\put(-5600,300){\makebox(0,0)[lb]{\btr}}
\put(-5600,400){\makebox(0,0)[lb]{\btr}}
\put(-5600,500){\makebox(0,0)[lb]{\btr}}
\put(-5600,600){\makebox(0,0)[lb]{\btr}}
\put(-5600,700){\makebox(0,0)[lb]{\btr}}
\put(-5600,800){\makebox(0,0)[lb]{\btr}}
\put(-5600,900){\makebox(0,0)[lb]{\btr}}
\put(-5600,1000){\makebox(0,0)[lb]{\btr}}
\put(-5600,1100){\makebox(0,0)[lb]{\btr}}
\put(-5600,1200){\makebox(0,0)[lb]{\btr}}
\put(-5600,1300){\makebox(0,0)[lb]{\btr}}
\put(-5600,1400){\makebox(0,0)[lb]{\btr}}
\put(-5600,1500){\makebox(0,0)[lb]{\btr}}
\put(-5600,1600){\makebox(0,0)[lb]{\btr}}
\put(-5600,1700){\makebox(0,0)[lb]{\btr}}
\put(-5600,1800){\makebox(0,0)[lb]{\btr}}

\put(-5200,-1700){\makebox(0,0)[lb]{\wtr}}
\put(-5200,-1600){\makebox(0,0)[lb]{\wtr}}
\put(-5200,-1500){\makebox(0,0)[lb]{\wtr}}
\put(-5200,-1400){\makebox(0,0)[lb]{\wtr}}
\put(-5200,-1300){\makebox(0,0)[lb]{\wtr}}
\put(-5200,-1200){\makebox(0,0)[lb]{\wtr}}
\put(-5200,-1100){\makebox(0,0)[lb]{\btr}}
\put(-5200,-1000){\makebox(0,0)[lb]{\btr}}
\put(-5200,-90){\makebox(0,0)[lb]{\btr}}
\put(-5200,-80){\makebox(0,0)[lb]{\btr}}
%\put(-5200,-700){\makebox(0,0)[lb]{\wtr}}
\put(-5200,-600){\makebox(0,0)[lb]{\wtr}}
\put(-5200,-500){\makebox(0,0)[lb]{\wtr}}
\put(-5200,-400){\makebox(0,0)[lb]{\wtr}}
\put(-5200,-300){\makebox(0,0)[lb]{\btr}}
\put(-5200,-200){\makebox(0,0)[lb]{\btr}}
\put(-5200,-100){\makebox(0,0)[lb]{\btr}}
\put(-5200,0){\makebox(0,0)[lb]{\btr}}
\put(-5200,100){\makebox(0,0)[lb]{\btr}}
\put(-5200,200){\makebox(0,0)[lb]{\btr}}
\put(-5200,300){\makebox(0,0)[lb]{\btr}}
\put(-5200,400){\makebox(0,0)[lb]{\btr}}
\put(-5200,500){\makebox(0,0)[lb]{\btr}}
\put(-5200,600){\makebox(0,0)[lb]{\btr}}
\put(-5200,700){\makebox(0,0)[lb]{\btr}}
\put(-5200,800){\makebox(0,0)[lb]{\btr}}
\put(-5200,900){\makebox(0,0)[lb]{\btr}}
\put(-5200,1000){\makebox(0,0)[lb]{\btr}}
\put(-5200,1100){\makebox(0,0)[lb]{\btr}}
\put(-5200,1200){\makebox(0,0)[lb]{\btr}}
\put(-5200,1300){\makebox(0,0)[lb]{\btr}}
\put(-5200,1400){\makebox(0,0)[lb]{\btr}}
\put(-5200,1500){\makebox(0,0)[lb]{\btr}}
\put(-5200,1600){\makebox(0,0)[lb]{\btr}}
\put(-5200,1700){\makebox(0,0)[lb]{\btr}}
\put(-5200,1800){\makebox(0,0)[lb]{\btr}}

\put(-4800,-1700){\makebox(0,0)[lb]{\btr}}
\put(-4800,-1600){\makebox(0,0)[lb]{\btr}}
\put(-4800,-1500){\makebox(0,0)[lb]{\btr}}
\put(-4800,-1400){\makebox(0,0)[lb]{\btr}}
\put(-4800,-1300){\makebox(0,0)[lb]{\btr}}
\put(-4800,-1200){\makebox(0,0)[lb]{\btr}}
\put(-4800,-1100){\makebox(0,0)[lb]{\btr}}
\put(-4800,-1000){\makebox(0,0)[lb]{\btr}}
\put(-4800,-900){\makebox(0,0)[lb]{\btr}}
\put(-4800,-800){\makebox(0,0)[lb]{\btr}}
\put(-4800,-700){\makebox(0,0)[lb]{\btr}}
\put(-4800,-600){\makebox(0,0)[lb]{\btr}}
\put(-4800,-500){\makebox(0,0)[lb]{\btr}}
\put(-4800,-400){\makebox(0,0)[lb]{\btr}}
\put(-4800,-300){\makebox(0,0)[lb]{\btr}}
\put(-4800,-200){\makebox(0,0)[lb]{\btr}}
\put(-4800,-100){\makebox(0,0)[lb]{\btr}}
\put(-4800,0){\makebox(0,0)[lb]{\btr}}
\put(-4800,100){\makebox(0,0)[lb]{\btr}}
\put(-4800,200){\makebox(0,0)[lb]{\btr}}
\put(-4800,300){\makebox(0,0)[lb]{\btr}}
%\put(-4800,400){\makebox(0,0)[lb]{\wtr}}
\put(-4800,500){\makebox(0,0)[lb]{\wtr}}
\put(-4800,600){\makebox(0,0)[lb]{\wtr}}
\put(-4800,700){\makebox(0,0)[lb]{\wtr}}
\put(-4800,800){\makebox(0,0)[lb]{\wtr}}
\put(-4800,900){\makebox(0,0)[lb]{\wtr}}
\put(-4800,1000){\makebox(0,0)[lb]{\wtr}}
\put(-4800,1100){\makebox(0,0)[lb]{\wtr}}
\put(-4800,1200){\makebox(0,0)[lb]{\wtr}}
\put(-4800,1300){\makebox(0,0)[lb]{\wtr}}
\put(-4800,1400){\makebox(0,0)[lb]{\btr}}
\put(-4800,1500){\makebox(0,0)[lb]{\btr}}
\put(-4800,1600){\makebox(0,0)[lb]{\btr}}
\put(-4800,1700){\makebox(0,0)[lb]{\btr}}
\put(-4800,1800){\makebox(0,0)[lb]{\btr}}

\put(-4400,-1700){\makebox(0,0)[lb]{\wtr}}
\put(-4400,-1600){\makebox(0,0)[lb]{\wtr}}
\put(-4400,-1500){\makebox(0,0)[lb]{\wtr}}
\put(-4400,-1400){\makebox(0,0)[lb]{\wtr}}
\put(-4400,-1300){\makebox(0,0)[lb]{\wtr}}
\put(-4400,-1200){\makebox(0,0)[lb]{\wtr}}
\put(-4400,-1100){\makebox(0,0)[lb]{\wtr}}
\put(-4400,-1000){\makebox(0,0)[lb]{\btr}}
\put(-4400,-900){\makebox(0,0)[lb]{\btr}}
\put(-4400,-800){\makebox(0,0)[lb]{\btr}}
\put(-4400,-700){\makebox(0,0)[lb]{\btr}}
\put(-4400,-600){\makebox(0,0)[lb]{\btr}}
%\put(-4400,-500){\makebox(0,0)[lb]{\wtr}}
\put(-4400,-400){\makebox(0,0)[lb]{\wtr}}
\put(-4400,-300){\makebox(0,0)[lb]{\wtr}}
\put(-4400,-200){\makebox(0,0)[lb]{\wtr}}
\put(-4400,-100){\makebox(0,0)[lb]{\wtr}}
\put(-4400,0){\makebox(0,0)[lb]{\wtr}}
\put(-4400,100){\makebox(0,0)[lb]{\wtr}}
\put(-4400,200){\makebox(0,0)[lb]{\wtr}}
\put(-4400,300){\makebox(0,0)[lb]{\btr}}
\put(-4400,400){\makebox(0,0)[lb]{\btr}}
\put(-4400,500){\makebox(0,0)[lb]{\btr}}
%\put(-4400,600){\makebox(0,0)[lb]{\wtr}}
\put(-4400,700){\makebox(0,0)[lb]{\wtr}}
\put(-4400,800){\makebox(0,0)[lb]{\wtr}}
\put(-4400,900){\makebox(0,0)[lb]{\wtr}}
\put(-4400,1000){\makebox(0,0)[lb]{\wtr}}
\put(-4400,1100){\makebox(0,0)[lb]{\wtr}}
\put(-4400,1200){\makebox(0,0)[lb]{\wtr}}
\put(-4400,1300){\makebox(0,0)[lb]{\wtr}}
\put(-4400,1400){\makebox(0,0)[lb]{\wtr}}
%\put(-4400,1500){\makebox(0,0)[lb]{\wtr}}
\put(-4400,1600){\makebox(0,0)[lb]{\wtr}}
\put(-4400,1700){\makebox(0,0)[lb]{\wtr}}
\put(-4400,1800){\makebox(0,0)[lb]{\wtr}}

\put(-4000,-1700){\makebox(0,0)[lb]{\wtr}}
\put(-4000,-1600){\makebox(0,0)[lb]{\wtr}}
\put(-4000,-1500){\makebox(0,0)[lb]{\wtr}}
\put(-4000,-1400){\makebox(0,0)[lb]{\wtr}}
\put(-4000,-1300){\makebox(0,0)[lb]{\wtr}}
\put(-4000,-1200){\makebox(0,0)[lb]{\wtr}}
\put(-4000,-1100){\makebox(0,0)[lb]{\wtr}}
\put(-4000,-1000){\makebox(0,0)[lb]{\wtr}}
\put(-4000,-900){\makebox(0,0)[lb]{\btr}}
\put(-4000,-800){\makebox(0,0)[lb]{\btr}}
\put(-4000,-700){\makebox(0,0)[lb]{\btr}}
\put(-4000,-600){\makebox(0,0)[lb]{\btr}}
%\put(-4000,-500){\makebox(0,0)[lb]{\wtr}}
\put(-4000,-400){\makebox(0,0)[lb]{\wtr}}
\put(-4000,-300){\makebox(0,0)[lb]{\wtr}}
\put(-4000,-200){\makebox(0,0)[lb]{\wtr}}
\put(-4000,-100){\makebox(0,0)[lb]{\wtr}}
\put(-4000,0){\makebox(0,0)[lb]{\wtr}}
%\put(-4000,100){\makebox(0,0)[lb]{\wtr}}
\put(-4000,200){\makebox(0,0)[lb]{\wtr}}
\put(-4000,300){\makebox(0,0)[lb]{\wtr}}
\put(-4000,400){\makebox(0,0)[lb]{\wtr}}
\put(-4000,500){\makebox(0,0)[lb]{\wtr}}
\put(-4000,600){\makebox(0,0)[lb]{\wtr}}
\put(-4000,700){\makebox(0,0)[lb]{\wtr}}
\put(-4000,800){\makebox(0,0)[lb]{\wtr}}
\put(-4000,900){\makebox(0,0)[lb]{\wtr}}
\put(-4000,1000){\makebox(0,0)[lb]{\wtr}}
\put(-4000,1100){\makebox(0,0)[lb]{\wtr}}
\put(-4000,1200){\makebox(0,0)[lb]{\btr}}
\put(-4000,1300){\makebox(0,0)[lb]{\btr}}
\put(-4000,1400){\makebox(0,0)[lb]{\btr}}
\put(-4000,1500){\makebox(0,0)[lb]{\btr}}
\put(-4000,1600){\makebox(0,0)[lb]{\btr}}
\put(-4000,1700){\makebox(0,0)[lb]{\btr}}
\put(-4000,1800){\makebox(0,0)[lb]{\btr}}

\put(-3600,-1700){\makebox(0,0)[lb]{\wtr}}
\put(-3600,-1600){\makebox(0,0)[lb]{\wtr}}
\put(-3600,-1500){\makebox(0,0)[lb]{\wtr}}
%\put(-3600,-1400){\makebox(0,0)[lb]{\wtr}}
\put(-3600,-1300){\makebox(0,0)[lb]{\wtr}}
\put(-3600,-1200){\makebox(0,0)[lb]{\wtr}}
\put(-3600,-1100){\makebox(0,0)[lb]{\wtr}}
\put(-3600,-1000){\makebox(0,0)[lb]{\wtr}}
\put(-3600,-900){\makebox(0,0)[lb]{\wtr}}
\put(-3600,-800){\makebox(0,0)[lb]{\wtr}}
\put(-3600,-700){\makebox(0,0)[lb]{\wtr}}
\put(-3600,-600){\makebox(0,0)[lb]{\btr}}
\put(-3600,-500){\makebox(0,0)[lb]{\btr}}
\put(-3600,-400){\makebox(0,0)[lb]{\btr}}
\put(-3600,-300){\makebox(0,0)[lb]{\btr}}
\put(-3600,-200){\makebox(0,0)[lb]{\btr}}
\put(-3600,-100){\makebox(0,0)[lb]{\btr}}
%\put(-3600,0){\makebox(0,0)[lb]{\btr}}
\put(-3600,100){\makebox(0,0)[lb]{\wtr}}
\put(-3600,200){\makebox(0,0)[lb]{\btr}}
\put(-3600,300){\makebox(0,0)[lb]{\btr}}
\put(-3600,400){\makebox(0,0)[lb]{\btr}}
\put(-3600,500){\makebox(0,0)[lb]{\btr}}
\put(-3600,600){\makebox(0,0)[lb]{\btr}}
\put(-3600,700){\makebox(0,0)[lb]{\btr}}
\put(-3600,800){\makebox(0,0)[lb]{\btr}}
\put(-3600,900){\makebox(0,0)[lb]{\btr}}
\put(-3600,1000){\makebox(0,0)[lb]{\btr}}
\put(-3600,1100){\makebox(0,0)[lb]{\btr}}
\put(-3600,1200){\makebox(0,0)[lb]{\btr}}
\put(-3600,1300){\makebox(0,0)[lb]{\btr}}
\put(-3600,1400){\makebox(0,0)[lb]{\btr}}
\put(-3600,1500){\makebox(0,0)[lb]{\btr}}
\put(-3600,1600){\makebox(0,0)[lb]{\btr}}
%\put(-3600,1700){\makebox(0,0)[lb]{\wtr}}
\put(-3600,1800){\makebox(0,0)[lb]{\wtr}}

\put(-3200,-1700){\makebox(0,0)[lb]{\wtr}}
\put(-3200,-1600){\makebox(0,0)[lb]{\wtr}}
\put(-3200,-1500){\makebox(0,0)[lb]{\wtr}}
\put(-3200,-1400){\makebox(0,0)[lb]{\wtr}}
\put(-3200,-1300){\makebox(0,0)[lb]{\wtr}}
\put(-3200,-1200){\makebox(0,0)[lb]{\wtr}}
\put(-3200,-1100){\makebox(0,0)[lb]{\wtr}}
\put(-3200,-1000){\makebox(0,0)[lb]{\wtr}}
%\put(-3200,-900){\makebox(0,0)[lb]{\wtr}}
\put(-3200,-800){\makebox(0,0)[lb]{\wtr}}
\put(-3200,-700){\makebox(0,0)[lb]{\wtr}}
\put(-3200,-600){\makebox(0,0)[lb]{\wtr}}
\put(-3200,-500){\makebox(0,0)[lb]{\wtr}}
\put(-3200,-400){\makebox(0,0)[lb]{\wtr}}
\put(-3200,-300){\makebox(0,0)[lb]{\btr}}
\put(-3200,-200){\makebox(0,0)[lb]{\btr}}
\put(-3200,-100){\makebox(0,0)[lb]{\btr}}
\put(-3200,0){\makebox(0,0)[lb]{\btr}}
\put(-3200,100){\makebox(0,0)[lb]{\btr}}
\put(-3200,200){\makebox(0,0)[lb]{\btr}}
\put(-3200,300){\makebox(0,0)[lb]{\btr}}
%\put(-3200,400){\makebox(0,0)[lb]{\wtr}}
\put(-3200,500){\makebox(0,0)[lb]{\wtr}}
\put(-3200,600){\makebox(0,0)[lb]{\wtr}}
\put(-3200,700){\makebox(0,0)[lb]{\wtr}}
%\put(-3200,800){\makebox(0,0)[lb]{\wtr}}
\put(-3200,900){\makebox(0,0)[lb]{\wtr}}
\put(-3200,1000){\makebox(0,0)[lb]{\wtr}}
\put(-3200,1100){\makebox(0,0)[lb]{\wtr}}
%\put(-3200,1200){\makebox(0,0)[lb]{\wtr}}
\put(-3200,1300){\makebox(0,0)[lb]{\wtr}}
\put(-3200,1400){\makebox(0,0)[lb]{\wtr}}
\put(-3200,1500){\makebox(0,0)[lb]{\wtr}}
\put(-3200,1600){\makebox(0,0)[lb]{\wtr}}
\put(-3200,1700){\makebox(0,0)[lb]{\wtr}}
\put(-3200,1800){\makebox(0,0)[lb]{\wtr}}

\put(-2800,-1700){\makebox(0,0)[lb]{\wtr}}
\put(-2800,-1600){\makebox(0,0)[lb]{\wtr}}
\put(-2800,-1500){\makebox(0,0)[lb]{\wtr}}
\put(-2800,-1400){\makebox(0,0)[lb]{\wtr}}
\put(-2800,-1300){\makebox(0,0)[lb]{\wtr}}
%\put(-2800,-1200){\makebox(0,0)[lb]{\wtr}}
\put(-2800,-1100){\makebox(0,0)[lb]{\wtr}}
\put(-2800,-1000){\makebox(0,0)[lb]{\wtr}}
\put(-2800,-900){\makebox(0,0)[lb]{\wtr}}
\put(-2800,-800){\makebox(0,0)[lb]{\wtr}}
\put(-2800,-700){\makebox(0,0)[lb]{\wtr}}
\put(-2800,-600){\makebox(0,0)[lb]{\wtr}}
\put(-2800,-500){\makebox(0,0)[lb]{\wtr}}
\put(-2800,-400){\makebox(0,0)[lb]{\wtr}}
\put(-2800,-300){\makebox(0,0)[lb]{\wtr}}
\put(-2800,-200){\makebox(0,0)[lb]{\wtr}}
\put(-2800,-100){\makebox(0,0)[lb]{\wtr}}
\put(-2800,0){\makebox(0,0)[lb]{\wtr}}
\put(-2800,100){\makebox(0,0)[lb]{\wtr}}
\put(-2800,200){\makebox(0,0)[lb]{\wtr}}
\put(-2800,300){\makebox(0,0)[lb]{\wtr}}
%\put(-2800,400){\makebox(0,0)[lb]{\wtr}}
\put(-2800,500){\makebox(0,0)[lb]{\wtr}}
\put(-2800,600){\makebox(0,0)[lb]{\wtr}}
\put(-2800,700){\makebox(0,0)[lb]{\wtr}}
\put(-2800,800){\makebox(0,0)[lb]{\wtr}}
\put(-2800,900){\makebox(0,0)[lb]{\wtr}}
%\put(-2800,1000){\makebox(0,0)[lb]{\wtr}}
\put(-2800,1100){\makebox(0,0)[lb]{\wtr}}
\put(-2800,1200){\makebox(0,0)[lb]{\wtr}}
\put(-2800,1300){\makebox(0,0)[lb]{\wtr}}
\put(-2800,1400){\makebox(0,0)[lb]{\wtr}}
\put(-2800,1500){\makebox(0,0)[lb]{\wtr}}
\put(-2800,1600){\makebox(0,0)[lb]{\wtr}}
\put(-2800,1700){\makebox(0,0)[lb]{\wtr}}
\put(-2800,1800){\makebox(0,0)[lb]{\wtr}}

\put(-5100,-370){\makebox(0,0)[lb]{\lupa}}
\put(-5100,-1170){\makebox(0,0)[lb]{\lupa}}
\put(-5100,130){\makebox(0,0)[lb]{\lupa}}
\put(-4660,-1020){\makebox(0,0)[lb]{\rupa}}
\put(-4260,-920){\makebox(0,0)[lb]{\rupa}}
\put(-5060,1280){\makebox(0,0)[lb]{\rupa}}
\put(-3860,-720){\makebox(0,0)[lb]{\rupa}}
\put(-3460,-420){\makebox(0,0)[lb]{\rupa}}
\put(-3460,-220){\makebox(0,0)[lb]{\rupa}}
\put(-5500,240){\makebox(0,0)[lb]{\lupa}}
\put(-5900,440){\makebox(0,0)[lb]{\lupa}}
\put(-5500,840){\makebox(0,0)[lb]{\lupa}}
\put(-4700,-770){\makebox(0,0)[lb]{\lupa}}
\put(-4660,180){\makebox(0,0)[lb]{\rupa}}

\put(-3500,130){\makebox(0,0)[lb]{\lupa}}
\put(-3900,1130){\makebox(0,0)[lb]{\lupa}}
\put(-3900,1530){\makebox(0,0)[lb]{\lupa}}

\put(-4800,-1900){\makebox(0,0)[lb]{$\scriptstyle \ovec$}}
\put(-5650,1950){\makebox(0,0)[lb]{$\scriptstyle \xvec_1$}}
\put(-4050,1950){\makebox(0,0)[lb]{$\scriptstyle \xvec_2$}}

\put(30, -1450){\ellipse{200}{400}}
\put(30, -900){\ellipse{200}{400}}
\put(30, -350){\ellipse{200}{400}}
\path(30,-1250)(30,-1100)
\path(30,-700)(30,-550)
\path(30,-150)(30,0)

%\begin{rotate}{0}
%\put(-200,0){\makebox(0,0)[lb]{$~$}}
%\put(0, 0){\ellipse{400}{200}}
%\path(200,0)(350,0)
%\put(550, 0){\ellipse{400}{200}}
%\path(750,0)(900,0)
%\put(1100, 0){\ellipse{400}{200}}
%\path(1300,0)(1450,0)
%\put(1650, 0){\ellipse{400}{200}}
%\put(1850,0){\makebox(0,0)[lb]{$~$}}
%\put(1950,-100){\makebox(0,0)[lb]{0}}
%\end{rotate}

\begin{rotate}{45}
\put(0,0){\makebox(0,0)[lb]{$~$}}
\put(200, 40){\ellipse{400}{200}}
\path(400,40)(550,40)
\put(750, 40){\ellipse{400}{200}}
\path(950,40)(1100,40)
\put(1300, 40){\ellipse{400}{200}}
\path(1500,40)(1650,40)
\put(1850,40){\ellipse{400}{200}}
\path(2050,40)(2200,40)
\put(2400,40){\ellipse{400}{200}}
\put(2050,40){\makebox(0,0)[lb]{$~$}}
%\put(2150,40){\makebox(0,0)[lb]{1}}
\end{rotate}

\begin{rotate}{135}
\put(0,-200){\makebox(0,0)[lb]{$~$}}
\put(200, -40){\ellipse{400}{200}}
\path(400,-40)(550,-40)
\put(750, -40){\ellipse{400}{200}}
\path(950,-40)(1100,-40)
\put(1300, -40){\ellipse{400}{200}}
\path(1500,-40)(1650,-40)
\put(1850,-40){\ellipse{400}{200}}
\path(2050,-40)(2200,-40)
\put(2400,-40){\ellipse{400}{200}}
\put(2050,-40){\makebox(0,0)[lb]{$~$}}
%\put(2150,-40){\makebox(0,0)[lb]{2}}
\end{rotate}

\put(-100,-1900){\makebox(0,0)[lb]{$\scriptstyle \ovec$}}
\put(-1900,1950){\makebox(0,0)[lb]{$\scriptstyle \xvec_1$}}
\put(1700,1950){\makebox(0,0)[lb]{$\scriptstyle \xvec_2$}}
}
\end{picture}
}
\end{center}

\vskip 7cm

\caption{\label{fig-rpt1}
(a) A configuration for the discretized contact process.
Both $\blacktriangle$ and $\vartriangle$ denote occupied temporal bonds;
$\blacktriangle$ is connected from $\ovec$, while $\vartriangle$ is not.
The arrows are occupied spatial bonds, representing the spread of an
infection to neighbours.
(b) Schematic depiction of the configuration as a ``string of sausages.''}
\end{figure}
%%%%%FIGFIGFIGFIGFIGFIGFIGFIGFIGFIGFIGFIGFIGFIGFIGFIGFIGFIGFIG
%%%%%FIGFIGFIGFIGFIGFIGFIGFIGFIGFIGFIGFIGFIGFIGFIGFIGFIGFIGFIG

We start by introducing some notation. For $r \geq 3$, let
\begin{align}\lbeq{JJ1}
J=\{1,2,\ldots,r-1\},&& J_j=J\setminus\{j\}\qquad(j\in J).
\end{align}
For $I=\{i_1,\ldots,i_s\}\subset J$, we write
$\vec\xvec_I=\{\xvec_{i_1},\ldots,\xvec_{i_s}\}$ and
$\vec\xvec_I-\yvec=\{\xvec_{i_1}-\yvec,\ldots,\xvec_{i_s}-\yvec\}$,
and abuse notation by writing
\begin{align}\lbeq{pabuse}
p_\vep((x,t))=p_\vep(x)\,\delta_{t,\vep}.
\end{align}

From the sausage at the origin, there may be anywhere from zero to $r-1$
pivotal bonds for $\{\ovec\conn\vec\xvec_J\}$ emerging, where we let
\begin{align}\lbeq{conn-all}
\{\ovec\conn\vec\xvec_J\}=\{\vvec\conn\xvec_j~~\forall j\in J\}.
\end{align}
Configurations with zero or more than two pivotal bonds will turn out to
constitute an error term.  Indeed, when there are zero pivotal bonds, this
means that $\ovec\dbc\xvec_i$ for some $i$, which constitutes an error term.
When there are more than two pivotal bonds, the sausage at the origin has at
least \emph{three} disjoint connections to different $\xvec_i$'s, which also
turns out to constitute an error term.  Therefore, we are left with
configurations which have one or two branches emerging from the sausage at the
origin.  When there is one branch, then this branch contains $\vec\xvec_J$.
When there are two branches, one branch will contain $\vec\xvec_I$ for some
nonempty $I\subseteq J_1$ and the other branch will contain
$\vec\xvec_{J\setminus I}$, where we require $1\in J\setminus I$ to make the
identification unique.

The first expansion deals with the case where there is a single branch
from the origin.  It serves to decouple the interaction between that single
branch and the branches of the tree of sausages leading to $\vec\xvec_J$.
The expansion writes $\tau(\vec{\xvec}_J)$ in the form
    \begin{align}\lbeq{taunxvecr2}
    \tau(\vec\xvec_J)=A(\vec\xvec_J)+(B\sstar \tau)(\vec\xvec_J)=A(\vec\xvec_J)+
    \sum_{\vvec\in \Lambda}B(\vvec)~\tau(\vec\xvec_J-\vvec),
    \end{align}
where $(f\sstar g)(\xvec)$ represents the space-time convolution of two function
$f, g\colon \Lambda \to \R$ given by
    \eq
    \lbeq{conv}
    (f\sstar g)(\xvec)=\sum_{\yvec\in \Lambda} f(\yvec)g(\xvec-\yvec).
    \en
For details, see
Section~\ref{s:lace}, where \refeq{taunxvecr2} is derived.  We have that
    \begin{align}\lbeq{Bpieq}
    B(\xvec)=(\pi\sstar p_\vep)(\xvec),
    \end{align}
where $\pi(\xvec)$ is the expansion coefficient for the 2-point function
as derived in \cite[Section 3]{hsa04}. Moreover, for $r=2$,
    \begin{align}\lbeq{A/B}
    A(\xvec)=\pi(\xvec),
    \end{align}
so that \refeq{taunxvecr2} becomes
    \begin{align}\lbeq{lace-exp}
    \tau(\xvec)=\pi(\xvec)+(\pi\sstar p_\vep\sstar \tau)(\xvec).
    \end{align}
This is the lace expansion for the 2-point function, which
serves as the key ingredient in the analysis of the 2-point
function in \cite{hsa04}.\footnote{In this paper, we will use a
different expansion for the 2-point function than the one used in
\cite{hsa04}. However, the resulting $\pi(\xvec)$ {\it is} the same,
as $\pi(\xvec)$ is uniquely defined by the equation \refeq{lace-exp}.}

The next step is to write $A(\vec\xvec_J)$ as
    \begin{align}\lbeq{Adec}
    A(\vec\xvec_J)=\sum_{I\subset J_1:I\ne\varnothing}\sum_{\yvec_1}
    B(\yvec_1,\vec\xvec_I)\tau(\vec\xvec_{J\backslash I}-\yvec_1)+a(\vec\xvec_J;1),
    \end{align}
where, to leading order, $J\setminus I$ consists of those $j$ for which the first pivotal of
$\xvec_j$ is the same as the one for $\xvec_1$, while for $i\in I$, this
first pivotal is different.  The equality \refeq{Adec} is the result
of the {\it first expansion for } $A(\vec\xvec_J)$. In this expansion,
we wish to treat the connections from the top of the first pivotal to
$\vec\xvec_{J\backslash I}$ as being independent from the connections from
$\ovec$ to $\vec\xvec_I$ that do not use the first pivotal bond.
In the {\it second expansion} for $A(\vec\xvec_J)$, we wish to
extract a factor $\tau(\vec\xvec_I-\yvec_2)$ for some $\yvec_2$
from the connection from $\ovec$ to $\vec\xvec_I$ that is still present in
$B(\yvec_1,\vec\xvec_I)$. This leads to a result of the form
    \begin{align}\lbeq{BCE}
    \sum_{\yvec_1}
    B(\yvec_1,\vec\xvec_I)\tau(\vec\xvec_{J\backslash I}-\yvec_1)
    =\sum_{\yvec_1, \yvec_2}C(\yvec_1,
    \yvec_2)~\tau(\vec\xvec_{J\setminus I}-\yvec_1)~\tau(\vec\xvec_I-\yvec_2)
    +a(\vec\xvec_{J\backslash I},\vec\xvec_I),
    \end{align}
where $a(\vec\xvec_{J\backslash I}, \vec{\xvec}_I)$
is an error term, and, to first approximation,
$C(\yvec_1,\yvec_2)$ represents the sausage at $\ovec$
together with the
pivotal bonds ending at $\yvec_1$ and $\yvec_2$, with
the two branches removed.
In particular, $C(\yvec_1,\yvec_2)$ is independent of $I$.
The leading contribution to $C(\yvec_1,\yvec_2)$ is
$p_\vep(\yvec_1)\,p_\vep(\yvec_2)$ with $\yvec_1\neq \yvec_2$, corresponding
to the case where the sausage at $\ovec$ is the single point $\ovec$.
For details, see Section~\ref{s:exp-applexp}, where
\refeq{BCE} is derived.

We will use a new expansion for the higher-point functions, which is
a simplification of the expansion for oriented percolation in $\Zd\times\Zp$
in \cite{hs01}. The difference resides mainly in the second expansion,
i.e., the expansion of $A(\vec\xvec_J)$.

In the course of the expansion, in Section \ref{sec-Cvepvep},
we shall also describe a close relation between the expansion coefficients
for the $r$-point functions derived in this paper and the ones
for the survival probability of the descritized contact process
derived in \cite{HHS05b}. See \cite{hsa06} for a more detailed discussion of this
relation and its consequences.

%which can be seen
%as an expansion for the two-point function. In fact, \refeq{lace-exp} will follow
%immediately from this first expansion. In \cite{hs01}, the Hara-Slade expansion
%was used for this first expansion, while we use the simpler inclusion-exclusion
%expansion by Sakai \cite{s01,s02}. As we will see in the course of the expansion,
%these expansions are very close in spirit. The Sakai expansion uses the Markov property,
%which is valid for general oriented percolation models, while the Hara-Slade expansion
%uses a factorization lemma, which is based on the independence of bonds, and is valid
%also for non-oriented percolation.
%
%After the first expansion, we will use Hara-Slade expansion steps to make
%the disjoint connections to $\vec{\xvec}_{J\setminus I}$ and to $\vec{\xvec}_I$
%effectively independent. The expansion is completed with a final Sakai expansion
%for a quantity which is a generalised version of an $r$-point function, and
%which arises in the Hara-Slade expansion steps. We believe that the current
%set-up is simpler than the Hara-Slade expansion as in \cite{hs01},
%and the fact that the expansion coefficients in the different expansions arise
%separately will make the bounds on the expansion in Section \ref{s:bounds} simpler
%as well.

\subsection{The main identity and estimates}
\label{sec-convolutionalgebra}
In this section, we solve the recursion \refeq{taunxvecr2} by iteration,
so that on the right-hand side no $r$-point function appears. Instead,
only $s$-point functions with $s<r$ appear, which opens up the possibility
for an inductive analysis in $r$. The argument in
this section is virtually identical to the argument in
\cite[Section 2.3]{hs02}, and we add it to make the paper
self-contained.

We define
    \begin{align}\lbeq{nudef}
    \nu(\xvec)=\sum_{n=0}^\infty B^{\sstar n}(\xvec),
    \end{align}
where $B^{\sstar n}$ denotes the $n$-fold space-time convolution of $B$ with
itself, with $B^{\sstar 0}(\xvec)=\delta_{\ovec,\xvec}$.  The sum over $n$ in
\refeq{nudef} terminates after finitely many terms, since by definition
\ch{$B((x,t))\neq 0$} only if $t\in\vep\N$, so that in particular $B((x,0))=0$.
Therefore, $B^{\sstar n}(\xvec)=0$ if $n>t_{\xvec}/\vep$, \ch{where, for $\xvec=(x,t)\in \Lambda$,
$t_{\xvec}=t$ denotes the time coordinate of $\xvec$.}
Then \refeq{taunxvecr2} can be
solved to give
    \begin{align}\lbeq{taunuA}
    \tau(\vec\xvec_J)=(\nu\sstar A)(\vec\xvec_J).
    \end{align}
The function $\nu$ can be identified as follows.  We note that
\refeq{taunuA} for $r=2$ yields that
    \begin{align}\lbeq{taunuA2}
    \tau(\xvec)=(\nu\sstar A)(\xvec).
    \end{align}
Thus, extracting the $n=0$ term from \refeq{nudef}, using \refeq{A/B}
to write one factor of $B$ as $A\sstar p_\vep$ (cf., \refeq{Bpieq}) for the
terms with $n\geq1$, it follows from \refeq{taunuA2} that
    \begin{align}\lbeq{nuform}
    \nu(\xvec)=\delta_{\ovec,\xvec}+(\nu\sstar B)(\xvec)
    =\delta_{\ovec,\xvec}+(\nu\sstar A\sstar p_\vep)(\xvec)
    =\delta_{\ovec,\xvec}+(\tau\sstar p_\vep)(\xvec).
    \end{align}
Substituting \refeq{nuform} into \refeq{taunuA}, the solution to
\refeq{taunxvecr2} is then given by
    \begin{align}\lbeq{X3.4}
    \tau(\vec\xvec_J)=A(\vec\xvec_J)+(\tau\sstar p_\vep\sstar A)(\vec\xvec_J),
    \end{align}
which recovers \refeq{lace-exp} when $r=2$, using \refeq{A/B}.  For
$r\geq3$, we further substitute \refeq{Adec}--\refeq{BCE} into
\refeq{X3.4}.  Let
    \begin{align}
    \psi(\yvec_1,\yvec_2)&=\sum_{\vvec}p_\vep(\vvec)~C(\yvec_1-\vvec,
    \yvec_2-\vvec),\lbeq{psidef}\\
    \zetav^{\sss(r)}(\vec\xvec_J)&=A(\vec\xvec_J)+(\tau\sstar p_\vep\sstar a)
    (\vec\xvec_J),\lbeq{phidef}
    \end{align}
where
    \begin{align}\lbeq{edef}
    a(\vec\xvec_J)=a(\vec\xvec_J;1)+\sum_{I\subset J_1:I\ne\varnothing}
    a(\vec\xvec_{J\setminus I},\vec\xvec_I).
    \end{align}
Then, \refeq{X3.4} becomes
    \begin{align}\lbeq{id3}
    \tau^{\sss(r)}(\vec\xvec_J)=\sum_{\vvec,\yvec_1,\yvec_2}\tau^{\sss(2)}
    (\vvec)~\psi(\yvec_1-\vvec,\yvec_2-\vvec)\sum_{I\subset J_1:I\ne
    \varnothing}\tau^{\sss(r_1)}(\vec\xvec_{J\setminus I}-\yvec_1)~
    \tau^{\sss(r_2)}(\vec\xvec_I-\yvec_2)+\zetav^{\sss(r)}(\vec\xvec_J),
    \end{align}
where we recall that $r_1=|J\setminus I|+1$ and $r_2=|I|+1$, and we
write the superscripts of the $r$-point functions explicitly.  Since
$1\leq|I|\leq r-2$, we have that $r_1, r_2\leq r-1$, which opens up
the possibility for induction in $r$.

The first term on the right side of \refeq{id3} is the main term.
% and is depicted schematically in Figure~\ref{fig-tsexp}.
The leading contribution to $\psi(\yvec_1,\yvec_2)$ is
    \begin{align}\lbeq{psimain}
    \psi_{2\vep,2\vep}(y_1,y_2)=\sum_up_\vep(u)\,p_\vep(y_1-u)\,p_\vep(y_2
     -u)\,(1-\delta_{y_1,y_2}),
    \end{align}
using the leading
contribution to $C$ described \ch{below} \refeq{BCE}.  Here, we are writing
$\psi_{s_1,s_2}(y_1,y_2)$ for $\psi((y_1,s_1),(y_2,s_2))$.

We will analyse \refeq{id3} using the Fourier transform.
For brevity, we write $\vec{t}= (t_1,\ldots,t_{r-1})$ and
$\vec{k}=(k_1, \ldots, k_{r-1})$.
For $I \subset \{1,2,\ldots, r-1\}$, we also write
$\vec{k}_I=(k_i)_{i\in I}$, $k_I=\sum_{i\in I} k_i$ and
$k=\sum_{i=1}^{r-1} k_i$.
For $I\subseteq J$, we further write $\underline{t}_I = \min_{i\in I} t_i$
and $\underline{t} = \underline{t}_J$.  With this notation, the Fourier
transform of \refeq{id3} becomes
    \begin{align}\lbeq{tau3exp}
    \hat\tau_{\vec t}^{\sss(r)}(\vec k)=\ddsum_{s_0=0}^{\underline t-2\vep}\hat
     \tau_{s_0}^{\sss(2)}(k)\sum_{\varnothing\ne I\subset J_1}~\ddsum_{s_1=2\vep}
     ^{\underline t_{J\setminus I}-s_0}~\ddsum_{s_2=2\vep}^{\underline t_I-s_0}
     \hat\psi_{s_1,s_2}(k_{J\setminus I},k_I)~\hat\tau_{\vec t_{J\setminus I}
     -s_1-s_0}^{\sss(r_1)}(\vec k_{J\setminus I})~\hat\tau_{\vec t_I-s_2-s_0}
     ^{\sss(r_2)}(\vec k_I)+\hat\zetav_{\vec t}^{\sss(r)}(\vec k),
    \end{align}
where $\dsum_{t\le s\le t'}$ is an abbreviation for
$\sum_{s\in[t,t']\cap\vep\Zp}$.  The identity \refeq{tau3exp} is our main
identity and will be our point of departure for analysing
the $r$-point functions for $r \geq 3$.
Apart from $\psi$ and $\zetav^\smallsup{r}$, the right-hand side of \refeq{id3}
involves the $s$-point functions with $s=2,r_1,r_2$.
As discussed below \refeq{id3}, we can use an inductive analysis,
with the $r=2$ case given by the result of Theorem~\ref{thm:2pt}
proved in \cite{hsa04}.  The term involving $\psi$ is the main term,
whereas $\zetav^\smallsup{r}$ will turn out to be an error term.

The analysis will be based on the following important proposition, whose
proof is deferred to \ch{Sections~\ref{s:bounds}--\ref{ss:ebd}}. In its statement, we use the
notation
    \begin{align}
    \lbeq{bs-def}
    b_{s_1,s_2}^{\sss(\vep)}=
    \frac{\vep^{n_{s_1,s_2}}\,\ind{s_1\leq s_2}}
    {(1+s_1)^{(d-2)/2}}\times
     \begin{cases}
     (1+s_2-s_1)^{-(d-2)/2}\quad&(d>2),\\[1pt]
     \log(1+s_2)&(d=2),\\[2pt]
     (1+s_2)^{(2-d)/2}&(d<2),
     \end{cases}
    \end{align}
where
    \eq
    \lbeq{ns-def}
    n_{s_1,s_2} = 3-\delta_{s_1,s_2}-\delta_{s_1,2\vep}\delta_{s_2,2\vep}.
    \en
We note that the number of powers of $\vep$ is precisely such that,
for $d>4$,
    \eq
    \lbeq{bs-sum}
    \ddsum_{s_1,s_2=2\vep}^{\infty}b_{s_1,s_2}^{\smallsup{\vep}} =O(\vep).
    \en
We also rely on the notation
    \eq
    \lbeq{beta-def}
    \beta=L^{-d},
    \en
and, for $d\leq 4$, we write $\beta_{\sT}=L_{\sT}^{-d}$.
Then, the main bounds on the lace-expansion coefficients are as follows:

\begin{prop}[{\bf Bounds on the lace-expansion coefficients}]
\label{thm-psivphibd}
The lace-expansion coefficients satisfy the following properties:
    \eq
    \lbeq{psimainrep}
    \psi_{2\vep,2\vep}(y_1,y_2) =
    \sum_up_\vep(u)\,p_\vep(y_1-u)\,p_\vep(y_2-u)\,(1-\delta_{y_1,y_2}).
    \en

\begin{enumerate}
\item[(i)] Let $d>4$, $\alphaminn\in(0,1\wedge\Delta\wedge\frac{d-4}2)$ and
$\lamb=\lambc^{\sss(\vep)}$.  Let $\bar t$ denote the second-largest element of
$\{t_1,\dots,t_{r-1}\}$.  There exist $C_\psi,C_\zetav^{\sss(r)}>0$
(independent of $L$) and $L_0=L_0(d)$ such that, for all $L\ge L_0$,
$q\in\{0,2\}$, $s_i\ge0$, $\vec t$, $r\ge3$ and $k_i\in[-\pi,\pi]^d$, and
uniformly in $\vep\in(0,1]$, the following bounds hold:
    \begin{align}
    |\nabla_{k_i}^q\hat\psi_{s_1,s_2}(k_1,k_2)|&\leq C_\psi\,\sigma^q
    (1+s_i)^{q/2} (\delta_{s_1,s_2}+\beta)\beta
    (b_{s_1,s_2}^{\sss(\vep)}+b_{s_2,s_1}^{\sss(\vep)}),
    \lbeq{psibd}\\
    |\hat\zetav_{\vec t}^{\sss(r)}(\vec k)|&\leq C_\zetav^{\sss(r)}\,(1+\bar
    t)^{r-2-\alphaminn}.\lbeq{vphibd}
    \end{align}
\item[(ii)] Let $d\le4$ with $\alpha\equiv bd-\frac{4-d}2>0$ and $\alphaminn\in(0,\alpha)$,
and let $\beta_{\sT}=\beta_1T^{-bd}$.
Let $\lamb_{\sT}=1+O(L_{\sT}^{-\mu})$ and $\mu\in(0,\alpha-\delta)$
be as in Theorem \ref{thm:2pt}(i). There exist
\chg{$C_\psi,C_\zetav^{\sss(r)}>0$ (independent of $L$) and $L_0=L_0(d)$} such
that, for \chg{$L_1\ge L_0$ with $L_{\sT}$ defined as in \refeq{Lt-def}}, $r\ge2$,
$0<s\le\log{T}$, as $T\uaw\infty$, and uniformly in $\vep\in (0,1]$,
the following bounds hold:
    \begin{align}
    |\nabla_{k_i}^q\hat\psi_{s_1,s_2}(k_1,k_2)|&\leq C_\psi\,\sigma^q
    (1+s_i)^{q/2} (\delta_{s_1,s_2}+\beta_{\sT})\beta_{\sT}
    (b_{s_1,s_2}^{\sss(\vep)}+b_{s_2,s_1}^{\sss(\vep)}),
    \lbeq{psibd-low}\\
    |\hat\zetav_{\vec t}^{\sss(r)}(\vec k)|&\leq C_\zetav^{\sss(r)}
    T^{r-2-\alphaminn}.\lbeq{vphibd-low}
    \end{align}
\end{enumerate}
\end{prop}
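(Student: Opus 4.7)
The plan is to establish \refeq{psimainrep}--\refeq{vphibd-low} by combining the explicit diagrammatic expressions for $\psi$, $A$, and $a$ that will come out of the first and second expansions in Sections~\ref{s:lace}--\ref{s:exp-applexp} with convolution-based diagrammatic estimates built from the two-point function bounds of Theorem~\ref{thm:2pt}.  First, the identity \refeq{psimainrep} is immediate from the definition \refeq{psidef} together with the diagrammatic form of $C$: when $s_1=s_2=2\vep$, the time coordinates force the sausage at the origin to reduce to the single point $\ovec$ and the two pivotal bonds to be single spatial bonds, yielding $C(\yvec_1,\yvec_2)=p_\vep(\yvec_1)\,p_\vep(\yvec_2)\,(1-\delta_{\yvec_1,\yvec_2})$, and one further convolution with $p_\vep$ (accounting for the initial bond from $\ovec$) then produces \refeq{psimainrep}.

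To prove \refeq{psibd} I would apply a BK-type inequality (adapted to the discretized contact process as in \cite{hsa04}) to control $C(\yvec_1,\yvec_2)$ by a finite sum of diagrams, each a product of two-point functions convolved along a tree-like skeleton with two branches emerging from $\ovec$.  The bound $\|D\|_\infty\le C\beta$ at each spatial step leaving the origin sausage yields the prefactor $\beta$, with an extra $\beta$ in the off-diagonal case where the two branches must split at distinct times, producing the factor $(\delta_{s_1,s_2}+\beta)\beta$.  The remaining convolution of two two-point functions of times $s_1$ and $s_2$ produces $b_{s_1,s_2}^{\sss(\vep)}+b_{s_2,s_1}^{\sss(\vep)}$ via \refeq{tauasy}--\refeq{tausup}, and the gradient factor $\sigma^q(1+s_i)^{q/2}$ for $q=2$ is obtained by inserting $|y_i|^2$ into the $x$-space estimate and invoking the gyration bound \refeq{taugyr}.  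Uniformity in $\vep\in(0,1]$ is built in because every diagrammatic estimate is expressed in terms of $p_\vep$ and the 2-point function, for which Theorem~\ref{thm:2pt} and its discretized analogue in \cite{hsa04} supply $\vep$-independent constants.

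For \refeq{vphibd} I would use \refeq{phidef}--\refeq{edef} to write $\zetav^{\sss(r)}=A+\tau\sstar p_\vep\sstar a$ and bound $A$ (which contributes only through its one-branch part, treated as for $\psi$) and $\tau\sstar p_\vep\sstar a$ separately.  The term $a(\vec\xvec_J;1)$ from the first expansion collects configurations with either a double connection $\ovec\dbc\xvec_i$ or at least three disjoint branches leaving the origin sausage; the terms $a(\vec\xvec_{J\setminus I},\vec\xvec_I)$ from the second expansion collect configurations in which the two branches fail to decouple.  Each such configuration forces, diagrammatically, an extra connection on top of an $(r-1)$-line skeleton; at criticality for $d>4$ this extra loop is summable by the triangle condition and produces an additional factor of order $(1+\bar t)^{-\alphaminn}$ beyond the trivial $(1+\bar t)^{r-2}$ skeleton bound.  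Taking the Fourier transform and using $|\hat f(\vec k)|\le\hat f(\vec 0)$ at $\vec k=\vec 0$ then yields \refeq{vphibd}.  The low-dimensional estimates \refeq{psibd-low}--\refeq{vphibd-low} follow by exactly the same argument, with $L$ replaced by $L_\sT$ and $\beta$ by $\beta_\sT$, and with Theorem~\ref{thm:2pt}(ii) used in place of Theorem~\ref{thm:2pt}(i); the restriction $s\le\log T$ keeps one inside the regime where the rescaled Gaussian asymptotics are valid, and the factor $T^{r-2-\alphaminn}$ replaces $(1+\bar t)^{r-2-\alphaminn}$ after the time rescaling $t\mapsto Tt$.

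The main obstacle will be the diagrammatic classification and bounding of the second-expansion error term $a(\vec\xvec_{J\setminus I},\vec\xvec_I)$.  Enumerating all topologies in which the two branches share vertices with the origin sausage (so that the naive decoupling of branches fails), and showing that each such topology produces exactly one extra convolution factor of size $(1+\bar t)^{-\alphaminn}$ uniformly in $\vep\in(0,1]$, is the technical heart of the argument and is what Sections~\ref{s:bounds}--\ref{ss:ebd} are designed to carry out.  All other pieces, including the bound on $\psi$ and the control of $A$ and of $a(\vec\xvec_J;1)$, reduce to standard triangle/square diagram estimates once the two-point function input from Theorem~\ref{thm:2pt} is in place.
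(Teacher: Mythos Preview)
Your high-level architecture is right, and the identity \refeq{psimainrep} is indeed obtained essentially as you describe (Section~\ref{sec-Cvepvep}).  But the route you sketch for the $\psi$-bound \refeq{psibd} has a real gap.  You propose to ``apply a BK-type inequality to control $C(\yvec_1,\yvec_2)$ by a finite sum of diagrams''.  This cannot be done directly: by \refeq{Cdef} and \refeq{aphi-def}, $C$ is a signed alternating sum $\sum_{N}(-1)^N\phi^{\sss(N)}_{\pm}$, and each $\phi^{\sss(N)}_{\pm}$ in \refeq{phipm-def} is a nested expectation $\tilde M^{\sss(N+1)}$ containing two further signed factors $B_\delta$.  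BK applies to probabilities of increasing events, not to such objects.  The paper first rewrites $\phi_{\pm}$ as $\sum_{N,N_1,N_2}(-1)^{N+N_1+N_2}\phi^{\sss(N,N_1,N_2)}_{\pm}$ with each summand nonnegative (\refeq{phialtbd}), and only then bounds the summands diagrammatically.  Even then, one cannot simply BK: the event $H_{t_{\yvec_1}}(\tb_{\sN},\eb;\bC_\pm)$ inside $\tilde M^{\sss(N+1)}$ is not increasing, so the paper splits it (Lemma~\ref{lem-inclG}) into a piece contained in the increasing event $V_{t-\vep}$ and a piece contained in $\Ecal_t(\,\cdot\,;\tilde\bC_{\sN-1})$, uses monotonicity (Lemma~\ref{lem-remtilde}) to replace $\tilde\mE^b$ by $\mE$, and then bounds by the diagram functions $R^{\sss(N,N')}$ and $Q^{\sss(N,N')}$ of \refeq{PN0def}--\refeq{QN-def}.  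The factor $(\delta_{s_1,s_2}+\beta)$ is \emph{not} ``branches splitting at distinct times'': it is structural, coming from the fact that $R^{\sss(N)}$ (via $\tilde R^{\sss(N)}$ in \refeq{Rtildedef}) forces $t_{\yvec_1}=t_{\yvec_2}$, while $Q^{\sss(N)}$ carries an extra $L(\zvec,\wvec;\yvec_2)$ factor worth one additional $\beta$.

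A second gap is the $\vep$-uniformity.  The powers $\vep^{n_{s_1,s_2}}$ in $b^{\sss(\vep)}_{s_1,s_2}$ (see \refeq{ns-def}) are what makes \refeq{bs-sum} finite uniformly in $\vep$, and they come from counting how many spatial bonds (each worth $\lambda\vep D$) are forced by the topology of each diagram; this is why the paper distinguishes Constructions~$\bspat$ and $\btemp$ and tracks the factor $\vep^{\delta_{(u,s),(v,s')}}$ in \refeq{supL-bd}.  Your outline does not address this bookkeeping, and without it the bounds would diverge as $\vep\downarrow0$.  Finally, for \refeq{vphibd} the mechanism is not a single ``extra summable loop'': the paper separates $a^{\sss(N)}$ into four pieces $a^{\sss(N)}(\cdots;\ell)$, $\ell=1,\dots,4$ (Proposition~\ref{prop-abds}), with distinct smallness sources, and the pieces $\ell=3,4$ are controlled again through $R$ and $Q$ and pick up the dimension-dependent factor $\Delta_{\bar t}$ of \refeq{Deltat-def}, which is logarithmic at $d=6$ and polynomial below---not a triangle-condition estimate.
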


It follows from \refeq{psibd} and \refeq{bs-sum} that for $d>4$,
the constant $V^{\smallsup{\vep}}$ defined
by
    \eq
    \lbeq{Vpsi}
    V^{\smallsup{\vep}}=\frac{1}{\vep}\ddsum_{s_1,s_2=2\vep}^\infty
    \hat{\psi}_{s_1,s_2}(0,0),
    \en
with $\lambda=\lambda_{\rm c}^{\smallsup{\vep}}$, is finite uniformly in $\vep>0$.  The
constant $V$ of Theorem~\ref{thm:rpt} should then be given by
$\lim_{\vep \downarrow 0}V^{\smallsup{\vep}}$. In Proposition \ref{prop-disc}
below, we will prove the existence of the limit
$\lim_{\vep \downarrow 0}V^{\smallsup{\vep}}$. Since
$\hat{\psi}_{2\vep,2\vep}(0,0)\approx 2\vep\lambda_{\rm c}(1-\vep\lambc)=
\vep[2-\vep+O(\beta)]$ by \refeq{psimain}, it follows from
Proposition~\ref{thm-psivphibd} that uniformly in $\vep>0$,
    \eq
    \lbeq{V1}
    V^{\smallsup{\vep}} = 2-\vep +O(\beta).
    \en
This establishes the claim on $V$ of Theorem~\ref{thm:rpt}(i).
For $d\leq 4$, on the other hand, $\beta=\beta_{\sT}$ converges to zero
as $T\uparrow \infty$, so that $V^{\smallsup{\vep}}$ is replaced
by $2-\vep$ in \ch{Theorem~\ref{thm:rptvep}(ii)}.

%%%%%%%%%%%%%%%%%%%%%%%%%%%%%%%%%%%%%%%%%%%%%%%%%%%%%%%%%%%%%%%%%%%%%%%%%%%%%%%
%%%%%%%%%%%%%%%%%%%%%%%%%%%%%%%%%%%%%%%%%%%%%%%%%%%%%%%%%%%%%%%%%%%%%%%%%%%%%%%
\subsection{Induction in $r$}
\label{sec-proofr=3}
In this section, we prove \ch{Theorem~\ref{thm:rptvep}} for $\vep\in (0,1]$
fixed, assuming \refeq{tau3exp} and Proposition~\ref{thm-psivphibd}.
We fix $\lambda=\lambda_{\rm c}^{\smallsup{\vep}}$ throughout this
section. The argument in this section is an adaptation of the
argument in \cite[Section 2.4]{hs02}, adapted so as to deal with
the uniformity in the time discretization. In particular, in this section,
we prove \ch{Theorem~\ref{thm:rptvep}} for oriented percolation for which $\vep=1$.

We start by giving the proof for $d>4$.
Let $\bar t$ denote the second-largest element of $\{t_1,\ldots,t_{r-1}\}$.
We will prove that for $d>4$ there are positive constants $L_0=L_0(d)$ and
$V^{\sss(\vep)}=V^{\sss(\vep)}(d,L)$ such that for $\lamb=\lambc^{\sss(\vep)}$,
$L\geq L_0$ and $\alphaminn\in(0,1\wedge\Delta\wedge\frac{d-4}2)$, we have
    \eq
    \hat{\tau}_{\vec{t}}^\smallsup{r}(\tfrac{\vec{k}}{\sqrt{v^{\sss(\vep)}\sigma^2 t}})
    = \ch{A^{\sss(\vep)}\big((A^{\sss(\vep)})^2V^{\sss(\vep)}t)^{r-2}}
    \left[
    \hat{M}_{\vec{t}/t}^{\smallsup{r-1}}(\vec{k})
    + O((\bar{t}+1)^{-\alphaminn})
        \right]
    \quad
    (r \geq 3)
    \lbeq{aim3pt}
    \en
uniformly in $t \geq \bar{t}$ and in
$\vec{k} \in \Rbold^{(r-1)d}$ with $\sum_{i=1}^{r-1}|k_i|^2$
bounded, and uniformly in $\vep>0$.  Since the $\hat{M}_{\vec{t}}^\smallsup{r-1}(\vec{k})$
are smooth functions of $\vec{t}$ (cf., \cite[(2.51)]{hs01}),
proving the above is sufficient to prove Theorem~\ref{thm:rptvep}(i).

We will prove \refeq{aim3pt} by induction in $r$,
with the case $r=2$ given by Theorem~\ref{thm:rptvep}(i) for $r=2$.
Indeed, Theorem~\ref{thm:rptvep}(i) for $r=2$ gives
    \eq
    \lbeq{aimr2}
    \ch{\hat{\tau}_{t_1}(\tfrac{k}{\sqrt{v^{\sss(\vep)}\sigma^2 t}})}
    =
    \hat{\tau}_{t_1}\Big((\tfrac{t_1}t)^{\sss1/2}\tfrac{k}{\sqrt{\ch{v^{\sss(\vep)}}\sigma^2 t_1}}\Big)
    =
    \ch{A^{\sss(\vep)}} \left[
    e^{-\tfrac{|k|^2t_1}{2dt}} + O((t_1+1)^{-\alphaminn})
    \right],
    \en
using the facts that $|k|^2$ is bounded, $t_1 \leq t$,
and $\alphaminn < \frac{d-4}{2}$.

\medskip \noindent
{\em Proof of Theorem~\ref{thm:rptvep}(i) assuming Proposition~\ref{thm-psivphibd}.}
Let $r \geq 3$.  The proof is by induction in $r$, with the induction
hypothesis that \refeq{aim3pt} holds for $\tau^{\sss(s)}$ with $2\leq s<r$.
We have seen in \refeq{aimr2} that \refeq{aim3pt} does hold for $r=2$.
The induction will be advanced using \refeq{tau3exp}.
By \refeq{vphibd}, $\hat{\varphi}^\smallsup{r}_{\vec{n}}(\vec{k})$
is an error term. Thus, we are
left to determine the asymptotic behaviour of the
first term on the right side of
\refeq{tau3exp}.

Fix $\vec{k}$ with $\sum_{i=1}^{r-1}|k_i|^2$ bounded.
To abbreviate the notation, we write
$\veckt = \vec{k}/\sqrt{\ch{v^{\sss(\vep)}}\sigma^2 t}$.
Recall the notation $\underline{t}=\min\{t_1,\ldots,t_{r-1}\}$.
Given $0 \leq s_0 \leq \underline{t}$, let
$\underline{t}_0 = \min \{s_0, \underline{t} -s_0\}$.
We will show that for every nonempty subset $I \subset J_1$,
    \begin{gather}
    \Bigg|~\ddsum_{s_1=2\vep}^{\underline{t}_{J\setminus I}-s_0}~\ddsum_{s_2
    =2\vep}^{\underline{t}_I-s_0}\hat\psi_{s_1,s_2}(\kt_{J\setminus I},
    \kt_I)~\hat\tau_{\vec t_{J\setminus I}-s_1-s_0}^{\sss(r_1)}(\vec\kt
    _{J\setminus I})~\hat\tau_{\vec t_I-s_2-s_0}^{\sss(r_2)}(\vec\kt_I)-V^{\sss(\vep)}
    \,\hat\tau_{\vec t_{J\setminus I}-s_0}^{\sss(r_1)}(\vec\kt_{J\setminus
    I})~\hat\tau_{\vec t_I-s_0}^{\sss(r_2)}(\vec\kt_I)\Bigg|\nnmb\\
    \leq C\vep t^{r-3}(\underline{t}_0+1)^{-\alphaminn}.\lbeq{conv3}
    \end{gather}

Before establishing \refeq{conv3}, we first show that it implies
\refeq{aim3pt}.  Since $|\hat{\tau}_{s_0}(\kt)|$ is uniformly bounded
by \ch{Theorem~\ref{thm:rptvep}} for $r=2$,
inserting \refeq{conv3} into \refeq{tau3exp}
and applying \refeq{vphibd} gives
\eqalign
\lbeq{taur.1}
    \hat{\tau}^\smallsup{r}_{\vec{t}}(\veckt)
    & =
    \ch{V^{\sss(\vep)}}\vep \ddsum_{s_0 =0}^{\underline{t}}
    \hat{\tau}_{s_0}(\kt)\sum_{I \subset J_1 : |I| \geq 1}
    \hat{\tau}_{\vec{t}_{J\setminus I}-s_0}^{\smallsup{r_1}}(\veckt_{J\setminus I})
    \hat{\tau}_{\vec{t}_{I}-s_0}^{\smallsup{r_2}}(\veckt_{I}) +
    O(t^{r-3})\vep \ddsum_{s_0 =0}^{\underline{t}}(\underline{t}_0+1)^{-\alphaminn}
    + O(t^{r-2-\alphaminn}).
\enalign
Using the fact that $\alphaminn <1$, the summation in the error term can
be seen to be bounded by a multiple of $\underline{t}^{1-\alphaminn}
\leq t^{1-\alphaminn}$.  With the
induction hypothesis and the identity $r_1+r_2=r+1$,
\refeq{taur.1} then implies that
    \eqalign
    \lbeq{taur.7}
    \hat{\tau}^\smallsup{r}_{\vec{t}}(\veckt)
    & =
    A^{\sss(\vep)}\big((A^{\sss(\vep)})^2V^{\sss(\vep)}t\big)^{r-2}
     \vep \ddsum_{s_0 =0}^{\underline{t}}
    \hat{M}_{s_0/t}^{\smallsup{1}}(k)\sum_{I \subset J_1 : |I| \geq 1}
    \hat{M}_{(\vec{t}_{J\setminus I}-s_0)/t}^{\smallsup{r_1-1}}
    (\vec{k}_{J\setminus I})~
    \hat{M}_{(\vec{t}_{I}-s_0)/t}^{\smallsup{r_2-1}}(\vec{k}_{I})
    +
    O(t^{r-2-\alphaminn}),
    \enalign
where the error arising from the error terms in the induction hypothesis again contributes
an amount
$O(t^{r-3})\vep \ddsum_{s_0=0}^{\underline{t}}(\underline{t}_0+1)^{-\alphaminn}
\leq O(t^{r-2-\alphaminn})$.
The summation on the right-hand side of \refeq{taur.7}, divided by $t$, is the Riemann sum
approximation to an integral.  The error in approximating the integral by this Riemann
sum is $O(\vep t^{-1})$.  Therefore, using \refeq{Mr-def}, we obtain
    \eqalign
    \hat{\tau}^\smallsup{r}_{\vec{t}}(\veckt)
    & =
    \ch{A^{\sss(\vep)}\big((A^{\sss(\vep)})^2V^{\sss(\vep)}t\big)^{r-2}}
    \int_0^{\underline{t}/t} ds_0 \; \hat{M}_{s_0}^{\smallsup{1}}(k)
    \sum_{I \subset J_1 : |I| \geq 1}
    \hat{M}_{t^{-1}\vec{t}_{J\setminus I}-s_0}^{\smallsup{r_1-1}}(\vec{k}_{J\setminus I})
    \hat{M}_{t^{-1}\vec{t}_{I}-s_0}^{\smallsup{r_2-1}}(\vec{k}_{I})  +
    O(t^{r-2-\alphaminn})
    \nnb
    \lbeq{3lim}
    & =  \ch{A^{\sss(\vep)}\big((A^{\sss(\vep)})^2V^{\sss(\vep)}t\big)^{r-2}}
    \hat{M}_{\vec{t}/t}^{\smallsup{r-1}}(\vec{k}) +
    O(t^{r-2-\alphaminn}).
    \enalign
Since $t \geq \bar{t}$, it follows that
$t^{r-2-\alphaminn} \leq Ct^{r-2}(\bar{t}+1)^{-\alphaminn}$.  Thus, it suffices to
establish \refeq{conv3}.

To prove \refeq{conv3}, we write the quantity inside the absolute
value signs on the left-hand side as
\eqalign
    \lbeq{3ptdecomp}
    &
    \left[~
    \ddsum_{s_1=2\vep}^{\underline{t}_{J\setminus I}-s_0}
    \ddsum_{s_2=2\vep}^{\underline{t}_{I} - {s}_0}
    \hat{\psi}_{s_1,s_2}(\kt_{J\setminus I},\kt_{I})
    \hat{\tau}_{\vec{t}_{J\setminus I}-s_1-s_0}^{\smallsup{r_1}}
    (\veckt_{J\setminus I})
    \hat{\tau}_{\vec{t}_{I}-s_2-s_0}^{\smallsup{r_2}}(\veckt_{I})
    \right]
    -  V^{\sss(\vep)}\hat{\tau}_{\vec{t}_{J\setminus I}-s_0}^{\smallsup{r_1}}
    (\veckt_{J\setminus I})
    \hat{\tau}_{\vec{t}_{I}-s_0}^{\smallsup{r_2}}(\veckt_{I})
    \nnb
    & \hspace{10mm}  = T_1 + T_2 + T_3,
    \enalign
with
    \eqalign
    \lbeq{T1def}
    T_1 & =   \left[~
    \ddsum_{s_1=2\vep}^{\underline{t}_{J\setminus I}-s_0}
    \ddsum_{s_2=2\vep}^{\underline{t}_{I} - {s}_0}
    \hat{\psi}_{s_1,s_2} (0,0) -
    \ch{V^{\sss(\vep)}}\right]
    \hat{\tau}_{\vec{t}_{J\setminus I}-s_0}^{\smallsup{r_1}}
    (\veckt_{J\setminus I})
    \hat{\tau}_{\vec{t}_{I}-s_0}^{\smallsup{r_2}}(\veckt_{I}),
    \\
    \lbeq{T2def}
    T_2 & =
    \ddsum_{s_1=2\vep}^{\underline{t}_{J\setminus I}-s_0}
    \ddsum_{s_2 =2\vep}^{\underline{t}_{I}-{s}_0} \left[
    \hat{\psi}_{s_1,s_2}
    (\kt_{J\setminus I},\kt_{I})-
    \hat{\psi}_{s_1,s_2} (0,0)\right]
    %\nnb && \hspace{10mm} \times
    \hat{\tau}_{\vec{t}_{J\setminus I}-s_0}^{\smallsup{r_1}}(\veckt_{J\setminus I})
    \hat{\tau}_{\vec{t}_{I}-s_0}^{\smallsup{r_2}}(\veckt_{I}),
    \\
    \lbeq{T3def}
    T_3 & =
    \ddsum_{s_1=2\vep}^{\underline{t}_{J\setminus I}-s_0}
    \ddsum_{s_2=2\vep}^{\underline{t}_{I} - {s}_0}
    \hat{\psi}_{s_1,s_2} (\kt_{J\setminus I},\kt_{I})
    \nnb & \hspace{10mm} \times
    \left[
    \hat{\tau}^{\smallsup{r_1}}_{\vec{t}_{J\setminus I}-s_1-s_0}
    (\veckt_{J\setminus I})
    \hat{\tau}^{\smallsup{r_2}}_{\vec{t}_{I}-s_2-s_0}(\veckt_{I})
    -\hat{\tau}_{\vec{t}_{J\setminus I}-s_0}^{\smallsup{r_1}}(\veckt_{J\backslash I})
    \hat{\tau}_{\vec{t}_{I}-s_0}^{\smallsup{r_2}}(\veckt_{I})
    \right].
    \enalign
To complete the proof, it suffices to show that
for each nonempty $I\subset J_1$, the absolute value
of each $T_i$ is bounded above by the right-hand side of
\refeq{conv3}.

In the course of the proof, we will make use of some bounds on sums
involving $b_{s_1,s_2}^{\smallsup{\vep}}$:

\begin{lemma}[{\bf Bounds on sums involving $b_{s_1,s_2}^{\smallsup{\vep}}$}]
\label{lem-sumbds}
\begin{enumerate}
\item[(i)] Let $d>4$. For every $\alphaminn\in [0,1\wedge \frac{d-4}{2})$, there exists
a constant $C=C(\alphaminn, d)$ such that the following bounds hold
uniformly in $\vep\in (0,1]$
    \eq
    \lbeq{bdsumd12}
    \ddsum_{\stackrel{s_1,s_2=2\vep}{s_1\vee s_2\leq s}}s_i (b_{s_1,s_2}^{\smallsup{\vep}}
    +b_{s_2,s_1}^{\smallsup{\vep}})
    \leq C \vep s^{1-\alphaminn}, \qquad
    \ddsum_{\stackrel{s_1,s_2=2\vep}{s_1\vee s_2\geq s}}^{\infty} b_{s_1,s_2}^{\smallsup{\vep}}
    \leq C \vep s^{-\alphaminn}.
    \en
\item[(ii)] Let $d\le4$ with $\alpha\equiv bd-\frac{4-d}2>0$ and fix $\alphamin\in(0,\alpha)$,
recall that $\beta_{\sT}=\beta_1T^{-bd}$ and let $\hat\beta_{\sT}=\beta_1T^{-\alphamin}$. There exists
a constant $C=C(\alphaminn,d)$ such that the following bound holds
uniformly in $\vep\in (0,1]$
    \eq
    \lbeq{bdsumd12lowdim}
    \beta_{\sT}\ddsum_{\stackrel{s_1,s_2=2\vep}{s_1\vee s_2>2\vep}}^{T\log{T}} (\delta_{s_1,s_2}+\beta_{\sT})(b_{s_1,s_2}^{\smallsup{\vep}}
    +b_{s_2,s_1}^{\smallsup{\vep}})
    \leq C \hat{\beta}_{\sT} \vep.
    \en
\end{enumerate}
\end{lemma}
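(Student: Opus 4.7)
The plan is to use symmetry, a change of variables, and careful bookkeeping of the discretization factor $\vep$ to reduce each sum to a continuum integral that can be estimated by Fubini.

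First, since $b_{s_1,s_2}^{\smallsup{\vep}}$ is supported on $\{s_1\le s_2\}$ and the summands in \refeq{bdsumd12} and \refeq{bdsumd12lowdim} are symmetric in $(s_1,s_2)$, I would restrict each sum to $\{s_1\le s_2\}$ at the cost of an overall constant. The three regions in the definition of $n_{s_1,s_2}$ --- the off-diagonal piece $\{s_1<s_2\}$ (where $n=3$), the off-corner diagonal $\{s_1=s_2>2\vep\}$ (where $n=2$), and the single point $(2\vep,2\vep)$ (where $n=1$) --- together with the $\vep$-spacing of the lattice $\vep\Zp$, always net exactly one overall factor of $\vep$, and the remaining sum differs from the corresponding continuum integral by $O(\vep)$.

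For part~(i), after the change of variables $u=s_1$, $v=s_2-s_1$ on $\{s_1\le s_2\}$, the left-hand side of the first inequality in \refeq{bdsumd12} is of order
\[
\vep\iint_{u,v\ge 0,\ u+v\le s}\frac{g_i(u,v)}{(1+u)^{(d-2)/2}(1+v)^{(d-2)/2}}\,du\,dv,\qquad g_1(u,v)=u,\ g_2(u,v)=u+v.
\]
By symmetry of the integrand and the domain in $(u,v)$, the case $g_2$ reduces to $2\times$ the case $g_1$. For $d>4$ one has $(d-2)/2>1$, so $\int_0^\infty(1+v)^{-(d-2)/2}\,dv<\infty$, and by Fubini the displayed integral is controlled by $\int_0^s u(1+u)^{-(d-2)/2}\,du$, which is $O(s^{1-(d-4)/2})$ for $d<6$, $O(\log s)$ for $d=6$, and $O(1)$ for $d>6$. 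In every case this is $O(s^{1-\alphaminn})$ since $\alphaminn<1\wedge(d-4)/2$. The second inequality in \refeq{bdsumd12} is easier: after the same change of variables, $s_1\vee s_2\ge s$ becomes $u+v\ge s$, forcing $u\ge s/2$ or $v\ge s/2$, and then the tail bound $\int_{s/2}^\infty(1+t)^{-(d-2)/2}\,dt\le C\,s^{-(d-4)/2}\le C\,s^{-\alphaminn}$ combined with the finite companion integral yields the claim.

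For part~(ii), the same change of variables applies, but now the cutoff $T\log T$ and the subcritical exponent $(d-2)/2\le 1$ mean the kernel integrals are no longer uniformly bounded at infinity. In the range $2<d\le 4$, the relevant integrals grow like $(T\log T)^{(4-d)/2}$; the analogues involving $\log(1+s)$ for $d=2$ and $(1+s)^{(2-d)/2}$ for $d<2$ yield the same scaling. Multiplying by $\beta_{\sT}=\beta_1 T^{-bd}$ and using $\alpha=bd-(4-d)/2$, the $\delta_{s_1,s_2}$-contribution is of order $\beta_1 T^{-\alpha}(\log T)^{(4-d)/2}\,\vep$ and the $\beta_{\sT}$-contribution is of order $\beta_1^2 T^{-2\alpha}(\log T)^{4-d}\,\vep$. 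Both are dominated by $\hat\beta_{\sT}\,\vep=\beta_1 T^{-\alphamin}\,\vep$ for $T$ large, because the strict inequality $\alphamin<\alpha$ provides positive slack that absorbs the polylogarithmic factors.

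The main obstacle is the bookkeeping in part~(ii): the three subcases $d>2$, $d=2$, $d<2$ in the definition of $b_{s_1,s_2}^{\smallsup{\vep}}$ must be handled separately, and for $d=2$ the logarithm in the kernel compounds with the logarithm from the cutoff $T\log T$. The strict inequality $\alphamin<\alpha$ is precisely what is needed to absorb these polylogarithmic factors into $T^{-\alphamin}$, uniformly in $\vep\in(0,1]$. By contrast, part~(i) is essentially a one-line Fubini computation once the change of variables is performed.
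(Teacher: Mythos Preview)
Your proposal is correct and follows essentially the same approach as the paper. The paper's proof of (i) is a one-line remark that the bound is ``straightforward'' once one tracks the powers of $\vep$ and uses $(d-2)/2>1$, and for (ii) it performs the same diagonal/off-diagonal split you describe, bounding the resulting sums by $C\vep\beta_{\sT}(T\log T)^{(4-d)/2}$ and its square and then absorbing the polylogarithmic factors via $\alphamin<\alpha$; your change of variables $u=s_1$, $v=s_2-s_1$ is a clean way to organize the same computation.
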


\proof $(i)$ This is straightforward from \refeq{bs-def}, when we pay
special attention to the number of powers of $\vep$ present
in $b_{s_1,s_2}^{\smallsup{\vep}}$ and use the fact that the power of $(1+s_1)$
and of $(1+s_2-s_1)$ is $(d-2)/2>1$.\\
$(ii)$ We shall only perform the proof for $d\leq 4$ with $d\neq 2$,
the proof for $d=2$ being a slight modification of the argument below.
Using \refeq{bs-def}, we can perform the sum to obtain
    \eqalign
    {\rm LHS}~\refeq{bdsumd12lowdim}&\leq C\beta_{\sT}\ddsum_{s_1>2\vep}^{T\log{T}} \vep^2 (1+s_1)^{(2-d)/2}
    +C\beta_{\sT}^2\ddsum_{\stackrel{2\vep\leq s_1<s_2\leq T\log{T}}{s_2>2\vep}}
    \vep^3 (1+s_1)^{(2-d)/2}(1+s_2-s_1)^{(2-d)/2}\nonumber\\
    &\leq C\vep \beta_{\sT} \big(T\log{T}\big)^{(4-d)/2}\big(1+\beta_{\sT} \big(T\log{T}\big)^{(4-d)/2}\big)
    \leq C\vep \hat\beta_{\sT}(1+\hat\beta_{\sT}),
    \enalign
as long as $\alphamin\in(0,\alpha)$. Using that $\hat\beta_{\sT}$ converges to 0
as $T\uaw\infty$, this proves \refeq{bdsumd12lowdim}.
\qed
\vskip0.5cm

\noindent
By the induction hypothesis and the fact that $\bar{t}_{I_i} \leq t$,
it follows that
$|\hat{\tau}_{\vec{t}_{I_i}}^{\smallsup{r_i}}(\vec{k}_{I_i})| \leq O(t^{r_i-2})$,
uniformly in $\vec{t}_{I_i}$ and $\vec{k}_{I_i}$.  Therefore, it follows from
\refeq{psibd} and the definition of \ch{$V^{\sss(\vep)}$} in \refeq{Vpsi} that
    \eq
    \lbeq{T1bd}
    |T_1| \le \ddsum_{\substack{s_1\ge\underline{t}_{J\setminus I}-s_0\\
        \text{or }s_2\ge\underline{t}_I-s_0}}
    O(t^{r-3})b_{s_1,s_2}^{\smallsup{\vep}}
    \leq O(\vep t^{r-3}(\underline{t}_0+1)^{-(d-4)/2}),
    \en
where the final bound follows from the second bound in \refeq{bdsumd12}.

Similarly, by \refeq{psibd} with $q=2$, now using the first bound in
\refeq{bdsumd12},
    \eq
    \lbeq{T2bd}
    | T_2 |
    \leq
    \ddsum_{s_1=2\vep}^{\underline{t}_{J\setminus I}-s_0}
    \ddsum_{s_2=2\vep}^{\underline{t}_{I} - {s}_0}
    (s_1 |\kt_{J\setminus I}|^2+s_2|\kt_{I}|^2)
    O(t^{r-3})b_{s_1,s_2}^{\smallsup{\vep}}
    \leq
    O(\vep t^{r-3}(\underline{t}_0+1)^{-\alphaminn}),
    \en
\ch{using that $t^{r-4}(\underline{t}_0+1)^{1-\alphaminn}\leq t^{r-3}(\underline{t}_0+1)^{-\alphaminn}$
since $t\geq \underline{t}_0$.}
It remains to prove that
    \eq
    \lbeq{conv31}
    |T_3| \leq
    O( \vep t^{r-3} (\underline{t}_0+1)^{-\alphaminn}).
    \en

To begin the proof of \refeq{conv31}, we note that the domain of
summation over $s_1,s_2$ in \refeq{T3def} is contained in
$\cup_{j=0}^2 \Scal_j(\vec{t})$, where
    \eqalign
    \Scal_0(\vec{t}) & =
    [0 , {\textstyle \frac{1}{2}}(\underline{t}_{J\setminus I}-s_0)] \times
    [0 ,{\textstyle \frac{1}{2}}(\underline{t}_{I}-s_0)],
    \nonumber \\ \nonumber
    \Scal_1(\vec{t}) & =
    [{\textstyle \frac{1}{2}}(\underline{t}_{J\setminus I} - s_0) ,
    \underline{t}_{J\setminus I} - s_0 ] \times
    [0 ,  \underline{t}_{I} -s_0]
    ,\\
    \Scal_2(\vec{t}) & =
    [0 ,  \underline{t}_{J\setminus I} -s_0] \times
    [{\textstyle \frac{1}{2}}(\underline{t}_{I} - s_0)
    , \underline{t}_{I} - s_0
    ].\nonumber
    \enalign
Therefore, $|T_3|$ is bounded by
    \eq
    \lbeq{T3decomp}
    \sum_{j=0}^2  \ddsum_{\vec{s} \in \Scal_j(\vec{t})}
    \left| \hat{\psi}_{s_1,s_2} (\kt_{J\setminus I},\kt_{I})
    \right| \,
    \left|\hat{\tau}^{\smallsup{r_1}}_{\vec{t}_{J\setminus I}-s_1-s_0}
    (\veckt_{J\setminus I})
    \hat{\tau}^{\smallsup{r_2}}_{\vec{t}_{I}-s_2-s_0}(\veckt_{I})
    -\hat{\tau}_{\vec{t}_{J\setminus I}-s_0}^{\smallsup{r_1}}(\veckt_{J\backslash I})
    \hat{\tau}_{\vec{t}_{I}-s_0}^{\smallsup{r_2}}(\veckt_{I})\right|.
    \en
The terms with $j=1,2$ in \refeq{T3decomp}
can be estimated as in the
bound \refeq{T1bd} on $T_1$, after using the triangle inequality
and bounding the $r_i$-point functions by $O(t^{r_i-2})$.

For the $j=0$ term of \refeq{T3decomp}, we write
    \eqalign
    \lbeq{tauni-mi1}
    \hat{\tau}^{\smallsup{r_1}}_{\vec{t}_{J\setminus I}-s_1-s_0}
    (\veckt_{J\setminus I})
    & =
    \hat{\tau}^{\smallsup{r_1}}_{\vec{t}_{J\setminus I}-s_0}(\veckt_{J\setminus I})
    +
    \left[ \hat{\tau}^{\smallsup{r_1}}_{\vec{t}_{J\setminus I}-s_1-s_0}
    (\veckt_{J\setminus I})
    - \hat{\tau}^{\smallsup{r_1}}_{\vec{t}_{J\setminus I}-s_0}(\veckt_{J\setminus I})
    \right],\\
    \lbeq{tauni-mi2}
    \hat{\tau}^{\smallsup{r_2}}_{\vec{t}_{I}-s_2-s_0}(\veckt_{I}) & =
    \hat{\tau}^{\smallsup{r_2}}_{\vec{t}_{I}-s_0}(\veckt_{I}) +
    \left[ \hat{\tau}^{\smallsup{r_2}}_{\vec{t}_{I}-s_2-s_0}(\veckt_{I})
    - \hat{\tau}^{\smallsup{r_2}}_{\vec{t}_{I}-s_0}(\veckt_{I})
    \right].
    \enalign
We expand the product of \refeq{tauni-mi1} and \refeq{tauni-mi2}. This gives
four terms, one of which is cancelled by
$\hat{\tau}_{\vec{t}_{J\setminus I}-s_0}^{\smallsup{r_1}}
(\veckt_{J\setminus I})
\hat{\tau}_{\vec{t}_{I}-s_0}^{\smallsup{r_2}}(\veckt_{I})$
in \refeq{T3decomp}.
Three terms remain,
each of which contains at least one factor from the second
terms in \refeq{tauni-mi1}--\refeq{tauni-mi2}.
In each term we retain one such factor and bound the
other factor by a power of $t$, and we estimate $\hat{\psi}$
using \refeq{psibd}.  This
gives a bound for the $j=0$ contribution to \refeq{T3decomp} equal to the sum
of
    \eq
    \lbeq{T3bd1}
    \ddsum_{ (s_1,s_2) \in \Scal_0(\vec{n})}
    O(t^{r_2-2})b_{s_1,s_2}^{\smallsup{\vep}}
     \left|
     \hat{\tau}^{\smallsup{r_1}}_{\vec{t}_{J\setminus I}-s_1-s_0}
     (\veckt_{J\setminus I})
    - \hat{\tau}^{\smallsup{r_1}}_{\vec{t}_{J\setminus I}-s_0}
    (\veckt_{J\setminus I})
    \right|
    \en
plus a similar term with $J\setminus I$ replaced by $I$.

By the induction hypothesis, the difference of $r_1$-point functions
in \refeq{T3bd1} is equal to
\eq
\lbeq{T3bd1.5}
    \ch{A^{\sss(\vep)} \big((A^{\sss(\vep)})^2V^{\sss(\vep)}t\big)^{r_1-2}}
    \left[
    f\big((\vec{t}_{J\setminus I}-s_1-s_0)/t\big)
     - f\big( (\vec{t}_{J\setminus I}-s_0)/t \big)
        \right]
        + O(t^{r_1-2}(\underline{t}_0+1)^{-\alphaminn})
\en
with $f(\vec{t})
= \hat{M}_{\vec t}^{\smallsup{r_1-1}}(\vec{k}_{J\setminus I})$.
Using \refeq{Mr-def}, the difference in \refeq{T3bd1.5} can
be seen to be at most $O(s_1 t^{-1})$.
Therefore, \refeq{T3bd1} is bounded above, using \refeq{bdsumd12}, by
    \eq
    \lbeq{T3bd2}
    \ddsum_{ (s_1,s_2) \in \Scal_0(\vec{t})}
    \big(O(s_1 t^{r-4})+O(t^{r-3}(\underline{t}_0+1)^{-\alphaminn})\big)
    (b_{s_1,s_2}^{\smallsup{\vep}}+b_{s_2,s_1}^{\smallsup{\vep}})
    \leq O(\vep t^{r-3}(\underline{t}_0+1)^{-\alphaminn}).
   \en
This establishes \refeq{conv31}.

Combining \refeq{T1bd}, \refeq{T2bd}, and \refeq{conv31} yields
\refeq{conv3}.  This completes the proof of Theorem~\ref{thm:rptvep}{\rm (i)},
assuming Proposition~\ref{thm-psivphibd}{\rm (i)}.

The proof of Theorem~\ref{thm:rptvep}{\rm (ii)} is similar, now using
Proposition~\ref{thm-psivphibd}{\rm (ii)} instead of Proposition~\ref{thm-psivphibd}{\rm (i)}
and Lemma \ref{lem-sumbds}{\rm (ii)} instead of Lemma \ref{lem-sumbds}{\rm (i)}.
For $d\leq 4$, we will prove that for there are positive constants $L_0=L_0(d)$ and
such that for $\lamb_{\sT}$ and $\mu$ as in Theorem \ref{thm:2pt}(ii),
$L_1\geq L_0$, with $L_{\sT}$ defined as in \refeq{Lt-def},
and $\delta\in(0,1\wedge\Delta\wedge\alpha)$, we have
    \eq
    \hat{\tau}_{\vec{t}}^\smallsup{r}(\tfrac{\vec{k}}{\sqrt{\sigma^2_{\sT} T}})
    = \big((2-\vep)T)^{r-2}
    \left[
    \hat{M}_{\vec{t}/T}^{\smallsup{r-1}}(\vec{k})
    + O(T^{-\mu\wedge\delta})
        \right]
    \quad
    (r \geq 3)
    \lbeq{aim3ptlowdim}
    \en
uniformly in $T \geq \bar{t}$, in $\vec{t}$ such that $\max_{i=1}^{r-1} t_i\leq T\log{T}$,
and in $\vec{k} \in \Rbold^{(r-1)d}$ with $\sum_{i=1}^{r-1}|k_i|^2$
bounded, and uniformly in $\vep>0$.

We will again prove \refeq{aim3ptlowdim} by induction in $r$,
with the case $r=2$ given by Theorem~\ref{thm:rptvep}(ii) for $r=2$.
This part is a straightforward adaptation of the argument in
\refeq{aimr2}, and is omitted.

We now advance the induction hypothesis. By \refeq{tau3exp}
and \refeq{vphibd-low},
    \begin{align}\lbeq{tau3explowdim}
    \hat\tau_{\vec t}^{\sss(r)}(\vec k)=\ddsum_{s_0=0}^{\underline t-2\vep}\hat
     \tau_{s_0}^{\sss(2)}(k)\sum_{\varnothing\ne I\subset J_1}~\ddsum_{s_1=2\vep}
     ^{\underline t_{J\setminus I}-s_0}~\ddsum_{s_2=2\vep}^{\underline t_I-s_0}
     \hat\psi_{s_1,s_2}(k_{J\setminus I},k_I)~\hat\tau_{\vec t_{J\setminus I}
     -s_1-s_0}^{\sss(r_1)}(\vec k_{J\setminus I})~\hat\tau_{\vec t_I-s_2-s_0}
     ^{\sss(r_2)}(\vec k_I)+O(T^{r-2-\alphaminn}),
    \end{align}
where, since, by Theorem \ref{thm:2pt}(i), $\mu\in (0,\alpha-\delta)$
and since, by Proposition \ref{thm-psivphibd}(ii), $\alphaminn\in (0,\alpha)$
is arbitrary, without loss of generality, we may assume that $\alphaminn\geq \mu$.

By Lemma \ref{lem-sumbds}{\rm (ii)}, using the fact that $\hat{\beta}_{\sT}=\beta_1T^{-\mu}$
and the tree-graph inequality, we can bound
    \eqalign
    &\ddsum_{s_0=0}^{\underline t-2\vep}\hat
     \tau_{s_0}^{\sss(2)}(k)\sum_{\varnothing\ne I\subset J_1}~\ddsum_{s_1=2\vep}
     ^{\underline t_{J\setminus I}-s_0}~\ddsum_{s_2=2\vep}^{\underline t_I-s_0}
     \ind{(s_1,s_2)\neq (2\vep,2\vep)}\hat\psi_{s_1,s_2}(k_{J\setminus I},k_I)~\hat\tau_{\vec t_{J\setminus I}
     -s_1-s_0}^{\sss(r_1)}(\vec k_{J\setminus I})~\hat\tau_{\vec t_I-s_2-s_0}
     ^{\sss(r_2)}(\vec k_I^)\\
     &\qquad \leq C_{\psi}(\bar{t}+1)^{r-3} \ddsum_{s_0=0}^{\underline t-2\vep}
     \ddsum_{\stackrel{s_1,s_2=2\vep}{s_1\vee s_2>2\vep}}^{T\log{T}} (\delta_{s_1,s_2}+\beta_{\sT})(b_{s_1,s_2}^{\smallsup{\vep}}
    +b_{s_2,s_1}^{\smallsup{\vep}})\leq O(T^{r-2-\alphaminn}).
    \nonumber
    \enalign
Fix $\vec{k}$ with $\sum_{i=1}^{r-1}|k_i|^2$ bounded.
To abbreviate the notation, we now write
$\veckT = \vec{k}/\sqrt{\sigma^2_{\sT} T}$.
By \refeq{psimain} and \refeq{Deltadef},
    \eq
    \hat\psi_{2\vep,2\vep}(\veckT_{J\setminus I},\veckT_I)-\hat{\psi}_{2\vep,2\vep}(0,0)
    = O(\vep |k|^2 T^{-1}),
    \en
and, by \refeq{psimain} and the fact that $\lamb_{\sT}=1+O(T^{-\mu})$,
    \eq
    \hat{\psi}_{2\vep,2\vep}(0,0)=\lambda_{\sT}\vep(2-\vep)
    =\vep(2-\vep)+O(\vep T^{-\mu}).
    \en
As a result, we obtain that
    \eqalign
    \hat\tau_{\vec t}^{\sss(r)}(\veckT)
    &=\vep(2-\vep)\ddsum_{s_0=0}^{\underline t-2\vep}\hat
     \tau_{s_0}^{\sss(2)}(k^{\sss(T)})\sum_{\varnothing\ne I\subset J_1}~
     \hat\tau_{\vec t_{J\setminus I}-2\vep-s_0}^{\sss(r_1)}
     (\veckT_{J\setminus I})~\hat\tau_{\vec t_I-2\vep-s_0}^{\sss(r_2)}(\veckT_I)
     +O(T^{r-2-\mu})+O(|k|^2 T^{r-3}).
     \enalign
The remainder of the argument can now be completed as in \refeq{taur.1}--\refeq{3lim},
using the induction hypothesis in \refeq{aim3ptlowdim} instead of the one in
\refeq{aim3pt}.
\qed

\subsection{The continuum limit}
\label{sec-outlinedisc}
In this section we state the results concerning the
continuum limit when $\vep \downarrow 0$.
This proof will crucially rely on the convergence
of $A^{\smallsup{\vep}}, V^{\smallsup{\vep}}$ and $v^{\smallsup{\vep}}$
when $\vep \downarrow 0$.
The convergence of $A^{\smallsup{\vep}}$ and $v^{\smallsup{\vep}}$
was proved in \cite[Proposition 2.6]{hsa04}, so we are left to study
$V^{\smallsup{\vep}}$.
When $1\leq d\leq 4$, we have that the role of
$A^{\smallsup{\vep}}, V^{\smallsup{\vep}}$ and $v^{\smallsup{\vep}}$ are taken by
$A^{\smallsup{\vep}}=1, V^{\smallsup{\vep}}=2-\vep$ and $v^{\smallsup{\vep}}=1$,
so there is nothing to prove. Thus, we are left to study the convergence of
$V^{\smallsup{\vep}}$ when $\vep \downarrow 0$ for
$d>4$.

%%%%%%%%%%%PROPOSITION%%%%%%%%%
\begin{prop}[{\bf Continuum limit}]\label{prop-disc}
Fix $d>4$. Suppose that $\lambda^{\smallsup{\vep}}\rightarrow \lambda$ and
$\lambda^{\smallsup{\vep}} \leq \lambda^{\smallsup{\vep}}_{\rm c}$
for $\vep$ sufficiently small. Then,
there exists a finite and positive constant $V=2+O(\beta)$ such that
    \eq\lbeq{convV}
    \lim_{\varepsilon \downarrow 0} V^{\smallsup{\vep}}=V.
    \en
\end{prop}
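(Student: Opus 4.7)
The plan is to combine the uniform bounds from Proposition \ref{thm-psivphibd}(i) with a subsequential limits argument, then use the three-point function asymptotics of Theorem \ref{thm:rptvep}(i) together with weak convergence of the discretized contact process to the continuous one to identify the limit.

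First, I would establish uniform boundedness of $V^{\smallsup{\vep}}$. Splitting the sum \refeq{Vpsi} into the $(s_1,s_2)=(2\vep,2\vep)$ contribution and the remainder, a direct computation from \refeq{psimainrep} gives
\begin{equation*}
\hat\psi_{2\vep,2\vep}(0,0) \;=\; \hat{p}_\vep(0)\big[\hat{p}_\vep(0)^2-\|p_\vep\|_2^2\big] \;=\; \vep(2-\vep)+O(\vep\beta),
\end{equation*}
using $\|D\|_2^2 = O(L^{-d})=O(\beta)$ and $\lambda^{\smallsup{\vep}}=1+O(\beta)$. The remainder is controlled by combining \refeq{psibd} with the summability estimate \refeq{bs-sum} for the off-diagonal piece, plus a separate summation of $b^{\smallsup{\vep}}_{s,s}$ on the diagonal $s_1=s_2>2\vep$; both contributions are $O(\beta)$ uniformly in $\vep$. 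Hence $V^{\smallsup{\vep}} = 2-\vep + O(\beta)$ uniformly in $\vep\in(0,1]$, so in particular $\{V^{\smallsup{\vep}}\}_\vep$ lies in a compact interval.

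Next, by Bolzano--Weierstrass, any sequence $\vep_n\downarrow 0$ admits a subsequence $\vep_{n_k}$ along which $V^{\smallsup{\vep_{n_k}}}\to V^\star$ for some $V^\star=2+O(\beta)$. To show $V^\star$ is independent of the subsequence, I would invoke Theorem \ref{thm:rptvep}(i) with $r=3$, $\vec k=\vec 0$ and $\vec t=T(1,1)$: uniformly in $\vep\in(0,1]$ and as $T\uparrow\infty$,
\begin{equation*}
\hat\tau^{\smallsup{3}}_{T(1,1);\vep}(\vec 0) \;=\; (A^{\smallsup{\vep}})^3\,V^{\smallsup{\vep}}\,T\,\big(\hat{M}^{\smallsup{2}}_{(1,1)}(\vec 0)+O(T^{-\delta})\big),
\end{equation*}
where $\hat{M}^{\smallsup{2}}_{(1,1)}(\vec 0)=\int_0^1 dt = 1$ from \refeq{M1-def}--\refeq{Mr-def}. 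For each fixed $T$, the finite-dimensional marginals of the discretized contact process converge to those of the continuous contact process as $\vep\downarrow 0$ (\cite{bg91}), so $\hat\tau^{\smallsup{3}}_{T(1,1);\vep}(\vec 0)\to\hat\tau^{\smallsup{3}}_{T(1,1)}(\vec 0)$. Combined with $A^{\smallsup{\vep}}\to A$ (\cite[Proposition 2.6]{hsa04}) and the uniformity of the $O(T^{-\delta})$ error in $\vep$, passing to the limit along $\vep_{n_k}$ yields $T^{-1}\hat\tau^{\smallsup{3}}_{T(1,1)}(\vec 0) = A^3 V^\star + O(T^{-\delta})$. Sending $T\uparrow\infty$ shows $A^3 V^\star = \lim_{T\uparrow\infty}T^{-1}\hat\tau^{\smallsup{3}}_{T(1,1)}(\vec 0)$, a quantity determined entirely by the continuous contact process. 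Hence $V^\star$ is independent of the subsequence, and $V^{\smallsup{\vep}}$ converges to the common value $V=V^\star$.

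The main obstacle is justifying the pointwise convergence $\hat\tau^{\smallsup{3}}_{T\vec t;\vep}(\vec 0)\to\hat\tau^{\smallsup{3}}_{T\vec t}(\vec 0)$ for fixed $T$. This follows from the graphical coupling of the discretized and continuous contact processes on a common probability space (\cite{bg91}) together with dominated convergence, since the $r$-point function is a bounded and almost-surely continuous functional of the underlying Poisson data at fixed times. A secondary subtlety is that Theorem \ref{thm:rptvep}(i) is stated at $\lambda=\lambda_{\rm c}^{\smallsup{\vep}}$, whereas the hypothesis of the proposition allows $\lambda^{\smallsup{\vep}}\le\lambda_{\rm c}^{\smallsup{\vep}}$; extending the bounds of Proposition \ref{thm-psivphibd}(i) throughout the subcritical regime follows from monotonicity of the expansion coefficients in $\lambda$, so the argument above applies at the limiting parameter $\lambda=\lim_\vep\lambda^{\smallsup{\vep}}$ without modification.
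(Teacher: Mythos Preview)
Your proof is correct and takes essentially the same approach as the paper: both identify $V$ via the $r=3$ case of Theorem~\ref{thm:rptvep}(i) at $\vec k=\vec 0$, combined with the convergence $\hat\tau^{\smallsup{3}}_{(t,t);\vep}(0,0)\to\hat\tau^{\smallsup{3}}_{(t,t)}(0,0)$ and $A^{\smallsup{\vep}}\to A$, and then a subsequential argument to pin down the limit. The only cosmetic difference is the order of operations---you extract a convergent subsequence in $\vep$ first (using the already-established boundedness $V^{\smallsup{\vep}}=2-\vep+O(\beta)$) and then send $T\uparrow\infty$, whereas the paper sandwiches $\liminf/\limsup_{\vep} V^{\smallsup{\vep}}$ for each fixed $t$ and then takes a subsequence in $t$ via a tree-graph bound on $t^{-1}\hat\tau^{\smallsup{3}}_{(t,t)}(0,0)$; your route in fact avoids that tree-graph step.
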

%%%%%%%%%%%PROPOSITION%%%%%%%%%
\noindent
Before proving Proposition \ref{prop-disc}, we first
complete the proof of Theorem~\ref{thm:rpt}.
\vskip0.3cm

\noindent
{\it Proof of Theorem~\ref{thm:rpt}.} We start by proving Theorem~\ref{thm:rpt}{\rm (i)}.
We first claim that
$\lim_{\vep \downarrow 0} \hat\tau_{\vec t;\vep}^{\lambc^{\smallsup{\vep}}}(\vec k)
=\hat \tau_{\vec t}^{\lambc}(\vec k)$. For this, the argument in
\cite[Section 2.5]{hsa04} can easily be adapted from the 2-point function to
the higher-point functions.

Using the convergence of $\hat\tau_{\vec t;\vep}^{\lambc^{\smallsup{\vep}}}(\vec k)$,
together with Theorem \ref{thm:rptvep}(i) and the uniformity of the
error term in \refeq{taurasyvep} in $\vep\in (0,1]$, to obtain
\begin{align}
\hat\tau_{T \vec t}^{\lambc}\big(\tfrac{\vec k}{\sqrt{v\sigma^2T}}\big)&=\lim_{\vep\daw0}
 \hat\tau_{T\vec t;\vep}^{\lambc^{(\vep)}}\big(\tfrac{\vec k}{\sqrt{v\sigma^2T}}\big)
=\lim_{\vep\daw0}\hat\tau_{T\vec t;\vep}^{\lambc^{(\vep)}}\Big(\tfrac{\sqrt{v^{
 (\vep)}}}{\sqrt{v}}\tfrac{\vec k}{\sqrt{v^{(\vep)}\sigma^2T}}\Big)\nn\\
&=\lim_{\vep\daw0}  A^{\smallsup{\vep}}((A^{\smallsup{\vep}})^2V^{\smallsup{\vep}}T)^{r-2}
\big[\wM_{\vec{t}}^{\smallsup{r-1}}\big(\tfrac{\sqrt{v^{
 (\vep)}}}{\sqrt{v}}\vk\big)
    +O(T^{-\delta})\big] \nn\\
&=A(A^2Vt)^{r-2}
\big[\wM_{\vec{t}}^{\smallsup{r-1}}(\vk)
    +O(T^{-\delta})\big],
    \lbeq{continlimproof}
\end{align}
where we have made use of the convergence of $v^{\smallsup{\vep}}$ to $v$,
and the fact that $\vk\mapsto \wM_{\vec{t}}^{\smallsup{r-1}}(\vk)$ is
continuous. This proves \refeq{taurasy}.

The proof of Theorem \ref{thm:rptvep}(ii) is similar, where on the right-hand side
of \refeq{continlimproof} we need to replace \ch{$A, A^{\smallsup{\vep}}, v$ and
$v^{\smallsup{\vep}}$ by 1,
$V^{\smallsup{\vep}}$ by $2-\vep$, $V$ by 2 and $\delta$ by $\mu\wedge\delta$.}
\qed

\vskip0.3cm

\noindent
{\it Proof of Proposition \ref{prop-disc}.}
The proof of the continuum limit is substantially different from the
proof used in \cite{hsa04}, where, among other things, it was shown that
$A^{\smallsup{\vep}}$ and $v^{\smallsup{\vep}}$ converge as $\vep\downarrow
0$. The main idea behind the argument in this paper also applies to
the convergence of $A^{\smallsup{\vep}}$ and $v^{\smallsup{\vep}}$,
as we first show. This simpler argument leads to an alternative proof
of the convergence of $A^{\smallsup{\vep}}$ and $v^{\smallsup{\vep}}$.

For this proof, we use \cite[Proposition 2.1]{hsa04}, which states that,
uniformly in $\vep\in (0,1]$,
    \eq
    \hat\tau_{t;\vep}^{\lambc^{(\vep)}}(0)=A^{\smallsup{\vep}}
    [1+O(t^{-(d-4)/2})].
    \en
The uniformity of the error term can be reformulated by saying that
    \eq
    \lbeq{taueq}
    \hat\tau_{t;\vep}^{\lambc^{(\vep)}}(0)=A^{\smallsup{\vep}}
    [1+\gamma_{\vep}(t)],
    \en
where
    \eq
    \lbeq{gammabd}
    \gamma(t) = \sup_{\vep\in (0,1]}|\gamma_{\vep}(t)|=O((t+1)^{-(d-4)/2}).
    \en
Therefore, we obtain that, uniformly in $\vep\in (0,1]$ and $t\geq 0$,
    \eq
    \hat\tau_{t;\vep}^{\lambc^{(\vep)}}(0) [1+\gamma(t)]^{-1}\leq
    A^{\smallsup{\vep}}
    \leq \hat\tau_{t;\vep}^{\lambc^{(\vep)}}(0) [1-\gamma(t)]^{-1}.
    \en
Now we take the limit $\vep\downarrow 0$, and use that,
as proved in \cite[Section 2.4]{hsa04}, we have $\lim_{\vep \downarrow 0} \hat\tau_{t;\vep}^{\lambc^{\smallsup{\vep}}}(0)
=\hat \tau_{t}^{\lambc}(0)$, to obtain that
    \eq
    \lbeq{sandwich}
    \hat \tau_{t}^{\lambc}(0) \frac{1}{1+\gamma(t)}
    \leq
    \liminf_{\vep\downarrow 0} A^{\smallsup{\vep}}\leq \limsup_{\vep\downarrow 0} A^{\smallsup{\vep}}
    \leq \hat \tau_{t}^{\lambc}(0)\frac{1}{1-\gamma(t)}.
    \en

Since $A^{\smallsup{\vep}}=1+O(\beta)$, uniformly in $\vep\in (0,1]$, we see from
\refeq{taueq} that $t\mapsto \hat\tau_{t;\vep}^{\lambc}(0)$ is a bounded sequence.
Therefore,
%Since $\lim_{\vep \downarrow 0} \hat\tau_{t;\vep}^{\lambc^{\smallsup{\vep}}}(0)
%=\hat \tau_{t}^{\lambc}(0)$,
we conclude that also $\hat \tau_{t}^{\lambc}(0)$
is uniformly bounded in $t\geq 0$. Therefore, \ch{there exists a subsequence of
times $\{t_l\}_{l=1}^{\infty}$ satisfying $t_l\rightarrow \infty$ such that
$\hat \tau_{t_l}^{\lambc}(0)$ converges
as $l\rightarrow \infty$.} Denote the limit of $\hat \tau_{t_l}^{\lambc}(0)$
by $A$. Then we obtain from \refeq{gammabd} and \refeq{sandwich} that
    \eq
    A
    \leq
    \liminf_{\vep\downarrow 0} A^{\smallsup{\vep}}\leq \limsup_{\vep\downarrow 0} A^{\smallsup{\vep}}
    \leq A,
    \en
so that $\lim_{\vep\downarrow 0} A^{\smallsup{\vep}}=A$. This completes the proof
of convergence of $A^{\smallsup{\vep}}$. A similar proof can also be used to
prove that the limit $\lim_{\vep\downarrow 0} v^{\smallsup{\vep}}=v$ exists.

On the other hand, the proof in \cite{hsa04} was based on the explicit
formula for $A^{\sss(\vep)}$, which reads
    \eq
    \lbeq{Avep-def}
    A^{\sss(\vep)}=
    \frac{\dpst1+\ddsum_{s=2\vep}^\infty\hat\pi_{s;\vep}^{\lambc^{(\vep)}}\!(0)}
    {\dpst1+\frac1\vep\ddsum_{s=2\vep}^\infty s\;\hat\pi^{
    \lambc^{\smallsup{\vep}}}_{s;\vep}\!(0)\,\hat p_\vep^{\lambc^{(\vep)}}\!
    (0)},
    \en
where $\hat p_\vep^\lamb(k)=1-\vep+\lamb\vep\hat D(k)$, and on the fact
that $\frac1{\vep^2}\hat\pi_{s;\vep}^{\lambc^{(\vep)}}(0)$ converges as
$\vep\daw0$ for every $s>0$.  This proof was much more involved, but also
allowed us to give a formula for $A$ in terms of the pointwise limits of
$\frac1{\vep^2}\hat\pi_{s;\vep}^{\lambc^{(\vep)}}(0)$ as $\vep\daw0$.

For the convergence of $V^{\smallsup{\vep}}$, we adapt the above simple argument
proving convergence of $A^{\sss(\vep)}$.
We use \refeq{taurasyvep} for $r=3$, $\vec t=(t,t)$ and $\vec k=0$, rewritten in the following way:
    \eq
    \hat{\tau}_{(t,t);\vep}^\smallsup{3}(0,0)
    = \big(A^{\smallsup{\vep}}\big)^3V^{\smallsup{\vep}} t
    \big(1
    + \gamma_{\vep}(t)\big),
    \lbeq{res3pt}
    \en
where the error term satisfies $\gamma_{\vep}(t)=O((t+1)^{-\delta})$
uniformly in $\vep$. Therefore,
    \eq
    \gamma(t) = \sup_{\vep\in (0,1]}|\gamma_{\vep}(t)|
    =O\big((t+1)^{-\delta}\big).
    \en
We conclude that
    \begin{align}
    \frac{(A^{\sss(\vep)})^{-3}}{(1+\gamma(t))\,t}\,\hat{\tau}_{(t,t);
     \vep}^{\sss(3)}(0,0)\leq V^{\sss(\vep)}\leq\frac{(A^{\sss(\vep)})
     ^{-3}}{(1-\gamma(t))\,t}\,\hat{\tau}_{(t,t);\vep}^{\sss(3)}(0,0)
    \end{align}
We next let $\vep\daw0$, and use that the limits
    \begin{align}
    \lim_{\vep\daw0}\hat\tau_{(t,t);\vep}^{\sss(3)}(0,0)=\hat\tau_{(t,t)}
     ^{\sss(3)}(0,0),&&
    \lim_{\vep\daw0}A^{\sss(\vep)}=A
    \end{align}
both exist, so that
    \begin{align}
    \frac{A^{-3}}{(1+\gamma(t))\,t}\,\hat{\tau}_{(t,t)}^{\sss(3)}(0,0)
     \leq\liminf_{\vep\daw0}V^{\sss(\vep)}\leq\limsup_{\vep\daw0}V^{\sss
     (\vep)}\leq\frac{A^{-3}}{(1-\gamma(t))\,t}\,\hat{\tau}_{(t,t)}^{\sss
     (3)}(0,0).
    \end{align}
The above bounds are true for {\it any $t$}. Moreover, by the tree-graph inequality
and the fact that $\hat{\tau}_{t}^\smallsup{2}(0)$ is uniformly bounded in $t$,
we know that $\frac1t \hat{\tau}_{(t,t)}^\smallsup{3}(0,0)$ is uniformly bounded in $t$.
Indeed, the tree-graph inequality yields that
    \eq
    \hat{\tau}_{(t,t)}^\smallsup{3}(0,0)\leq 2\int_{0}^t
    \hat{\tau}_{s}^\smallsup{2}(0)(\lambda D\sstar \hat{\tau}_{t-s}^\smallsup{2})(0)
    \hat{\tau}_{t-s}^\smallsup{2}(0)ds.
    \en
Since $\hat{\tau}_{s}^\smallsup{2}(0)$ is uniformly bounded in $s$ by $K$, say,
we obtain that, uniformly in $t$,
    \eq
    \hat{\tau}_{(t,t)}^\smallsup{3}(0,0)\leq 2\lambda K^3 t.
    \en
Therefore, there exists a subsequence $\{t_l\}_{l=1}^{\infty}$
with $\lim_{l\rightarrow \infty} t_l=\infty$
such that the limit
    \eq
    \lim_{l\rightarrow \infty} \frac1{t_l} \hat{\tau}_{(t_l,t_l)}^\smallsup{3}(0,0)
    \equiv A^3 V
    \en
exists. Then, using that $\gamma(t)=o(1)$ as $t\rightarrow \infty$,
we come to the conclusion that
    \eq
    V=A^{-3}(A^3V)\leq\liminf_{\vep\daw0}V^{\smallsup{\vep}}
    \leq\limsup_{\vep\daw0}V^{\smallsup{\vep}}\leq A^{-3}(A^3V)=V,
    \en
that is, $\lim_{\vep\daw0}V^{\sss(\vep)}=V$.  This completes
the proof of Proposition~\ref{prop-disc}.
\qed

\newcommand{\piv}{{\tt piv}}
\newcommand{\Lb}[2]{\cL_{#1}^{\sss(#2)}}
\newcommand{\ta}{{\bar a}}
\newcommand{\bba}{{\underline a}}
\newcommand{\sN}{\sss N}

\newcommand{\bL}{{\bf L}}
\newcommand{\bS}{{\bf S}}
\newcommand{\eb}{\underline{e}}
\newcommand{\mB}{{\mathbb B}}
\newcommand{\Fone}{F'}
\newcommand{\Ftwo}{H}
\newcommand{\Pno}{P_\vno}
\newcommand{\tPno}{\tilde P_\vno}
\newcommand{\te}{\overline{e}}
\newcommand{\vno}{\varnothing}

\newcommand{\sB}{{\sss B}}

%\newcommand{\Edef}[3]
% {\sideset{\raisebox{7pt}{$#1$}\hspace{18pt}\raisebox{-15pt}{$#2$}}{}
% {\mathop{\raisebox{-16pt}{\hspace{-27pt}\includegraphics[scale=0.25]
% {Edef}}}}^{\hspace{-3pt}\raisebox{3pt}{$#3$}}}
%
%\newcommand{\EEdef}[3]
% {\overbrace{\sideset{\raisebox{-2pt}{$#1$}}{}{\mathop{\raisebox{-5pt}
% {\hspace{-20pt}\includegraphics[scale=0.22]{EEdef}}}}_{\hspace{5pt}
% \raisebox{-5pt}{$#2$}}}^{\dpst #3}}

\section{Linear expansion for the $r$-point function}\label{s:lace}
In this section, we derive the expansion \refeq{taunxvecr2} which
extracts an explicit $r$-point function $\tau(\vec{\xvec}_J-\vvec)$,
and an unexpanded contribution $A(\vec{\xvec}_J)$.  In
Section~\ref{s:exp-applexp}, we investigate $A(\vec{\xvec}_J)$ using
two expansions.  The first of these expansions extracts a factor
$\tau(\vec\xvec_{J\setminus I}-\yvec_1)$ from $A(\vec{\xvec}_J)$
in \refeq{BCE}, and the
second expansion extracts a factor $\tau(\vec\xvec_I-\yvec_2)$
from $A(\vec{\xvec}_J)$. This will lead to \refeq{Adec}--\refeq{BCE}.

From now on, we suppress the dependence on $\lambda$ and $\vep$ when no
confusion can arise.  The $r$-point function is defined by
\begin{align}\lbeq{rpt-remind}
\tau(\vec\xvec_J)=\mP(\ovec\conn\vec\xvec_J),
\end{align}
where we recall the notation \refeq{JJ1} and \refeq{conn-all}.
Rather than expanding \refeq{rpt-remind}, we expand a generalized version
of the $r$-point function defined below.

\begin{defn}[\ch{\textbf{Connections through $\bC$}}]\label{def:through}
{\rm Given a configuration and a set of sites $\bC$, we say that $\yvec$ is
connected to $\xvec$ {\it through} $\bC$, if every occupied path from $\yvec$
to $\xvec$ has at least one bond with an endpoint in $\bC$. This event is
written as $\yvec\ct{\bC}\xvec$.  Similarly, we write}
\begin{align}\lbeq{through}
\big\{\yvec\ct{\bC}\vec\xvec_J\big\}=\{\yvec\conn\vec\xvec_J\}\cap
\big\{\exists\,j\in J\text{ such that }\yvec\ct{\bC}\xvec_j\big\}.
\end{align}
\end{defn}

Below, we derive an expansion for $\mP\big(\vvec\ct{\bC}\vec\xvec_J\big)$.
This is more general than an expansion for the $r$-point function
$\tau(\vec\xvec_J)$, since
\begin{align}\lbeq{taur-rewr}
\tau(\vec\xvec_J)=\mP\big(\ovec\ctx{\{\ovec\}}\vec\xvec_J\big).
\end{align}
Thus, to obtain the linear expansion for the $r$-point function,
we need to specialize to $\yvec=\ovec$ and $\bC=\{\ovec\}$.
Before starting with the expansion, we introduce some further notation.

\begin{defn}[{\bf Clusters and pivotal bonds}]
{\rm Let $\bC(\xvec)=\{\yvec\in\Lambda:\xvec\conn\yvec\}$ denote the forward cluster
of $\xvec\in\Lambda$.  Given a bond $b$, we define $\tilde\bC^b(\xvec)\subseteq
\bC(\xvec)$ to be the set of sites to which $\xvec$ is connected in the
(possibly modified) configuration in which $b$ is made vacant. We say that $b$
is \emph{pivotal for} $\xvec\conn\yvec$ if
$\yvec\in\bC(\xvec)\setminus\tilde\bC^b(\xvec)$, i.e., if $\xvec$ is connected
to $\yvec$ in the possibly modified configuration in which the bond is made
occupied, whereas $\xvec$ is not connected to $\yvec$ in the possibly modified
configuration in which the bond is made vacant.}
%\item We say that $\xvec$ is {\em doubly connected to} $\yvec$,
%and we write $\xvec\dbc\yvec$, if $\xvec\conn\yvec$ but there are
%no pivotal bonds for $\xvec\conn\yvec$, i.e., if there are at least
%two bond-disjoint occupied paths from $\xvec$ to $\yvec$.
%By convention, we say that $\xvec\dbc\xvec$ for all $\xvec$.

\end{defn}

\ch{
\begin{rem}[\textbf{Clusters as collections of bonds}]
\label{rem-clusbonds}
We shall also often view $\bC(\xvec)$ and $\tilde\bC^b(\xvec)$
as collections of bonds, and abuse notation
to write, for a bond $a$, that $a\in \bC(\xvec)$
(resp.\ $a\in \tilde\bC^b(\xvec)$) when $\underline{a}\in \bC(\xvec)$ and $a$ is occupied
(resp.\ $\underline{a}\in \tilde\bC^b(\xvec)$ and $a$ is occupied).
\end{rem}
}
We now start the first step of the expansion.  For a bond
$b=(\xvec,\yvec)$, we write $\bb=\xvec$ and $\tb=\yvec$.  The event
$\{\vvec\ct{\bC}\vec\xvec_J\}$ can be decomposed into two disjoint
events depending on whether there is or is not a common pivotal bond $b$
for $\vvec\conn\xvec_j$ for all $j\in J$ such that $\vvec\ct{\bC}\bb$.  Let
\begin{align}
E'(\vvec,\vec\xvec_J;\bC)&=\big\{\vvec\ct{\bC}\vec\xvec_J\big\}\cap
 \big\{\nexists\text{ pivotal bond $b$ for }\vvec\conn\xvec_j~\forall
 j\in J\text{ such that }\vvec\ct{\bC}\bb\big\}.\lbeq{E'def}
\end{align}
See Figure~\ref{fig-E} for schematic representations of
$E'(\vvec,\xvec;\bC)$ and $E'(\vvec,\vec\xvec_J;\bC)$.
%%%%%FIGFIGFIGFIGFIGFIGFIGFIGFIGFIGFIGFIGFIGFIGFIGFIGFIGFIGFIGFIG
\begin{figure}[t]
\begin{center}
%\setlength{\unitlength}{0.00067in}
%{
%\begin{picture}(1000,3000)(0,500)
%{
%\put(500, 2950){\ellipse{200}{400}}
%\path(500,2600)(500,2750)
%\put(500, 2400){\ellipse{200}{400}}
%\path(500,2050)(500,2200)
%\put(500, 1850){\ellipse{200}{400}}
%\path(500,1500)(500,1650)
%\put(500, 1300){\ellipse{200}{400}}
%\path(500,950)(500,1100)
%\put(500, 750){\ellipse{200}{400}}
%
%\put(250,450){\makebox(0,0)[lb]{$b$}}
%\put(430,3250){\makebox(0,0)[lb]{$\xvec$}}
%\put(-300,2000){\makebox(0,0)[lb]{$\bC$}}
%
%\thicklines
%\qbezier(-200,1000)(0,2500)(650,3100)
%\path(430,550)(570,550)
%\path(430,450)(570,450)
%\thinlines
%}
%\end{picture}
%}\hskip 7pc
%\includegraphics[scale=0.3]{Edef}
\begin{align*}
E'(\vvec,\xvec;\bC)~=\quad
 \raisebox{-3.5pc}{\includegraphics[scale=0.20]{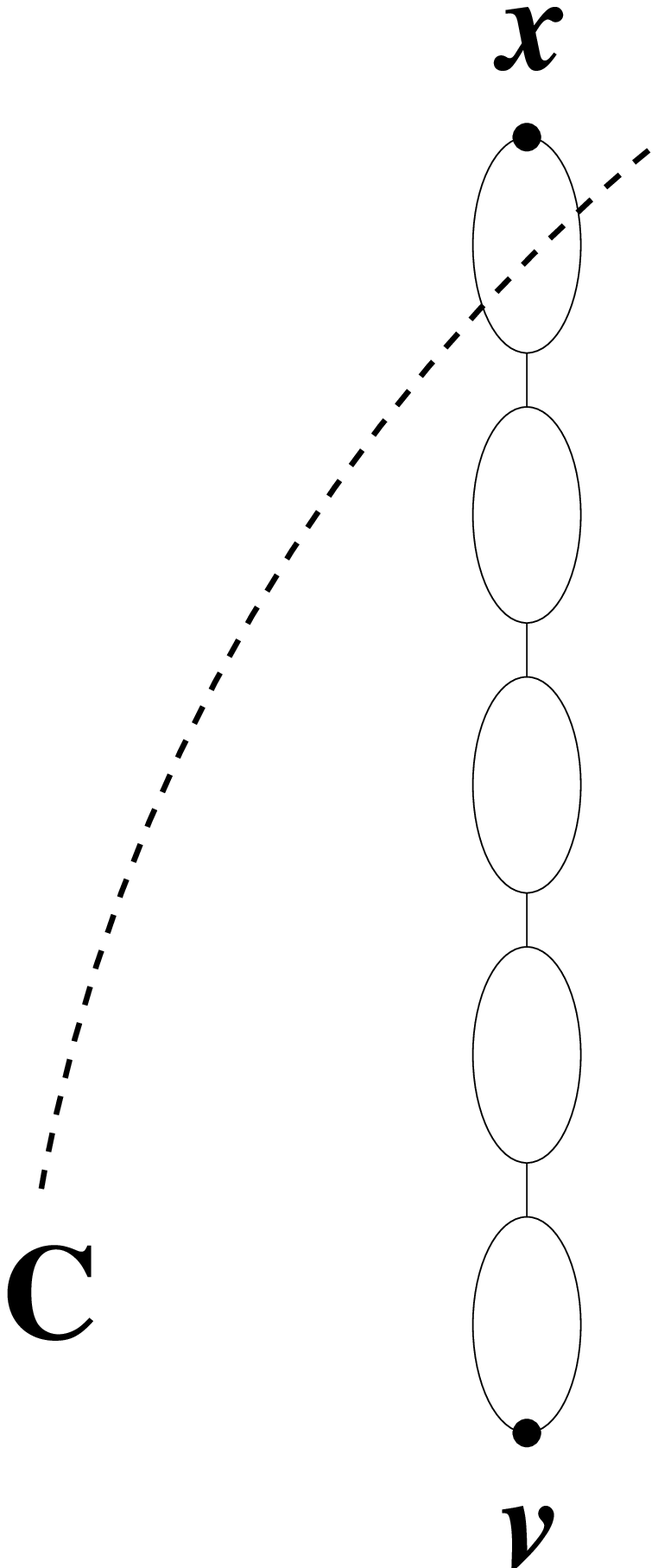}}&&
E'(\vvec,\vec\xvec_J;\bC)~=
  \raisebox{-4pc}{\includegraphics[scale=0.20]{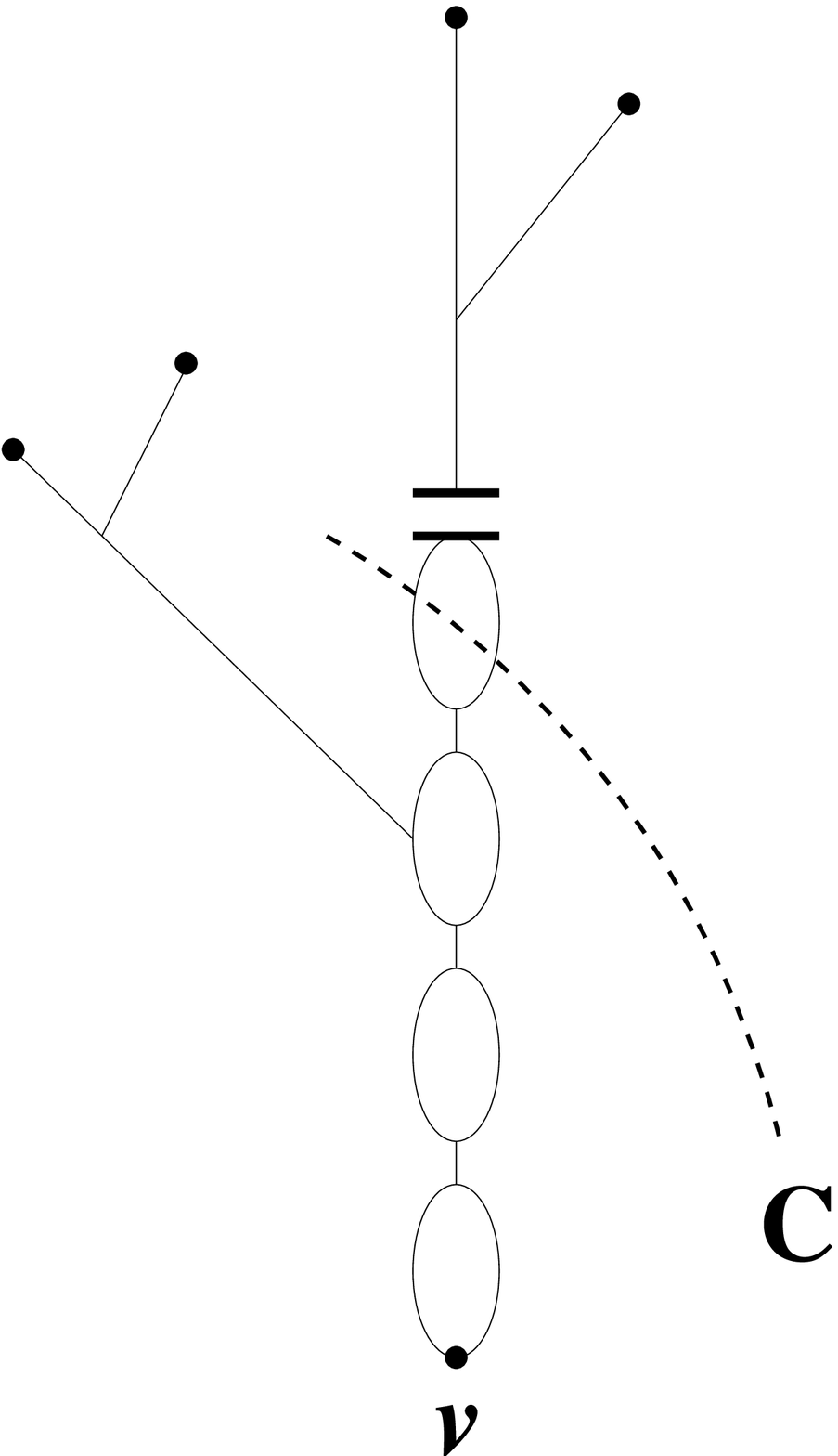}}
%\EEdef{\bC}{\vvec}{\vec\xvec_J}&&\Edef{\bC}{b}{\xvec}
\end{align*}
\caption{\label{fig-E}Schematic representations of
$E'(\vvec,\xvec;\bC)$ and $E'(\vvec,\vec\xvec_J;\bC)$.
The vertices at the top of the right figure are the components
of $\vec\xvec_J$.}
\end{center}
\end{figure}
%%%%%FIGFIGFIGFIGFIGFIGFIGFIGFIGFIGFIGFIGFIGFIGFIGFIGFIGFIGFIG

If there are such pivotal bonds, then we take the {\it first} bond among them.
This leads to the following \ch{partition}:

\begin{lem}[\textbf{Partition}]
\ch{For every $\vvec\in \Lambda$, $\vec\xvec_J\in \Lambda^{r-1}$ and $\bC\subseteq \Lambda$,}
\begin{align}\lbeq{decompE2}
\big\{\vvec\ct{\bC}\vec\xvec_J\big\}=E'(\vvec,\vec\xvec_J;\bC)\DDcup
\BDcup{b}\Big\{E'(\vvec,\bb;\bC)\cap\{b\text{ is occupied \& pivotal for }
\vvec\conn\xvec_j~\forall j\in J\}\Big\}.
\end{align}
\end{lem}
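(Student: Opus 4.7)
The plan is to decompose the event $\{\vvec\ct{\bC}\vec\xvec_J\}$ according to the collection
\[
\mathcal{P}=\mathcal{P}(\vvec,\vec\xvec_J;\bC)=\bigl\{b : b\text{ is occupied and pivotal for }\vvec\conn\xvec_j~\forall j\in J,~\text{and }\vvec\ct{\bC}\bb\bigr\}.
\]
When $\mathcal{P}=\varnothing$, the configuration lies in $\{\vvec\ct{\bC}\vec\xvec_J\}$ by hypothesis and in $E'(\vvec,\vec\xvec_J;\bC)$ by the very definition \refeq{E'def}. When $\mathcal{P}\ne\varnothing$, I use the fact that pivotal bonds for connections in an oriented model are totally ordered by their time coordinate (with any fixed rule to break ties among bonds at the same time), so that a unique \emph{first} element $b\in\mathcal{P}$ exists. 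I will label the configuration by this first pivotal $b$, yielding the disjoint union over $b$ on the right-hand side of \refeq{decompE2}.

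The next step is to verify that, when $b$ is the first element of $\mathcal{P}$, the portion of the configuration from $\vvec$ up to $\bb$ is described by $E'(\vvec,\bb;\bC)$. The inclusion $\{b\text{ is first in }\mathcal{P}\}\subseteq E'(\vvec,\bb;\bC)$ is where the main work lies. The cluster-through-$\bC$ condition $\vvec\ct{\bC}\bb$ holds by membership of $b$ in $\mathcal{P}$. Suppose, toward a contradiction, that there were a pivotal bond $a$ for $\vvec\conn\bb$ with $\vvec\ct{\bC}\ba$. Since every occupied path $\vvec\to\xvec_j$ must pass through $b$ and hence through $\bb$, any bond pivotal for $\vvec\conn\bb$ is also pivotal for $\vvec\conn\xvec_j$ for every $j\in J$. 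Hence $a\in\mathcal{P}$, but $a$ occurs strictly earlier in time than $b$, contradicting minimality of $b$. Conversely, on $E'(\vvec,\bb;\bC)\cap\{b\text{ is occupied \& pivotal for }\vvec\conn\xvec_j~\forall j\in J\}$, the bond $b$ belongs to $\mathcal{P}$; any earlier $a\in\mathcal{P}$ would in particular be a pivotal bond for $\vvec\conn\bb$ with $\vvec\ct{\bC}\ba$, contradicting the no-pivotal condition in $E'(\vvec,\bb;\bC)$. This establishes the set equality for each $b$.

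Finally, I will check pairwise disjointness on the right-hand side. The first term and any of the $b$-indexed terms are disjoint because the former requires $\mathcal{P}=\varnothing$ while the latter requires $b\in\mathcal{P}$. For two different bonds $b\ne b'$, the respective events force $b$ (resp.\ $b'$) to be the first element of $\mathcal{P}$, which a single configuration cannot satisfy simultaneously. Combined with the fact that every configuration in $\{\vvec\ct{\bC}\vec\xvec_J\}$ falls into exactly one of the listed classes, this yields the disjoint decomposition \refeq{decompE2}.

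The main obstacle is the bookkeeping in the middle paragraph: turning the intuitive picture of \textquotedblleft first pivotal bond common to all $\xvec_j$\textquotedblright\ into the precise statement that the portion before $b$ has no such pivotal bond of its own, which is exactly what $E'(\vvec,\bb;\bC)$ encodes. The rest is formal.
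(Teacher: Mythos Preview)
Your argument is correct and is exactly the standard partition-by-first-common-pivotal argument that the cited reference \cite[Lemma~3.3]{HHS05b} contains; the paper itself simply defers to that reference. One small remark: your parenthetical about tie-breaking among pivotal bonds at the same time is unnecessary here, since in this oriented model any two pivotal bonds for a fixed connection $\vvec\conn\xvec_1$ (and hence any two elements of $\mathcal{P}$) lie at distinct times, so the ordering is strict.
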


\begin{proof}
See \cite[Lemma~3.3]{HHS05b}.
\end{proof}

Defining
\begin{align}\lbeq{A0predef}
A^{\sss(0)}(\vvec,\vec\xvec_J;\bC)=\mP(E'(\vvec,\vec\xvec_J;\bC)),
\end{align}
we obtain
\begin{align}\lbeq{0thexp}
\mP\big(\vvec\ct{\bC}\vec\xvec_J\big)=A^{\sss(0)}(\vvec,\vec\xvec_J;\bC)+
 \sum_b\mP\Big(E'(\vvec,\bb;\bC)\cap\{b\text{ is occupied \& pivotal for }
 \vvec\conn\xvec_j~\forall j\in J\}\Big).
\end{align}

For the second term, we will use a {\it Factorization Lemma}
(see \cite{HS00a}, and, in particular, \cite[Lemma 2.2]{HHS05b}).
To state that lemma below, we first introduce some
notation.

\ch{
\begin{defn}[\textbf{Occurring in and on}]\label{def:inon}
{\rm For a bond configuration $\omega$ and a certain set of bonds $\mB$,
we denote by $\omega|_\mB$ the bond configuration which agrees with
$\omega$ for all bonds in $\mB$, and which has all other bonds vacant.
Given a (deterministic or random) set of vertices $\bC$, we let
$\mB_\bC=\big\{b:\{\bb,\tb\}\subset\bC\big\}$ and say that,
for events $E$,}
\begin{align}\lbeq{in-def}
\{E\text{ occurs in }\bC\}=\{\omega:\,\omega|_{\mB_\bC}\!\in E\}.
\end{align}
{\rm We adopt the convenient convention that $\{\xvec \conn \xvec$ in $\bC\}$
occurs if and only if $\xvec\in \bC$.}
\end{defn}
}

We will often omit ``occurs'' and simply write $\{E$ in $\bC\}$.  For example,
we define the {\it restricted $r$-point function} $\tau^{\sss\bC}(\vvec,\vec\xvec_J)$
by
\begin{align}\lbeq{restr-rpt}
\tau^{\sss\bC}(\vvec,\vec\xvec_J)=\prob{\vvec\conn\vec\xvec_J\text{ in }
 \Lambda\setminus\bC},
\end{align}
where we emphasize that, by the convention below \refeq{in-def},
$\tau^{\sss\bC}(\vvec,\vec\xvec_J)=0$ when
$\vvec \in \bC$. Note that, by Definition~\ref{def:through},
\begin{align}\lbeq{incl-excl}
\tau^{\sss\bC}(\vvec,\vec\xvec_J)=\tau(\vec\xvec_J-\vvec)-\mP\big(\vvec
 \ct{\bC}\vec\xvec_J\big).
\end{align}
A nice property of the notion \ch{of occurring} ``in'' is
its compatibility with operations in set theory (see
\cite[Lemma~2.3]{HS00a}):
\begin{align}\lbeq{on/inprop}
    \{E^{\rm c}\text{ in }\bC\}=\{E\text{ in }\bC\}^{\rm c},&&
    \{E\cap F\text{ in }\bC\}=\{E\text{ in }\bC\}\cap\{F\text{ in }\bC\}.
\end{align}

The statement of the Factorization Lemma is in terms of
{\it two} independent percolation configurations. The laws
of these independent configurations are indicated by subscripts,
i.e., $\Ebold_{\sss 0}$ denotes the expectation with respect to
the first percolation configuration, and $\Ebold_{\sss 1}$
denotes the expectation with respect to the second percolation configuration.
We also use the same subscripts for random variables, to indicate which law
describes their distribution. Thus, the law of
$\bC^{b}_{\sss 0}(\wvec)$
is described by $\Ebold_{\sss 0}$.

%%%%%%%%%%%%%%%%% LEMMA %%%%%%%%%%%%%%%%%%%%%%%%%%%%%%%%
\begin{lem}[\textbf{Factorization Lemma \cite[Lemma 2.2]{HHS05b}}]
\label{lem-cut1}
Given a site $\wvec\in\Lambda$, fix $\lamb\geq0$ such that $\bC(\wvec)$ is
almost surely finite.  For a bond $b$ and events $E,F$ determined by the
occupation status of bonds with time variables less than or equal to $t$
for some $t<\infty$,
\begin{align}\lbeq{lemcut1}
    \mE\big[\ind{E\text{ in }\tilde\bC^b(\wvec)}\;\ind{F\text{ in }\Lambda
    \setminus\tilde\bC^b(\wvec)}\big]=\mE_{\sss 0}\Big[\ind{E\text{ in }\tilde\bC^b_{\sss 0}
    (\wvec)}\;\mE_{\sss 1}\big[\ind{F\text{ in }\Lambda\setminus\tilde\bC^b_{\sss 0}(\wvec)}
    \big]\Big].
\end{align}
Moreover, when $E\subset\{\bb\in\tilde\bC^b(\wvec)\}\cap\{\tb\notin
\tilde\bC^b(\wvec)\}$, the event in the left-hand side is independent
of the occupation status of $b$.
\end{lem}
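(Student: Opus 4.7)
The plan is to condition on the value of the (a.s.\ finite) set $\tilde\bC^b(\wvec)$ and then exploit the independence of the bond-occupation variables across disjoint sets of bonds. I would first write the left-hand side of \refeq{lemcut1} as a sum over finite sets $C\subset\Lambda$,
$$\mE\big[\ind{E\text{ in }\tilde\bC^b(\wvec)}\,\ind{F\text{ in }\Lambda\setminus\tilde\bC^b(\wvec)}\big]=\sum_C\mE\big[\ind{\tilde\bC^b(\wvec)=C}\,\ind{E\text{ in }C}\,\ind{F\text{ in }\Lambda\setminus C}\big].$$
The a.s.\ finiteness of $\bC(\wvec)$ justifies this partition, while the hypothesis that $E$ and $F$ depend only on bonds with time coordinate $\le t<\infty$ keeps everything measurable and ensures that only finitely many bonds are relevant for each fixed $C$.

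For each such $C$ I would separate the bonds into three pairwise disjoint sets that separately determine the three indicators above: (a) the internal bonds $\mB_C$, which through the induced restricted percolation determine both $\{E\text{ in }C\}$ and the requirement that $\wvec$ reach every site of $C$; (b) the boundary bonds $\{a\ne b:\bba\in C,\,\ta\notin C\}$, all of which must be vacant in order for $\tilde\bC^b(\wvec)$ not to extend beyond $C$; and (c) the external bonds $\mB_{\Lambda\setminus C}$, which determine $\{F\text{ in }\Lambda\setminus C\}$. Since these three sets of bonds are disjoint and all bonds are independent, the joint expectation factors as
$$\mE\big[\ind{\tilde\bC^b(\wvec)=C}\,\ind{E\text{ in }C}\,\ind{F\text{ in }\Lambda\setminus C}\big]=\mE\big[\ind{\tilde\bC^b(\wvec)=C}\,\ind{E\text{ in }C}\big]\cdot\mE\big[\ind{F\text{ in }\Lambda\setminus C}\big].$$
Summing again over $C$ and reinterpreting the two factors as expectations with respect to two independent copies of the percolation (with the same law, carrying subscripts $0$ and $1$) allows me to reabsorb $C$ as $\tilde\bC_0^b(\wvec)$ inside the outer expectation, producing the right-hand side of \refeq{lemcut1}.

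For the second claim, the assumption $E\subset\{\bb\in\tilde\bC^b(\wvec)\}\cap\{\tb\notin\tilde\bC^b(\wvec)\}$ forces $b$ to be a boundary bond of $\tilde\bC^b(\wvec)$, so $b\notin\mB_{\tilde\bC^b(\wvec)}\cup\mB_{\Lambda\setminus\tilde\bC^b(\wvec)}$; combined with the fact that $\tilde\bC^b(\wvec)$ is itself defined from the configuration with $b$ declared vacant, this shows that every factor on the left-hand side of \refeq{lemcut1} is measurable with respect to the occupation statuses of bonds other than $b$. The main technical point to watch will be the bookkeeping of the boundary bonds: one must confirm that no bond with exactly one endpoint in $C$ contributes to $\{E\text{ in }C\}$ or to $\{F\text{ in }\Lambda\setminus C\}$, and that its forced vacancy is correctly absorbed into $\{\tilde\bC^b(\wvec)=C\}$. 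Once this three-way disjoint decomposition is set up cleanly, the factorization itself is simply the statement that independent families of bonds generate independent sigma-algebras.
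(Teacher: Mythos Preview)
The paper does not prove this lemma; it is quoted verbatim from \cite[Lemma~2.2]{HHS05b} and used as a black box throughout Sections~\ref{s:lace}--\ref{s:exp-applexp}. Your proposed argument---conditioning on the realisation $C$ of $\tilde\bC^b(\wvec)$, splitting the bonds into internal, boundary, and external classes, and invoking independence across disjoint bond sets---is correct and is precisely the standard proof (and the one given in the cited reference). One small bookkeeping point you may want to make explicit: the bond $b$ itself can in principle lie in $\mB_C$, in $\mB_{\Lambda\setminus C}$, or on the boundary, but this does not disturb the factorisation since the only split that matters for \refeq{lemcut1} is that $\mB_{\Lambda\setminus C}$ is disjoint from the bonds determining $\{\tilde\bC^b(\wvec)=C\}\cap\{E\text{ in }C\}$; the location of $b$ becomes relevant only for the second claim, where your observation that the hypothesis forces $b$ to be a boundary bond is exactly what is needed.
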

%%%%%%%%%%%%%%%%%%%%%%%%%%%%%%%%%%%%%%%%%%%%%%%%%%%%%%%

\medskip

We now apply this lemma to the second term in \refeq{0thexp}.
First, we note that
\begin{align}\lbeq{partition}
    &E'(\vvec,\bb;\bC)\cap\{b\text{ is occupied \& pivotal for }\vvec
    \conn\xvec_j~\forall j\in J\}\nn\\
    &~=\big\{E'(\vvec,\bb;\bC)
    \text{ in }\tilde\bC^b(\vvec)\big\}\cap\{b\text{ is occupied}\}
    \cap\big\{\tb\conn\vec\xvec_J\text{ in }\Lambda\setminus\tilde
    \bC^b(\vvec)\big\}.
\end{align}
Since $E'(\vvec,\bb;\bC)\subset\{\bb \in\tilde\bC^b(\vvec)\}$ and since the
event $\{\tb\conn\vec\xvec_J\text{ in }\Lambda\setminus\tilde \bC^b(\vvec)\}$
ensures that $\tb\notin\tilde\bC^b(\vvec)$, as required in
Lemma~\ref{lem-cut1}, the occupation status of $b$ is independent of the other
two events in \refeq{partition}.  Therefore, when we abbreviate
$p_b=p_\vep(\tb-\bb)$ (\ch{recall} \refeq{pabuse}) and make use of
\refeq{restr-rpt}--\refeq{incl-excl} as well as \refeq{lemcut1}, we obtain
\begin{align}\lbeq{fact-appl1}
    &\mP\Big(E'(\vvec,\bb;\bC)\cap\{b\text{ is occupied \& pivotal for }
    \vvec\conn\xvec_j~\forall j\in J\}\Big)\nn\\
    &~=\mE\Big[\ind{E'(\vvec,\bb;\bC)
    \text{ in }\tilde\bC^b(\vvec)}\;\ind{b\text{ is occupied}}\;\ind{\tb
    \conn\vec\xvec_J\text{ in }\Lambda\setminus\tilde\bC^b(\vvec)}\Big]\nn\\
    &~=p_b\,\mE\Big[\ind{E'(\vvec,\bb;\bC)
    \text{ in }\tilde\bC^b(\vvec)}~\mE\big[\ind{\tb\conn\vec
    \xvec_J\text{ in }\Lambda\setminus\tilde\bC^b(\vvec)}\big]\Big]\nn\\
    &~=p_b\,\mE\Big[\indic_{E'(\vvec,\bb;\bC)}
    \;\tau^{\sss\tilde\bC^b(\vvec)}(\tb,\vec\xvec_J)\Big]
    =p_b\,\mE\Big[\indic_{E'(\vvec,\bb;\bC)}\,\Big(\tau(\vec\xvec_J-\tb)
    -\mP\big(\tb\ctx{\sss\tilde\bC^b(\vvec)}\vec\xvec_J\big)\Big)\Big],
\end{align}
where we omit ``in $\tilde\bC^b(\vvec)$'' in the third equality, since
$E'(\vvec,\bb;\bC)$ depends only on bonds before time $t_{\bb}$
(where, for $\xvec=(x,t)\in \Lambda$, $t_{\xvec}=t$ denotes the
temporal component of $\xvec$).

Substituting \refeq{fact-appl1} in \refeq{0thexp}, we have
\begin{align}\lbeq{0thexp-rewr}
\mP\big(\vvec\ct{\bC}\vec\xvec_J\big)=A^{\sss(0)}(\vvec,\vec\xvec_J;\bC)
 +\sum_bp_b\,\mE\Big[\indic_{E'(\vvec,\bb;\bC)}\,\Big(\tau(\vec\xvec_J-
 \tb)-\mP\big(\tb\ctx{\sss\tilde\bC^b(\vvec)}\vec\xvec_J\big)\Big)\Big].
\end{align}
\ch{On the right-hand side of \refeq{0thexp-rewr}, again a generalised
$r$-point function appears, which allows us to iterate \refeq{0thexp-rewr},
by substituting the expansion for $\mP\big(\tb\ctx{\sss\tilde\bC^b(\vvec)}\vec\xvec_J\big)$
into the right-hand side of \refeq{0thexp-rewr}.}

In order to simplify the expressions arising in the expansion, we first
introduce some useful notation.  For a (random or deterministic) variable $X$,
we let
    \begin{align}\lbeq{B0def}
    M^{\sss(1)}_{\vvec,\vec\xvec_J;\bC}(X)=\mE\big[\indic_{E'(\vvec,\vec
    \xvec_J;\bC)}\;X\big],&&
    B^{\sss(0)}(\vvec,\yvec;\bC)=\sum_{b:\tb=\yvec}M^{\sss(1)}_{\vvec,\bb;
    \bC}(1)\;p_b.
    \end{align}
Note that, by this notation,
    \begin{align}\lbeq{A0def}
    A^{\sss(0)}(\vvec,\vec\xvec_J;\bC)=M^{\sss(1)}_{\vvec,\vec\xvec_J;\bC}(1).
    \end{align}
Then, \refeq{0thexp-rewr} equals
    \begin{align}\lbeq{Rn1rewr2}
    \mP\big(\vvec\ct{\bC}\vec\xvec_J\big)=A^{\sss(0)}(\vvec,\vec\xvec_J;\bC)+
    \sum_{\yvec}B^{\sss(0)}(\vvec,\yvec;\bC)\,\tau(\vec\xvec_J-\yvec)-\sum_b
    p_b\,M^{\sss(1)}_{\vvec,\bb;\bC}\Big(\mP\big(\tb\ctx{\sss\tilde\bC^b
    (\vvec)}\vec\xvec_J\big)\Big).
    \end{align}
This completes the first step of the expansion. We first take stock of what
we have achieved so far. In \refeq{Rn1rewr2}, we see that the generalized
$r$-point function $\mP\big(\vvec\ct{\bC}\vec\xvec_J\big)$ is written as
the sum of $A^{\sss(0)}(\vvec,\vec\xvec_J;\bC)$, a term which
is a convolution of some expansion term $B^{\sss(0)}(\vvec,\yvec;\bC)$
with an {\it ordinary} $r$-point function $\tau(\vec\xvec_J-\yvec)$
and a remainder term. The remainder term again involves a
generalized $r$-point function $\mP\big(\tb\ctx{\sss\tilde\bC^b
(\vvec)}\vec\xvec_J\big)$. Thus, we can iterate the above procedure,
until no more generalized $r$-point functions are present. This will prove
\refeq{taunxvecr2}.

%%%%%FIGFIGFIGFIGFIGFIGFIGFIGFIGFIGFIGFIGFIGFIGFIGFIGFIGFIGFIG
%%%%%FIGFIGFIGFIGFIGFIGFIGFIGFIGFIGFIGFIGFIGFIGFIGFIGFIGFIGFIG
\begin{figure}[t]
\begin{align*}
B^{\sss(0)}(\vvec,\xvec;\bC)~:~~
 \raisebox{-4pc}{\includegraphics[scale=0.20]{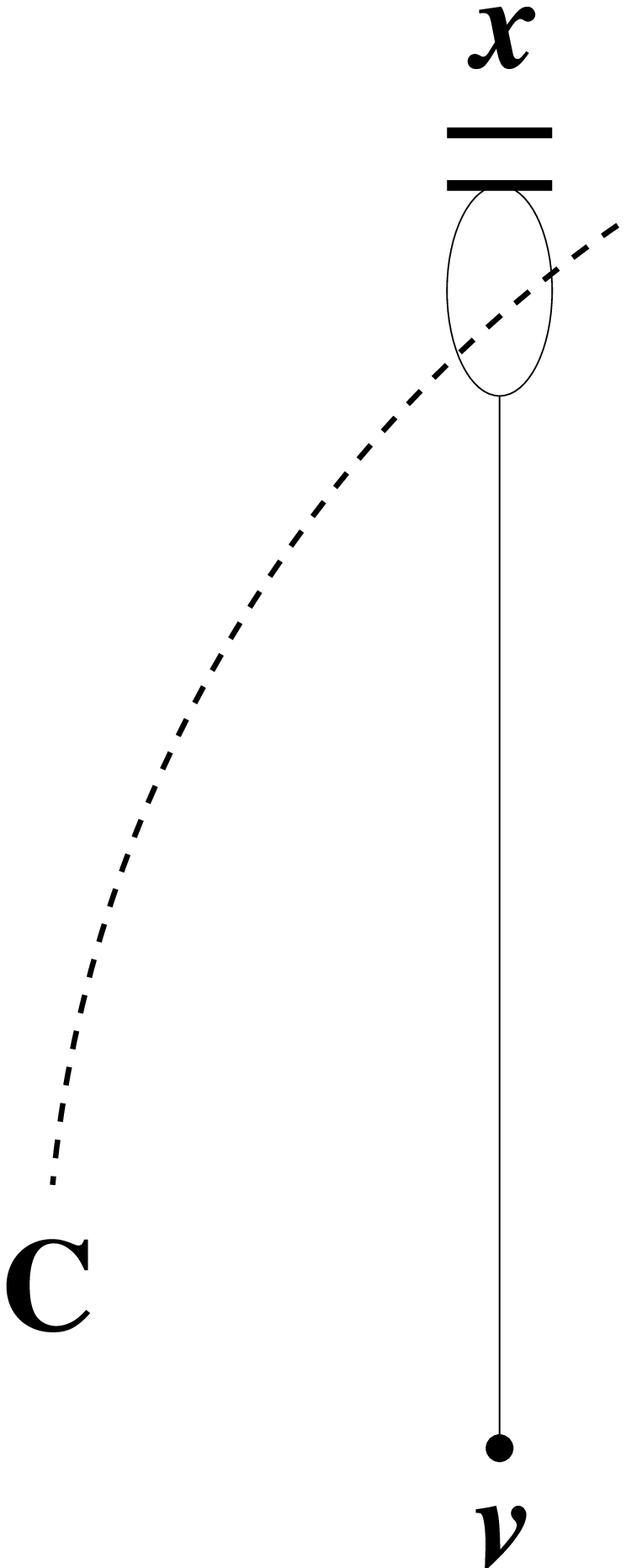}}&&
B^{\sss(1)}(\vvec,\xvec;\bC)~:~~
 \raisebox{-4pc}{\includegraphics[scale=0.20]{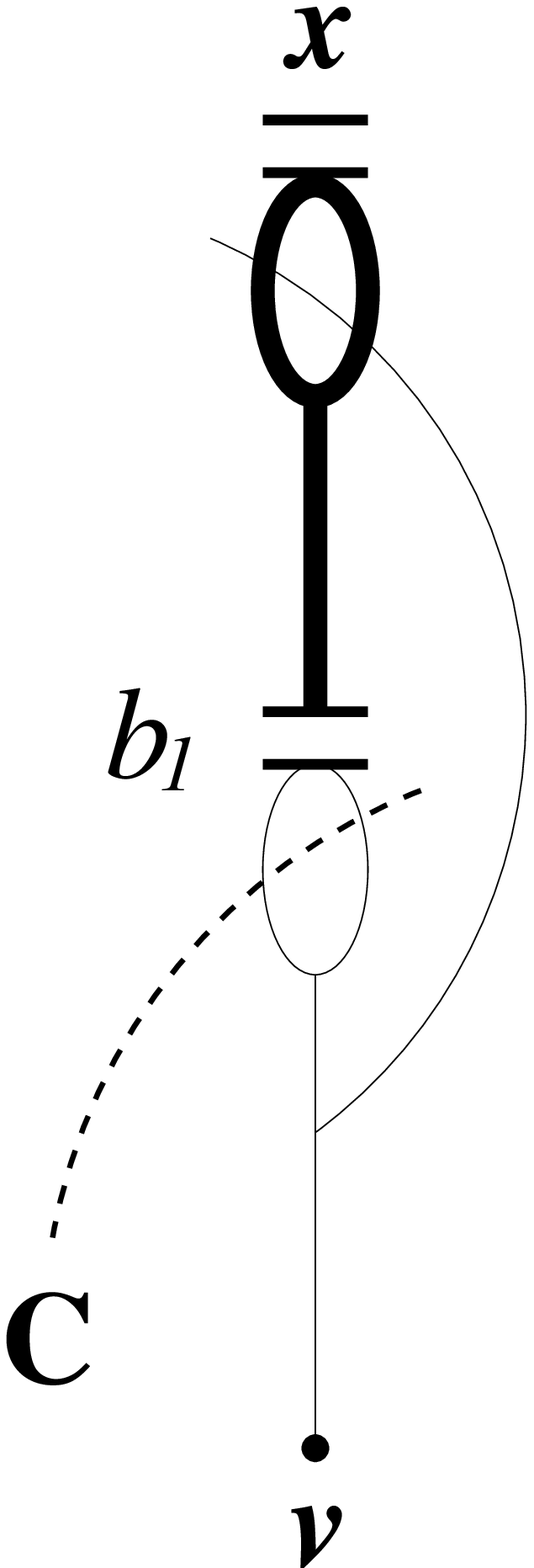}}
% \qquad\bigcup\quad\raisebox{-4pc}{\includegraphics[scale=0.20]{b11}}
\end{align*}
\caption{\label{fig-pi}Schematic representations of
$B^{\sss(0)}(\vvec,\xvec;\bC)$ and $B^{\sss(1)}(\vvec,\xvec;\bC)$.}
\end{figure}
%%%%%FIGFIGFIGFIGFIGFIGFIGFIGFIGFIGFIGFIGFIGFIGFIGFIGFIGFIGFIG
%%%%%FIGFIGFIGFIGFIGFIGFIGFIGFIGFIGFIGFIGFIGFIGFIGFIGFIGFIGFIG

In order to facilitate this iteration, and expand the right-hand side
in \refeq{Rn1rewr2} further, we first introduce
some more notation.  For $N\geq1$, we define
    \begin{align}\lbeq{M-def}
    M^{\sss(N+1)}_{\vvec,\vec\xvec_J;\bC}(X)&=\sum_{b_N}p_{b_N}
     M^{\sss(N)}_{\vvec,\bb_N;\bC}\Big(M^{\sss(1)}_{\tb_N,\vec\xvec_J;
     \tilde\bC_{N-1}}(X)\Big)\nn\\
    &=\sum_{\vec b_N=(b_1,\dots,b_N)}\prod_{i=1}^Np_{b_i}\,M^{\sss
     (1)}_{\vvec,\bb_1;\bC}\bigg(M^{\sss(1)}_{\tb_1,\bb_2;\tilde\bC_0}
     \Big(\cdots M^{\sss(1)}_{\tb_N,\vec\xvec_J;\tilde\bC_{N-1}}(X)\cdots
     \Big)\bigg),
    \end{align}
where the superscript $n$ of $M^{\sss(n)}$ denotes the number of involved
nested expectations, and, for $n\geq0$, we abbreviate
$\tilde\bC^{b_{n+1}}(\tb_n)=\tilde\bC_n$, where we use the convention that
$\tb_0=\vvec$, which is the initial vertex in
$M^{\sss(N+1)}_{\vvec,\vec\xvec_J;\bC}$.

Let
    \begin{align}\lbeq{ANBNdef}
    A^{\sss(N)}(\vvec,\vec\xvec_J;\bC)=M^{\sss(N+1)}_{\vvec,\vec\xvec_J;\bC}
    (1),&&
    B^{\sss(N)}(\vvec,\yvec;\bC)=\sum_{b:\tb=\yvec}M^{\sss(N+1)}_{\vvec,\bb;
    \bC}(1)\,p_b,
    \end{align}
which agree with \refeq{B0def}--\refeq{A0def} when $N=0$.  Note that
$A^{\sss(N)}(\vvec,\vec\xvec_J;\bC)=B^{\sss(N)}(\vvec,\yvec;\bC)=0$ for
$N\vep>\min_{j\in J}t_{\xvec_j}-t_{\vvec}$, since, by the recursive
definition \refeq{M-def}, the operation $M^{\sss(N+1)}$ eats up at least
$N$ time-units (where one time-unit is $\vep$).

We now resume the expansion of the right-hand side of \refeq{Rn1rewr2}.
As we notice, we have $\mP(\vvec\ct{\bC}\vec\xvec_J)$ again in the
right-hand side of \refeq{Rn1rewr2}, but now with $\vvec$ and $\bC$ being
replaced by $\tb$ and $\tilde\bC^b(\vvec)$, respectively.  Applying
\refeq{Rn1rewr2} to its own right-hand side, we obtain
    \begin{align}\lbeq{1stexp}
    \mP\big(\vvec\ct{\bC}\vec\xvec_J\big)=\Big(A^{\sss(0)}(\vvec,\vec\xvec_J;
    \bC)-A^{\sss(1)}(\vvec,\vec\xvec_J;\bC)\Big)&+\sum_{\yvec}\Big(B^{\sss(0)}
    (\vvec,\yvec;\bC)-B^{\sss(1)}(\vvec,\yvec;\bC)\Big)~\tau(\vec\xvec_J-
    \yvec)\nn\\
    &+\sum_{b_2}p_{b_2}\,M^{\sss(2)}_{\vvec,\bb_2;\bC}\Big(\mP\big(\tb_2
    \ctx{\sss\tilde\bC_1}\vec\xvec_J\big)\Big).
    \end{align}
\ch{Define
    \begin{align}\lbeq{AB-alt}
    A(\vvec,\vec\xvec_J;\bC)=\sum_{N=0}^\infty(-1)^NA^{\sss(N)}(\vvec,\vec
    \xvec_J;\bC),&&
    B(\vvec,\yvec;\bC)=\sum_{N=0}^\infty(-1)^NB^{\sss(N)}(\vvec,\yvec;\bC).
    \end{align}
By repeated application of \refeq{Rn1rewr2} to \refeq{1stexp} until the
remainder vanishes (which happens after a finite number of iterations, see
below \refeq{ANBNdef}), we arrive at the following conclusion, which
is the linear expansion for the generalised $r$-point function:}

\begin{prop}[\textbf{Linear expansion}]\label{prop:exp-first}
For any $J\ne\varnothing$, $\lamb\leq\lambc$ and $\vec\xvec_J\in\Lambda^{|J|}$,
    \begin{align}\lbeq{taur-exp}
    \mP\big(\vvec\ct{\bC}\vec\xvec_J\big)=A(\vvec,\vec\xvec_J;\bC)+\sum_{\yvec}
    B(\vvec,\yvec;\bC)\,\tau(\vec\xvec_J-\yvec).
    \end{align}
\end{prop}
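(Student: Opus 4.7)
The plan is to prove the identity \refeq{taur-exp} by iterating \refeq{Rn1rewr2}, at each step substituting that identity into the remainder on its own right-hand side. Concretely, I will establish by induction on $N\ge 0$ that
\begin{align}
\mP\big(\vvec\ct{\bC}\vec\xvec_J\big)=\sum_{n=0}^{N}(-1)^n A^{\sss(n)}(\vvec,\vec\xvec_J;\bC)+\sum_{\yvec}\sum_{n=0}^{N}(-1)^n B^{\sss(n)}(\vvec,\yvec;\bC)\,\tau(\vec\xvec_J-\yvec)+(-1)^{N+1}R_{N+1},\nn
\end{align}
where
\begin{align}
R_{N+1}=\sum_{b_{N+1}}p_{b_{N+1}}\,M^{\sss(N+1)}_{\vvec,\bb_{N+1};\bC}\Big(\mP\big(\tb_{N+1}\ctx{\sss\tilde\bC_N}\vec\xvec_J\big)\Big).\nn
\end{align}
The base case $N=0$ is precisely \refeq{Rn1rewr2}, and the case $N=1$ has already been written out as \refeq{1stexp}.

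For the inductive step, I will apply \refeq{Rn1rewr2} with $(\vvec,\bC)$ replaced by $(\tb_{N+1},\tilde\bC_N)$ inside the innermost expectation of $R_{N+1}$. The recursive definition \refeq{M-def}, combined with \refeq{ANBNdef}, repackages the two main contributions as $(-1)^{N+1}A^{\sss(N+1)}(\vvec,\vec\xvec_J;\bC)$ and $(-1)^{N+1}\sum_\yvec B^{\sss(N+1)}(\vvec,\yvec;\bC)\,\tau(\vec\xvec_J-\yvec)$, while the new remainder acquires an additional minus sign and takes the claimed form with $N+1$ in place of $N$. The substitution is legitimate at each level by Lemma~\ref{lem-cut1}, since $E'(\tb_{i-1},\bb_i;\tilde\bC_{i-1})$ depends only on bonds with time coordinate at most $t_{\bb_i}$, whereas the next generalised $r$-point function depends only on bonds with time coordinate at least $t_{\tb_i}=t_{\bb_i}+\vep$.

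The second ingredient is termination of the alternating sums. Every bond $b_i$ in the nested structure is forward-directed in time, and the event $E'(\tb_{i-1},\bb_i;\tilde\bC_{i-1})$ forces $t_{\bb_i}\ge t_{\tb_{i-1}}$, with equality possible only via the convention that $\{\xvec\conn\xvec\text{ in }\bC\}$ occurs iff $\xvec\in\bC$. Iterating, the terminal site $\tb_{N+1}$ appearing in $R_{N+1}$ satisfies $t_{\tb_{N+1}}\ge t_\vvec+(N+1)\vep$. Since $\mP(\tb_{N+1}\ct{\tilde\bC_N}\vec\xvec_J)$ vanishes as soon as $t_{\tb_{N+1}}>\min_{j\in J}t_{\xvec_j}$, both $R_{N+1}$ and the summands $A^{\sss(N+1)},B^{\sss(N+1)}$ are identically zero once $(N+1)\vep>\min_{j\in J}t_{\xvec_j}-t_\vvec$. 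Hence the series in \refeq{AB-alt} are in fact finite sums, and passing to the limit $N\to\infty$ in the inductive identity yields \refeq{taur-exp}. The point requiring most care will be the sign bookkeeping and tracking which bonds sit in which time slab so as to justify the repeated application of Lemma~\ref{lem-cut1}; both are direct extensions of the reasoning already used in deriving \refeq{Rn1rewr2} itself.
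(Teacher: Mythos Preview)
Your proposal is correct and follows essentially the same route as the paper: iterate \refeq{Rn1rewr2} into its own remainder, use the recursive structure \refeq{M-def}--\refeq{ANBNdef} to identify the $N^{\text{th}}$ contribution as $(-1)^N A^{\sss(N)}$ and $(-1)^N B^{\sss(N)}$, and observe (as the paper notes below \refeq{ANBNdef}) that each nested bond advances time by at least $\vep$, so the remainder vanishes after finitely many steps. The only minor imprecision is your appeal to Lemma~\ref{lem-cut1} to justify the substitution: the substitution itself is legitimate simply because \refeq{Rn1rewr2} is an identity valid for \emph{any} deterministic pair $(\vvec,\bC)$, and inside the innermost expectation $\tilde\bC_N$ is deterministic; Lemma~\ref{lem-cut1} was already used in deriving \refeq{Rn1rewr2} and need not be re-invoked here.
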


\begin{figure}[t]
\begin{align*}
%\raisebox{-4pc}{\includegraphics[scale=0.17]{rptC1}}~
\mP\big(\vvec\ct{\bC}\vec\xvec_J\big)~=~
 \left(~\raisebox{-4pc}{\includegraphics[scale=0.17]{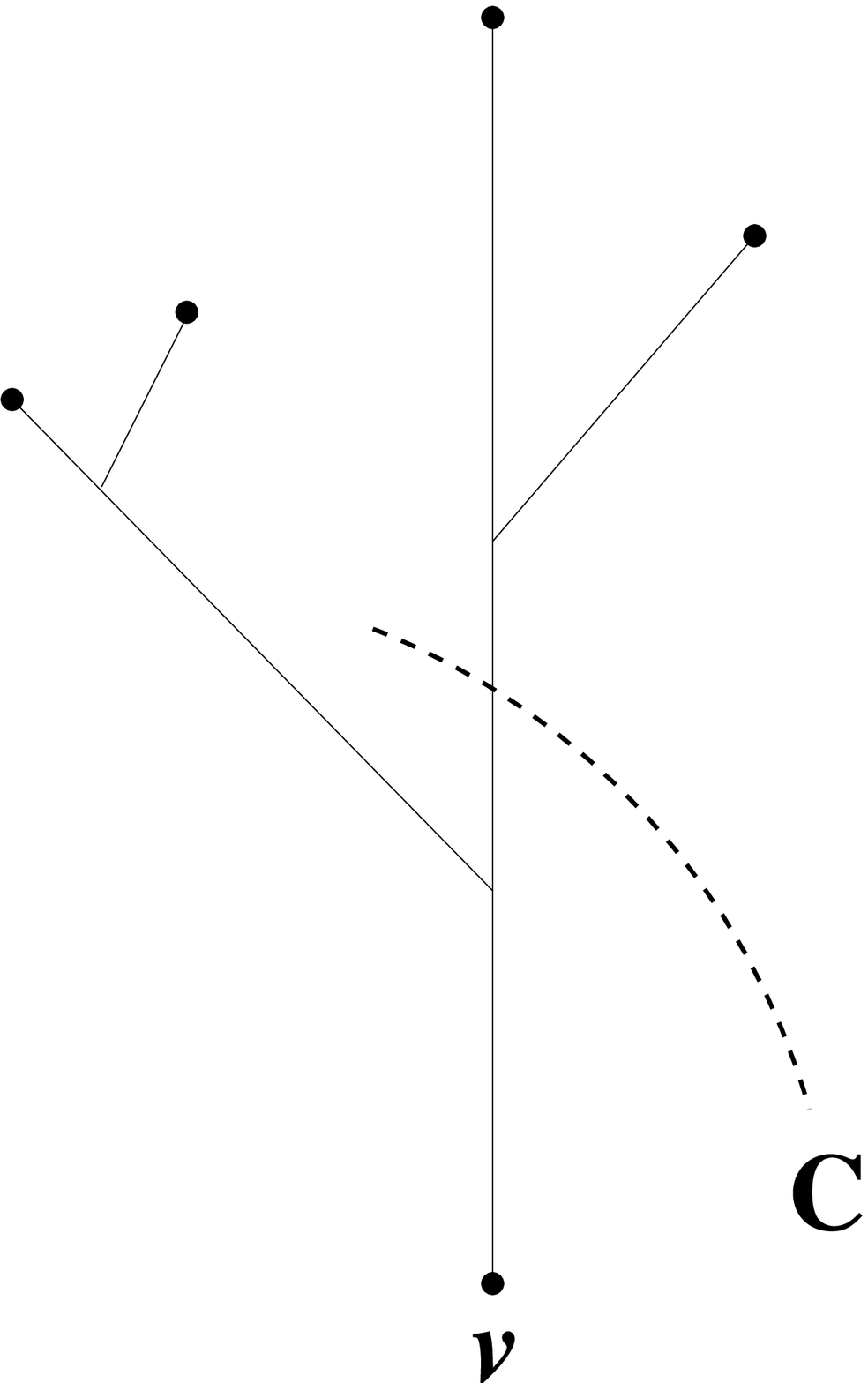}}~-
 \raisebox{-4pc}{\includegraphics[scale=0.17]{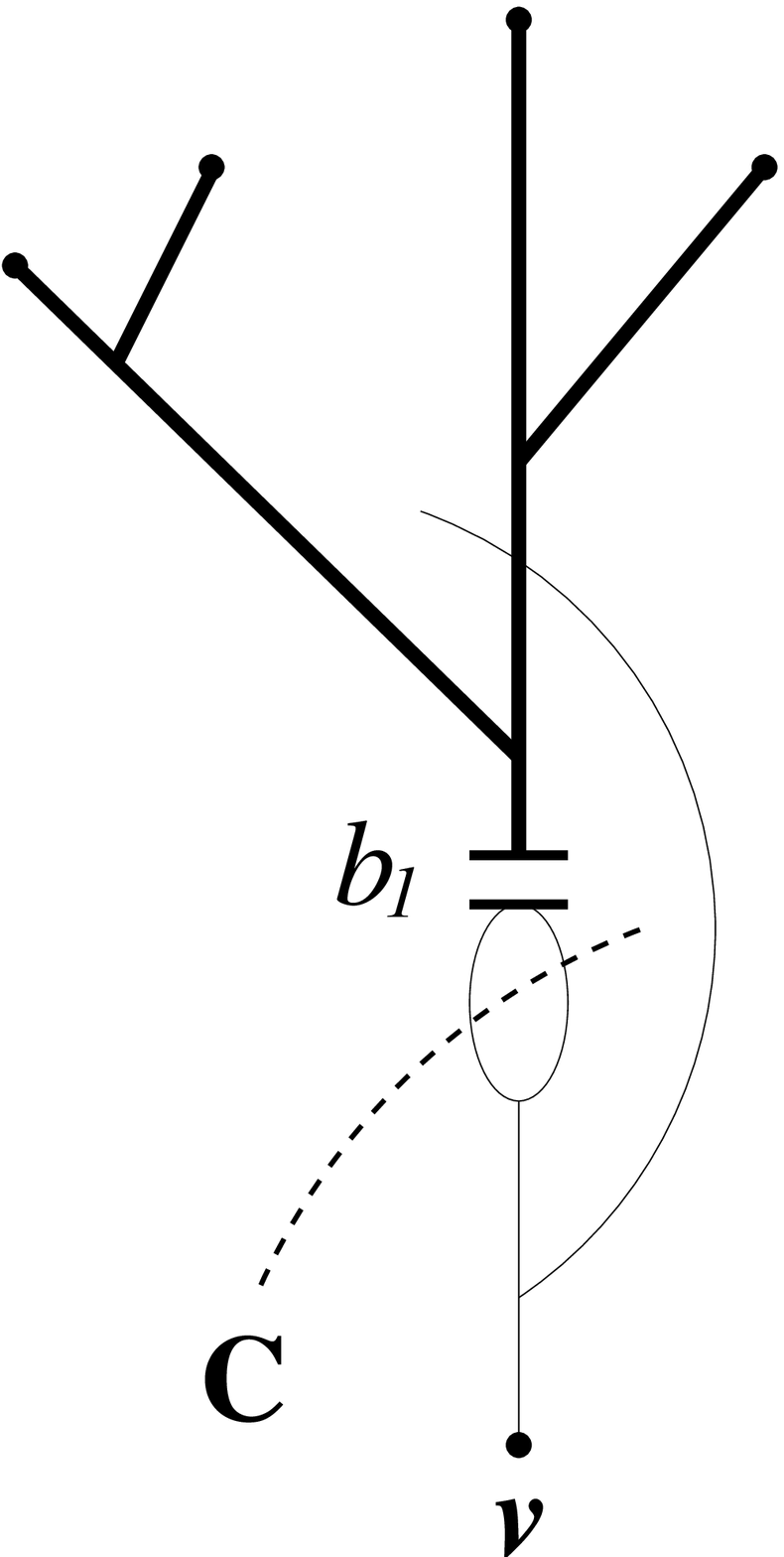}}~~+~\cdots~~\right)~+~
 \sum_{\yvec}\left(~\raisebox{-4pc}{\includegraphics[scale=0.17]{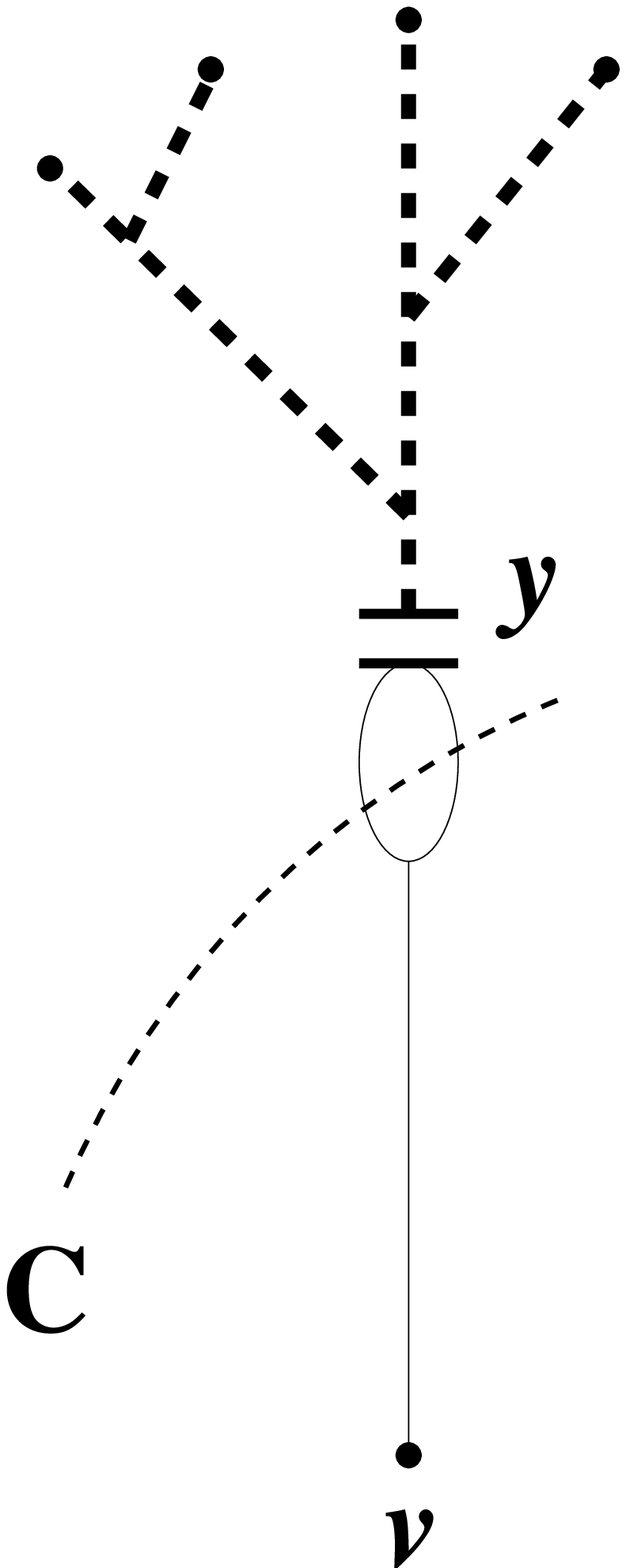}}~~
 -~\raisebox{-4pc}{\includegraphics[scale=0.17]{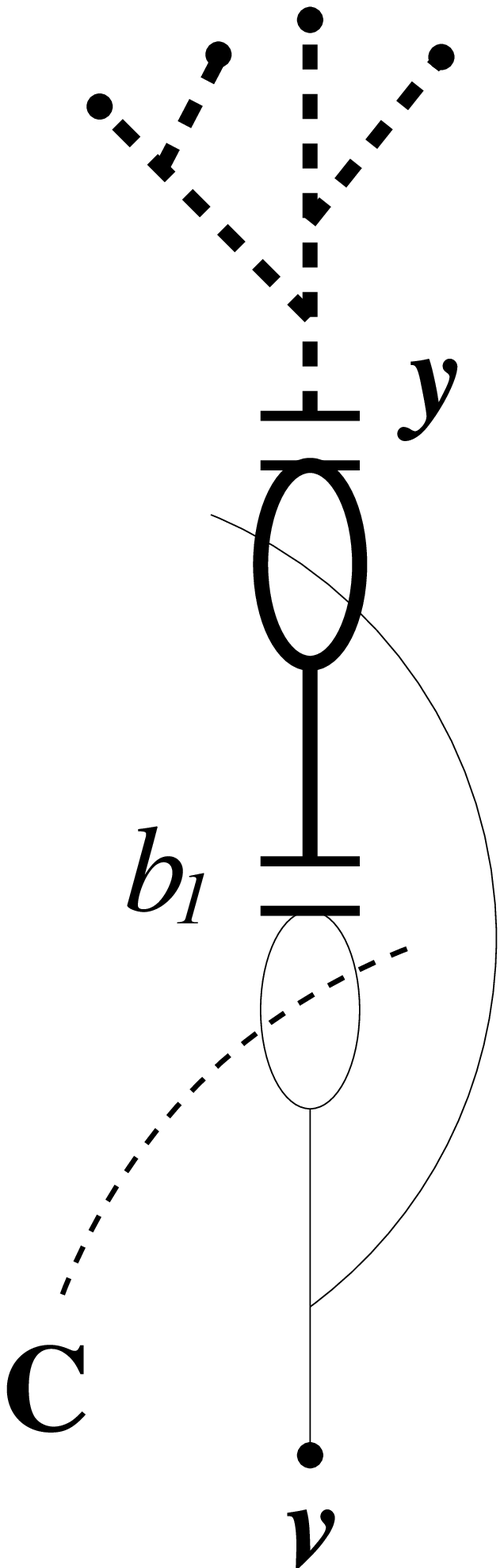}}~~+~\cdots~~\right)
\end{align*}
\caption{\label{fig-1stexp}A schematic representation of the expansion
\refeq{taur-exp}.  The vertices at the top of each diagram are the
components of $\vec\xvec_J$, as in Figure~\ref{fig-E}.  In the second
parentheses, the connection from $\yvec$ to $\vec\xvec_J$ in each diagram
(depicted in bold dashed lines) represents $\tau(\vec\xvec_J-\yvec)$.}
\end{figure}

Applying Proposition~\ref{prop:exp-first} to the $r$-point function
in \refeq{taur-rewr}, we arrive at
    \begin{align}\lbeq{taurexp}
    \tau(\vec\xvec_J)=A(\vec\xvec_J)+\sum_{\yvec}B(\yvec)\,\tau(\vec\xvec_J
    -\yvec),
    \end{align}
where we abbreviate
    \begin{align}
    \lbeq{ABzerodef}
    A(\vec\xvec_J)=A(\ovec,\vec\xvec_J;\{\ovec\}),&&
    B(\yvec)=B(\ovec,\yvec;\{\ovec\}),
    \end{align}
and similarly for
$A^{\sss(N)}(\vec\xvec_J)=A^{\sss(N)}(\ovec,\vec\xvec_J;\{\ovec\})$ and
$B^{\sss(N)}(\yvec)=B^{\sss(N)}(\ovec,\yvec;\{\ovec\})$. In the remainder of
this paper, we will specialise to the case where $\vvec=\ovec$ and
$\bC=\{\ovec\}$, and abbreviate
    \begin{align}
    M^{\sss(N)}_{\vec\xvec_J}(X)=M^{\sss(N)}_{\ovec,\vec\xvec_J;\{\ovec\}}
    (X)\qquad(N\geq1).
    \end{align}
This completes the proof of \refeq{taunxvecr2}.  In the next section,
we will use Proposition~\ref{prop:exp-first} for a general set $\bC$
in order to obtain the expansion for $A(\vec\xvec_J)$.

For future reference, we state a convenient recursion formula for
$M^{\sss(N+M)}_{\vvec,\vec\xvec_J;\bC}(X)$, valid for $N\geq 0, M\geq 1$:
    \begin{align}\lbeq{MN-rec}
    M^{\sss(N+M+1)}_{\vvec,\vec\xvec_J;\bC}(X)&
    =\sum_{b_{\sN}}p_{b_{\sN}}\,
    M^{\sss(N+1)}_{\vvec,
    \bb_{\sN};\bC}\Big(M^{\sss(M)}_{\tb_{\sN},\vec\xvec_J;\tilde\bC^{b_{\sN}}(\tb_{\sss N-1})}
    (X)\Big),
    \end{align}
which follows immediately from the second representation in \refeq{M-def}.

%%%%%%%%%%%%%%%%%%%%%%%%%%%%%%%%%%%%%%%%%%%%%%%%%%%%%%%%%%%%%%%%%%%%%%%%%%%%
%%%%%%%%%%%%%%%%%%%%%%%%%%%%%%%%%%%%%%%%%%%%%%%%%%%%%%%%%%%%%%%%%%%%%%%%%%%%
%%%%%%%%%%%%%%%%%%%%%%%%%%%%%%%%%%%%%%%%%%%%%%%%%%%%%%%%%%%%%%%%%%%%%%%%%%%%
%%%%%%%%%%%%%%%%%%%%%%%%%%%%%%%%%%%%%%%%%%%%%%%%%%%%%%%%%%%%%%%%%%%%%%%%%%%%

\section{Expansion for \protect $A(\vec{\xvec}_J)$}\label{s:exp-applexp}
We now consider $A(\vec\xvec_J)$ in \refeq{taurexp}.  Our goal is to extract
two factors $\tau(\vec\xvec_{J\setminus I}-\yvec_1)$ and
$\tau(\vec\xvec_I-\yvec_2)$ from $A(\vec\xvec_J)$, for some $I\subsetneq J$
with $I\ne\vno$ and some $\yvec_1,\yvec_2\in\Lambda$.  Let $r_1=|J\setminus
I|+1$ and $r_2=|I|+1$.  We devote Section~\ref{ss:cutting1} to the extraction
of the first $r_1$-point function $\tau(\vec\xvec_{J\setminus I}-\yvec_1)$, and
Section~\ref{ss:cutting2} to the extraction of the second $r_2$-point function
$\tau(\vec\xvec_I-\yvec_2)$.

\subsection{First cutting bond and decomposition of \protect $A^{\sss(N)}(\vec{\xvec}_J)$}\label{ss:cutting1}

First, we recall \refeq{A0def} and, by the recursive definition
\refeq{M-def} for $N\geq1$,
    \begin{align}\lbeq{AN-dec1}
    A^{\sss(N)}(\vec\xvec_J)&=M^{\sss(N+1)}_{\vec\xvec_J}(1)=\sum_{b_N}
    p_{b_N}\,M^{\sss(N)}_{\bb_N}\Big(\mP_{\sss N}\big(E'(\tb_{\sss N},
    \vec\xvec_J;\tilde\bC_{\sss N-1})\big)\Big),
    \end{align}
where the subscripts indicate which probability measure describes the
distribution of which cluster.  For example, $\tilde\bC_{\sss
N-1}\equiv\tilde\bC^{b_N}(\tb_{\sss N-1})$ is a random variable for $\mP_{\sss
N-1}$ that is hidden in the operation $M^{\sss(N)}_{\bb_N}$ (cf.,
\refeq{M-def}), but is deterministic for $\mP_{\sss N}$.  Therefore, to obtain
an expansion for $A^{\sss(N)}(\vec\xvec_J)$, it suffices to investigate
$\mP(E'(\vvec,\vec\xvec_J;\bC))$ for given $\vvec\in\Lambda$ and
$\bC\subset\Lambda$.  In this section, we shall extract an $r_1$-point function
$\tau(\vec\xvec_{J\setminus I}-\yvec_1)$ from $\mP(E'(\vvec,\vec\xvec_J;\bC))$.

Recall \refeq{through} and \refeq{E'def} to see that there must be a $j\in J$
such that $\vvec\ct{\bC}\xvec_j$.  We partition $E'(\vvec,\vec\xvec_J;\bC)$
according to the {\it first} component $\xvec_j$ which is connected from
$\vvec$ {\it through} $\bC$, i.e.,
    \begin{align}\lbeq{E'-dec}
    E'(\vvec,\vec\xvec_J;\bC)&=\BDcup{j\in J}\Big\{\{\vvec\conn\vec\xvec_J\}
    \cap\big\{\vvec\ct{\bC}(\xvec_1,\dots,\xvec_{j-1})\big\}^{\rm c}\cap
    \big\{\vvec\ct{\bC}\xvec_j\big\}\Big\}\nn\\
    &\quad~~\cap\big\{\nexists\text{ pivotal bond $b$ for }\vvec\conn\xvec_i~
    \forall i\in J\text{ such that }\vvec\ct{\bC}\bb\big\},
    \end{align}
where we use the convention that
    \eq
    \lbeq{conventiona}
    \{\vvec\ct{\sss\bC}(\xvec_1,\dots,\xvec_{j-1})\}=\vno\qquad\quad \text{if $j=1$.}
    \en
Because of this convention, for $j=1$, the event
$\{\vvec\ct{\sss\bC}(\xvec_1,\dots,\xvec_{j-1})\}^c$ is the whole probability
space. If $j\geq2$, then we can ignore the intersection in the second line of
\refeq{E'-dec}, because $\{\vvec\conn\vec\xvec_J\}\cap\{\vvec\ct{\bC}(\xvec_1,
\dots,\xvec_{j-1})\}^\text{c}$ implies \ch{that} $\vvec\conn\xvec_i$ in
$\Lambda\setminus\bC$ for $i=1,\dots,j-1$, so that the event in the second line
is automatically satisfied. \ch{We now define the first cutting bond:}

\begin{defn}[\textbf{First cutting bond}]{\rm
Given that $\vvec\ct{\bC}\xvec_j$, we say that a bond $b$ is the
\emph{$\xvec_j$-cutting bond} if it is the first occupied pivotal bond for
$\vvec\conn\xvec_j$ such that $\vvec\ct{\bC}\bb$.}
\end{defn}

Let
\begin{align}
\Fone(\vvec,\vec\xvec_J;\bC)&=\BDcup{j\in J}\Big\{\{\vvec\conn\vec\xvec_J
 \}\cap\big\{\vvec\ct{\bC}(\xvec_1,\dots,\xvec_{j-1})\big\}^{\rm c}\cap
 \big\{\vvec\ct{\bC}\xvec_j\big\}\cap\{\nexists
 \text{ $\xvec_j$-cutting bond}\}\Big\}\nn\\
&\quad~~\cap\big\{\nexists\text{ pivotal bond $b$ for }\vvec\conn\xvec_i~
 \forall i\in J\text{ such that }\vvec\ct{\bC}\bb\big\},
\end{align}
which, by definition and \refeq{E'def}, equals
\begin{align}\lbeq{F'1-def}
\Fone(\vvec,\vec\xvec_J;\bC)&=\BDcup{j\in J}\Big\{\{\vvec\conn\vec\xvec_J
 \}\cap\big\{\vvec\ct{\bC}(\xvec_1,\dots,\xvec_{j-1})\big\}^{\rm c}\cap
 E'(\vvec,\xvec_j;\bC)\Big\}\nn\\
&\quad~~\cap\big\{\nexists\text{ pivotal bond $b$ for }\vvec\conn\xvec_i~
 \forall i\in J\text{ such that }\vvec\ct{\bC}\bb\big\}.
\end{align}
Then, $E'(\vvec,\vec\xvec_J;\bC)$ equals
\begin{align}\lbeq{E'-dec1}
E'(\vvec,\vec\xvec_J;\bC)&=\Fone(\vvec,\vec\xvec_J;\bC)\nn\\
&\quad\DDcup\BDcup{b}\BDcup{j\in J}\Big\{\{\vvec\conn\vec\xvec_J\}\cap
 \big\{\vvec\ct{\bC}(\xvec_1,\dots,\xvec_{j-1})\big\}^{\rm c}\cap\big\{
 \vvec\ct{\bC}\xvec_j\big\}\cap\{b\text{ is $\xvec_j$-cutting}\}\Big\}
 \nn\\
&\hspace{6pc}\cap\big\{\nexists\text{ pivotal bond $b$ for }\vvec\conn
 \xvec_i~\forall i\in J\text{ such that }\vvec\ct{\bC}\bb\big\}.
\end{align}
The contribution due to $\Fone(\vvec,\vec\xvec_J;\bC)$ will turn out to be an
error term.

Next, we consider the union over $j\in J$ in \refeq{E'-dec1}.  When
$b$ is the $\xvec_j$-cutting bond, there is a unique nonempty set
$I\subset J_j\equiv J\setminus\{j\}$ such that $b$ is pivotal for
$\vvec\conn\xvec_i$ for all $i\in J\setminus I$, but not pivotal for
$\vvec\conn\xvec_i$ for any $i\in I$.  \ch{On this event,} the intersection in
the third line of \refeq{E'-dec1} can be ignored.  For a nonempty set
$I\subsetneq J$, we let $j_I$ be the minimal element in $J\setminus I$,
i.e.,
    \begin{align}\lbeq{minj}
    j_I=\min_{j\in J\setminus I}j.
    \end{align}
Then, the union over $j\in J$ in \refeq{E'-dec1} is rewritten as
    \begin{align}\lbeq{E'-F'1-dec1}
    &\BDcup{j\in J}\!\BDcup{\substack{\vno\ne I\subset J_j\\ j_I=j}}
    \Big\{\{\vvec\conn\vec\xvec_J\}\cap\big\{\vvec\ct{\bC}(\xvec_1,\dots,
    \xvec_{j-1})\big\}^{\rm c}\cap\big\{\vvec\ct{\bC}\xvec_j\big\}\cap\{b
    \text{ is $\xvec_j$-cutting}\}\nn\\
    &\hspace{5pc}\cap\big\{b\text{ is not pivotal for }\vvec\conn\xvec_i~
    \forall i\in I\big\}\cap\big\{b\text{ is pivotal for }\vvec\conn\xvec_i
    ~\forall i\in J\setminus I\big\}\Big\}\nn\\
    &=\BDcup{\vno\ne I\subsetneq J}\Big\{\{\vvec\conn\vec\xvec_J\}\cap\big\{
    \vvec\ct{\bC}(\xvec_1,\dots,\xvec_{j_I-1})\big\}^{\rm c}\cap\big\{\vvec
    \ct{\bC}\xvec_{j_I}\big\}\cap\{b\text{ is $\xvec_{j_I}$-cutting}\}\nn\\
    &\hspace{5pc}\cap\big\{b\text{ is not pivotal for }\vvec\conn\xvec_i~
    \forall i\in I\big\}\cap\big\{b\text{ is pivotal for }\vvec\conn\xvec_i
    ~\forall i\in J\setminus I\big\}\Big\}.
    \end{align}
To this event, we will apply Lemma~\ref{lem-cut1} and extract a factor
$\tau(\vec\xvec_{J\setminus I}-\tb)$.  To do so, we first rewrite
this event in a similar fashion to \refeq{partition} as follows:

\begin{prop}[\textbf{Setting the stage for the factorization I}]
\label{prop:settingI}
For all $\vec\xvec_J\in \Lambda^{r-1}$, any $I\subsetneq J$ with $I\ne\vno$
and any bond $b$,
    \begin{align}\lbeq{partition2}
    &\{\vvec\conn\vec\xvec_J\}\cap\big\{\vvec\ct{\bC}(\xvec_1,\dots,\xvec_{
    j_I-1})\big\}^{\rm c}\cap\big\{\vvec\ct{\bC}\xvec_{j_I}\big\}\cap\{b
    \text{ is $\xvec_{j_I}$-cutting}\}\nn\\
    &\qquad\qquad\cap\{b\text{ is not pivotal for }\vvec\conn\xvec_i~\forall
    i\in I\}\cap\{b\text{ is pivotal for }\vvec\conn\xvec_i~\forall i\in J
    \setminus I\}\nn\\
    &~=\Big\{\{\vvec\conn\vec\xvec_I\}\cap\big\{\vvec\ct{\bC}(\xvec_1,\dots,
    \xvec_{j_I-1})\big\}^{\rm c}\cap E'(\vvec,\bb;\bC)
    \text{ in }\tilde\bC^b(\vvec)\Big\}\nn\\
    &\qquad\qquad\cap\{b\text{ is occupied}\}\cap\big\{\tb\conn\vec\xvec_{J
    \setminus I}\text{ in }\Lambda\setminus\tilde\bC^b(\vvec)\big\},
    \end{align}
where the first and third events in the right-hand side are independent
of the occupation status of $b$.
\end{prop}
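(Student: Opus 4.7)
The plan is to verify the set-theoretic identity \refeq{partition2} by the same pivotal-bond bookkeeping that was used to derive \refeq{partition}, now keeping track of several endpoints of $\vec\xvec_J$ at once. The key combinatorial observation is that, since $j_I=\min_{j\in J\setminus I}j$, every index $i\in J$ with $i<j_I$ lies in $I$; combined with the non-pivotality of $b$ for $\vvec\conn\xvec_i$ ($i\in I$) on the left-hand side, this forces every connection $\vvec\conn\xvec_i$ with $i\in I$ to be realised entirely within the cluster $\tilde\bC^b(\vvec)$.

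For the forward inclusion, the preceding observation converts $\{\vvec\conn\vec\xvec_I\}$ into ``in $\tilde\bC^b(\vvec)$'', and it also shows that $\{\vvec\ct{\bC}(\xvec_1,\dots,\xvec_{j_I-1})\}^{\rm c}$ only constrains bonds in $\tilde\bC^b(\vvec)$, so that it may be placed inside the ``in $\tilde\bC^b(\vvec)$'' clause. For $i\in J\setminus I$, the pivotality of the occupied bond $b$ for $\vvec\conn\xvec_i$ factorises the connection as $\vvec\conn\bb$ in $\tilde\bC^b(\vvec)$ together with $\tb\conn\xvec_i$ in $\Lambda\setminus\tilde\bC^b(\vvec)$. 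Finally, $b$ being the $\xvec_{j_I}$-cutting bond, together with $\vvec\conn\bb$ in $\tilde\bC^b(\vvec)$, is exactly the statement $\{E'(\vvec,\bb;\bC)\text{ in }\tilde\bC^b(\vvec)\}\cap\{b\text{ is occupied}\}$.

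For the reverse inclusion, given a configuration in the right-hand side, the two ``in''-clauses together with $b$ being occupied produce the connections $\vvec\conn\xvec_j$ for every $j\in J$; for $i\in I$ we have $\xvec_i\in\tilde\bC^b(\vvec)$, so $b$ is not pivotal for $\vvec\conn\xvec_i$, whereas for $i\in J\setminus I$ the only path reaching $\xvec_i$ crosses $b$, making it pivotal there. The $E'$-clause then ensures that $b$ is the \emph{first} occupied pivotal bond for $\vvec\conn\xvec_{j_I}$ with $\vvec\ct{\bC}\bb$, since any earlier such pivotal would contradict the definition of $E'(\vvec,\bb;\bC)$.

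The independence claim in the last sentence follows from Definition~\ref{def:inon}: the first event depends only on bonds with both endpoints in $\tilde\bC^b(\vvec)$, which excludes $b$ because $\tb\notin\tilde\bC^b(\vvec)$ (a consequence of the pivotality of $b$ for the connections to $\xvec_i$, $i\in J\setminus I$); the third event depends only on bonds with both endpoints in $\Lambda\setminus\tilde\bC^b(\vvec)$, which again excludes $b$ since $\bb\in\tilde\bC^b(\vvec)$. The hard part will be transferring the cutting-bond (first-pivotal-through-$\bC$) condition through the decomposition without loss, but this is the same mechanism already carried out for \refeq{partition}; here it only requires passively absorbing the non-pivotality constraints on $I$ into the ``in $\tilde\bC^b(\vvec)$'' clause.
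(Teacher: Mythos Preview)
Your proposal is correct and follows essentially the same approach as the paper. The paper organises the argument by splitting the left-hand side into three pieces $H_1\cap H_2\cap H_3$ (collecting the $I$-part, the $J\setminus I$-part, and the cutting-bond part) and rewriting each piece separately, whereas you phrase it as forward and reverse inclusions; but the key steps---the observation $\{1,\dots,j_I-1\}\subset I$, the use of non-pivotality to push $\{\vvec\conn\vec\xvec_I\}$ and $\{\vvec\ct{\bC}(\xvec_1,\dots,\xvec_{j_I-1})\}^{\rm c}$ into ``in $\tilde\bC^b(\vvec)$'', and the standard pivotal-bond factorisation from \refeq{partition} for the $J\setminus I$-part---are identical. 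One small clean-up: for the independence claim, rather than invoking pivotality (a left-hand-side notion), argue directly from the right-hand side that the third event forces $\tb\notin\tilde\bC^b(\vvec)$ by the convention below \refeq{in-def}, and that $E'(\vvec,\bb;\bC)$ forces $\bb\in\tilde\bC^b(\vvec)$; this is exactly the hypothesis of the independence statement in Lemma~\ref{lem-cut1}.
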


\begin{proof}
Since $\{\vvec\conn\vec\xvec_J\}=\{\vvec\conn\vec\xvec_I\}\cap\{\vvec\conn
\vec\xvec_{J\setminus I}\}$, the left-hand side of \refeq{partition2}
equals $\bigcap_{i=1}^3 H_i$, where
    \begin{align}
    H_1&=\{\vvec\conn\vec\xvec_I\}\cap\big\{\vvec\ct{\bC}(\xvec_1,\dots,
    \xvec_{j_I-1})\big\}^{\rm c}\cap\{b\text{ is not pivotal for }\vvec
    \conn\xvec_i~\forall i\in I\},\lbeq{H1-def}\\
    H_2&=\{\vvec\conn\vec\xvec_{J\setminus I}\}\cap\{b\text{ is pivotal
    for }\vvec\conn\xvec_i~\forall i\in J\setminus I\},\\
    H_3&=\big\{\vvec\ct{\bC}\xvec_{j_I}\big\}\cap\{b\text{ is
    $\xvec_{j_I}$-cutting}\}.
    \end{align}
Similarly to \refeq{partition}, $H_2$ and $H_3$ can be written as
    \begin{align}
    H_2&=\big\{\vvec\conn\bb\text{ in }
    \tilde\bC^b(\vvec)\big\}\cap\{b\text{ is occupied}\}\cap\big\{\tb\conn\vec
    \xvec_{J\setminus I}\text{ in }\Lambda\setminus\tilde\bC^b(\vvec)\big\},
    \lbeq{H2-rewr}\\
    H_3
    &=\big\{E'(\vvec,\bb;\bC)\text{ in }
    \tilde\bC^b(\vvec)\big\}\cap\{b\text{ is occupied}\}\cap\big\{\tb\conn
    \xvec_{j_I}\text{ in }\Lambda\setminus\tilde\bC^b(\vvec)\big\},
    \lbeq{H3-rewr}
    \end{align}
so that, also using that $E'(\vvec,\bb;\bC)\subseteq \{\vvec\conn\bb\}$ and $j_I\in J\setminus I$,
    \eq
    \lbeq{H23-rewr}
    H_2\cap H_3=\big\{E'(\vvec,\bb;\bC)\text{ in }
    \tilde\bC^b(\vvec)\big\}\cap\{b\text{ is occupied}\}\cap\big\{\tb\conn\vec
    \xvec_{J\setminus I}\text{ in }\Lambda\setminus\tilde\bC^b(\vvec)\big\}.
    \en

To prove \refeq{partition2}, it remains to show that
    \begin{align}\lbeq{H1-rewr}
    H_1=\Big\{\{\vvec\conn\vec\xvec_I\}\cap\big\{\vvec\ct{\bC}(\xvec_1,\dots,
    \xvec_{j_I-1})\big\}^{\rm c}\text{ in }\tilde\bC^b(\vvec)\Big\}.
    \end{align}
Due to \refeq{on/inprop} and $\{1,\dots,j_I-1\}\subset I$, \refeq{H1-def}
equals
    \begin{align}\lbeq{H1-rewr2}
    H_1&=\bigcap_{i=1}^{j_I-1}\big\{\{\vvec\conn\xvec_i\text{ in }\Lambda
    \setminus\bC\}\cap\{b\text{ is not pivotal for }\vvec\conn\xvec_i\}
    \big\}\nn\\
    &\qquad\cap\bigcap_{\substack{i'\in I\\ i'>j_I}}\big\{\{\vvec\conn
    \xvec_{i'}\}\cap\{b\text{ is not pivotal for }\vvec\conn\xvec_{i'}\}
    \big\}.
    \end{align}
When $j_I=1$, which is equivalent to $1\in I$, then the first
intersection is an empty intersection, so that, by convention,
it is equal to the whole probability space. We use that
    \begin{align}
    &\{\vvec\conn\xvec_i\text{ (in }\Lambda\setminus\bC)\}\cap\{b
    \text{ is not pivotal for }\vvec\conn\xvec_i\}\nn\\
    &=\{\vvec\conn\xvec_i\text{ (in }\Lambda\setminus\bC)\}\cap\big\{\vvec
    \conn\xvec_i\text{ in }\tilde\bC^b(\vvec)\big\}=\big\{\{\vvec\conn
    \xvec_i\text{ (in }\Lambda\setminus\bC)\}\text{ in }\tilde\bC^b(\vvec)
    \big\},
    \end{align}
where we write $(\text{in }\Lambda\setminus\bC)$ to indicate that the
equality is true with and without the restriction that the connections take place
in $\Lambda\setminus\bC$. Therefore, we can rewrite \refeq{H1-rewr2} as
    \begin{align}
    H_1=\bigcap_{i=1}^{j_I-1}\big\{\{\vvec\conn\xvec_i\text{ in }\Lambda
    \setminus\bC\}\text{ in }\tilde\bC^b(\vvec)\big\}\cap\bigcap_{\substack{
    i'\in I\\ i'>j_I}}\big\{\{\vvec\conn\xvec_{i'}\}\text{ in }\tilde\bC^b
    (\vvec)\big\},
    \end{align}
which equals \refeq{H1-rewr}.  This proves \refeq{partition2}.

As argued below \refeq{partition}, since $E'(\vvec,\bb;\bC)
\subset\{\bb\in\tilde\bC^b(\vvec)\}$ and since $\{\tb\conn\vec\xvec_{J
\setminus I}\text{ in }\Lambda\setminus\tilde\bC^b(\vvec)\}$
insures that $\tb \nin \tilde\bC^b(\vvec)$, by the independence statement in
lemma \ref{lem-cut1}, the occupation status of $b$ is independent of
the first and third events in the right-hand side of \refeq{partition2}.
This completes the proof of Proposition~\ref{prop:settingI}.
\end{proof}

We continue with the expansion of $\mP(E'(\vvec,\vec\xvec_J;\bC))$.
By \refeq{E'-dec1} and \refeq{E'-F'1-dec1}, as well as
Lemma~\ref{lem-cut1}, Proposition~\ref{prop:settingI} and
\refeq{incl-excl}, we obtain
    \begin{align}\lbeq{E'-F'1-dec1.5}
    &\mP(E'(\vvec,\vec\xvec_J;\bC))-\mP(\Fone(\vvec,\vec\xvec_J;\bC))\\
    &~=\sum_{\vno\ne I\subsetneq J}\sum_bp_b\,\mE\Big[\ind{\{\vvec\conn\vec
    \xvec_I\}\,\cap\,\{\vvec\ct{\bC}(\xvec_1,\dots,\xvec_{j_I-1})\}^{\rm c}\,
    \cap\,E'(\vvec,\bb;\bC)\,\text{ in }
    \tilde\bC^b(\vvec)}\;\ind{\tb\conn\vec\xvec_{J\setminus I}\text{ in }
    \Lambda\setminus\tilde\bC^b(\vvec)}\Big]\nn\\
    &~=\sum_{\vno\ne I\subsetneq J}\sum_bp_b\,\mE\Big[\indic_{E'(\vvec,\bb;
    \bC)}\;\ind{\{\vvec\conn\vec\xvec_I\}\,\cap\,\{\vvec\ct{\bC}(\xvec_1,
    \dots,\xvec_{j_I-1})\}^{\rm c}\text{ in }\tilde\bC^b(\vvec)}
    \;\tau^{\sss\tilde\bC^b(\vvec)}(\tb,\vec
    \xvec_{J\setminus I})\Big]\nn\\
    &~=\sum_{\vno\ne I\subsetneq J}\sum_bp_b\,M^{\sss(1)}_{\vvec,\bb;\bC}
    \bigg(\ind{\{\vvec\conn\vec\xvec_I\}\,\cap\,\{\vvec\ct{\bC}(\xvec_1,
    \dots,\xvec_{j_I-1})\}^{\rm c}\text{ in }\tilde\bC^b(\vvec)}\,\Big(\tau
    (\vec\xvec_{J\setminus I}-\tb)-\mP\big(\tb\ctx{\sss\tilde\bC^b(\vvec)}
    \vec\xvec_{J\setminus I}\big)\Big)\bigg),\nn
    \end{align}
where, in the second equality, we omit ``in $\tilde\bC^b(\vvec)$'' for
the event $E'(\vvec,\bb;\bC)$ due
to the fact that $E'(\vvec,\bb;\bC)$ depends only on bonds before time
$t_{\bb}$.  Applying
Proposition~\ref{prop:exp-first} to $\mP(\tb\ctx{\sss\tilde\bC^b(\vvec)}
\vec\xvec_{J\setminus I})$ and using the notation
    \begin{align}
    \lbeq{Bdelta-def}
    B_\delta(\tb,\yvec_1;\tilde\bC^b(\ovec))=\delta_{\tb,\yvec_1}
    -B(\tb,\yvec_1;\tilde\bC^b(\ovec)),
    \end{align}
we obtain
    \begin{align}\lbeq{E'-F'1-dec2}
    &\mP(E'(\vvec,\vec\xvec_J;\bC))-\mP(\Fone(\vvec,\vec\xvec_J;\bC))\nn\\
    &~=\sum_{\vno\ne I\subsetneq J}\sum_{\yvec_1}\sum_bp_b\,M^{\sss(1)}_{
    \vvec,\bb;\bC}\Big(\ind{\{\vvec\conn\vec\xvec_I\}\,\cap\,\{\vvec\ct{\bC}
    (\xvec_1,\dots,\xvec_{j_I-1})\}^{\rm c}\text{ in }\tilde\bC^b(\vvec)}
    \;B_\delta(\tb,\yvec_1;\tilde\bC^b(\vvec))\Big)\;\tau(\vec\xvec_{J
    \setminus I}-\yvec_1)\nn\\
    &\qquad-\sum_{\vno\ne I\subsetneq J}\sum_bp_b\,M^{\sss(1)}_{\vvec,\bb;
    \bC}\Big(\ind{\{\vvec\conn\vec\xvec_I\}\,\cap\,\{\vvec\ct{\bC}(\xvec_1,
    \dots,\xvec_{j_I-1})\}^{\rm c}\text{ in }\tilde\bC^b(\vvec)}\;A(\tb,
    \vec\xvec_{J\setminus I};\tilde\bC^b(\vvec))\Big).
    \end{align}

The first step of the expansion for $A^{\sss(N)}(\vec\xvec_J)$ is completed
by substituting \refeq{E'-F'1-dec2} into \refeq{AN-dec1} as follows.  Let
(see Figure~\ref{fig-E1capP})
    \eq
    a^{\sss(0)}(\vec\xvec_J;1)=\mP_{\sss 0}\big(\Fone(\ovec,\vec\xvec_J;\{\ovec\})\big),\lbeq{a01-def}
    \en
and, for $N\geq 1$,
    \eq
    a^{\sss(N)}(\vec\xvec_J;1)=\sum_{b_N}p_{b_N}M^{\sss(N)}_{\bb_N}
    \Big(\mP_{\sN}\big(\Fone(\tb_{\sN},\vec\xvec_J;\tilde\bC_{\sss N-1})\big)
    \Big).\lbeq{aN1-def}
    \en
Define, furthermore, for $N\geq 0$,
    \begin{align}
    \tilde B^{\sss(N)}(\yvec_1,\vec\xvec_I)
    &=\sum_{b_{N+1}}p_{b_{N+1}}\,
    M^{\sss(N+1)}_{\bb_{N+1}}\Big(\ind{\{\tb_N\conn\vec\xvec_I\}\,\cap\,\{
    \tb_N\ctx{\sss\tilde\bC_{N-1}}(\xvec_1,\dots,\xvec_{j_I-1})\}^{\rm c}
    \text{ in }\tilde\bC_N}\;B_\delta(\tb_{\sss N+1},\yvec_1;\tilde
    \bC_{\sN})\Big),\lbeq{tildeBN-def}\\
    a^{\sss(N)}(\vec\xvec_{J\setminus I},\vec\xvec_I;2)&=-\sum_{b_{N+1}}p_{
    b_{N+1}}\,M^{\sss(N+1)}_{\bb_{N+1}}\Big(\ind{\{\tb_N\conn\vec\xvec_I\}
    \,\cap\,\{\tb_N\ctx{\sss\tilde\bC_{N-1}}(\xvec_1,\dots,\xvec_{j_I-1})
    \}^{\rm c}\text{ in }\tilde\bC_N}\;A(\tb_{\sss N+1},\vec\xvec_{J\setminus
    I};\tilde\bC_{\sN})\Big),\lbeq{aN2-def}
    \end{align}
where we use the convention that, for $N=0$,
    \begin{align}\lbeq{convention}
    \tb_{\sss 0}=\ovec,&&
    \tilde\bC_{\sss -1}=\{\ovec\}.
    \end{align}
Here $a^{\sss(N)}(\vec\xvec_J;1)$ and
$a^{\sss(N)}(\vec\xvec_{J\setminus I},\vec\xvec_I;2)$
will \ch{turn out to} be error terms.  Then, using \refeq{AN-dec1}, \refeq{E'-F'1-dec2}, and the definitions in
\refeq{a01-def}--\refeq{aN2-def}, we arrive at the statement that for all $N\geq 0$,
    \begin{align}\lbeq{AN-dec2}
    A^{\sss(N)}(\vec\xvec_J)=a^{\sss(N)}(\vec\xvec_J;1)+\sum_{\vno\ne I
    \subsetneq J}\bigg(\sum_{\yvec_1}\tilde B^{\sss(N)}(\yvec_1,\vec\xvec_I)
    \;\tau(\vec\xvec_{J\setminus I}-\yvec_1)+a^{\sss(N)}(\vec\xvec_{J\setminus
    I},\vec\xvec_I;2)\bigg),
    \end{align}
where we further make use of the recursion relation in \refeq{M-def}.

\begin{figure}[t]
\begin{align*}
a^{\sss(1)}(\vec\xvec_J;1)\;:\raisebox{-3pc}{\includegraphics
 [scale=0.18]{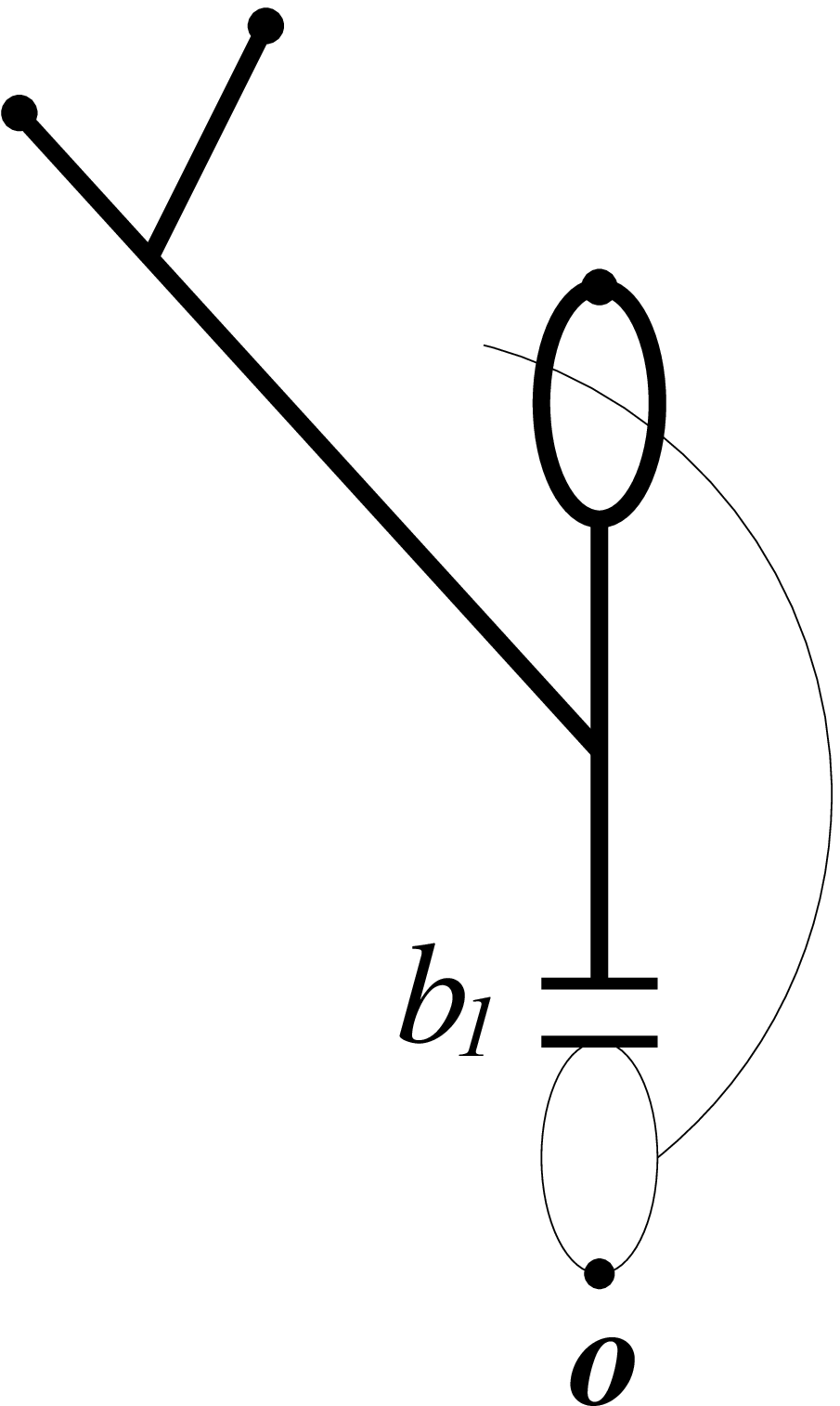}}&&
\tilde B^{\sss(1)}(\yvec_1,\vec\xvec_I)\;:\raisebox{-4pc}
 {\includegraphics[scale=0.18]{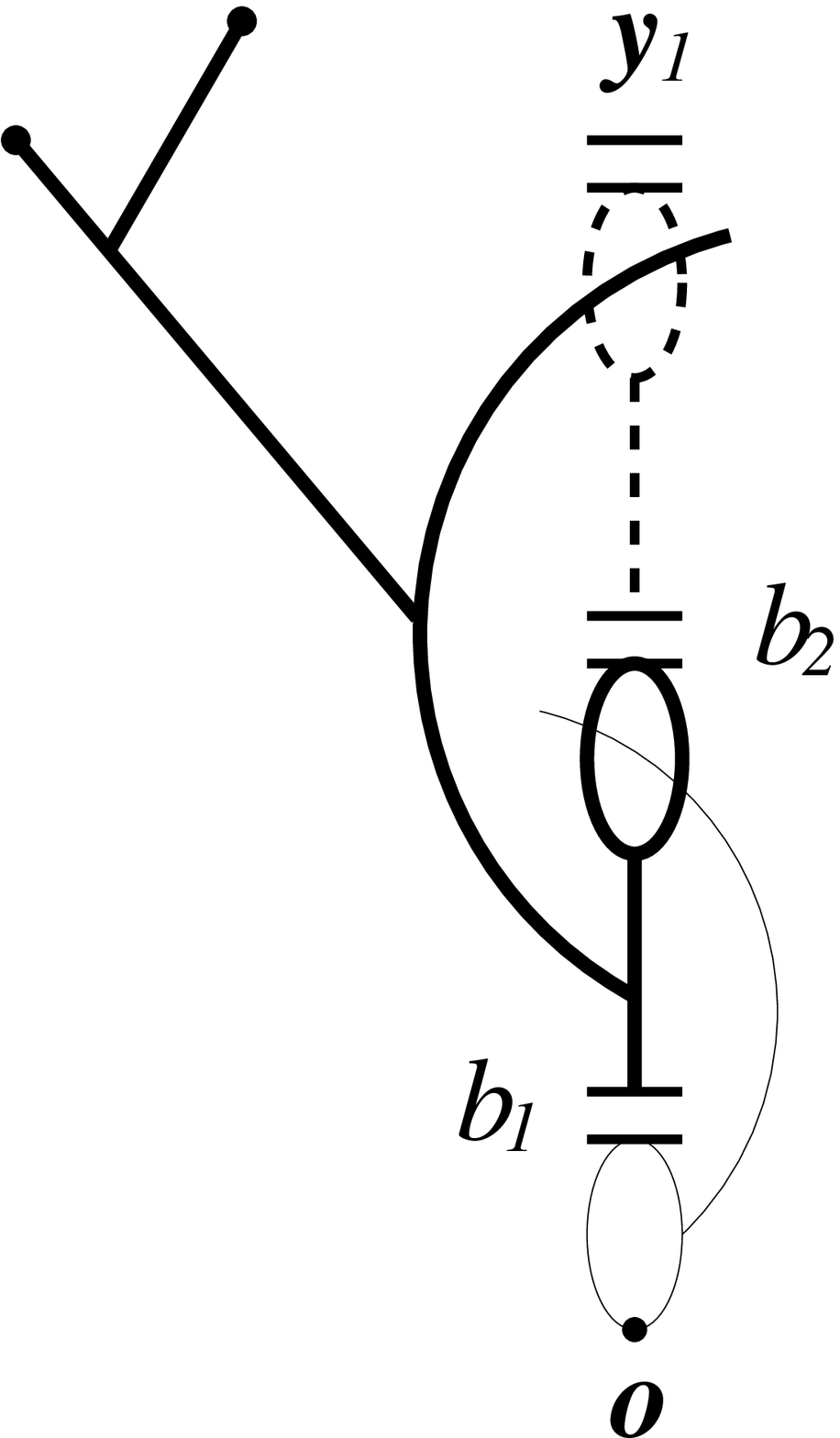}}&&
a^{\sss(1)}(\vec\xvec_{J\setminus I},\vec\xvec_I;2)\;:\raisebox{-4.2pc}
 {\includegraphics[scale=0.18]{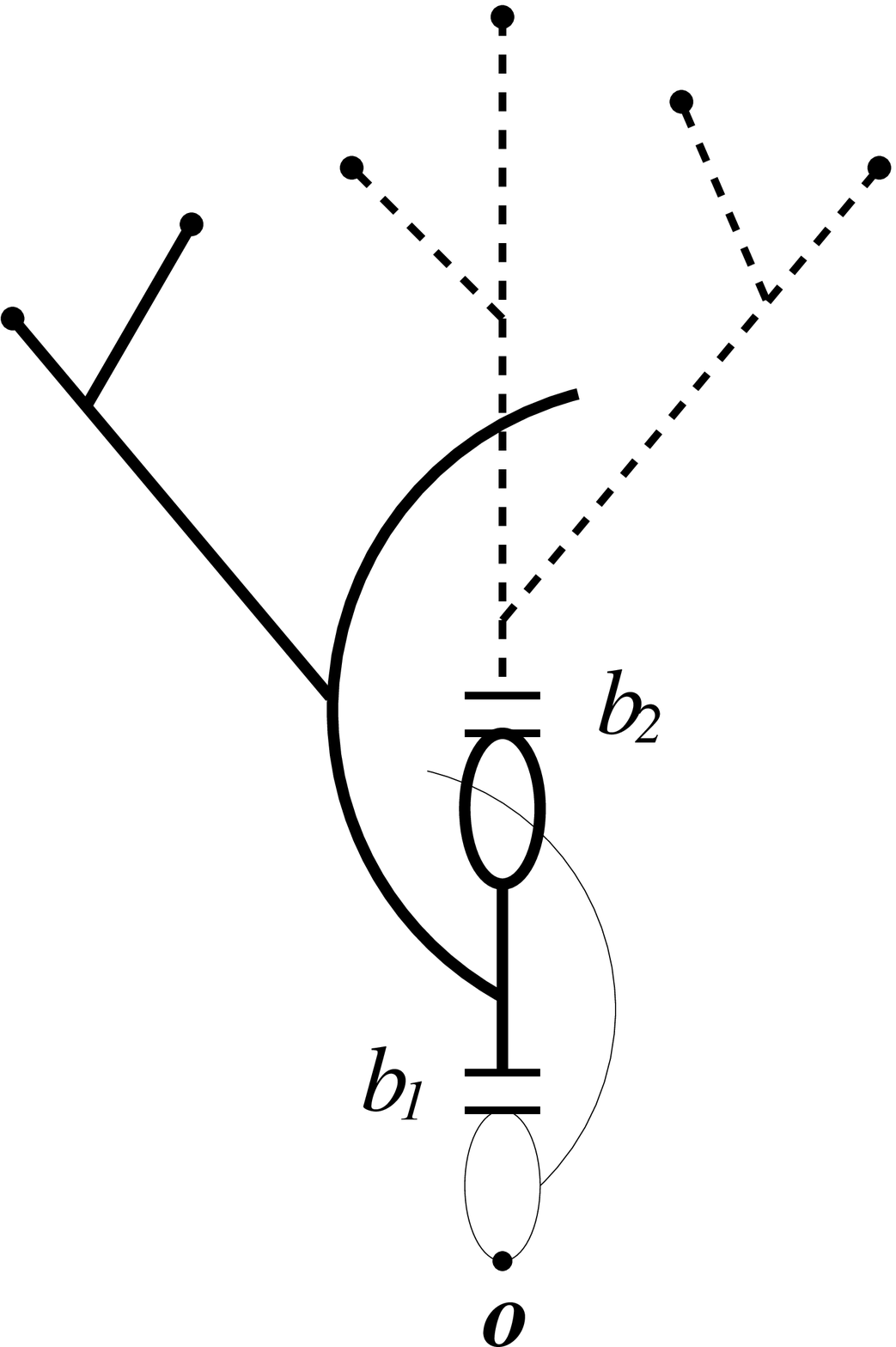}}
\end{align*}
\caption{\label{fig-E1capP}Schematic representations of
$a^{\sss(1)}(\vec\xvec_J;1)$, $\tilde B^{\sss(1)}(\yvec_1,\vec\xvec_I)$
and $a^{\sss(1)}(\vec\xvec_{J\setminus I},\vec\xvec_I;2)$, where
$B_\delta(\tb_2,\yvec_1;\tilde\bC_1)$ in
$\tilde B^{\sss(1)}(\yvec_1,\vec\xvec_I)$ and
$A(\tb_2,\vec\xvec_{J\setminus I};\tilde\bC_1)$ in
$a^{\sss(1)}(\vec\xvec_{J\setminus I},\vec\xvec_I;2)$ are reduced to
$B^{\sss(0)}(\tb_2,\yvec_1;\tilde\bC_1)$ and
$A^{\sss(0)}(\tb_2,\vec\xvec_{J\setminus I};\tilde\bC_1)$, respectively
(depicted in dashed lines).}
\end{figure}

In Section~\ref{ss:cutting2}, we extract a factor
$\tau(\vec\xvec_I-\yvec_2)$ out of
$\tilde B^{\sss(N)}(\yvec_1,\vec\xvec_I)$ and
complete the expansion for $A^{\sss(N)}(\vec\xvec_J)$.

\subsection{Second cutting bond and decomposition of
\protect $\tilde B^{\sss(N)}(\yvec_1,\vec{\xvec}_I)$}\label{ss:cutting2}

First, we recall that, for $N=0$,
    \begin{align}
    \lbeq{tildeB0-rewr}
    \tilde B^{\sss(0)}(\yvec_1,\vec\xvec_I)
    &=\!\sum_{b_{1}} p_{b_{1}}
    M^{\sss(1)}_{\bb_{1}}\Big(\ind{\{\ovec
    \conn\vec\xvec_I\}\,\cap\,\{\ovec\conn(\xvec_1,\dots,
    \xvec_{j_I-1})\}^{\rm c}\text{ in }\tilde\bC_{\sss 0}}\;B_\delta(\tb_{\sss 1},
    \yvec_1;\tilde\bC_{\sss 0})\Big)\bigg),
    \end{align}
where, by \refeq{conventiona}, for $j_I=1$, $\{\ovec\conn(\xvec_1,\dots,
    \xvec_{j_I-1})\}^{\rm c}$ is the whole probability space, while,
for $j_I>1$ and since $j_I-1\in I$ by \refeq{minj},
$\tilde B^{\sss(0)}(\yvec_1,\vec\xvec_I)\equiv 0$.
For $N\geq 1$, we recall that
    \begin{align}\lbeq{tildeBN-rewr}
    &\tilde B^{\sss(N)}(\yvec_1,\vec\xvec_I)\\
    &=\!\sum_{b_N,b_{N+1}}\!\!p_{b_N}p_{b_{N+1}}M^{\sss(N)}_{\bb_N}\bigg(
    M^{\sss(1)}_{\tb_N,\bb_{N+1};\tilde\bC_{\sss N-1}}\Big(\ind{\{\tb_N
    \conn\vec\xvec_I\}\,\cap\,\{\tb_N\ctx{\sss\tilde\bC_{N-1}}(\xvec_1,\dots,
    \xvec_{j_I-1})\}^{\rm c}\text{ in }\tilde\bC_N}\;B_\delta(\tb_{\sss N+1},
    \yvec_1;\tilde\bC_{\sN})\Big)\bigg).\nn
    \end{align}
Therefore, to decompose $\tilde B^{\sss(N)}(\yvec_1,\vec\xvec_I)$ and
extract $\tau(\vec\xvec_I-\yvec_2)$, it suffices to consider
    \begin{align}\lbeq{M1B-dec1}
    &M^{\sss(1)}_{\vvec,\bb;\bC}\Big(\ind{\{\vvec\conn\vec\xvec_I\}\,\cap\,
    \{\vvec\ct{\sss\bC}(\xvec_1,\dots,\xvec_{j_I-1})\}^{\rm c}\text{ in }
    \tilde\bC^b(\vvec)}\;B_\delta(\tb,\yvec_1;\tilde\bC^b(\vvec))\Big)\nn\\
    &~=M^{\sss(1)}_{\vvec,\bb;\bC}\Big(\ind{\{\vvec\conn\vec\xvec_I\}
    \text{ in }\tilde\bC^b(\vvec)}\;B_\delta(\tb,\yvec_1;\tilde\bC^b(\vvec))
    \Big)\nn\\
    &~~\quad-M^{\sss(1)}_{\vvec,\bb;\bC}\Big(\ind{\{\vvec\conn\vec\xvec_I\}\,
    \cap\,\{\vvec\ct{\sss\bC}(\xvec_1,\dots,\xvec_{j_I-1})\}\text{ in }
    \tilde\bC^b(\vvec)}\;B_\delta(\tb,\yvec_1;\tilde\bC^b(\vvec))\Big),
    \end{align}
for any fixed $I\subsetneq J$ with $I\ne\vno$, $\vvec\in\Lambda$,
$\bC\subset\Lambda$ and a bond $b$, where the second term is zero if
$j_I=1$ (see \refeq{conventiona}).  If $j_I>1$, then both terms in
the right-hand side are of the form
    \begin{align}\lbeq{preM1B-gen1}
    &M^{\sss(1)}_{\vvec,\bb;\bC}\Big(\ind{\{\vvec\conn\vec\xvec_I\}\,\cap\,\{
    \vvec\ct{\sss\bA}(\xvec_1,\dots,\xvec_{j_I-1})\}\text{ in }\tilde\bC^b
    (\vvec)}\;B_\delta(\tb,\yvec_1;\tilde\bC^b(\vvec))\Big)\nn\\
    &~=\mE\Big[\indic_{E'(\vvec,\bb;\bC)}\;\ind{\{\vvec\conn\vec\xvec_I\}
    \,\cap\,\{\vvec\ct{\sss\bA}(\xvec_1,\dots,\xvec_{j_I-1})\}\text{ in }
    \tilde\bC^b(\vvec)}\;B_\delta(\tb,\yvec_1;\tilde\bC^b(\vvec))\Big],
    \end{align}
with $\bA=\{\vvec\}$ and $\bA=\bC$, respectively.  To treat the case of
$j_I=1$ simultaneously, we temporarily adopt the convention that
    \eq
    \lbeq{conventionb}
    \{\vvec\ct{\{\vvec\}}
    (\xvec_1,\dots,\xvec_{j_I-1})\}=\Omega\qquad \qquad \text{for $j_I=1$},
    \en
where $\Omega$ is the whole probability space.
(Do not be confused with the convention in \refeq{conventiona}.)

We note that the random variables in the above expectation depend only on
bonds, other than $b$, whose both end-vertices are in $\tilde\bC^b(\vvec)$,
and are independent of the occupation status of $b$.  For an event $E$ and
a random variable $X$, we let
    \begin{align}\lbeq{condexp}
    \tilde\mP^b(E)=\mP\big(E\,\big|\,b\text{ is vacant}\big),&&
    \tilde\mE^b[X]=\mE\big[X\big|\,b\text{ is vacant}\big].
    \end{align}
Since $\tilde\bC^b(\vvec)=\bC(\vvec)$ almost surely with respect to
$\tilde\mP^b$, we can simplify \refeq{preM1B-gen1} as
    \begin{align}\lbeq{M1B-gen1}
    \tilde\mE^b\Big[\indic_{E'(\vvec,\bb;\bC)}\;\ind{\vvec\conn\vec\xvec_I\}
    \,\cap\,\{\vvec\ct{\sss\bA}(\xvec_1,\dots,\xvec_{j_I-1})}\;B_\delta(\tb,
    \yvec_1;\bC(\vvec))\Big].
    \end{align}

To investigate \refeq{M1B-gen1}, we now introduce a second cutting bond:

\begin{defn}[\textbf{Second cutting bond}]
\label{def-seccb}
{\rm
For $t\geq t_{\vvec}$, we say that a bond $e$ is the $t$-\emph{cutting bond for
} $\vvec\ctx{\bA} \vec\xvec_I$ if it is the first occupied pivotal bond for
$\vvec\conn\xvec_i$ \emph{for all} $i\in I$ such that $\vvec\ct{\bA}\eb$ and
$t_{\te}\geq t$.}
\end{defn}

Let
    \begin{align}\lbeq{F'2-def}
    \Ftwo_t(\vvec,\vec\xvec_I;\bA)=\{\vvec\conn\vec\xvec_I\}\cap\big\{\vvec
    \ct{\bA}(\xvec_1,\dots,\xvec_{j_I-1})\big\}\cap\{\nexists\text{ $t$-cutting bond for }
    \vvec\ctx{\bA} \vec\xvec_I\},
    \end{align}
which, for $\vec\xvec_I=\xvec$, equals
    \begin{align}\lbeq{F'2-1comp}
    \Ftwo_t(\vvec,\xvec;\bA)=\big\{\vvec\ct{\bA}\xvec\big\}\cap\{\nexists
    \text{ $t$-cutting bond for }\vvec\ctx{\bA} \xvec\}.
    \end{align}
\ch{Note in \refeq{M1B-gen1}, due to \refeq{condexp}, $b$ is
$\tilde\mP^b$-a.s.\ vacant. Also, by Definition \ref{def-seccb},
when $e$ is a cutting bond, then $e$ is occupied. Thus, we must
have that $e\neq b$.}
Using \refeq{M1B-gen1}--\refeq{F'2-def}, we have, for $j_I>1$,
    \begin{align}\lbeq{M1B-gen2}
    &\tilde\mE^b\Big[\indic_{E'(\vvec,\bb;\bC)}\,\ind{\vvec\conn\vec\xvec_I
    \}\,\cap\,\{\vvec\ct{\sss\bA}(\xvec_1,\dots,\xvec_{j_I-1})}\,B_\delta
    (\tb,\yvec_1;\bC(\vvec))\Big]-\tilde\mE^b\Big[\indic_{E'(\vvec,\bb;
    \bC)}\,\indic_{\Ftwo_{t_{\yvec_1}}(\vvec,\vec\xvec_I;\bA)}\,B_\delta
    (\tb,\yvec_1;\bC(\vvec))\Big]\nn\\
    &=\sum_{e(\ne b)}\tilde\mE^b\Big[\indic_{E'(\vvec,\bb;\bC)}\,\ind{
    \vvec\conn\vec\xvec_I\}\,\cap\,\{\vvec\ct{\sss\bA}(\xvec_1,\dots,
    \xvec_{j_I-1})\}\,\cap\,\{e\text{ is $t_{\yvec_1}$-cutting for }\vvec
    \ctx{\bA}\vec\xvec_I}\,B_\delta(\tb,\yvec_1;\bC(\vvec))\Big]\nn\\
    &=\sum_{e(\ne b)}\tilde\mE^b\Big[\indic_{E'(\vvec,\bb;\bC)}\,
    \ind{\vvec\ct{\bA}\xvec_i\;\forall i\in I\}\,\cap\,\{e
    \text{ is $t_{\yvec_1}$-cutting for }\vvec\ctx{\bA}\vec\xvec_I}\,
    B_\delta(\tb,\yvec_1;\bC(\vvec))\Big].
    \end{align}
By the convention \refeq{conventionb}, this equality also holds when $j_I=1$
and $\bA=\{\vvec\}$, so that in both cases we are left to analyse
\refeq{M1B-gen2}. To the right-hand side, we will apply Lemma~\ref{lem-cut1}
and extract \ch{a factor} $\tau(\vec\xvec_I-\yvec_2)$.  To do so, we first rewrite the event
in the second indicator on the right-hand side as follows:

\begin{prop}[\textbf{Setting the stage for the factorization II}]
\label{prop:settingII}
For $\bA\subset\Lambda$, $t\geq t_{\vvec}$ and a bond $e$,
    \begin{align}\lbeq{exp-onin2}
    &\{\vvec\ct{\bA}\xvec_i~\forall i\in I\}\cap\{e\text{ is
    $t$-cutting for }\vvec\ctx{\bA}\vec\xvec_I\}\nn\\
    &\quad=\big\{\Ftwo_t(\vvec,\eb;\bA)
    \text{ in }\tilde\bC^e(\vvec)\big\}\cap\{e\text{ is occupied}\}\cap
    \big\{\te\conn\vec\xvec_I\text{ in }\Lambda\setminus\tilde\bC^e(\vvec)
    \big\},
    \end{align}
where the first and third events in the right-hand side are independent
of the occupation status of $b$.
\end{prop}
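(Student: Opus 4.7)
I would mirror the proof of Proposition~\ref{prop:settingI}, with $e$ now playing the role of the pivotal cutting bond. The argument splits into two inclusions plus a short independence check.

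For the forward inclusion, suppose $e$ is the $t$-cutting bond for $\vvec\ctx{\bA}\vec\xvec_I$. By Definition~\ref{def-seccb}, $e$ is occupied, pivotal for $\vvec\conn\xvec_i$ for every $i\in I$, satisfies $\vvec\ct{\bA}\eb$ and $t_{\te}\ge t$, and no earlier bond with all of these properties exists. Pivotality of $e$ forces $\te\notin\tilde\bC^e(\vvec)$, so every occupied path from $\vvec$ to any $\xvec_i$ must traverse $e$; hence $\{\te\conn\vec\xvec_I\text{ in }\Lambda\setminus\tilde\bC^e(\vvec)\}$ holds. The connection certifying $\vvec\ct{\bA}\eb$ does not use the bond $e$ and takes place entirely inside $\tilde\bC^e(\vvec)$. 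The minimality clause then translates to the statement that no $t$-cutting bond for $\vvec\ctx{\bA}\eb$ exists inside $\tilde\bC^e(\vvec)$, since any such earlier candidate would be ordered before $e$ in the original configuration and would violate minimality of $e$ there as well. Combining these observations with \refeq{F'2-1comp} yields that $\Ftwo_t(\vvec,\eb;\bA)$ occurs in $\tilde\bC^e(\vvec)$.

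For the reverse inclusion, starting from the right-hand side, the occupied bond $e$ together with the connection from $\te$ to each $\xvec_i$ in $\Lambda\setminus\tilde\bC^e(\vvec)$ makes $e$ pivotal for $\vvec\conn\xvec_i$ for every $i\in I$. The event $\Ftwo_t(\vvec,\eb;\bA)$ in $\tilde\bC^e(\vvec)$ supplies the connection $\vvec\ct{\bA}\eb$, which combined with $e$ occupied and the outside connection yields $\{\vvec\ct{\bA}\xvec_i\ \forall\,i\in I\}$. The ``no earlier cutting bond'' clause in $\Ftwo_t$, restricted to $\tilde\bC^e(\vvec)$, together with $t_{\te}\ge t$ and the pivotality just established, identifies $e$ as the \emph{first} $t$-cutting bond for $\vvec\ctx{\bA}\vec\xvec_I$.

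For the independence claim, $\tilde\bC^e(\vvec)$ is defined by declaring $e$ vacant and therefore does not depend on the actual status of $e$; the first event is determined by bonds with both endpoints in $\tilde\bC^e(\vvec)$, while the third event is determined by bonds with both endpoints in $\Lambda\setminus\tilde\bC^e(\vvec)$. Since the third event forces $\te\notin\tilde\bC^e(\vvec)$, the bond $e$ appears in neither event, yielding independence from the status of $e$. In the application \refeq{M1B-gen2} one already works under $\tilde\mE^b$, so $b$ is conditioned vacant and the asserted independence from $b$ is automatic.

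The only delicate point is the translation of the minimality clause ``first $t$-cutting bond in the whole configuration'' into ``no $t$-cutting bond inside $\tilde\bC^e(\vvec)$''. This hinges on the fact that any would-be earlier cutting bond would, by its own pivotality, sit on the portion of the cluster strictly preceding $e$, which is contained in $\tilde\bC^e(\vvec)$ because $e$ is itself pivotal. The remaining bookkeeping is routine and parallels Proposition~\ref{prop:settingI} line by line.
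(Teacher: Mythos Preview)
Your proof is correct and takes the same approach as the paper, which dispatches the equality in a single displayed chain by invoking \refeq{partition} and \refeq{H3-rewr} rather than spelling out both inclusions, and handles the independence claim via the same observation that $\Ftwo_t(\vvec,\eb;\bA)\subset\{\eb\in\tilde\bC^e(\vvec)\}$ while the third event forces $\te\notin\tilde\bC^e(\vvec)$. Note that the ``$b$'' in the independence clause is a typo for $e$; the paper's own proof addresses $e$, as you do, and your additional remark about $\tilde\mE^b$ is correct but concerns a different bond.
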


\begin{proof}
By definition, we immediately obtain (cf., \refeq{partition} and
\refeq{H3-rewr})
    \begin{align}
    &\{\vvec\ct{\bA}\xvec_i~\forall i\in I\}\cap\{e\text{ is
    $t$-cutting for }\vvec\ctx{\bA}\vec\xvec_I\}\nn\\
    &=\Big\{\big\{\vvec\ct{\bA}\eb\big\}\cap\{\nexists
    \text{ $t$-cutting bond for }\vvec\ctx{\bA} \eb\}
    \text{ in }\tilde\bC^e(\vvec)\Big\}\cap\{e\text{ is occupied}\}
    \cap\big\{\te\conn\vec\xvec_I\text{ in }\Lambda\setminus\tilde\bC^e
    (\vvec)\big\}\nn\\
    &=\big\{\Ftwo_t(\vvec,\eb;\bA)
    \text{ in }\tilde\bC^e(\vvec)\big\}\cap\{e\text{ is occupied}\}
    \cap\big\{\te\conn\vec\xvec_I\text{ in }\Lambda\setminus\tilde\bC^e
    (\vvec)\big\},
    \end{align}
which proves \refeq{exp-onin2}.

The statement below \refeq{exp-onin2} also holds, since $\Ftwo_t(\vvec,\eb;
\bA)\subset\{\eb\in\tilde\bC^e(\vvec)
\}$, while $\te\conn\vec\xvec_I\text{ in }\Lambda\setminus\tilde\bC^e
(\vvec)$ ensures that $\te\notin\tilde\bC^e(\vvec)$ occurs
(see the similar arguments below \refeq{partition} and
\refeq{H3-rewr}).  This completes the proof of
Proposition~\ref{prop:settingII}.
\end{proof}

We continue with the expansion of the right-hand side of \refeq{M1B-gen2}.
First, we note that $B_\delta(\tb,\yvec_1;\bC(\vvec))$ is random only
when $t_{\yvec_1}$ is strictly larger than $t_{\tb}$, and depends only
on bonds whose both endvertices are in $\bC(\vvec;t_{\yvec_1}-\vep)$,
where we define, for $T\geq t_{\vvec}$,
    \begin{align}
    \bC(\vvec;T)=\bC(\vvec)\cap\big(\Zd\times[t_{\vvec},T]\big),
    \end{align}
which is almost surely finite as long as the interval $[t_{\vvec},T]$ is
finite.  As a result, we claim that, \ch{a.s.},
    \begin{align}\lbeq{Bident}
    B_\delta(\tb,\yvec_1;\bC(\vvec))=B_\delta(\tb,\yvec_1;\bC(\vvec;
     t_{\yvec_1}-\vep)).
    \end{align}
Indeed, this follows since the first term of $B_\delta(\tb,\yvec_1;\bC(\vvec))$
in \refeq{Bdelta-def} does not depend on $\bC(\vvec)$ at all, while the other
term, due to the definition of $B(\tb,\yvec_1;\bC(\vvec))$ in \refeq{ANBNdef}
only depends on $\bC(\vvec)$ up to time $t_{\yvec_1}-\vep$.

As a result, by conditioning on $\bC(\vvec;t_{\yvec_1}-\vep)$ and using
Proposition~\ref{prop:settingII}, the summand in \refeq{M1B-gen2} for $e\ne b$
can be written as
    \begin{align}\lbeq{propappl}
    &\sum_{\bB\subset\Lambda}\tilde\mE^b\Big[\indic_{E'(\vvec,\bb;\bC)}\;\ind{
    \Ftwo_{t_{\yvec_1}}(\vvec,\eb;\bA)
    \text{ in }\tilde\bC^e(\vvec)}\;\ind{\bC(\vvec;t_{\yvec_1}-\vep)=\bB}
    \;B_\delta(\tb,\yvec_1;\bB)
    \ind{e\text{ is occupied}}\;\ind{\te\conn\vec\xvec_I\text{ in }
    \Lambda\setminus\tilde\bC^e(\vvec)}\Big]\;\nn\\
    &=p_e\sum_{\bB\subset\Lambda}B_\delta(\tb,\yvec_1;\bB)\;\tilde\mE^b\Big[
    \ind{E'(\vvec,\bb;\bC)\,\cap\,\Ftwo_{t_{\yvec_1}}(\vvec,\eb;\bA)\,\cap\,
    \{\bC(\vvec;t_{\yvec_1}-\vep)
    =\bB\}\text{ in }\tilde\bC^e(\vvec)}
    \ind{\te\conn\vec\xvec_I\text{ in }\Lambda\setminus\tilde\bC^e(\vvec)}\Big],
    \end{align}
where the second expression is obtained by using
$t_{\tb}\leq t_{\yvec_1}\leq t_{\te}$ and the fact that the event $\{e$
is occupied\} is independent of the other events.  To the expectation on
the right-hand side of \refeq{propappl}, we apply Lemma~\ref{lem-cut1}
with $\mE$ in \refeq{lemcut1} being replaced by $\tilde\mE^b\,$, which, we recall, is
the expectation for oriented percolation defined over the bonds other than
$b$.  Then, \refeq{propappl} equals
    \begin{align}\lbeq{pre-rev-cond}
    &p_e\sum_{\bB\subset\Lambda}B_\delta(\tb,\yvec_1;\bB)\,\tilde\mE^b\Big[
    \ind{E'(\vvec,\bb;\bC)\,\cap\,\Ftwo_{t_{\yvec_1}}(\vvec,\eb;\bA)\,\cap
    \,\{\bC(\vvec;t_{\yvec_1}-\vep)=\bB
    \}\text{ in }\tilde\bC^e(\vvec)}\,\tilde\mE^b\big[\ind{\te\conn\vec
    \xvec_I\text{ in }\Lambda\setminus\tilde\bC^e(\vvec)}\big]\Big]\\
    &=p_e\sum_{\bB\subset\Lambda}\tilde\mE^b\Big[\indic_{E'(\vvec,\bb;\bC)}
    \,\ind{\Ftwo_{t_{\yvec_1}}(\vvec,\eb;\bA)\text{ in }\tilde\bC^e(\vvec)}
    \,\ind{\bC(\vvec;t_{\yvec_1}-\vep)=\bB}\,B_\delta(\tb,\yvec_1;\bB)
    \mE\big[\ind{\te\conn\vec\xvec_I\text{ in }
    \Lambda\setminus\tilde\bC^e(\vvec)}\big]\Big]\nn\\
    &=p_e\sum_{\bB\subset\Lambda}\tilde\mE^b\Big[\indic_{E'(\vvec,\bb;\bC)}\,
    \ind{\Ftwo_{t_{\yvec_1}}(\vvec,\eb;\bA)\text{ in }\tilde\bC^e(\vvec)}\,
    \ind{\bC(\vvec;t_{\yvec_1}-\vep)=\bB}\,B_\delta(\tb,\yvec_1;\bB)\,\Big(\tau
    (\vec\xvec_I-\te)-\mP\big(\te\ctx{\sss\tilde\bC^e(\vvec)}\vec\xvec_I\big)
    \Big)\Big],\nn
    \end{align}
where the first equality is due to the fact that the event
$\{\te\conn\vec\xvec_I$ in $\Lambda\setminus\tilde\bC^e(\vvec)\}$ depends only
on bonds after $t_{\te}\;(\geq t_{\tb})$, so that $\tilde\mE^b$ can be replaced
by $\mE$, and the second equality is obtained by using
\refeq{restr-rpt}--\refeq{incl-excl}.  By performing the sum over
$\bB\subset\Lambda$ and using \refeq{Bident}, \refeq{pre-rev-cond} equals
\begin{align}\lbeq{rev-cond}
p_e\,\tilde\mE^b\Big[\indic_{E'(\vvec,\bb;\bC)}\;\ind{\Ftwo_{t_{\yvec_1}}
 (\vvec,\eb;\bA)\text{ in }\tilde\bC^e(\vvec)}\;B_\delta(\tb,\yvec_1;\bC
 (\vvec))\;\Big(\tau(\vec\xvec_I-\te)-\mP\big(\te\ctx{\sss\tilde\bC^e
 (\vvec)}\vec\xvec_I\big)\Big)\Big].
\end{align}
For notational convenience, we define
\begin{align}\lbeq{tildeM-def}
\tilde M^b_{\vvec,\bb;\bC}(X)=\tilde\mE^b\big[\indic_{E'(\vvec,\bb;\bC)}
 \;X\big].
\end{align}
Note that $\tilde M^b_{\vvec,\bb;\bC}(X)=M^{\sss(1)}_{\vvec,\bb;\bC}(X)$
if $X$ depends only on bonds before $t_{\bb}$.  As in the derivation of
\refeq{E'-F'1-dec2} from \refeq{E'-F'1-dec1.5}, we use
Proposition~\ref{prop:exp-first} to conclude that, by \refeq{M1B-gen2}
and \refeq{rev-cond}--\refeq{tildeM-def},
\begin{align}\lbeq{M1B-gen3}
&\tilde M^b_{\vvec,\bb;\bC}\Big(\ind{\vvec\conn\vec\xvec_I\}\,\cap\,\{
 \vvec\ct{\sss\bA}(\xvec_1,\dots,\xvec_{j_I-1})}\;B_\delta(\tb,\yvec_1;
 \bC(\vvec))\Big)-\tilde M^b_{\vvec,\bb;\bC}\Big(\indic_{\Ftwo_{t_{
 \yvec_1}}(\vvec,\vec\xvec_I;\bA)}\;B_\delta(\tb,\yvec_1;\bC(\vvec))
 \Big)\nn\\
&=\sum_{\yvec_2}\sum_{e(\ne b)}p_e\,\tilde M^b_{\vvec,\bb;\bC}\Big(
 \ind{\Ftwo_{t_{\yvec_1}}(\vvec,\eb;\bA)\text{ in }\tilde\bC^e(\vvec)}
 \;B_\delta(\tb,\yvec_1;\bC(\vvec))\;B_\delta(\te,\yvec_2;\tilde\bC^e
 (\vvec))\Big)\;\tau(\vec\xvec_I-\yvec_2)\nn\\
&\quad-\sum_{e(\ne b)}p_e\,\tilde M^b_{\vvec,\bb;\bC}\Big(\ind{\Ftwo_{
 t_{\yvec_1}}(\vvec,\eb;\bA)\text{ in }\tilde\bC^e(\vvec)}\;B_\delta
 (\tb,\yvec_1;\bC(\vvec))\;A(\te,\vec\xvec_I;\tilde\bC^e(\vvec))\Big).
\end{align}

\begin{figure}[t]
\begin{align*}
a^{\sss(1)}(\yvec_1,\vec\xvec_I;3)_{\sss+}:
 \raisebox{-4pc}{\includegraphics[scale=0.17]{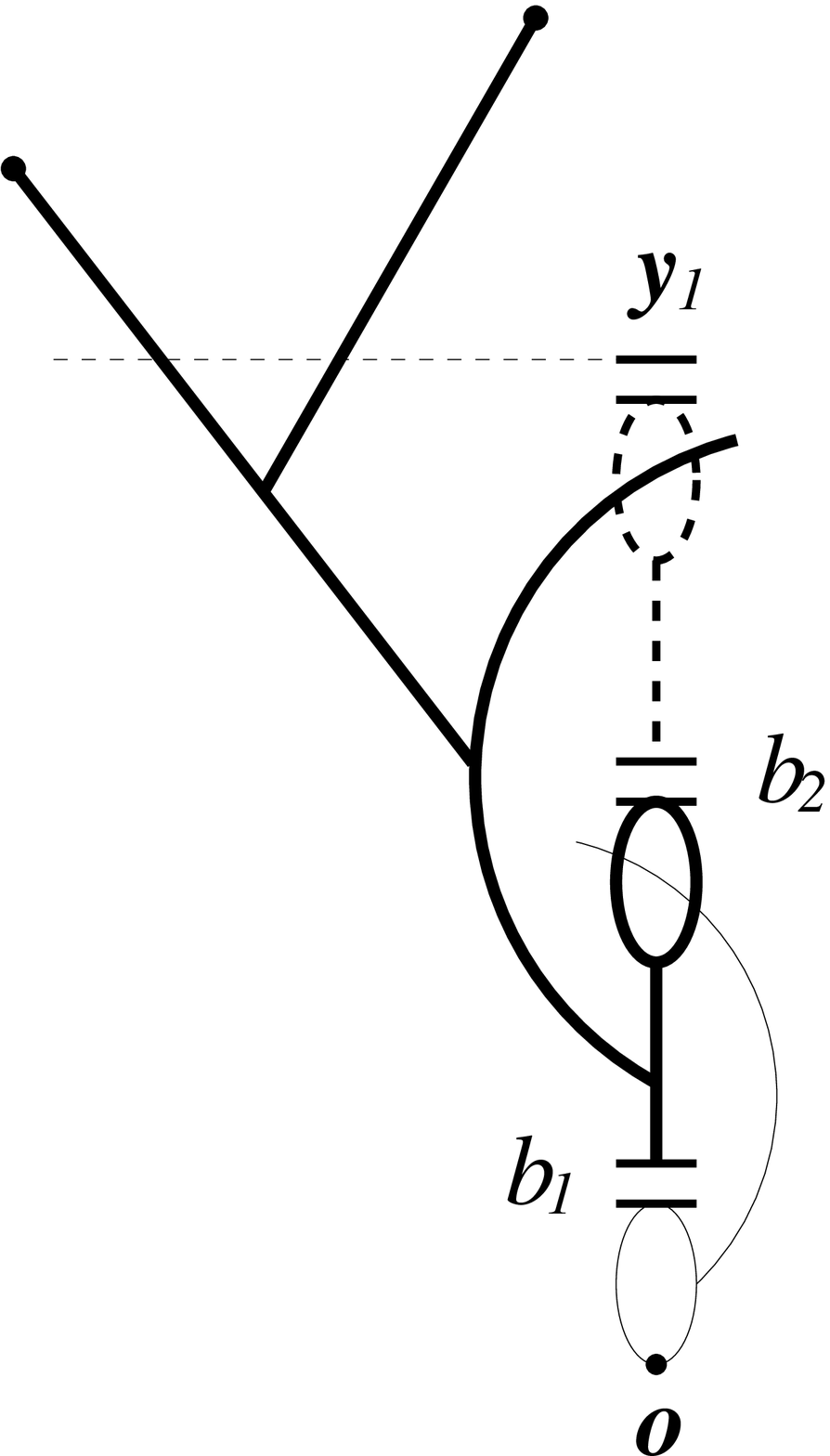}}\hspace{5pc}
a^{\sss(1)}(\yvec_1,\vec\xvec_I;3)_{\sss-}:
 \raisebox{-4pc}{\includegraphics[scale=0.17]{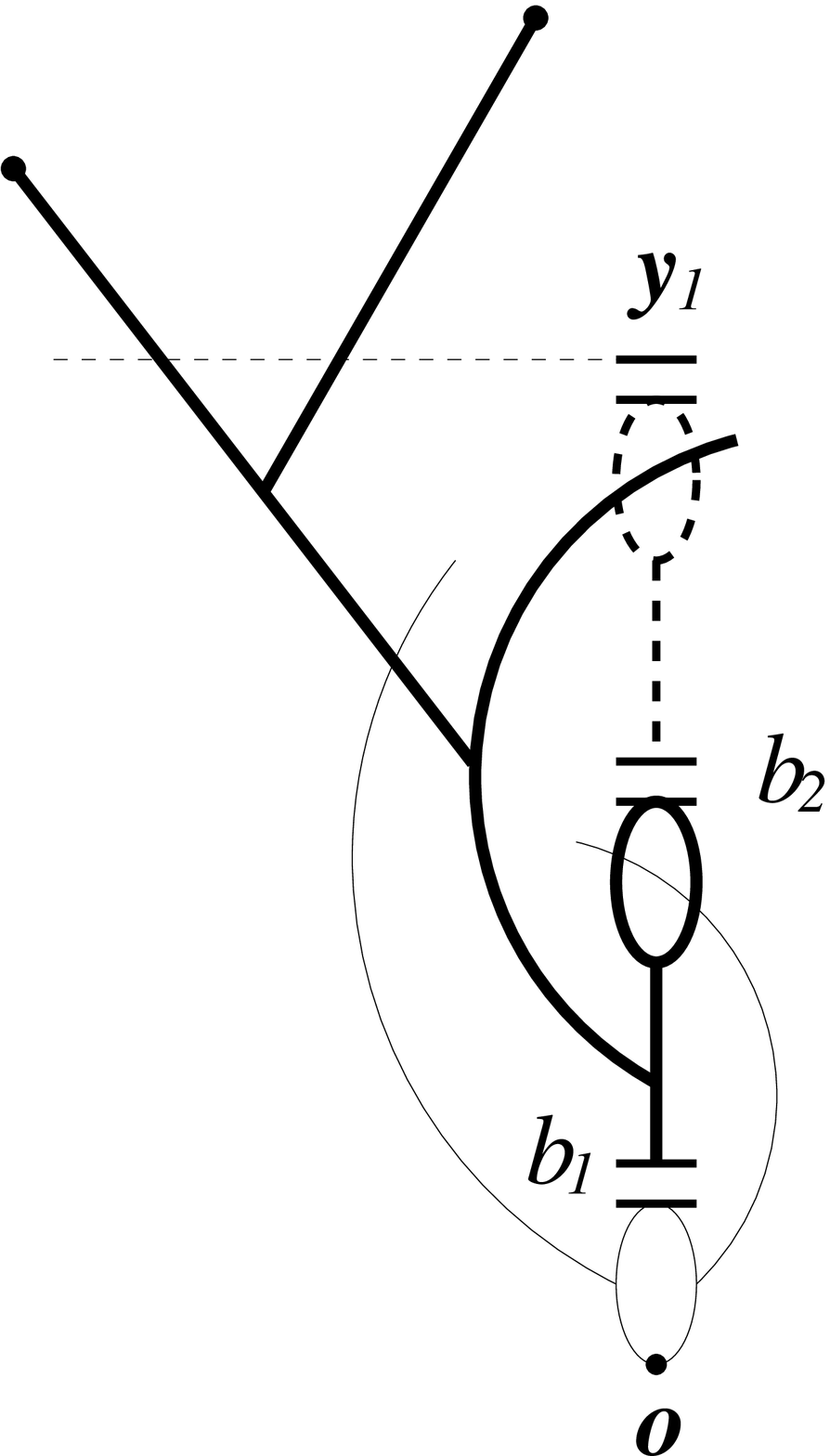}}\qquad+\quad
 \raisebox{-4pc}{\includegraphics[scale=0.17]{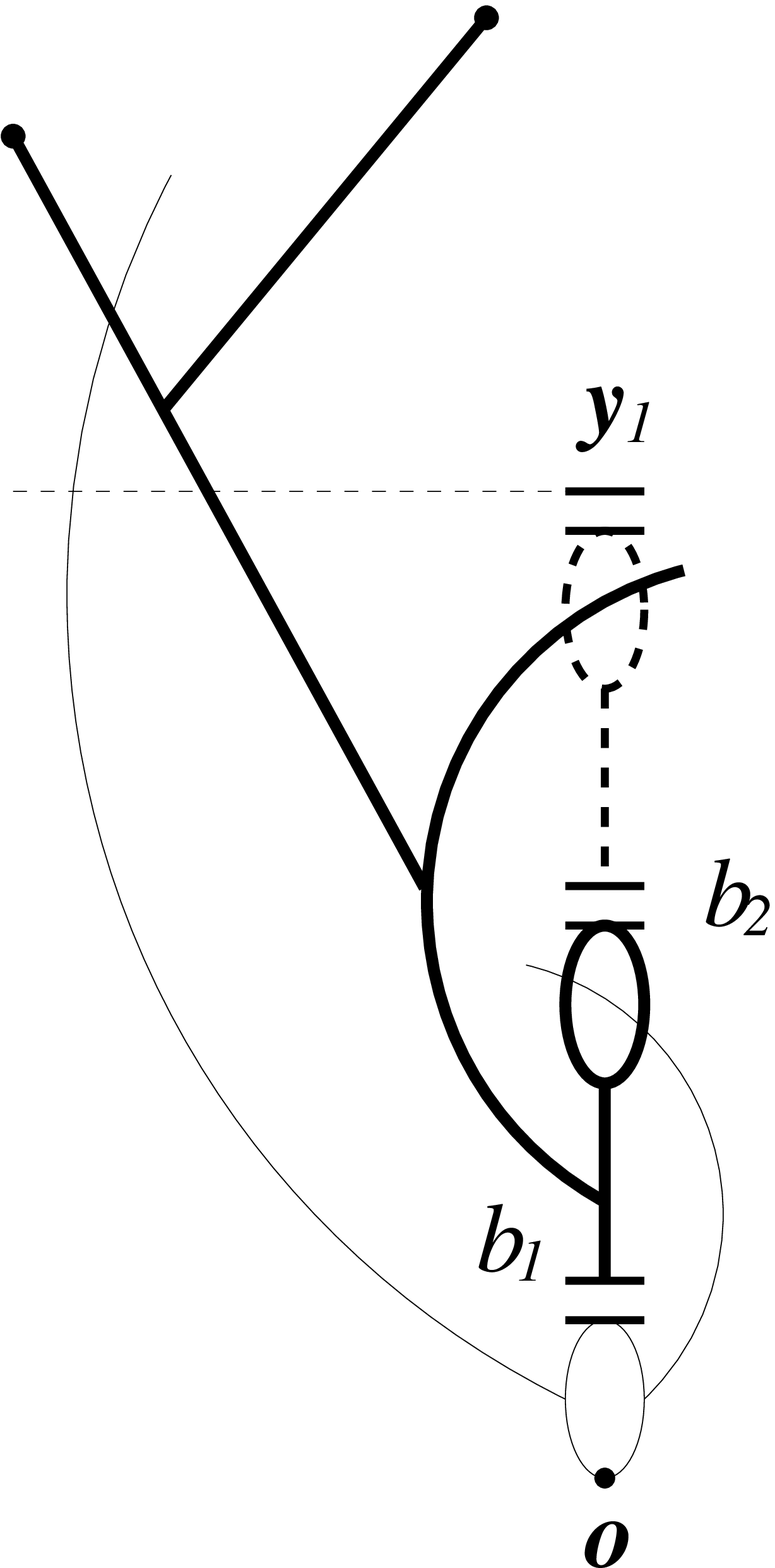}}
\end{align*}
\caption{\label{fig-2ndexp-error}Schematic representations of
$a^{\sss(1)}(\yvec_1,\vec\xvec_I;3)_{\sss\pm}$.  The random variable
$B_\delta(\tb_{\sss N+1},\yvec_1;\bC(\tb_{\sN}))$ in \refeq{a3IN-def} for
$N=1$ is reduced to $B^{\sss(0)}(\tb_2,\yvec_1;\bC(\tb_1))$ (in bold
dashed lines).}
\end{figure}

\begin{figure}[t]
\begin{gather*}
\phi^{\sss(1)}(\yvec_1,\yvec_2)_{\sss+}:
 \raisebox{-5pc}{\includegraphics[scale=0.17]{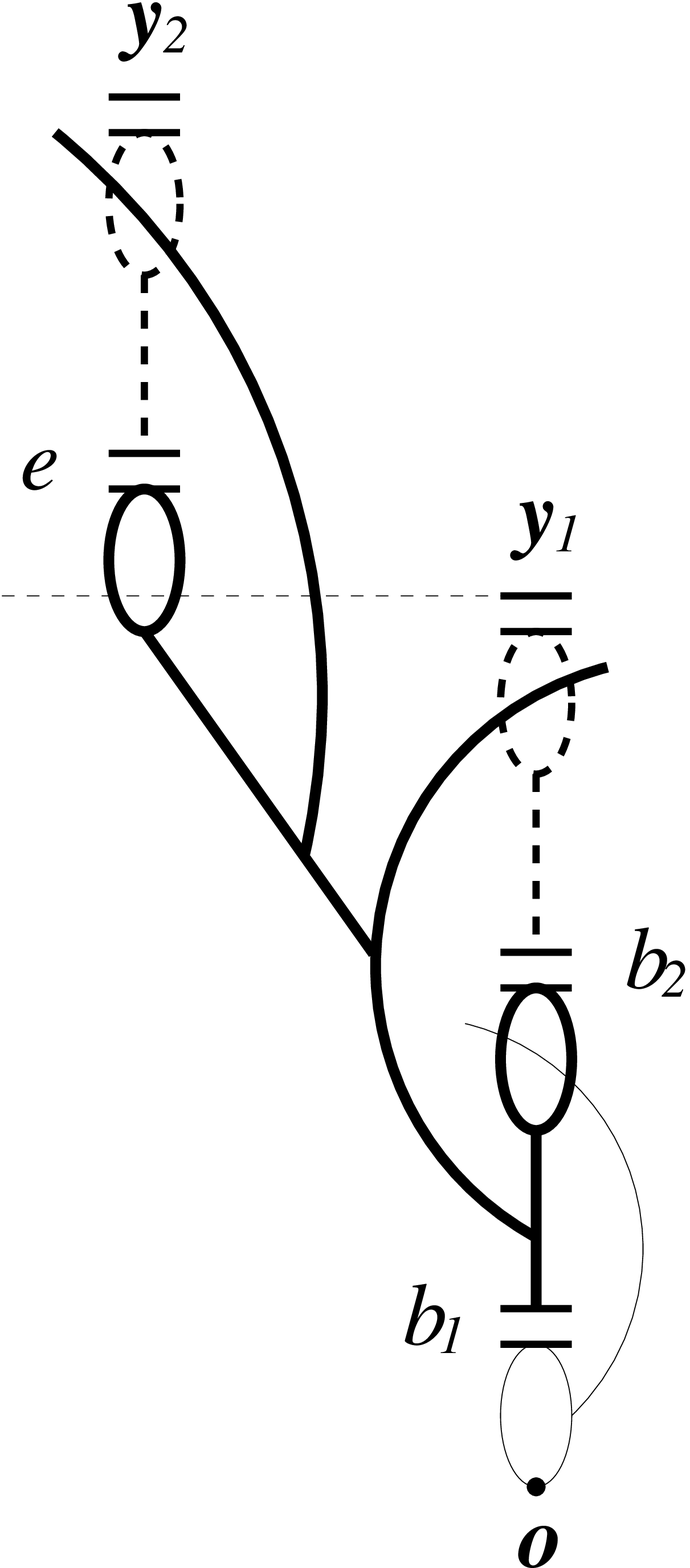}}\hspace{5pc}
\phi^{\sss(1)}(\yvec_1,\yvec_2)_{\sss-}:
 \raisebox{-5pc}{\includegraphics[scale=0.17]{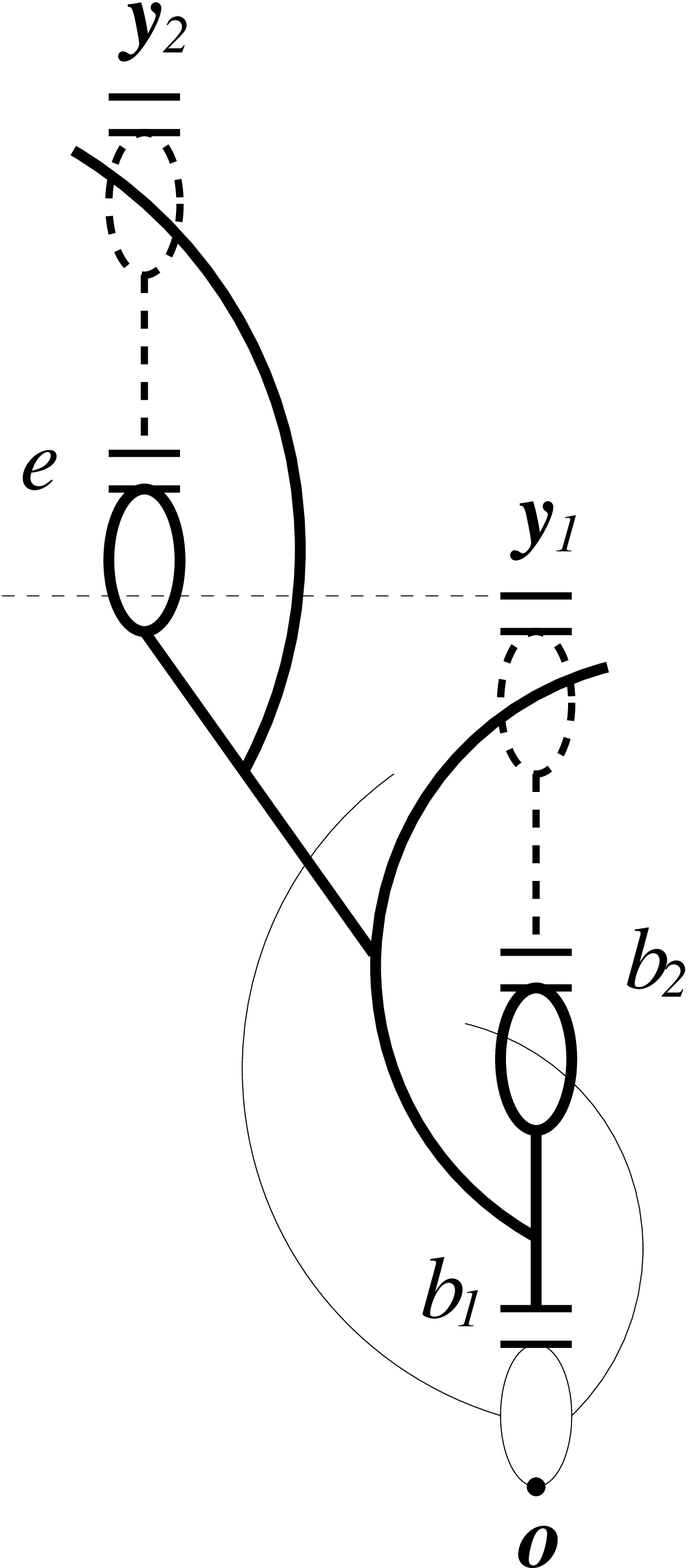}}\qquad+~\quad
 \raisebox{-5pc}{\includegraphics[scale=0.17]{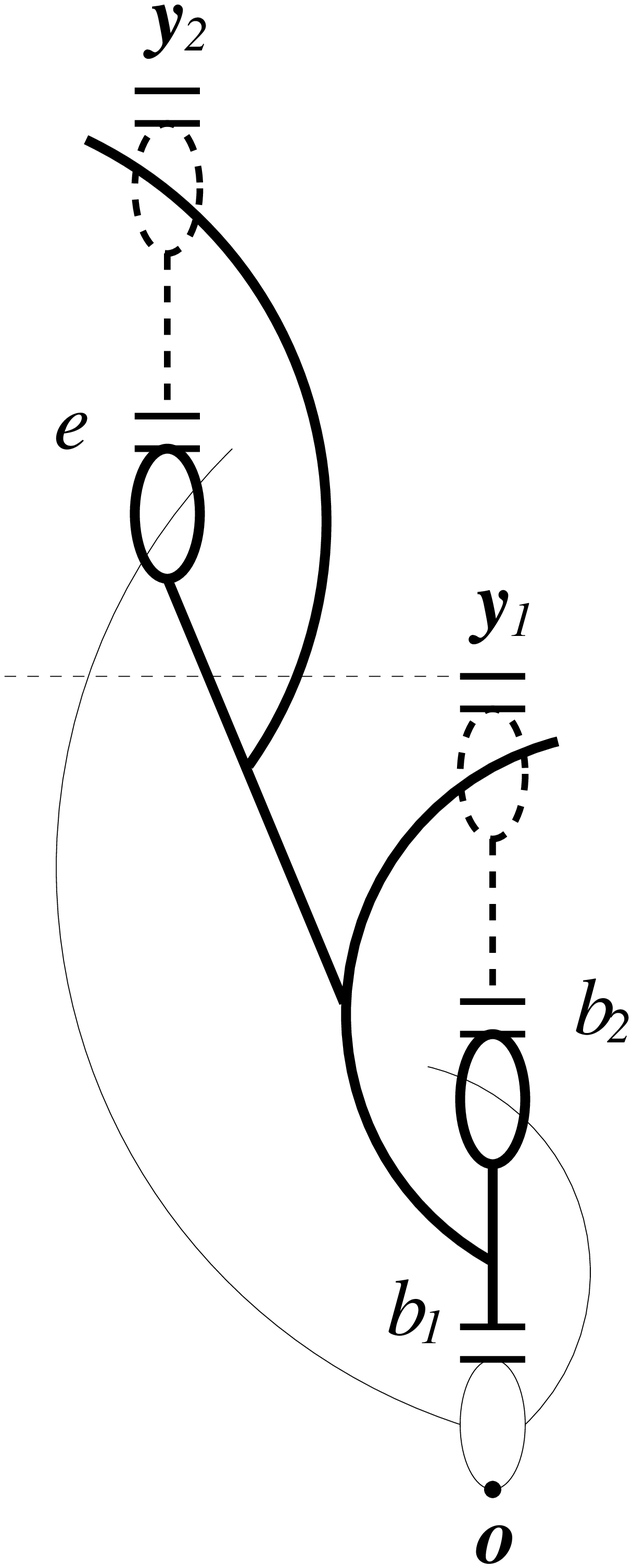}}\\[7pt]
a^{\sss(1)}(\yvec_1,\vec\xvec_I;4)_{\sss+}:
 \raisebox{-5pc}{\includegraphics[scale=0.17]{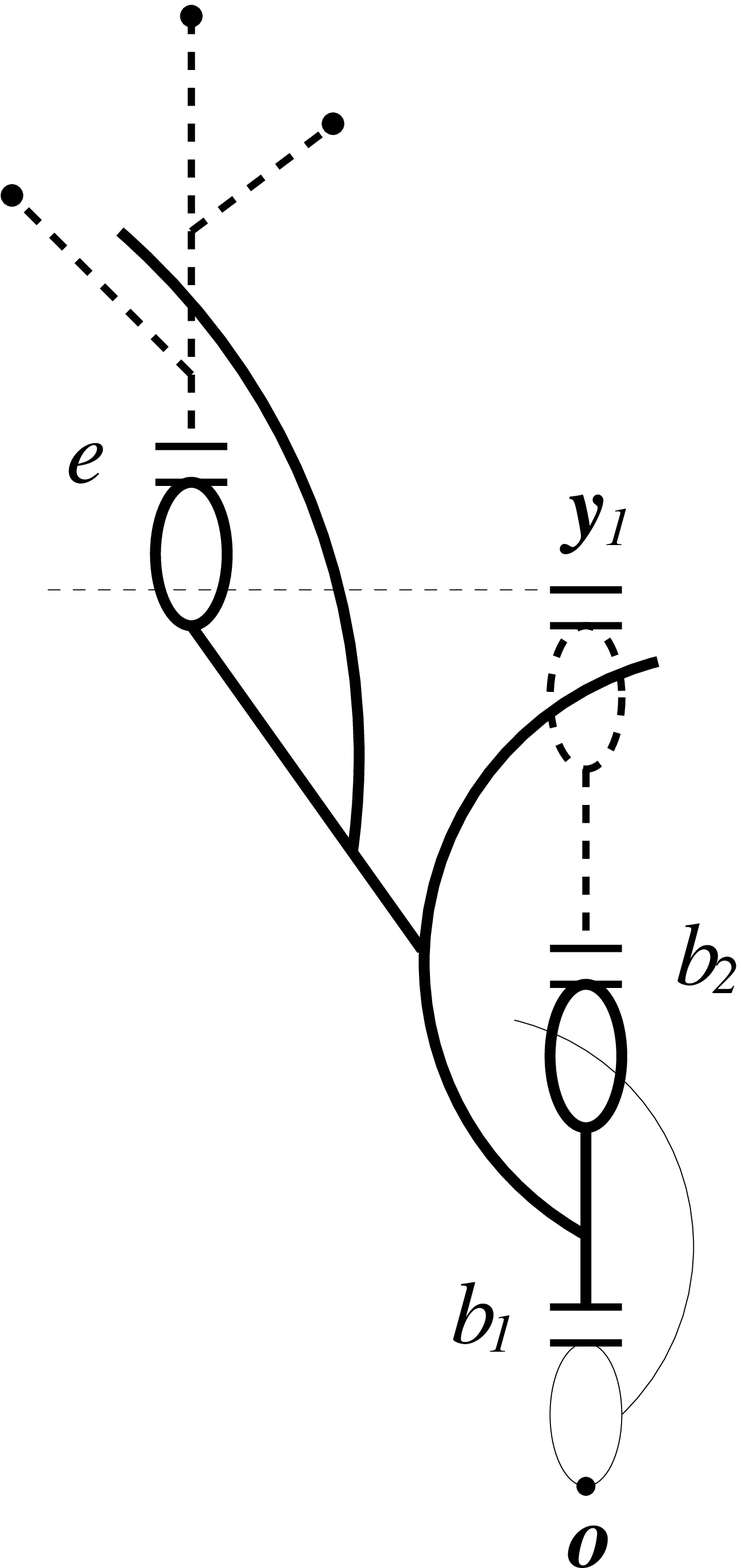}}\hspace{5pc}
a^{\sss(1)}(\yvec_1,\vec\xvec_I;4)_{\sss-}:
 \raisebox{-5pc}{\includegraphics[scale=0.17]{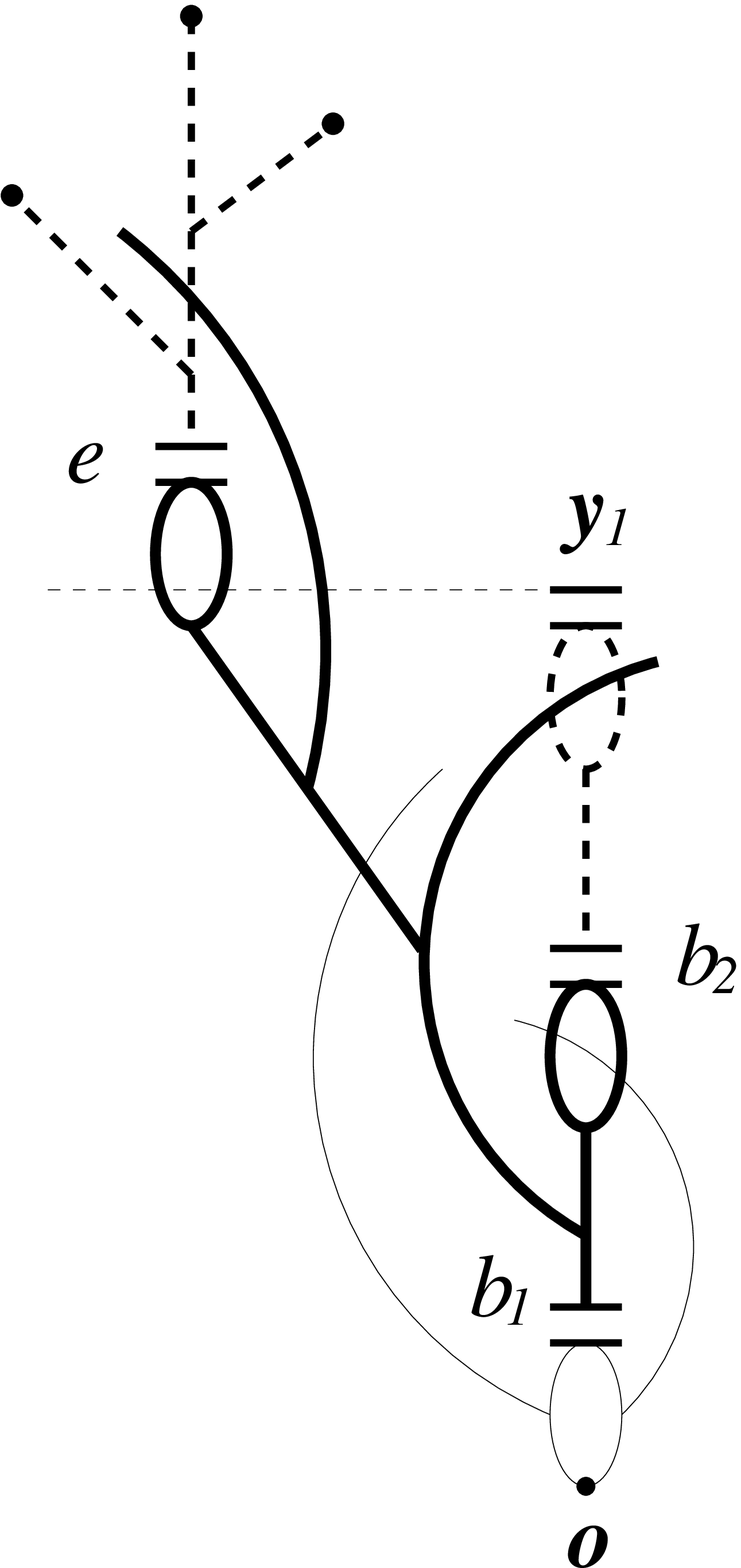}}\qquad+~\quad
 \raisebox{-5pc}{\includegraphics[scale=0.17]{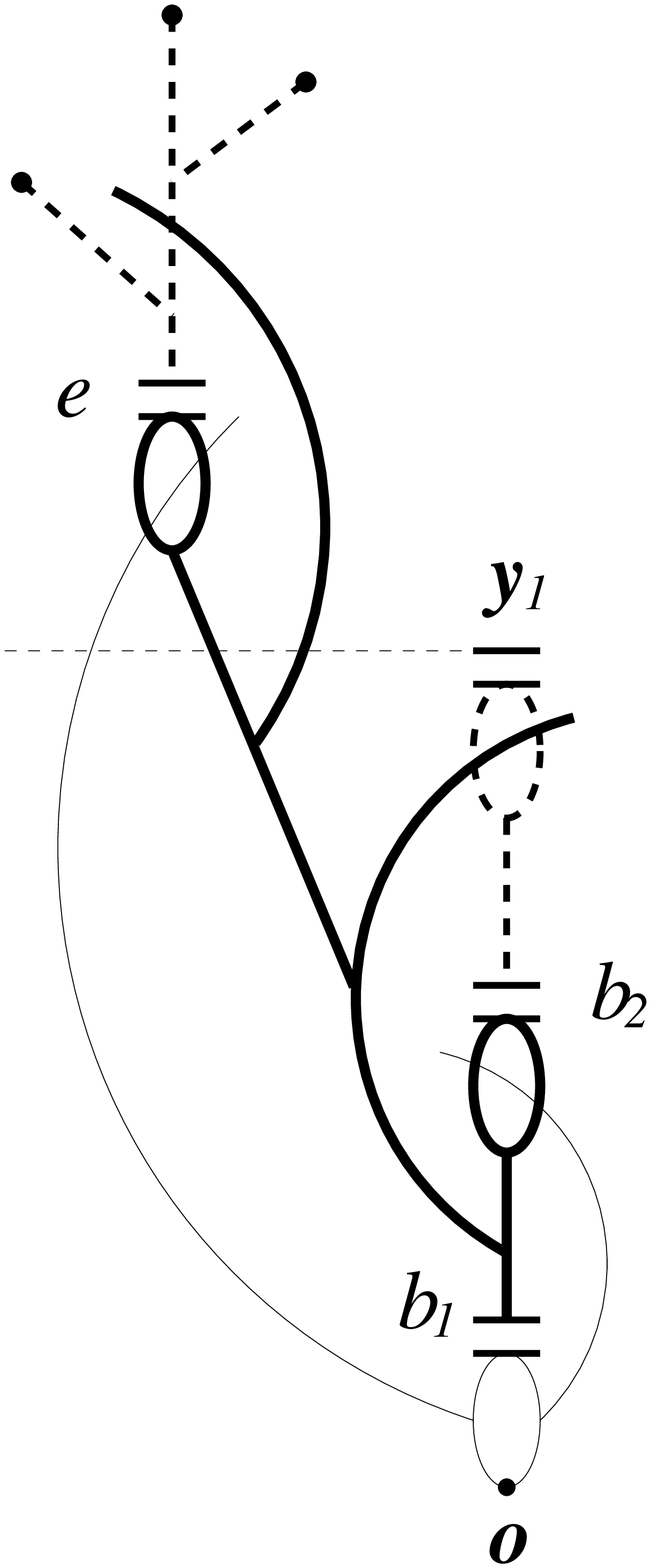}}
\end{gather*}
\caption{\label{fig-2ndexp-main}Schematic representations of
$\phi^{\sss(1)}(\yvec_1,\yvec_2)_{\sss\pm}$ and
$a^{\sss(1)}(\yvec_1,\vec\xvec_I;4)_{\sss\pm}$.  The random variables
$B_\delta(\tb_{\sss N+1},\yvec_1;\bC(\tb_{\sN}))$,
$B_\delta(\te,\yvec_2;\tilde\bC^e(\tb_{\sN}))$ and
$A(\te,\vec\xvec_I;\tilde\bC^e(\tb_{\sN}))$ in
\refeq{phipm-def}--\refeq{a4pm-def} for $N=1$ are reduced,
respectively, to $B^{\sss(0)}(\tb_2,\yvec_1;\bC(\tb_1))$,
$B^{\sss(0)}(\te,\yvec_2;\tilde\bC^e(\tb_1))$ and
$A^{\sss(0)}(\te,\vec\xvec_I;\tilde\bC^e(\tb_1))$ (depicted
in bold dashed lines).}
\end{figure}

The expansion for $\tilde B^{\sss(N)}(\yvec_1,\vec\xvec_I)$ is completed
by using \refeq{tildeBN-rewr}--\refeq{M1B-dec1} and \refeq{M1B-gen3} as
follows.  For convenience, we let
    \begin{align}\lbeq{tildeMN-def}
    \tilde M_{b_1}^{\sss(1)}(X)=\tilde M^{b_1}_{\ovec,\bb_1;\{\ovec\}}(X),&&
    \tilde M_{b_{N+1}}^{\sss(N+1)}(X)=\sum_{b_N}p_{b_N}M_{\bb_N}^{\sss(N)}
    \Big(\tilde M^{b_{N+1}}_{\tb_N,\bb_{N+1};\tilde\bC_{N-1}}(X)\Big)\qquad
    (N\geq1).
    \end{align}
Using this notation, as well as the abbreviations $\bC_{\sN}=\bC(\tb_{\sN})$, $\tilde\bC^e_{\sN}=\tilde\bC^e(\tb_{\sN})$, $\bC_{\sss+}=\{\tb_{\sN}\}$ and
$\bC_{\sss-}=\tilde\bC_{\sss N-1}$, we define, for $N\geq 0$,
\begin{align}
\dpst\phi^{\sss(N)}(\yvec_1,\yvec_2)_{\sss\pm}&=\sum_{\substack{b_{N+1},e\\
 b_{N+1}\ne e}}p_{b_{N+1}}p_e\,\tilde M_{b_{N+1}}^{\sss(N+1)}\Big(\ind{
 \Ftwo_{t_{\yvec_1}}(\tb_N,\eb;\bC_\pm)\text{ in }\tilde\bC^e_N}\,B_\delta
 (\tb_{\sss N+1},\yvec_1;\bC_{\sN})\,B_\delta(\te,\yvec_2;\tilde\bC^e_{\sN})
 \Big),\lbeq{phipm-def}
\end{align}
and, for $\ell=3,4$,
    \eq
    a^{\sss(N)}(\yvec_1,\vec\xvec_I;\ell)
    =a^{\sss(N)}(\yvec_1,\vec\xvec_I;\ell)_{
    \sss+}-
    \ind{j_I>1}\;a^{\sss(N)}(\yvec_1,\vec\xvec_I;\ell)_{\sss-},
    \lbeq{a3IN-def}
    \en
where
\begin{align}
a^{\sss(N)}(\yvec_1,\vec\xvec_I;3)_{\sss\pm}&=\sum_{b_{N+1}}p_{b_{N+1}}\tilde
 M_{b_{N+1}}^{\sss(N+1)}\Big(\indic_{\Ftwo_{t_{\yvec_1}}(\tb_N,\vec\xvec_I;
 \bC_\pm)}\,B_\delta(\tb_{\sss N+1},\yvec_1;\bC_{\sN})\Big),\lbeq{a3pm-def}\\
a^{\sss(N)}(\yvec_1,\vec\xvec_I;4)_{\sss\pm}&=-\sum_{\substack{b_{N+1},e\\
 b_{N+1}\ne e}}p_{b_{N+1}}p_e\,\tilde M_{b_{N+1}}^{\sss(N+1)}\Big(\ind{
 \Ftwo_{t_{\yvec_1}}(\tb_N,\eb;\bC_\pm)\text{ in }\tilde\bC^e_N}\,B_\delta
 (\tb_{\sss N+1},\yvec_1;\bC_{\sN})\,A(\te,\vec\xvec_I;\tilde\bC^e_{\sN})
 \Big).\lbeq{a4pm-def}
\end{align}
These functions correspond to the second term in the left-hand side of
\refeq{M1B-gen3} and the first and second terms in the right-hand side of
\refeq{M1B-gen3}, respectively, when \refeq{M1B-gen3} is substituted into
\refeq{tildeBN-rewr}.  We note that the functions
\refeq{a3IN-def} depend on $I$ via the indicator
$\ind{j_I>1}$, which is due to the fact that both terms in the right-hand
side of \refeq{M1B-dec1} contribute to the case of $j_I>1$, while for the
case of $j_I=1$, the contribution is only from the first term that has been
treated as the case of $\bA=\{\tb_{\sN}\}$.  Now we arrive at
    \begin{align}\lbeq{tildeBN-dec}
    \tilde B^{\sss(N)}(\yvec_1,\vec\xvec_I)-a^{\sss(N)}(\yvec_1,\vec\xvec_I;3)
    =\sum_{\yvec_2}
    \big(\phi^{\sss(N)}(\yvec_1,\yvec_2)_+-\indic_{\{j_I>1\}}\phi^{\sss(N)}(\yvec_1,\yvec_2)_-\big)
    ~\tau(\vec\xvec_I-\yvec_2)
    +a^{\sss(N)}(\yvec_1,\vec\xvec_I;4),
    \end{align}
where $a^{\sss(N)}(\yvec_1,\vec\xvec_I;\ell)$ for $\ell=3,4$ \ch{turn out to} be error
terms.  This extracts the factor $\tau(\vec\xvec_I-\yvec_2)$
from $\tilde B^{\sss(N)}(\yvec,\vec\xvec_I)$.

\subsection{Summary of the expansion for \protect $A(\vec{\xvec}_J)$}\label{ss:grocery}
Recall \refeq{AN-dec2} and \refeq{tildeBN-dec}, and define,
for $N\geq 0$,
    \begin{align}\lbeq{aNxI-def}
    a^{\sss(N)}(\vec\xvec_{J\setminus
    I},\vec\xvec_I)&=a^{\sss(N)}(\vec\xvec_{J\setminus
    I},\vec\xvec_I;2)+\sum_{\yvec_1}\Big(a^{\sss(N)}(\yvec_1,\vec\xvec_I;3)
    +a^{\sss(N)}(\yvec_1,\vec\xvec_I;4)\Big)\,\tau(\vec\xvec_{J\setminus I}-
    \yvec_1),
    \end{align}
let $a^{\sss(N)}(\vec\xvec_J)$ be given by \refeq{edef} and
define
    \begin{align}\lbeq{aphi-def}
    a(\vec\xvec_J)=\sum_{N=0}^\infty(-1)^Na^{\sss(N)}(\vec\xvec_J),&&
    \phi(\yvec_1,\yvec_2)_\pm=\sum_{N=0}^\infty(-1)^N\phi^{\sss(N)}(\yvec_1,
    \yvec_2)_\pm.
    \end{align}
Now, we can summarize the expansion in the previous two subsections
as follows:

\begin{prop}[\textbf{Expansion for $A(\vec\xvec_J)$}]\label{prop:grocery}
For any $\lamb\geq0$, $J\ne\varnothing$ and $\vec\xvec_J\in\Lambda^{|J|}$,
    \begin{align}\lbeq{expA}
    A(\vec\xvec_J)=a(\vec\xvec_J)+\sum_{\vno\ne I\subsetneq J_1}\,\sum_{\yvec_1,
    \yvec_2}C(\yvec_1,\yvec_2)\;\tau(\vec\xvec_{J\setminus I}-\yvec_1)\;
    \tau(\vec\xvec_I-\yvec_2),
    \end{align}
where
    \begin{align}\lbeq{Cdef}
    C(\yvec_1,\yvec_2)=\phi(\yvec_1,\yvec_2)_{\sss+}+\phi(\yvec_2,
    \yvec_1)_{\sss+}-\ch{\phi(\yvec_2,\yvec_1)_{\sss-}}.
    \end{align}
\end{prop}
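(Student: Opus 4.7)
The plan is to obtain \refeq{expA} by feeding the two intermediate decompositions \refeq{AN-dec2} and \refeq{tildeBN-dec} into the alternating series $A(\vec\xvec_J)=\sum_{N\ge0}(-1)^NA^{\sss(N)}(\vec\xvec_J)$ from \refeq{AB-alt}, and then reorganising the sum over subsets $I$ so that only subsets of $J_1$ appear. Concretely, I would proceed as follows.

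First, substitute \refeq{tildeBN-dec} into \refeq{AN-dec2}, obtaining, for each $N\ge0$,
\begin{align*}
A^{\sss(N)}(\vec\xvec_J)&=a^{\sss(N)}(\vec\xvec_J;1)+\sum_{\vno\ne I\subsetneq J}a^{\sss(N)}(\vec\xvec_{J\setminus I},\vec\xvec_I;2)\\
&\quad+\sum_{\vno\ne I\subsetneq J}\sum_{\yvec_1,\yvec_2}\big(\phi^{\sss(N)}(\yvec_1,\yvec_2)_+-\ind{j_I>1}\phi^{\sss(N)}(\yvec_1,\yvec_2)_-\big)\,\tau(\vec\xvec_{J\setminus I}-\yvec_1)\,\tau(\vec\xvec_I-\yvec_2)\\
&\quad+\sum_{\vno\ne I\subsetneq J}\sum_{\yvec_1}\big(a^{\sss(N)}(\yvec_1,\vec\xvec_I;3)+a^{\sss(N)}(\yvec_1,\vec\xvec_I;4)\big)\,\tau(\vec\xvec_{J\setminus I}-\yvec_1).
\end{align*}
Here the three lines containing $a^{\sss(N)}(\cdot\,;\ell)$ assemble, via \refeq{edef} and \refeq{aNxI-def}, to the single error term $a^{\sss(N)}(\vec\xvec_J)$.

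The heart of the proof is the reorganisation of the sum $\sum_{\vno\ne I\subsetneq J}$ in the main (second) line so that one obtains a sum over $\vno\ne I'\subsetneq J_1$ together with the asymmetric kernel $C(\yvec_1,\yvec_2)$ of \refeq{Cdef}. The key observation is that for any $\vno\ne I\subsetneq J$, the element $1\in J$ belongs to exactly one of $I$ or $J\setminus I$, and by \refeq{minj} we have $j_I=1$ in the first case and $j_I>1$ in the second. I split the sum accordingly, and in the case $1\in I$ I set $I'=J\setminus I\subsetneq J_1$ and relabel the dummies $\yvec_1\leftrightarrow\yvec_2$; this exchanges $\tau(\vec\xvec_{J\setminus I}-\yvec_1)\tau(\vec\xvec_I-\yvec_2)$ into $\tau(\vec\xvec_{I'}-\yvec_2)\tau(\vec\xvec_{J\setminus I'}-\yvec_1)$ but swaps the two arguments of $\phi^{\sss(N)}$. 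Adding the $1\in J\setminus I$ contribution (yielding $\phi^{\sss(N)}(\yvec_1,\yvec_2)_+$) and the $1\in I$ contribution (yielding $\phi^{\sss(N)}(\yvec_2,\yvec_1)_+-\phi^{\sss(N)}(\yvec_2,\yvec_1)_-$) reproduces the kernel $C^{\sss(N)}(\yvec_1,\yvec_2)$ defined as in \refeq{Cdef} at level $N$.

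Finally, I take the alternating sum $\sum_{N\ge0}(-1)^N$ of both sides. The error line collapses to $a(\vec\xvec_J)$ by the first definition in \refeq{aphi-def}, while the main line collapses to the claimed $\sum_{I'}C(\yvec_1,\yvec_2)\tau(\vec\xvec_{J\setminus I'}-\yvec_1)\tau(\vec\xvec_{I'}-\yvec_2)$ by the second definition in \refeq{aphi-def} together with \refeq{Cdef}; the interchange of the finite $I$-sum, the spatial sums, and the alternating $N$-sum is justified because, as noted below \refeq{ANBNdef}, $A^{\sss(N)}$, $\tilde B^{\sss(N)}$, $a^{\sss(N)}(\cdot\,;\ell)$ and $\phi^{\sss(N)}$ all vanish once $N\vep$ exceeds the time horizon of $\vec\xvec_J$, so only finitely many $N$ contribute.

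The main obstacle is not any single estimate (there are none at this stage, since \refeq{expA} is an identity), but rather the combinatorial bookkeeping in the reorganisation step: one must verify that the indicator $\ind{j_I>1}$ precisely tracks which half of the $(I,J\setminus I)$ pair contains $1$, and that the swap $\yvec_1\leftrightarrow\yvec_2$ used to fold the $1\in I$ case into a sum over $I'\subset J_1$ correctly produces the non-symmetric kernel $\phi(\yvec_1,\yvec_2)_++\phi(\yvec_2,\yvec_1)_+-\phi(\yvec_2,\yvec_1)_-$ rather than a symmetric one. This asymmetry is genuine and stems from the fact that the first cutting bond in Section~\ref{ss:cutting1} singles out the component with the smallest index in $J\setminus I$, which after the fold is always $\xvec_1$.
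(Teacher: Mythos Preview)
Your proposal is correct and follows essentially the same route as the paper's proof: substitute \refeq{tildeBN-dec} into \refeq{AN-dec2}, split the sum over $\vno\ne I\subsetneq J$ according to whether $1\in I$ or $1\in J\setminus I$, and in the former case relabel $I'=J\setminus I$ and swap $\yvec_1\leftrightarrow\yvec_2$ to obtain the kernel $C$. One small slip: you state ``$j_I=1$ in the first case and $j_I>1$ in the second'' with the cases listed as $1\in I$ and $1\in J\setminus I$, but since $j_I=\min_{j\in J\setminus I}j$ it is the other way around (as your subsequent computation in fact uses correctly).
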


\begin{proof}
We substitute \refeq{tildeBN-dec} into \refeq{AN-dec2}.
\ch{Note that, by \refeq{minj}, $j_I>1$ precisely when $1\in I$.
Thus, also taking} notice of the difference in $J\setminus I$, which contains
1 in \refeq{BCE}, but may not in \refeq{AN-dec2}, we split the sum
over $I$ arising from in \refeq{AN-dec2} as
    \begin{align}
    &\sum_{\yvec_1,\yvec_2}\bigg(\sum_{\vno\ne I\subset J_1}\ch{\phi(\yvec_1,
    \yvec_2)_{\sss +}}\;\tau(\vec\xvec_{J\setminus I}-\yvec_1)\;\tau(\vec\xvec_I-
    \yvec_2)\nn\\
    &\qquad+\sum_{1\in I\subsetneq J}
    \ch{\Big(\phi(\yvec_1,\yvec_2)_{\sss+}-\phi(\yvec_1,\yvec_2)_{\sss-}\Big)}\;\tau
    (\vec\xvec_{J\setminus I}-\yvec_1)\;\tau(\vec\xvec_I-\yvec_2)\bigg)\nn\\
    &~=\sum_{\yvec_1,\yvec_2}\,\sum_{\vno\ne I\subset J_1}\ch{\phi(\yvec_1,
    \yvec_2)_{\sss+}}\;\tau(\vec\xvec_{J
    \setminus I}-\yvec_1)\;\tau(\vec\xvec_I-\yvec_2)\nn\\
    &\qquad+\sum_{\yvec'_1,\yvec'_2}\,\sum_{\vno\ne I'\subset J_1}
    \ch{\Big(\phi(\yvec'_2,\yvec'_1)_{\sss+}-\phi(\yvec'_2,\yvec'_1)_{\sss+}\Big)}
    \;\tau(\vec\xvec_{J\setminus I'}-\yvec'_1)\;
    \tau(\vec\xvec_{I'}-\yvec'_2)\nn\\
    &~=\sum_{\yvec_1,\yvec_2}\,\sum_{\vno\ne I\subset J_1}\Big(\phi(\yvec_1,
    \yvec_2)_{\sss+}+\phi(\yvec_2,\yvec_1)_{\sss+}-\ch{\phi(\yvec_2,\yvec_1)_{
    \sss-}}\Big)\;\tau(\vec\xvec_{J\setminus I}-\yvec_1)\;\tau(\vec\xvec_I-
    \yvec_2),
    \lbeq{Cphipf}
    \end{align}
where $\yvec'_1, \yvec'_2$ and $I'$ in the middle expression correspond to
$\yvec'_1=\yvec_2,\yvec'_2=\yvec_1$ and $I'=J\setminus I$ on the left hand side of
\refeq{Cphipf}. Therefore, we arrive at \refeq{expA}--\refeq{Cdef}.
This completes the derivation of the lace expansion for the $r$-point
function.
\end{proof}

\subsection{Proof of \refeq{psimainrep} and a comparison
to the survival probability expansion coefficients}
\label{sec-Cvepvep}
In this section, we prove \refeq{psimainrep} and compare the lace-expansion
coefficients for the $r$-point functions to the ones of the survival
probability derived in \cite{HHS05b}.

First we prove \refeq{psimainrep}.  Note that, by \refeq{psidef},
\refeq{psimainrep} is equivalent to
    \begin{align}\lbeq{Cmain}
    C_{\vep,\vep}(y_1,y_2)=p_\vep(y_1)\,p_\vep(y_2)\,(1-\delta_{y_1,y_2}).
    \end{align}
By \refeq{Cdef}, \refeq{Cmain} follows when we show that
    \begin{align}\lbeq{psimainreprep}
    \phi_{\vep,\vep}(y_1,y_2)_{\sss\pm}=p_\vep(y_1)\,p_\vep(y_2)\,
    (1-\delta_{y_1,y_2}).
    \end{align}
According to \refeq{phipm-def},
$\phi_{\vep,\vep}^{\sss(N)}(y_1,y_2)_{\sss\pm}=0$ unless $N=0$.  Also, by
\refeq{convention}, we see that
$\phi_{\vep,\vep}^{\sss(0)}(y_1,y_2)_{\sss+}=\phi_{\vep,\vep}^{\sss(0)}(y_1,
y_2)_{\sss-}$.  Therefore, \ch{since $p_\vep(y_1)\,p_\vep(y_2)\,
(1-\delta_{y_1,y_2})$ is symmetric in $y_1, y_2$,} it suffices to show that
    \begin{align}\lbeq{phipm0-suff}
    \phi_{\vep,\vep}^{\sss(0)}(y_1,y_2)_{\sss+}&\equiv\sum_{b,e:b\ne e}p_b
    p_e\,\tilde\mE^b\Big[\indic_{E'(\ovec,\bb;\{\ovec\})}\ind{\Ftwo_\vep
    (\ovec,\eb;\{\ovec\})\text{ in }\tilde\bC^e(\ovec)}\,B_\delta(\tb,(y_1,
    \vep);\bC(\ovec))\,B_\delta(\te,(y_2,\vep);\tilde\bC^e(\ovec))\Big]\nn\\
    &=p_\vep(y_1)\,p_\vep(y_2)\,(1-\delta_{y_1,y_2}).
    \end{align}
However, this immediately follows from the fact that the product of the
two indicators in $\tilde\mE^b$ is $\ind{\bb=\eb=\ovec}$ (cf.,
\refeq{E'def} and \refeq{F'2-def}) and that, by \refeq{Bdelta-def},
$B_\delta(\tb,(y_1,\vep);\bC(\ovec))=\delta_{\tb,(y_1,\vep)}$ and
$B_\delta(\te,(y_2,\vep);\tilde\bC^e(\ovec))=\delta_{\te,(y_2,\vep)}$.
This completes the proof of \refeq{psimainrep}.

Next we compare the lace-expansion coefficients for the $r$-point functions
to the ones of the survival probability derived in \cite{HHS05b}.
We recall that, by \refeq{psidef},
    \eq
    \lbeq{psiCrel}
    \hat\psi_{s_1,s_2}(0,0)=p_\vep\hat{C}_{s_1-\vep,s_2-\vep}(0,0),
    \en
where $p_\vep=\hat p_\vep(0)\equiv1+(\lamb-1)\vep$.
In \cite{HHS05b}, it is shown that survival probability
    \eq
    \theta_t=\mathbb{P}(\exists x\in \Z^d: (0,0)\conn (x,t)).
    \en
satisfies a lace expansion of the form
    \eq
    \lbeq{laceexpeqmain}
    \theta_{t} = \ddsum_{0\leq s\leq t-\vep} \pi_t p_\vep \theta_{t-\vep-s}
    -\ddsum_{\vep\leq s_1\leq\lfloor t/2 \rfloor}\ddsum_{\vep\leq s_2 \leq t}
    \phi_{s_1,s_2} \theta_{t-s_1}\theta_{t-s_2} +e_t,
    \en
where $\pi_t=\hat{\pi}_t(0)$ arises in the lace expansion for the 2-point
function and $e_t$ are error terms.

We claim that the lace-expansion coefficients
for the $r$-point functions and the ones for the survival probability in
\cite{HHS05b} satisfy
    \eq
    \lbeq{phipsirel}
    \hat\psi_{s_1,s_2}(0,0)=2p_\vep\phi_{s_1-\vep,s_2-\vep}.
    \en
This relation shall play an essential role in \cite{hsa06}. Indeed, we bound
$\hat\psi_{s_1,s_2}(0,0)$ in the present paper, and therefore, shall allow to
make use of the bounds derived here in the sequel to this paper \cite{hsa06}.
We now prove \refeq{phipsirel}.

By \ch{\refeq{psiCrel},} \refeq{phipsirel} is
equivalent to
    \begin{align}\lbeq{phipsirel-equiv}
    \hat C_{s_1,s_2}(0,0)=2\phi_{s_1,s_2}.
    \end{align}
We refer to \cite[(4.3) and (5.17)]{HHS05b} for the lace-expansion coefficients
of the oriented-percolation survival probability:
    \begin{align}
    \phi_{s_1,s_2}=\sum_{N=0}^\infty(-1)^N\phi_{s_1,s_2}^{\sss(N)},&&
    2\phi_{s_1,s_2}^{\sss(N)}=2\phi_{s_1,s_2}^{\sss(N)}(\{\vvec_{\sss N-1}
    \})-\phi_{s_1,s_2}^{\sss(N)}(\tilde C_{\sss N-1}),
    \end{align}
where $\phi_{s_1,s_2}^{\sss(N)}(\{\vvec_{\sss N-1}\})$ and
$\phi_{s_1,s_2}^{\sss(N)}(\tilde C_{\sss N-1})$ correspond
respectively to $\hat\phi_{s_1,s_2}^{\sss(N)}(0,0)_{\sss+}$ and
$\hat\phi_{s_1,s_2}^{\sss(N)}(0,0)_{\sss-}$ in our paper.
Therefore, \refeq{phipsirel-equiv} follows from \refeq{aphi-def} and
\refeq{Cdef}.  This completes the proof of \refeq{phipsirel}.
We refer to \cite{hsa06} for a more detailed discussion of the implications
of \refeq{phipsirel}.

%%%%%%%%%%%%%%%%%%%%%%%%%%%%%%%%%%%%%%%%%%%%%%%%%%%%%%%%%%%%%%%%%%%%%%%%
%%%%%%%%%%%%%%%%%%%%%%%%%%%%%%%%%%%%%%%%%%%%%%%%%%%%%%%%%%%%%%%%%%%%%%%%
%%%%%%%%%%%%%%%%%%%%%%%%%%%%%%%%%%%%%%%%%%%%%%%%%%%%%%%%%%%%%%%%%%%%%%%%
%%%%%%%%%%%%%%%%%%%%%%%%%%%%%%%%%%%%%%%%%%%%%%%%%%%%%%%%%%%%%%%%%%%%%%%%
%\input{rpt4}

% June 1, 2007, RvdH

\newcommand{\hatbetaT}{\beta_1T^{-\alphamin}}
\newcommand{\Nsp}{\#_{\tt spat}}

\section{Bounds on \protect $B(\xvec)$ and \protect $A(\vec{\xvec}_J)$}\label{s:bounds}
In this section, we prove the following proposition, in which we denote the
second-largest element of $\{t_j\}_{j\in J}$ by $\bar t=\bar t_J$:

\begin{prop}[\textbf{Bounds on the coefficients of the linear expansion}]
 \label{prop:BAbds}
\begin{enumerate}[(i)]
\item
Let $d>4$ and $L\gg1$.  For $\lamb\le\lambc^{\sss(\vep)}$, $N\ge0$,
$t\in\vep\mN$, $\vec t_J\in(\vep\Zp)^{|J|}$ and $q=0,2$,
    \begin{align}
    \sum_x|x|^qB_t^{\sss(N)}(x)&\le\big((1-\vep)\delta_{q,0}+\lamb\vep
     \sigma^q\big)\delta_{t,\vep}\delta_{N,0}+\vep^2\frac{O(\beta)^{1\vee
     N}\sigma^q}{(1+t)^{(d-q)/2}},\lbeq{Bbd}\\
    \sum_{\vec x_J}A^{\sss(N)}_{\vec t_J}(\vec x_J)&\le\vep O(\beta)^N
     O\big((1+\bar t)^{r-3}\big),\lbeq{Abd}
    \end{align}
where the constant in the $O(\beta)$ term is independent of $\vep,L,N$ and $t$
(or $\bar t$ in \refeq{Abd}).
\item
Let $d\le4$ with $\alpha\equiv bd-\frac{4-d}2>0$,
$\hat\beta_\sT=\hatbetaT$ with $\alphamin\in(0,\alpha)$, and $L_1\gg1$.
For $\lamb\le\lambc^{\sss(\vep)}$, $N\ge0$, $t\in\vep\mN\cap[0,T\log T]$,
$\vec t_J\in(\vep\Zp)^{|J|}$ with $\max_{j\in J}t_j\le T\log T$ and $q=0,2$,
    \begin{align}
    \sum_x|x|^qB_t^{\sss(N)}(x)&\le\big((1-\vep)\delta_{q,0}+\lamb\vep
     \sigma_{\sT}^q\big)\delta_{t,\vep}\delta_{N,0}+\vep^2\frac{O(\beta
     _{\sT})\,O(\hat\beta_{\sT})^{0\vee(N-1)}\sigma_{\sT}^q}
     {(1+t)^{(d-q)/2}},\lbeq{Bbd<4}\\
    \sum_{\vec x_J}A^{\sss(N)}_{\vec t_J}(\vec x_J)&\le\vep O(\hat
     \beta_{\sT})^NO\big((1+\bar t)^{r-3}\big),\lbeq{Abd<4}
    \end{align}
where the constants in the $O(\beta_{\sT})$ and $O(\hat\beta_{\sT})$ terms are
independent of $\vep,L_1,T,N$ and $t$ (or $\bar t$ in \refeq{Abd<4}).
\end{enumerate}
\end{prop}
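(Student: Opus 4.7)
The plan is to reduce both bounds to diagrammatic estimates via the BK inequality and then to evaluate the resulting convolutions of 2-point functions using the asymptotics in Theorem~\ref{thm:2pt}. This is the standard Hara--Slade strategy, adapted here to the nested expectation structure produced by the recursion \refeq{M-def}. Throughout, the key small parameter will be $\beta=L^{-d}$ (respectively $\beta_\sT=L_\sT^{-d}$), which arises each time an unrestricted ``bubble'' $\sum_y\tau_s(y)^2$ is estimated using \refeq{tausup}.

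For $B^{\sss(N)}$, I would exploit \refeq{Bpieq}, $B=\pi\sstar p_\vep$, to reduce the claim to the corresponding bound on the 2-point function expansion coefficient $\pi^{\sss(N)}$. For $N=0$, the event $E'(\ovec,\bb;\{\ovec\})$ either forces $\bb=\ovec$---contributing the explicit $p_\vep$ term that yields the $\delta_{t,\vep}\delta_{N,0}$ part of \refeq{Bbd}---or forces a double connection from $\ovec$ to $\bb$, which BK bounds by one bubble of weight $O(\beta)(1+t)^{-(d-2)/2}$ after convolution with $p_\vep$. For $N\geq 1$, iterating \refeq{MN-rec} and applying BK at each of the $N+1$ levels yields a ladder of $N+1$ overlapping bubbles joined by pivotal bonds; estimating the bubbles by \refeq{tausup} and the interconnecting propagators by \refeq{tauasy} gives the asserted $\vep^2 O(\beta)^N(1+t)^{-d/2}$ decay. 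The $q=2$ moment is recovered by the standard device of placing the factor $|x|^2$ on a single bond or propagator and invoking $\sum_x|x|^2D(x)=\sigma^2$ together with the second-moment two-point function estimate \refeq{taugyr}.

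For $A^{\sss(N)}$, the structure up to level $N$ is identical, and only the final expectation $\mP_{\sN}(E'(\tb_N,\vec\xvec_J;\tilde\bC_{\sss N-1}))$ differs, now involving $r-1$ endpoints. Applying BK (or a tree-graph inequality) to this event bounds it by a sum over binary trees with $r-1$ leaves at $\vec\xvec_J$, each internal edge being a 2-point function. Such a tree has $r-2$ internal vertices, each carrying a free time variable bounded by $\bar t$; summing these by $\vep$-Riemann sums subject to the temporal ordering produces the factor $(1+\bar t)^{r-3}$, while the single unsummed time at the top of the stem supplies the extra $\vep$. The $N$ nested expectations below the tree contribute the $O(\beta)^N$ factor exactly as in the $B^{\sss(N)}$ analysis, since attaching the tree to the top of the ladder does not alter the ladder itself.

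The principal technical obstacle is controlling the dependencies introduced by the conditioning sets $\tilde\bC_{k-1}$ in \refeq{M-def}: because $E'(\tb_k,\bb_{k+1};\tilde\bC_{k-1})$ requires the connection to pass \emph{through} $\tilde\bC_{k-1}$, successive levels share sites in a nontrivial way, and one must invoke the Factorization Lemma~\ref{lem-cut1} before each BK step so that BK is applied within a single conditionally-independent layer. For $d\leq 4$, a further subtlety arises in \refeq{Bbd<4}--\refeq{Abd<4}: the bubble $\sum_s\|\tau_s\|_\infty^2$ no longer decays summably at the lattice scale, so the bounds must exploit the time cut-off $t\leq T\log T$ via \refeq{tausup-lowdim}. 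Only one ``thick'' bubble---the one attached to the explicit propagator of total length $t$---can then be estimated at the full $\beta_\sT$ strength, while every additional bubble on the ladder yields the weaker $\hat\beta_\sT=\beta_1T^{-\alphamin}$, producing the asymmetric $O(\beta_\sT)O(\hat\beta_\sT)^{(N-1)\vee 0}$ in \refeq{Bbd<4}; for $A^{\sss(N)}$ in \refeq{Abd<4} the tree fanning out at the top forces every bubble into a ``thin'' configuration, giving the fully symmetric $O(\hat\beta_\sT)^N$.
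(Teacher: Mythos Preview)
Your outline follows the paper's overall strategy---BK-type inclusions yielding diagrammatic bounds, then estimation of the diagrams via the 2-point function asymptotics---but it misses the two points that carry most of the work here.

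First, the $\vep$-powers. Your $N=0$ bubble estimate ``$O(\beta)(1+t)^{-(d-2)/2}$'' has the wrong $t$-exponent (a double connection from $\ovec$ to $\bb$ gives $\sum_y\tau_t(y)^2\le\|\tau_t\|_\infty\hat\tau_t(0)=O(\beta)(1+t)^{-d/2}$), and, more importantly, carries no $\vep^2$. Uniformity in $\vep$ is the whole point of this paper, and it does not fall out of naive BK: the paper obtains it by observing that a nontrivial double connection from a single vertex forces at least two \emph{spatial} bonds (one leaving, one arriving), each worth $\lambda\vep$. This is encoded in the diagram functions $L(\uvec,\uvec;\xvec)$ and $S^{\sss(0)}$ (see \refeq{Lne-def} and \refeq{S00-def}--\refeq{S0C-def}) and must be tracked through every level of the recursion; for $N\ge1$ you assert the $\vep^2$ without a mechanism for producing it. Similarly, the extra $\vep$ in \refeq{Abd} for $N=0$ comes from a forced spatial bond at the branch point (see \refeq{PE'vecx-prebd4}), not from ``an unsummed time at the top of the stem''.

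Second, the passage from level $N$ to $N+1$. The Factorization Lemma~\ref{lem-cut1} is what \emph{produced} the nested expectations $M^{\sss(N+1)}$; it is not re-applied to bound them, and ``BK within a conditionally-independent layer'' does not handle the through-$\tilde\bC_{k-1}$ constraint, which is not an increasing event in the bonds of that layer alone. The paper instead defines explicit diagram functions $P^{\sss(N)}$ and proves by induction the pair of bounds $M^{\sss(N+1)}_{\vvec,\yvec;\bC}(1)\le P^{\sss(N)}(\vvec,\yvec;\bC)$ and $M^{\sss(N+1)}_{\vvec,\yvec;\bC}(\ind{\wvec\in\bC_N})\le P^{\sss(N)}(\vvec,\yvec;\bC,\ell(\wvec))$ (Lemma~\ref{lem-BNbd}, equations \refeq{MP1}--\refeq{MPI}). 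The second is the bridge: the requirement that the next cluster meet $\tilde\bC_N$ at some $\cvec$ is absorbed as an additional 2-point-function line attached at $\cvec$ (Construction~$\ell$), and the Markov property \refeq{MN-indbond} turns bond-membership in $\tilde\bC_{N-1}$ into a deterministic $\lambda\vep D$ factor. This single mechanism is what both decomposes the ladder and supplies the $\vep$'s; your sketch does not contain it.
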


In Section~\ref{sec-PN}, we define several constructions that will be used
later to define bounding diagrams for $B(\xvec)$, $A(\vec\xvec)$,
$C(\yvec_1,\yvec_2)$ and $a(\vec\xvec)$.  There, we also summarize effects
of these constructions.  Then, we prove the above bounds on $B(\xvec)$ in
Section~\ref{ss:Bbd}, and the bounds on $A(\vec{\xvec}_J)$ in
Section~\ref{ss:Abd}. Throughout Sections \ref{s:bounds}--\ref{ss:ebd}, we
shall frequently assume that $\lambda\le 2$, which follows from
\refeq{estimates-discr} for $d>4$ and $L\gg1$, and from the restriction
on $\lamb_{\sT}$ in Theorem \ref{thm:2pt} for $d\le4$ and $L_1\gg1$.

\subsection{Constructions: I}\label{sec-PN}
First, in Section~\ref{sss:constr-def}, we introduce several constructions that
will be used in the following sections to define bounding diagrams on relevant
quantities.  Then, in Section~\ref{sss:constr-eff}, we show that these
constructions can be used iteratively by studying the effect of applying
constructions to diagram functions.  Such iterative bounds will be
crucial in Sections~\ref{ss:Bbd}--\ref{ss:Abd} to prove
Proposition~\ref{prop:BAbds}.

\subsubsection{Definitions of constructions}\label{sss:constr-def}
For $b=(\uvec,\vvec)$ with $\uvec=(u,s)$ and $\vvec=(v,s+\vep)$, we will abuse
notation to write $p(b)$ or $p(\vvec-\uvec)$ for $p_\vep(v-u)$, and $D(b)$ or
$D(\vvec-\uvec)$ for $D(v-u)$. Let
    \begin{align}\lbeq{vhi-def}
    \vhi(\xvec-\uvec)=(p\sstar\tau)(\xvec-\uvec),
    \end{align}
and (see Figure~\ref{fig:L-def})
    \begin{align}\lbeq{Lne-def}
    L(\uvec,\vvec;\xvec)&=
    \begin{cases}
    \vhi(\xvec-\uvec)~(\tau\sstar\lamb\vep D)(\xvec-\vvec)+(\vhi\sstar
     \lamb\vep D)(\xvec-\uvec)~\tau(\xvec-\vvec)&(\uvec\ne\vvec),\\
    (\lamb\vep D\sstar\tau)(\xvec-\uvec)~(\tau\sstar\lamb\vep D)(\xvec
     -\uvec)+(\lambda\vep D\sstar\tau\sstar\lamb\vep D)(\xvec-\uvec)~
     \tau(\xvec-\uvec)&(\uvec=\vvec),
    \end{cases}
    \end{align}
where $\vhi$ for $\uvec\ne\vvec$ corresponds to $\lamb\vep D\sstar\tau$ for
$\uvec=\vvec$.  We
call the lines from $\uvec$ to $\xvec$ in $L(\uvec,\vvec;\xvec)$ the
\emph{$L$-admissible lines}.  Here, with lines, we mean $\vhi(\xvec-\uvec)$
and $(\vhi\sstar\lamb\vep D)(\xvec-\uvec)$ when $\uvec\ne\vvec$.  If
$\uvec=\vvec$, then we define both lines from $\uvec$ to $\xvec$ in each term
in $L(\uvec,\uvec;\xvec)$ to be $L$-admissible.  We note that these lines can
be represented by 2-point functions as, e.g.,
    \begin{align}
    \lbeq{lines-two}
    (\vhi\sstar\lamb\vep D)(\xvec-\uvec)=\sum_{b=(\uvec,\,\cdot\,)}\;\sum_{
     \substack{b'=(\,\cdot\,,\xvec)\\ \text{spatial}}}\tau(\tb-\bb)\;\tau
     (\bb'-\tb)\;\tau(\tbp-\bb').
    \end{align}
Thus, below, we will frequently interpret lines to denote 2-point functions.

\begin{figure}[t]
\begin{align*}
\text{(a)}\qquad\raisebox{-3pc}{\includegraphics[scale=0.18]{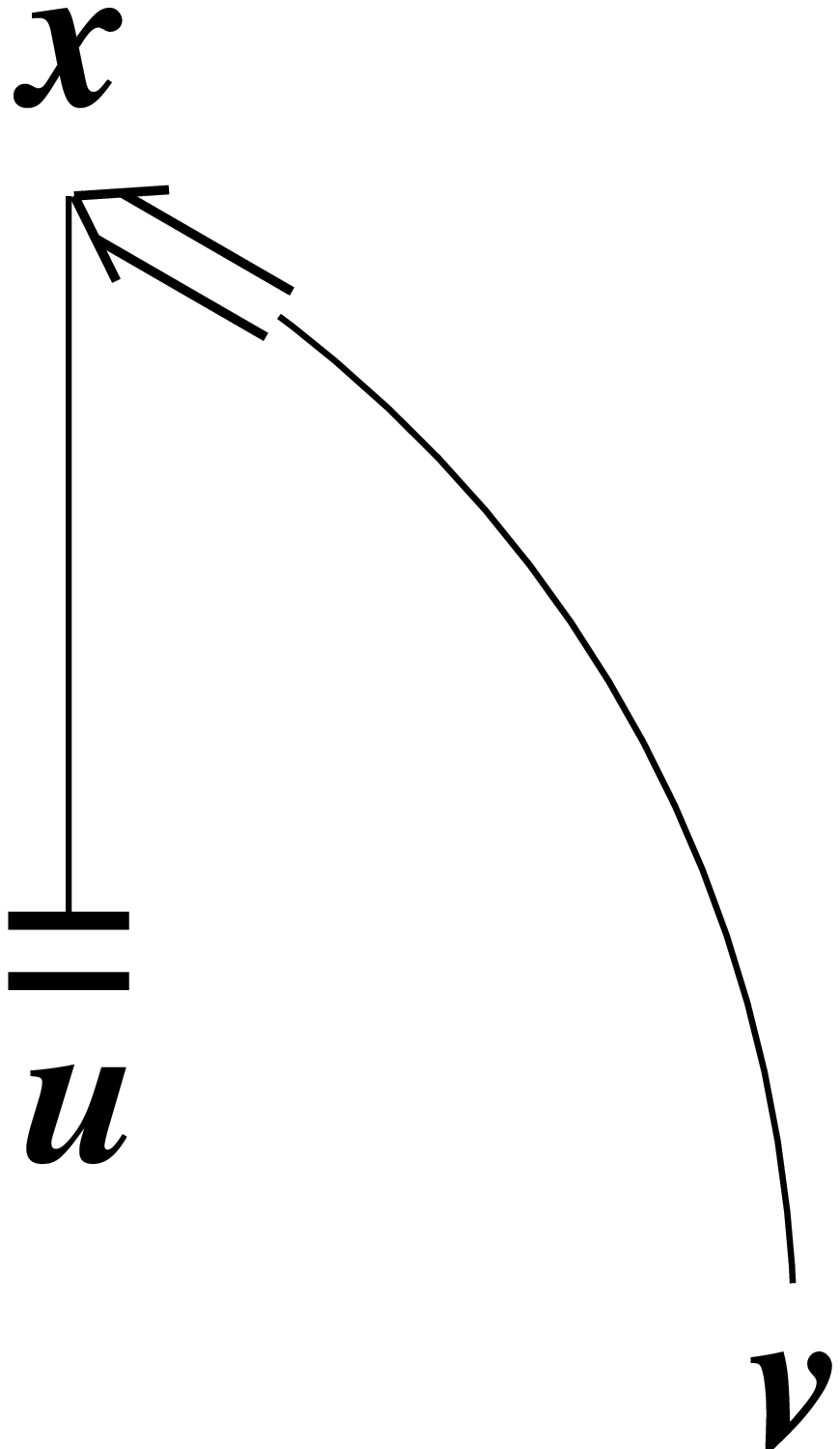}}\quad
 +\qquad\raisebox{-3pc}{\includegraphics[scale=0.18]{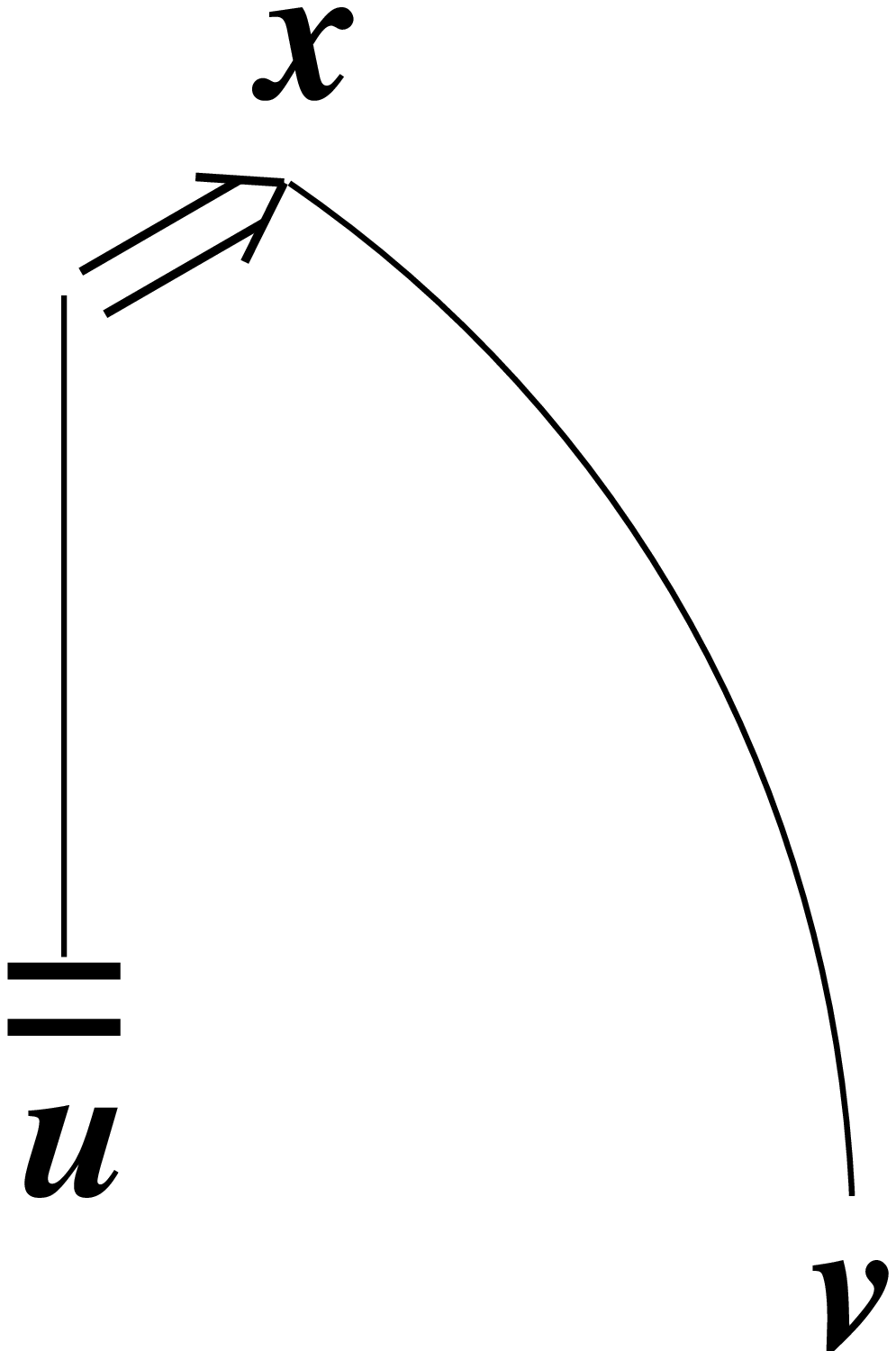}}&&&&
\text{(b)}\qquad\raisebox{-3pc}{\includegraphics[scale=0.18]{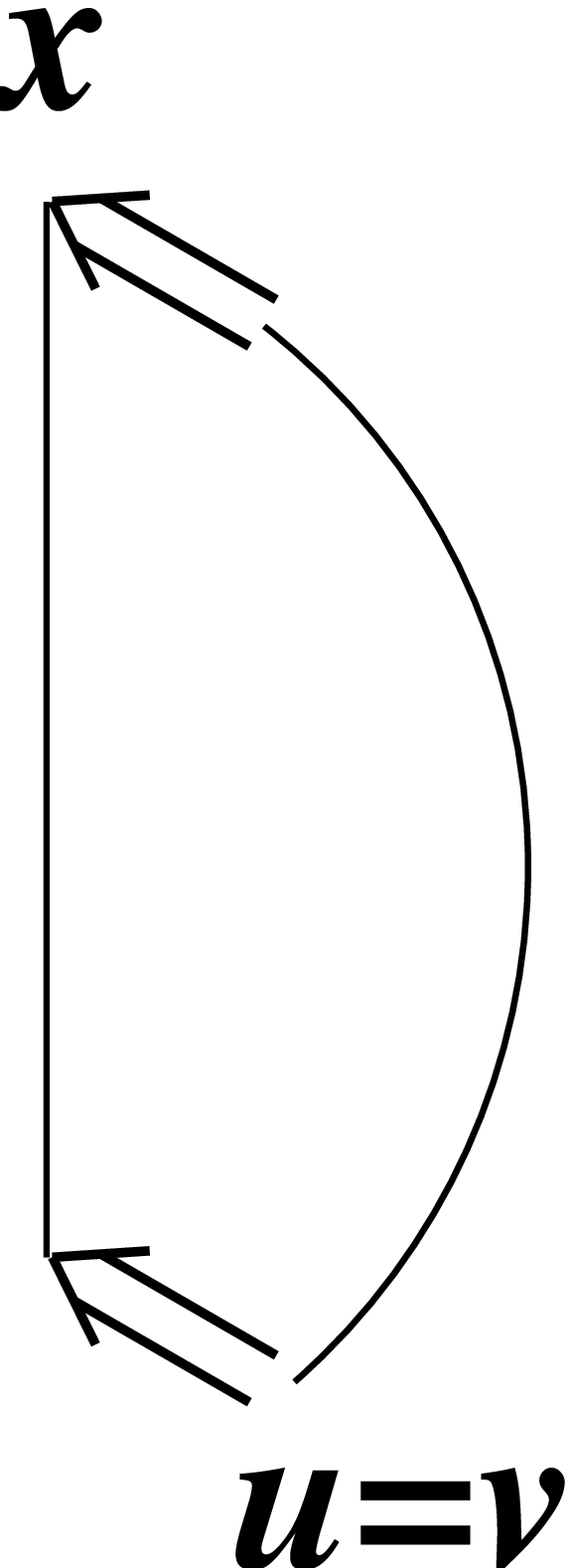}}\qquad
 +~\qquad\raisebox{-3pc}{\includegraphics[scale=0.18]{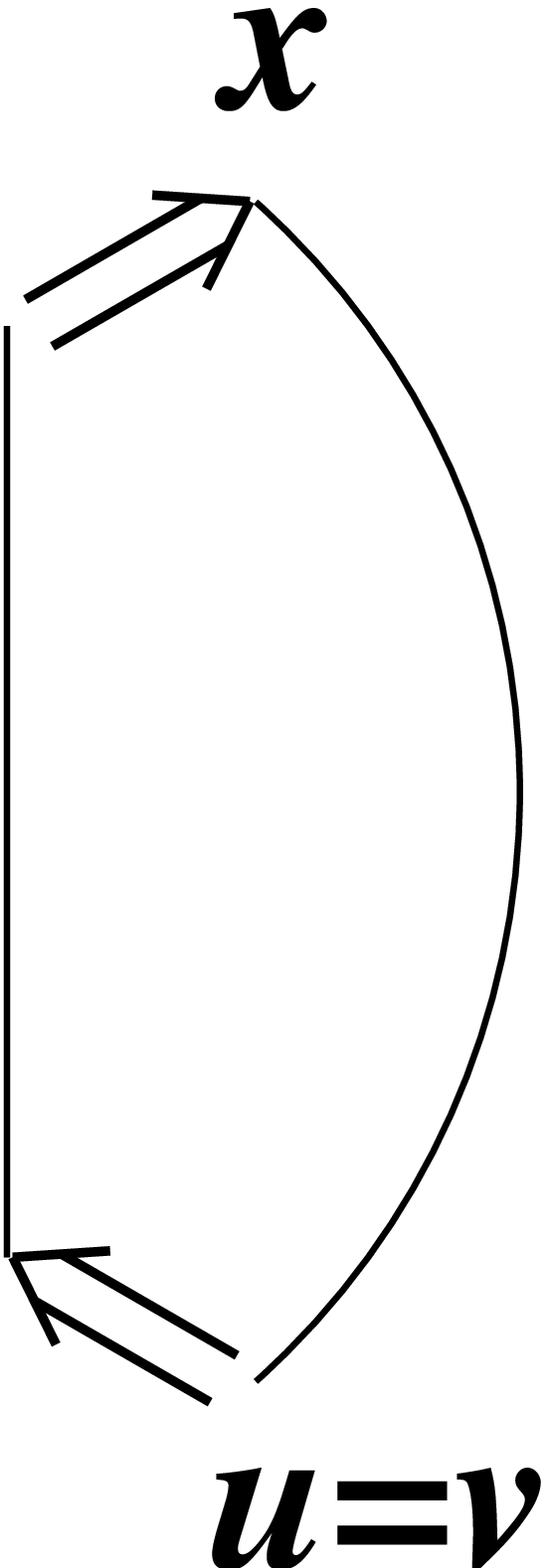}}
\end{align*}
\caption{\label{fig:L-def}Schematic representation of $L(\uvec,\vvec;\xvec)$
for (a) $\uvec\ne\vvec$ and (b) $\uvec=\vvec$.  Here, the tilted arrows
denote spatial bonds, while the short double line segments at $\uvec$ in Case
(a) denote unspecified bonds that could be spatial or temporal.}
\end{figure}

We will use the following constructions to prove Proposition~\ref{prop:BAbds}:

\begin{defn}[\textbf{Constructions~$B$, $\ell$, $2^{\sss(i)}$ and $E$}]
 \label{def-constr}
\begin{enumerate}[(i)]
\item
\emph{Construction~$B$.}  Given any diagram line $\twop$, say
$\tau(\xvec-\vvec)$, and given $\yvec\ne\xvec$, we define
Construction~$\bspat^{\twop}(\yvec)$ to be the operation in which
$\tau(\xvec-\vvec)$ is replaced by
    \begin{align}\lbeq{bspat}
    \tau(\yvec-\vvec)~(\lamb\vep D\sstar\tau)(\xvec-\yvec)\quad =
     \quad\raisebox{-3pc}{\includegraphics[scale=0.14]{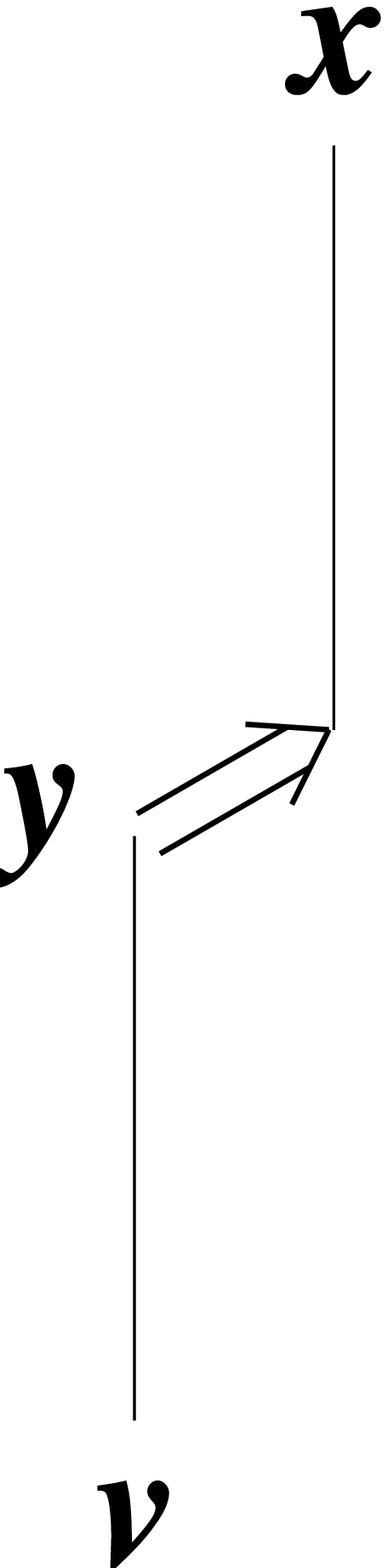}}~~,
    \end{align}
and define Construction~$\btemp^{\twop}(\yvec)$ to be the operation in which
$\tau(\xvec-\vvec)$ is replaced by
\begin{align}\lbeq{btemp}
\sum_{b:\tb=\yvec}\tau(\bb-\vvec)~\lamb\vep D(b)~\mP((\bb,\bb_+)\conn
 \xvec)\quad = \quad\raisebox{-3pc}{\includegraphics[scale=0.14]{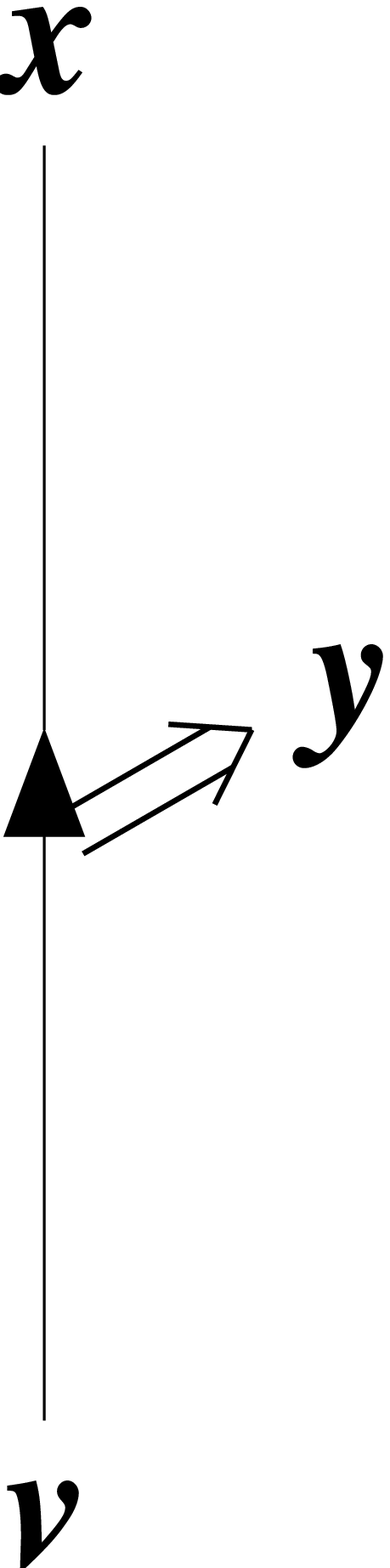}}~~,
\end{align}
where $\{b\conn\xvec\}=\{b$ is occupied$\}\cap\{\tb\conn\xvec\}$ and
$\vvec_+=(v,t_{\vvec}+\vep)$ for $\vvec=(v,t_{\vvec})$.
Construction~$B^{\twop}(\yvec)$ applied to $\tau(\xvec-\vvec)$ is the sum of
$\tau(\xvec-\vvec)\delta_{\xvec,\yvec}$ and the results of
Construction~$\bspat^{\twop}(\yvec)$ and Construction~$\btemp^{\twop}(\yvec)$
applied to $\tau(\xvec-\vvec)$.  Construction~$B^{\twop}(s)$ is the operation
in which Construction~$B^{\twop}(y,s)$ is performed and then followed by
summation over $y\in\Zd$.  Constructions~$\bspat^{\twop}(s)$ and
$\btemp^{\twop}(s)$ are defined similarly.  We omit the superscript $\twop$
and write, e.g., Construction~$B(\yvec)$ when we perform
Construction~$B^\twop(\yvec)$ followed by a sum over \emph{all} possible lines
$\twop$.  We denote the result of applying Construction~$B(\yvec)$ to a
diagram function $f(\xvec)$ by $f(\xvec;B(\yvec))$, and define
$f(\xvec;\bspat(\yvec))$ and $f(\xvec;\btemp(\yvec))$ similarly.  For example,
we denote the result of applying Construction~$\bspat(\yvec)$ to the line
$\varphi(\xvec)$ by
\begin{align}
\varphi(\xvec;\bspat(\yvec))\equiv(p\sstar\tau)(\xvec;\bspat(\yvec))
 =\delta_{\ovec,\yvec}(\lambda\vep D\sstar\tau)(\xvec)+\varphi(\yvec)\,
 (\lambda\vep D\sstar\tau)(\xvec-\yvec),
\end{align}
where $\delta_{\ovec,\yvec}(\lambda\vep D\sstar\tau)(\xvec)$ is the
contribution in which $p$ of $\varphi$ is replaced by $\lambda\vep D$.
\item
\emph{Construction~$\ell$.}  Given any diagram line $\twop$,
Construction~$\ell^{\twop}(\yvec)$ is the operation in which a line to $\yvec$
is inserted into the line $\twop$.  This means, for example, that the 2-point function $\tau(\uvec-\vvec)$ corresponding to the line $\twop$ is replaced by
\eq\lbeq{Constr-ell}
\sum_{\zvec}\tau(\uvec-\vvec;B^\twop(\zvec))\,\tau(\yvec-\zvec).
%\vep \lambda  \sum_{b} D(b)[\tau(\vvec-\tb)\tau(\bb-\uvec)\tau(\yvec-\bb)
%+\tau(\vvec-\bb)\tau(\bb-\uvec)\tau(\yvec-\tb)\big].
\en
 We omit the superscript $\twop$ and write Construction~$\ell(\yvec)$ when we
perform Construction~$\ell^\twop(\yvec)$ followed by a sum over \emph{all}
possible lines $\twop$.  We write $F(\vvec,\yvec;\ell(\zvec))$ for the diagram
where Construction~$\ell(\zvec)$ is performed on the diagram $F(\vvec,\yvec)$.
Similarly, for $\vec\yvec=(\yvec_1,\dots,\yvec_j)$,
Construction~$\ell(\vec\yvec)$ is the repeated application of
Construction~$\ell(\yvec_i)$ for $i=1,\dots,j$.  We note that the order of
application of the different Construction~$\ell(\yvec_i)$ is irrelevant.
\item
\emph{Constructions~$2^{\sss(i)}$\,and $E$.}  For a diagram $F(\vvec,\uvec)$
with two vertices carrying labels $\vvec$ and $\uvec$ and with a certain set of
admissible lines, Constructions~$2_{\uvec}^{\sss(1)}(\wvec)$ and
$2_{\uvec}^{\sss(0)}(\wvec)$ produce the diagrams
    \begin{align}
    F(\vvec,\uvec;2_{\uvec}^{\smallsup{1}}(\wvec))&=\sum_\twop\sum_{
     \uvec,\zvec}F(\vvec,\uvec;B^{\twop}(\zvec))\,L(\uvec, \zvec;\wvec),
     \lbeq{tilF1}\\
    F(\vvec,\uvec;2_{\uvec}^{\smallsup{0}}(\wvec))&=F(\vvec,\wvec)+
     F(\vvec,\uvec;2_{\uvec}^{\smallsup{1}}(\wvec)),\lbeq{tilF0}
    \end{align}
where $\sum_\twop$ is the sum over the set of admissible lines for
$F(\vvec,\uvec)$.  Here and elsewhere, we use Einstein's summation convention:
each diagram function $F(\vvec,\uvec;2_{\uvec}^{\sss(i)}(\wvec))$ depends only
on $\vvec$ and $\wvec$, but not on $\uvec$.  We call the $L$-admissible lines
of the added factor $L(\uvec,\zvec;\wvec)$ in \refeq{tilF1} the
\emph{$2^{\sss(1)}$-admissible lines} for
$F(\vvec,\uvec;2_{\uvec}^{\sss(1)}(\wvec))$.
%For a diagram $F(\vvec, \yvec)$, we write $F(\yvec;2_{\yvec}^{\smallsup{0}}(\wvec))$
%for the result of an application of Construction~$2_{\yvec}^{\smallsup{0}}(\wvec)$
%to $F(\vvec,\yvec)$, and $F(\yvec;2_{\yvec}^{\smallsup{1}}(\wvec))$ for the result
%of an application of Construction~$2_{\yvec}^{\smallsup{1}}(\wvec)$ to $F(\vvec,\yvec)$.
%We emphasize here that the diagram $F(\vvec,\yvec;2_{\yvec}^{\smallsup{i}}(\wvec))$
%only depends on $\vvec, \wvec$, and not on $\yvec$.
Construction~$E_{\yvec}(\wvec)$ is the successive applications of
Constructions~$2_{\yvec}^{\sss(1)}(\zvec)$ and $2_{\zvec}^{\sss(0)}(\wvec)$
(cf., Figure~\ref{fig-tilFz}):
    \begin{align}\lbeq{tilFz}
    F(\vvec,\yvec;E_{\yvec}(\wvec))&=F\big(\vvec,\yvec;2_{\yvec}^{\sss(1)}(\uvec),
     2_{\uvec}^{\sss(0)}(\wvec)\big)\nn\\[5pt]
    &\equiv F\big(\vvec,\yvec;2_{\yvec}^{\sss(1)}(\wvec)\big)+\sum_\twop\sum_{
     \uvec,\zvec}F\big(\vvec,\yvec;2_{\yvec}^{\sss(1)}(\uvec),B^{\twop}(\zvec)
     \big)\,L(\uvec,\zvec;\wvec),
    \end{align}
where $\sum_\twop$ is the sum over the $2^{\sss(1)}$-admissible lines for
$F(\vvec,\yvec;2_{\yvec}^{\sss(1)}(\uvec))$.  Note that
$F(\vvec,\yvec;E_{\yvec}(\wvec))$ also depends only on $\vvec$ and $\wvec$, but
not on $\yvec$.
%In words, Construction~$E_{\yvec}(\wvec)$ is the same as
%Construction~$2_{\yvec}^{\smallsup{1}}(\wvec)$ followed by
%Construction~$2_{\wvec}^{\smallsup{0}}(\yvec)$, where the unique admissible
%line prior to the application of Construction~$2_{\wvec}^{\smallsup{0}}(\yvec)$
%is the $L$-admissible line in $L(\uvec, \zvec;\wvec)$ added to the diagram
%in the application of Construction~$2_{\uvec}^{\smallsup{1}}(\wvec)$
%in \refeq{tilF1}.\\
We further define the \emph{$E$-admissible lines} to be all the lines added
in the Constructions~$2_{\yvec}^{\sss(1)}(\zvec)$ and
$2_{\zvec}^{\sss(0)}(\wvec)$.
%
%arising in the Construction~$2_{\wvec}^{\smallsup{0}}(\yvec)$ for the
%second term in \refeq{tilF0}, and the $L$-admissible lines
%arising in the Construction~$2_{\wvec}^{\smallsup{1}}(\yvec)$ for the
%first term in \refeq{tilF0}.
\end{enumerate}
\end{defn}

\begin{figure}[t]
\begin{align*}
\raisebox{-3.5pc}{\includegraphics[scale=0.17]{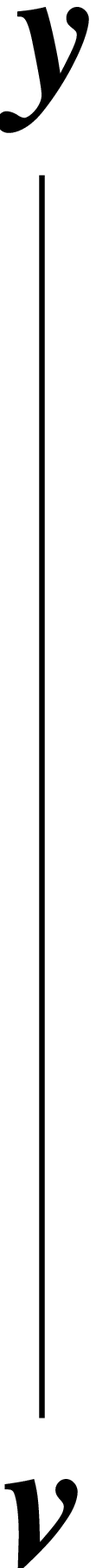}}\quad
 \underset{2_{\yvec}^{\sss(1)}(\zvec)}{\longrightarrow}\quad
\raisebox{-3.5pc}{\includegraphics[scale=0.17]{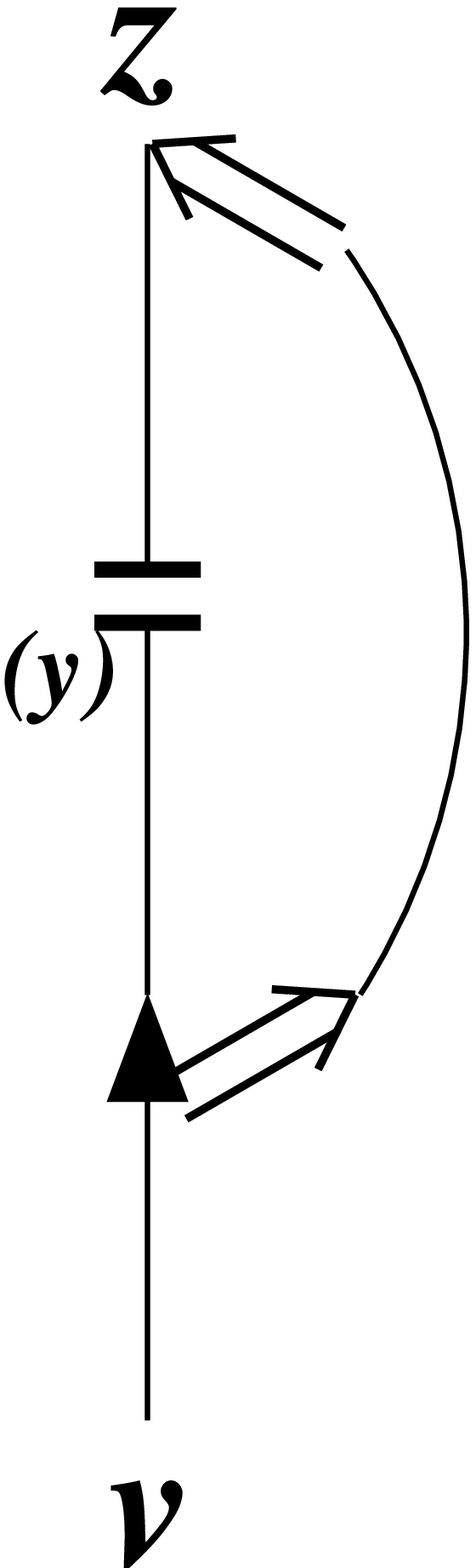}}\quad(+
 \text{ 5 other possibilities})\quad\underset{2_{\zvec}^{\sss(0)}
 (\wvec)}{\longrightarrow}\quad
\raisebox{-4.5pc}{\includegraphics[scale=0.17]{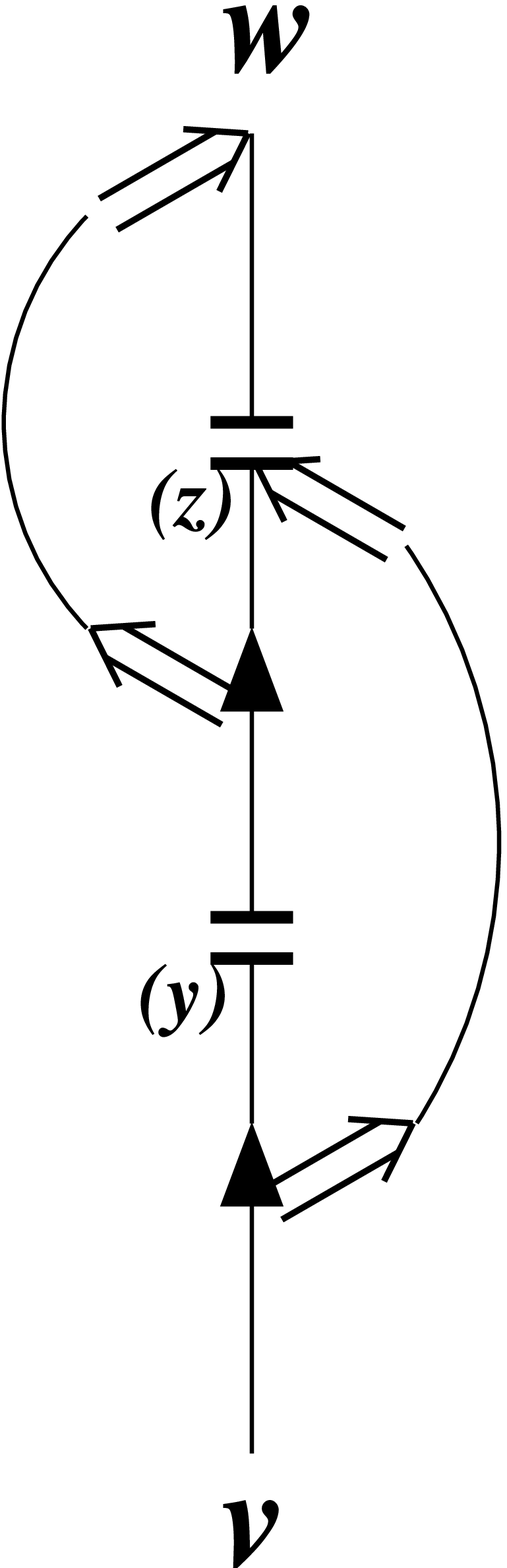}}\quad(+
 \text{ 53 other possibilities})
\end{align*}
\caption{\label{fig-tilFz}Construction~$E_{\yvec}(\wvec)$ in \refeq{tilFz}
applied to $F(\vvec,\yvec)=\tau(\yvec-\vvec)-\delta_{\vvec,\yvec}$.  The
$6~(=4+2)$ possibilities of the result of applying
Construction~$2_{\yvec}^{\sss(1)}(\zvec)$ are due to the fact that
$L(\yvec,\uvec;\zvec)$ for some $\uvec$ consists of 2 terms, and that the
result of Construction~$B^\twop(\uvec)$ consists of $3~(=2+1)$ terms, one of
which is the trivial contribution: $F(\vvec,\yvec)\,\delta_{\yvec,\uvec}$.  The
number of admissible lines in the resulting diagram is 2 for this trivial
contribution, otherwise 1. Therefore, the number of resulting terms at the end
is 54, which is the sum of 6 (due to the identity in \refeq{tilF0}),
$24~(=4\times6$, due to the non-trivial contribution in the first stage
followed by Construction~$2_{\zvec}^{\sss(0)}(\wvec)$) and
$24~(=2\times2\times6$, due to the trivial contribution having 2 admissible
lines followed by Construction~$2_{\zvec}^{\sss(0)}(\wvec)$).}
\end{figure}

\subsubsection{Effects of constructions}\label{sss:constr-eff}
In this section, we summarize the effects of applying the above constructions
to diagrams, i.e., we prove bounds on diagrams obtained by applying
constructions on simpler diagrams in terms of the bounds on those simpler
diagrams.  We also use the following bounds on $\hat\tau_t$ that
were proved in \cite{hsa04}: there is a $K=K(d)$ such that, for $d>4$ with any
$t\ge0$,
\begin{align}\lbeq{fbd}
\hat\tau_t(0)\le K,&&
|\nabla^2\hat\tau_t(0)|\le Kt\sigma^2,&&
\|\wD^2\,\hat{\tau}_t\|_1\le\frac{K\beta}{(1+t)^{d/2}}.
\end{align}
For $d\le4$ with $0\le t\le T\log T$, we replace $\beta$ by
$\beta_{\sT}=L_{\sT}^{-d}$,  and $\sigma$ by $\sigma_{\sT}=O(L_{\sT}^2)$.
Furthermore, by \cite[Lemma 4.5]{hsa04}, we have that, for $q=0,2$ and $d>4$,
\begin{align}\lbeq{tau-ass.1}
\sum_x(\tau_t*D)(x)\le K,&&
\sup_x|x|^q(\tau_t*D)(x)\le\frac{cK\sigma^q\beta}{(1+t)^{(d-q)/2}},
\end{align}
for some $c<\infty$.  Again, for $d\le4$, we replace $\sigma^q\beta$ by
$\sigma_\sT^q\beta_{\sT}$,

%for $d>4$ and
%$t\ge0$,
%\begin{align}\lbeq{fbd}
%\hat\tau_t(0)\le K,&&
%|\nabla^2\hat\tau_t(0)|\le Kt\sigma^2,&&
%\|\wD^2\,\hat{\tau}_t\|_1\le\frac{K\beta}{(1+t)^{d/2}}.
%\end{align}
%For $d\le4$, we restrict the time variable to $t\le T\log T$ and replace
%$\beta$ and $\sigma$ by $\beta_{\sT}=L_{\sT}^{-d}$ ($\equiv\beta_1T^{-bd}$)
%and $\sigma_{\sT}=O(L_{\sT}^2)$, respectively.  Furthermore, by
%\cite[Lemma 4.5]{hsa04}, we have, for $d>4$ and $q=0,2$,
%\begin{align}\lbeq{tau-ass.1}
%\sum_x(\tau_t*D)(x)\le K,&&
%\sup_x|x|^q(\tau_t*D)(x)\le\frac{cK\sigma^q\beta}{(1+t)^{(d-q)/2}},
%\end{align}
%for some $c<\infty$.  The same replacement needed in \refeq{fbd} for $d\le4$
%is also necessary in \refeq{tau-ass.1}.

\begin{lem}[\textbf{Effects of Constructions~$B$ and $\ell$}]\label{lem:constr1}
Let $s\wedge\min_{i\in I}t_i\ge0$, and let $f_{\vec t_I}(\vec x_I)$ be a
diagram function that satisfies $\sum_{\vec x_I}f_{\vec t_I}(\vec x_I)\le
F(\vec t_I)$ by assigning $l_1$ or $l_\infty$ norm to each diagram line and
using \refeq{fbd}--\refeq{tau-ass.1} in order to estimate those norms.  Let
$d>4$.  Then, there exist $C_1,C_2<\infty$ which are independent of $\vep,s$ and
$\vec t_I$ such that, for any line $\twop$ and $q=0,2$,
\begin{align}
\sum_{\vec x_I,y}|y|^qf_{\vec t_I}(\vec x_I;B^\twop(y,s))&\le(N_\eta\sigma^2
 s)^{q/2}(\delta_{s,t_\twop}+\vep C_1)\,F(\vec t_I),\lbeq{constr1-bd-a}\\
\sum_{\vec x_I,y}|y|^qf_{\vec t_I}(\vec x_I;\ell^\twop(y,s))&\le C_2(N_\eta
 \sigma^2s)^{q/2}(1+s\wedge t_\twop)\,F(\vec t_I),\lbeq{constr1-bd-b}
\end{align}
where $N_\eta$ is the number of lines (including $\eta$) contained in the
shortest path of the diagram from $\ovec$ to $\eta$, and $t_\twop$ is the
temporal component of the terminal point of the line $\twop$.  When $d\le4$,
$\sigma$ in \refeq{constr1-bd-a}--\refeq{constr1-bd-b} is replaced by
$\sigma_\sT$.
\end{lem}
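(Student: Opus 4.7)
My plan is to prove both bounds by decomposing the construction into its elementary pieces, extracting the spatial weight $|y|^q$ via a Cauchy--Schwarz path argument, and applying the $l_1/l_\infty$ estimates on each line via \refeq{fbd}--\refeq{tau-ass.1}. I treat the cases $d>4$ and $d\le 4$ simultaneously, since the only change is $\sigma\to\sigma_\sT$ in the variance bounds.

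For \refeq{constr1-bd-a}, I split $B^\twop(y,s)$ into its three defining pieces: the trivial contribution $\tau(\xvec-\vvec)\delta_{\xvec,y}$, the spatial contribution $\tau(y-\vvec)(\lambda\vep D\sstar\tau)(\xvec-y)$, and the temporal contribution $\sum_{b:\tb=y}\tau(\bb-\vvec)\lambda\vep D(b)\mP((\bb,\bb_+)\conn\xvec)$. The trivial piece forces $y$ to coincide with the terminal vertex of $\twop$, and after summing over $\vec x_I$ it produces precisely $\delta_{s,t_\twop}F(\vec t_I)$. The spatial and temporal pieces each carry an explicit factor $\lambda\vep$ from $\lambda\vep D$, and the additional convolutional line is absorbed into $F(\vec t_I)$ via its $l_1$ bound \refeq{tau-ass.1}; summing $\lambda\vep$ separately over the short time step produces the $\vep C_1$ factor. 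Note that since $\lambda\le 2$ uniformly in the regimes of interest, the prefactor $\lambda\vep$ is bounded by $2\vep$.

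To extract the spatial weight, for $q=2$ I use that $y$ lies at the end of some path of $N_\eta$ lines from $\ovec$ through the modified diagram; writing $y=\sum_i y_i$ with $y_i$ the displacement along the $i$-th line and applying Cauchy--Schwarz gives $|y|^2\le N_\eta\sum_i|y_i|^2$. For each line $i$ on this path, I upgrade the $l_1$-estimate of that line to a second-moment estimate, which at a line $\tau_{t_i}$ yields $C\sigma^2 t_i$ by \refeq{fbd}, while the other lines of the diagram retain their original $l_1$ or $l_\infty$ bounds. Since $\sum_i t_i\le s$ along any such path, this produces $|y|^2\le CN_\eta\sigma^2 s$, and combined with the above decomposition yields \refeq{constr1-bd-a}.

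For \refeq{constr1-bd-b}, I use the recursive definition \refeq{Constr-ell}: $\ell^\twop(y,s)$ is realised by first applying $B^\twop(\zvec)$ to the line $\twop$ and then attaching a new line $\tau(y-\zvec)$. Summing the attached line over $y$ first gives $\hat\tau_{s-t_\zvec}(0)\le K$ by \refeq{fbd}, while summing over $\zvec$ produces a temporal sum on $[0,s\wedge t_\twop]\cap\vep\Zp$, giving the factor $(1+s\wedge t_\twop)$. For $q=2$, I split $|y|^2\le 2(|\zvec|^2+|y-\zvec|^2)$: the $|\zvec|^2$ part is handled by the previously established $B$-bound (contributing $N_\eta\sigma^2 t_\zvec\le N_\eta\sigma^2 s$), and the $|y-\zvec|^2$ part uses $\sum_y|y-\zvec|^2\tau_{s-t_\zvec}(y-\zvec)\le C\sigma^2(s-t_\zvec)\le C\sigma^2 s$, again from \refeq{fbd}. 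The resulting constant $C_2$ is independent of $\vep$, $s$, and $\vec t_I$.

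The main obstacle is the bookkeeping of $l_1$ versus $l_\infty$ estimates: the Cauchy--Schwarz path argument requires that each line on the selected path carry an $l_1$ bound so that the second-moment factor $|y_i|^2$ can be transferred onto it, and this must be compatible with whichever norm assignment was used to obtain $F(\vec t_I)$ in the first place. This is standard in the lace-expansion literature and follows the template of \cite{hsa04,hs01}, but requires verifying that the chosen shortest path can always be estimated consistently. The low-dimensional case $d\le 4$ follows verbatim after replacing $\sigma,\beta$ by $\sigma_\sT,\beta_\sT$ in \refeq{fbd}--\refeq{tau-ass.1}.
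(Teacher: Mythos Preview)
Your proposal is correct and follows essentially the same approach as the paper: for \refeq{constr1-bd-a} the paper also decomposes $B^\twop$ into its three constituents (with the trivial piece producing $\delta_{s,t_\twop}$ and the other two contributing $\vep C_1$), and handles $q=2$ via the path inequality $|y|^2\le N_\eta\sum_i|y_i-y_{i-1}|^2$ together with the second-moment bounds in \refeq{fbd}--\refeq{tau-ass.1}, citing \cite[Lemma~4.6]{hsa04} for the remaining bookkeeping; for \refeq{constr1-bd-b} the paper likewise writes $\ell^\twop$ as $B^\twop(\zvec)$ followed by $\tau(\yvec-\zvec)$, splits $|y|^q\le 2^q(|z|^q+|y-z|^q)$, sums first over $y$ and then over $z$ via the already-proved \refeq{constr1-bd-a}, and collects the temporal sum into the factor $(1+s\wedge t_\twop)$. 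One small wording correction: the $\vep C_1$ factor does not arise from ``summing $\lambda\vep$ over a short time step'' but directly from the explicit $\lambda\vep D$ present in both the spatial and temporal constructions; the remaining convolutional line is absorbed via its $l_1$ bound.
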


\begin{proof}%[Proof of Lemma~\ref{lem:constr1}]
The first inequality \refeq{constr1-bd-a}, where $\delta_{s,t_\twop}$ is due to
the trivial contribution in $B^\twop(y,s)$, is a generalisation of
\cite[Lemma~4.6]{hsa04}, where $\twop$ was an admissible line.  For $q=2$,
in particular, we first bound $|y|^2$ by
$N_\eta\sum_{i=1}^{N_\eta}|y_i-y_{i-1}|^2$, where $(y_0,s_0)\equiv
\ovec,(y_1,s_1),(y_2,s_2),\dots,(y_{N_\eta},s_{N_\eta})\equiv(y,s)$ are the
endpoints of the diagram lines along the (shortest) path from $\ovec$ to
$(y,s)$. Then, we estimate each contribution from $|\Delta
y_i|^2\equiv|y_i-y_{i-1}|^2$ using the bound on
$|\nabla^2\hat\tau_{s_i-s_{i-1}}(0)|$ in \refeq{fbd} or the bound on
$\sup_{\Delta y_i}|\Delta y_i|^2(\tau_{s_i-s_{i-1}}*D)(\Delta y_i)$ in
\refeq{tau-ass.1}.  As a result, we gain an extra factor
$O(s_i-s_{i-1})\sigma^2$ or $O(s_i-s_{i-1})\sigma_\sT^2$ depending on the value
of $d$.  Summing all contributions yields the factor $O(s)\sigma^2$ or
$O(s)\sigma_\sT^2$.  The rest of the proof is similar to that of
\cite[Lemma~4.6]{hsa04}.

To prove the second inequality \refeq{constr1-bd-b}, we note that
\begin{align}\lbeq{constr1-Btau}
\sum_{\vec x_I,y}|y|^qf_{\vec t_I}(\vec x_I;\ell^\twop(y,s))
 \le2^q\ddsum_{r\le s\wedge t_\twop}\sum_{\vec x_I,y,z}(|z|^q+|y-z|^q)\,
 f_{\vec t_I}(\vec x_I;B^\twop(z,r))\,\tau_{s-r}(y-z).
\end{align}
We first perform the sum over $y$ using \refeq{fbd}--\refeq{tau-ass.1} and then
perform the sum over $z$ using \refeq{constr1-bd-a}.  This yields, for $d>4$,
\begin{align}
\sum_{\vec x_I,y}|y|^qf_{\vec t_I}(\vec x_I;\ell^\twop(y,s))&\le K\ddsum_{r
 \le s\wedge t_\twop}\sum_{\vec x_I,z}\big(|z|^q+\sigma^q(s-r)^{q/2}\big)\,
 f_{\vec t_I}(\vec x_I;B^\twop(z,r))\nn\\
&\le KF(\vec t_I)\,\sigma^q\ddsum_{r\le s\wedge t_\twop}\underbrace{\big(
 (N_\eta r)^{q/2}+(s-r)^{q/2}\big)}_{\le\;2(N_\eta s)^{q/2}}(\delta_{r,
 t_\twop}+\vep C_1)\nn\\
&\le2K(N_\eta\sigma^2s)^{q/2}\big(1+C_1(s\wedge t_\twop)\big)\,F(\vec t_I).
\end{align}
For $d\le4$, we only need to replace $\sigma$ in the above computation by
$\sigma_\sT$.  This completes the proof of Lemma~\ref{lem:constr1}.
\end{proof}

\begin{lem}[\textbf{Effects of Constructions~$2^{\sss(1)}$ and $E$}]
 \label{lem:constr1'}
Suppose that $t>0$ for all $d\ge1$ and that $t\le T\log T$ for $d\le4$.  Let
$f(\xvec)$ ($\equiv f_t(x)$ for $\xvec=(x,t)$) be a diagram function such that
$\sum_x|x|^qf_t(x)\le C_f(1+t)^{-(d-q)/2}$ for $q=0,2$, and that $f_t(x)$ has
at most $\cL_f$ lines at any fixed time between 0 and $t$.  There is a
constant $c<\infty$ which does not depend on $f,\cL_f,C_f$ and $t$ such that,
for $d>4$,
\begin{align}\lbeq{lem-constr1'a}
\sum_x|x|^qf(\uvec;2_{\uvec}^{\sss(1)}(x,t))\le\frac{c\cL_fC_f\beta}
 {(1+t)^{(d-q)/2}},
\end{align}
hence
\begin{align}\lbeq{lem-constr1'b}
\sum_x|x|^qf(\uvec;E_{\uvec}(x,t))\le\frac{c\cL_fC_f(1+c\cL_f\beta)\beta}
 {(1+t)^{(d-q)/2}}.
\end{align}
When $d\le4$, $\beta$ in \refeq{lem-constr1'a}--\refeq{lem-constr1'b} is
replaced by $\hat\beta_\sT$.
\end{lem}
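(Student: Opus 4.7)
The plan is to establish \refeq{lem-constr1'a} by a direct $l^1 \cdot l^\infty$ diagrammatic estimate, and then deduce \refeq{lem-constr1'b} from it by iteration.

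For \refeq{lem-constr1'a}, I unfold the construction via \refeq{tilF1}:
\begin{align}
f(\uvec;2_\uvec^{\sss(1)}(\wvec)) = \sum_\twop \sum_{\uvec,\zvec} f(\uvec;B^\twop(\zvec))\, L(\uvec,\zvec;\wvec), \qquad \wvec=(x,t).
\end{align}
The added factor $L(\uvec,\zvec;\wvec)$ is a sum of terms, each a product of two 2-point-function-like lines meeting at $\wvec$. The crucial $\beta$ arises by placing one of these two lines in $l^\infty$ via $\|\tau_s\|_\infty \le e^{-s}+C\beta(1+s)^{-d/2}$ (cf.\ \refeq{tausup}), while the other line is bounded in $l^1$ by the moment bounds \refeq{fbd} and \refeq{tau-ass.1}. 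The spatial weight is distributed using $|x|^q \le 2^q(|u|^q + |x-u|^q)$ where $\uvec=(u,t_\uvec)$: the $|u|^q$-piece is absorbed by the hypothesis bound on $f$, while the $|x-u|^q$-piece is absorbed by a weighted $l^1$ estimate on one of the $L$-lines (after a further split $|x-u|^q\le 2^q(|x-z|^q+|z-u|^q)$ to route the weight onto the appropriate line). For each fixed $\zvec$, the sum over admissible lines $\twop$ passing through the time-slice at $t_\zvec$ is at most $\cL_f$, producing the linear factor in $\cL_f$. The remaining time integrals are handled using the standard convolution estimate
\begin{align}
\vep^2\!\ddsum_{0\le s_1\le s_2\le t}(1+s_1)^{-a}(1+s_2-s_1)^{-b}(1+t-s_2)^{-c}\le c'(1+t)^{-(d-q)/2}
\end{align}
for the exponents $a,b,c$ arising from $f$ and the two $L$-lines; this requires $d>4$ for both $q=0$ and $q=2$. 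The low-dimensional regime is identical after replacing $\beta$ by $\hat\beta_\sT$ and using the 2-point function bounds from Theorem~\ref{thm:2pt}(ii), together with the mean-field scaling \refeq{Lt-def}--\refeq{alphadef}.

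For \refeq{lem-constr1'b}, I use that $E_\uvec = 2_\uvec^{\sss(1)}\circ 2_\uvec^{\sss(0)}$ together with the identity \refeq{tilF0} to write
\begin{align}
f(\uvec;E_\uvec(\wvec)) = f(\uvec;2_\uvec^{\sss(1)}(\wvec)) + \sum_\twop\sum_{\uvec',\zvec} f(\uvec;2_\uvec^{\sss(1)}(\uvec'), B^\twop(\zvec))\, L(\uvec',\zvec;\wvec).
\end{align}
The first summand is bounded directly by \refeq{lem-constr1'a}. For the second, I first apply \refeq{lem-constr1'a} to the inner diagram $f(\uvec;2_\uvec^{\sss(1)}(\uvec'))$, obtaining a new diagram function $g$ which, viewed as a function of the endpoint $\uvec'$, satisfies the hypotheses of the lemma with effective constants $C_g \le c\cL_f C_f \beta$ and maximal line-count at any time slice $\cL_g \le \cL_f + 2$ (the additive constant accounting for the two new lines introduced by the $L$-insertion). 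A second application of \refeq{lem-constr1'a} to $g$ then contributes an extra factor of $c\cL_g\beta \le c'\cL_f\beta$ (using $\cL_f\ge 1$), so the second summand is bounded by $c''\cL_f^2 C_f \beta^2(1+t)^{-(d-q)/2}$. Adding the two contributions yields exactly \refeq{lem-constr1'b}.

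The main obstacle will be the careful book-keeping of the time-integration in the proof of \refeq{lem-constr1'a}: the $L$-factor only provides $\beta$-decay in the duration of its own lines, not the full $(1+t)^{-(d-q)/2}$ at the endpoint $\wvec$. Restoring the uniform endpoint-time decay requires integrating against the hypothesis decay of $f$ via the convolution estimate above, whose sharpness depends on $d>4$ (respectively on the smallness of $\hat\beta_\sT$ for $d\le 4$). Making sure the iteration in \refeq{lem-constr1'b} does not generate more than the claimed $\cL_f^2$ factor also requires attention, which is why the bounded-line-count structure of the $L$-insertions is recorded explicitly.
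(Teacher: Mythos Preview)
Your approach is essentially the paper's: unfold via \refeq{tilF1}, extract $\beta$ from the $L$-factor by an $l^1$--$l^\infty$ bound (combined with Lemma~\ref{lem:constr1} for the $B^\eta$-insertion), reduce to a time convolution, and then iterate for \refeq{lem-constr1'b}. The paper organizes things slightly differently by bounding $\sum_x L$ as a single unit via \refeq{supL-bd} (so your three-factor convolution collapses to two, the $l^1$-line of $L$ contributing only $O(1)$), and is explicit that for $d\le4$ the raw convolution yields only $(1+t)^{2-d}$, after which one uses $t\le T\log T$ together with $\beta_\sT(1+t)^{(4-d)/2}\le O(\hat\beta_\sT)$ to recover the stated $(1+t)^{-d/2}$ decay.
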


\begin{proof}%[Proof of Lemma~\ref{lem:constr1'}]
The idea of the proof is the same as that of \cite[Lemma~4.7]{hsa04}.  Here we
only explain the case of $q=0$; the extension to $q=2$ is proved identically as
the extension to $q=2$ in \cite[Lemma~4.8]{hsa04}.

First we recall the definition \refeq{tilF1}.  Then, we have
    \begin{align}\lbeq{effBl1}
    \sum_xf(\uvec;2_{\uvec}^{\sss(1)}(x,t))\le\ddsum_{\substack{s<t\\ s'\le
     s}}\bigg(\sum_\twop\sum_{u,v}f((u,s);B^\twop(v,s'))\bigg)\bigg(\sup_{u,v}
     \sum_xL((u,s),(v,s');(x,t))\bigg).
    \end{align}
Since $f_s(u)$ has at most $\cL_f$ lines at any fixed time between 0 and $s$,
by Lemma~\ref{lem:constr1}, we obtain
    \begin{align}\lbeq{effBl1-1stblock}
    \sum_\twop\sum_uf((u,s);B^\twop(s'))\le\cL_f\frac{\delta_{s,s'}+\vep
     C_1}{(1+s)^{d/2}}.
    \end{align}
By \refeq{Lne-def} and \refeq{tau-ass.1}, we have that, for $d>4$ and any
$u,v\in\Zd$ and $s,s'\le t$,
\begin{align}\lbeq{supL-bd}
\sum_xL((u,s),(v,s');(x,t))\le\frac{c'\vep^{1+\delta_{(u,s),(v,s')}}\beta}
 {(1+t-s\wedge s')^{d/2}}.
\end{align}
For $d\le4$, $\beta$ is replaced by $\beta_\sT$.
The factor $\vep^{\delta_{(u,s),(v,s')}}$ will be crucial when we introduce the
$0^\text{th}$ order bounding diagram (see, e.g., \refeq{P0vyv} and
\refeq{Pcal0-bd} below).  To bound the convolution \refeq{effBl1}, however, we
simply ignore this factor. Then, the contribution to \refeq{effBl1} from
$\delta_{s,s'}$ in \refeq{effBl1-1stblock} is bounded by $c'\beta$ or
$c'\beta_\sT$ (depending on $d$) multiplied by
    \begin{align}
    \ddsum_{s<t}\frac1{(1+s)^{d/2}}\,\frac{\vep}{(1+t-s)^{d/2}}\le c''\times
     \begin{cases}
     (1+t)^{-d/2}&(d>2),\\
     (1+t)^{-1}\log(2+t)&(d=2),\\
     (1+t)^{1-d}&(d<2).
     \end{cases}
    \end{align}
Similarly, the contribution to \refeq{effBl1} from $\vep C_1$ in
\refeq{effBl1-1stblock} is bounded by $c'\beta$ or $c'\beta_\sT$ multiplied by
(cf., \cite[Lemma~4.7]{hsa04})
    \begin{align}\lbeq{convlem}
    \ddsum_{\substack{s<t\\ s'\le s}}\frac{\vep C_1}{(1+s)^{d/2}}\,
     \frac{\vep}{(1+t-s')^{d/2}}\le c'''\times
     \begin{cases}
     (1+t)^{-d/2}&(d>4),\\
     (1+t)^{-2}\log(2+t)&(d=4),\\
     (1+t)^{2-d}&(d<4).
     \end{cases}
    \end{align}
The above constants $c'',c'''$ are independent of $\vep$ and $t$.  To obtain
the required factor $(1+t)^{-d/2}$ for $d\le4$, we use $t\le T\log T$,
$\beta_\sT\equiv\beta_1T^{-bd}$ and
$\hat\beta_\sT\equiv\hatbetaT$ with $\alphamin<bd-\frac{4-d}2$ as follows:
\begin{align}\lbeq{convappl<4}
\beta_\sT(1+t)^{2-d}\big(\log(2+t)\big)^{\delta_{d,4}}
 =\frac{\beta_\sT(1+t)^{(4-d)/2}(\log(2+t))^{\delta_{d,4}}}{(1+t)^{d/2}}
 \le\frac{O(\hat\beta_\sT)}{(1+t)^{d/2}}.
\end{align}
This completes the proof.
\end{proof}

\subsection{Bound on \protect $B(\xvec)$}\label{ss:Bbd}
In this section, we estimate $B(\xvec)$.  First, in Section~\ref{sss:Bdiagbd},
we prove a $d$-independent diagrammatic bound on
$B^{\sss(N)}(\vvec,\yvec;\bC)$, where we recall
$B^{\sss(N)}(\xvec)=B^{\sss(N)}(\ovec,\xvec;\{\ovec\})$ (cf.,
\refeq{ABzerodef}).  Then, in Section~\ref{sec-BNbd}, we prove the bounds on
$B^{\sss(N)}(\xvec)$: \refeq{Bbd} for $d>4$ and \refeq{Bbd<4} for $d\le4$.

\subsubsection{Diagrammatic bound on \protect $B^{\sss(N)}(\vvec,\yvec;\bC)$}
\label{sss:Bdiagbd} First we define bounding diagrams for
$B^{\sss(N)}(\vvec,\yvec;\bC)$.  For $\vvec,\wvec,\cvec\in\Lambda$, we
let
    \begin{align}\lbeq{S00-def}
    S^{\sss(0,0)}(\vvec,\wvec;\cvec)&=\delta_{\wvec,\cvec}\times
    \begin{cases}
    \delta_{\vvec,\wvec}&(t_{\vvec}=t_{\wvec}),\\[2pt]
    (\tau\sstar\lamb\vep D)(\wvec-\vvec)&(t_{\vvec}<t_{\wvec}),
    \end{cases}\\
    \lbeq{S01-def}
    S^{\sss(0,1)}(\vvec,\wvec;\cvec)&=(1-\delta_{\wvec,\cvec})\times
    \begin{cases}
    0&(t_{\vvec}=t_{\wvec}),\\[2pt]
    \tau(\wvec-\vvec)&(t_{\vvec}<t_{\wvec}),
    \end{cases}
    \end{align}
and
    \begin{align}\lbeq{S0-def}
    S^{\sss(0)}(\vvec,\wvec;\cvec)=S^{\sss(0,0)}(\vvec,\wvec;\cvec)+
    S^{\sss(0,1)}(\vvec,\wvec;\cvec)\,\lamb\vep D(\wvec-\cvec).
    \end{align}
For $\vvec,\wvec\in\Lambda$ and $\bC\subseteq \Lambda$, we define
$\wvec_-=(w,t_{\wvec}-\vep)$ and
    \begin{align}\lbeq{S0C-def}
    S^{\sss(0)}(\vvec,\wvec;\bC)=\sum_{\cvec\in\bC}\Big(
    S^{\sss(0,0)}(\vvec,\wvec;\cvec)+
    S^{\sss(0,1)}(\vvec,\wvec;\cvec)
    \ind{(\cvec,\wvec)\in\bC}(1-\delta_{\cvec,\wvec_-})
    \Big),
    \end{align}
where $(\cvec,\wvec)\in\bC$ precisely when the bond $(\cvec,\wvec)$
is a part of $\bC$.  We now comment on this issue in more detail.

Note that $\bC\subseteq \Lambda$ appearing in $B^{\sss(N)}(\vvec,\yvec;\bC)$
is a set of sites.  However, we will only need bounds on
$B^{\sss(N)}(\vvec,\yvec;\bC)$ for $\bC=\tilde\bC_{\sN}$ for some $N$.  As a
result, the set $\bC$ of sites here have a special structure, which we will
conveniently make use of.  That is, in the sequel, we will consider $\bC$ to
consist of sites and bonds simultaneously, as in Remark~\ref{rem-clusbonds}
in the beginning of Section~\ref{s:lace}, and call $\bC$ a
\emph{cluster-realization} when
\begin{align}\lbeq{clusterreal-def}
\Pbold\big(\bC(\cvec)=\bC\big)>0
\end{align}
for some $\cvec\in\Lambda$.

The diagram $S^{\sss(0)}(\vvec,\wvec;\bC)$ is closely related to the diagram
$\sum_{\cvec\in \bC}S^{\sss(0)}(\vvec,\wvec;\cvec)$, apart from the fact that
$S^{\sss(0,1)}(\vvec,\wvec;\cvec)$ is multiplied by $\lamb\vep
D(\wvec-\cvec)$ in \refeq{S0-def} and by $\ind{(\cvec,\wvec)\subseteq
\bC}(1-\delta_{\cvec,\wvec_-})$ in \refeq{S0C-def}. In all our applications,
the role of $\bC$ is played by a $\tilde \bC_{\sss N}$-cluster, and, in such
cases, since $(\cvec, \wvec)$ is a spatial bond, $\lamb\vep D(\wvec-\cvec)$ is
the probability that the bond $(\cvec,\wvec)$ is occupied.  This factor $\vep$ is
crucial in our bounds.

Furthermore, we define
\begin{align}\lbeq{P0cdef}
P^{\sss(0)}(\vvec,\yvec;\cvec)=S^{\sss(0)}(\vvec,\wvec;\cvec,2^{\sss(0)}
 _{\wvec}(\yvec))=\quad\raisebox{-3pc}{\includegraphics[scale=0.14]{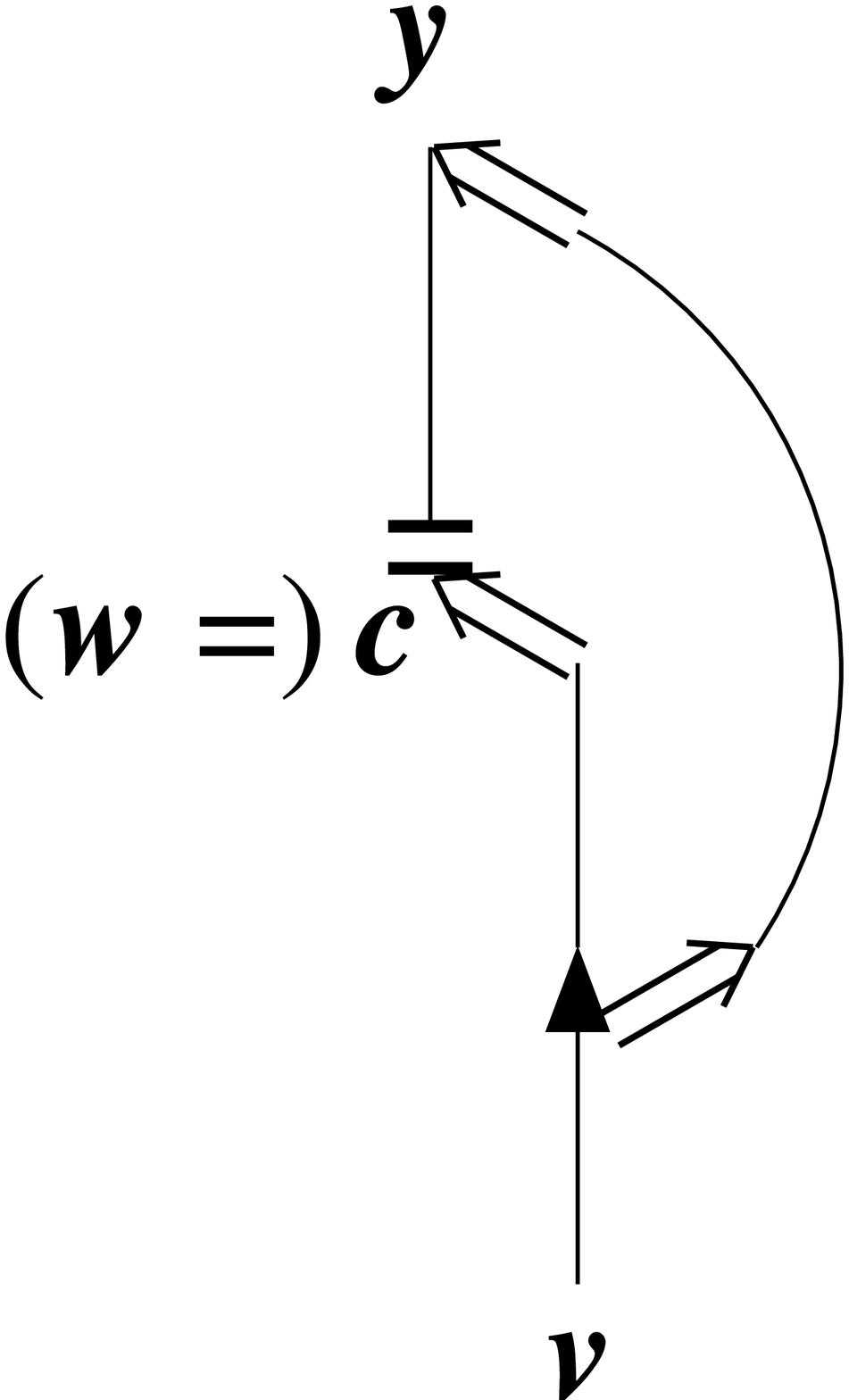}}
 \quad+\quad\raisebox{-3pc}{\includegraphics[scale=0.14]{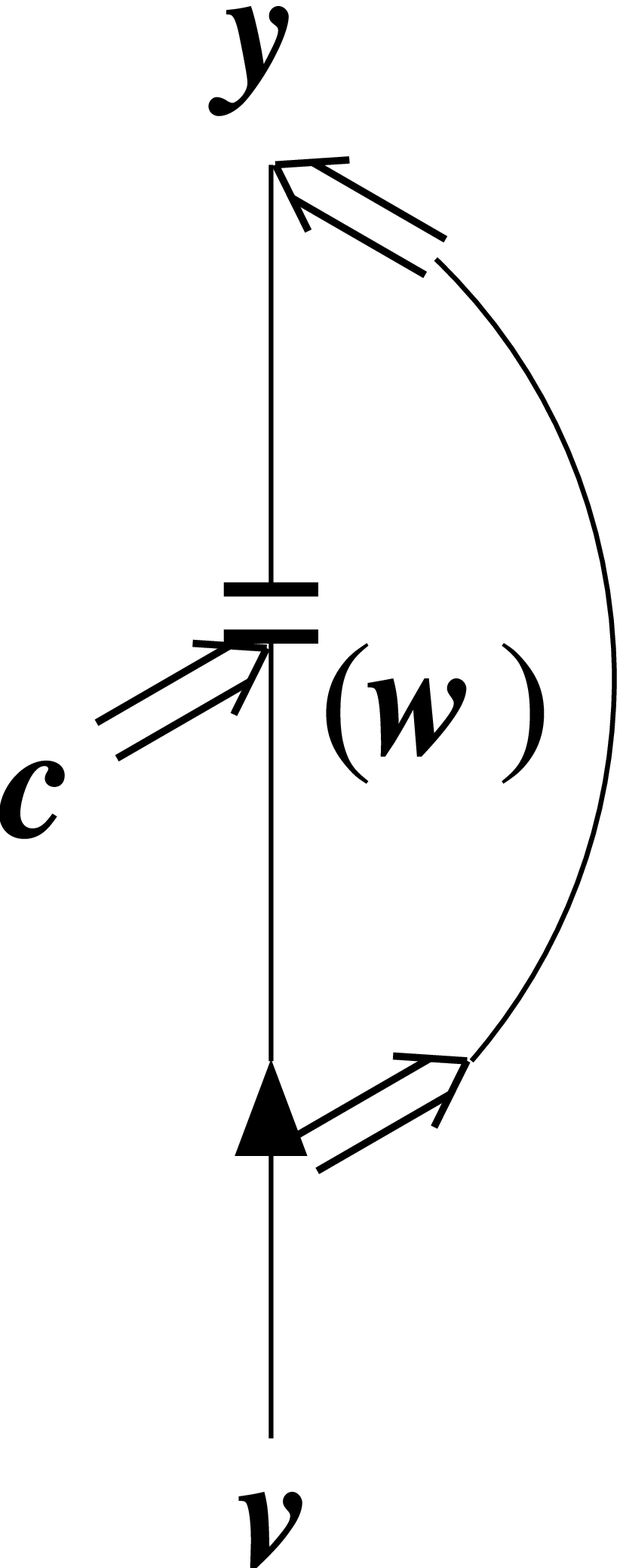}}\quad(+
 \text{ 12 other possibilities})
\end{align}
and
    \begin{align}
    \lbeq{P0Cdef}
    P^{\sss(0)}(\vvec,\yvec;\bC)=S^{\sss(0)}(\vvec,\wvec;\bC,2^{\sss
     (0)}_{\wvec}(\yvec)),
    \end{align}
where the admissible lines for the application of
Construction~$2^{\sss(0)}_{\wvec}(\yvec)$ in \refeq{P0cdef}--\refeq{P0Cdef}
are $(\tau\sstar\lamb\vep D)(\wvec-\vvec)$ and $\tau(\wvec-\vvec)$ in the
second lines of \refeq{S00-def}--\refeq{S01-def}.  If $\cvec=\vvec$, then,
by the first line of \refeq{S00-def} and recalling \refeq{tilF0} and the
definition of Construction~$B$ applied to a ``line'' of length zero (see
below \refeq{btemp}), we have
\begin{align}\lbeq{P0vyv}
P^{\sss(0)}(\vvec,\yvec;\vvec)=\delta_{\vvec,\yvec}+L(\vvec,\vvec;\yvec).
\end{align}
We further define the diagram $P^{\sss(N)}(\vvec,\yvec;\cvec)$ (resp.,
$P^{\sss(N)}(\vvec,\yvec;\bC)$) by $N$ applications of Construction~$E$ to
$P^{\sss(0)}(\vvec,\yvec;\cvec)$ in \refeq{P0cdef} (resp.,
$P^{\sss(0)}(\vvec,\yvec;\bC)$ in \refeq{P0Cdef}).
We call the $E$-admissible lines, arising in the final Construction~$E$, the
\emph{$N^\text{th}$ admissible lines}.

We note that, by \refeq{Lne-def} and this notation, it is not hard to see that
    \begin{align}
    L(\yvec,\uvec;\zvec,2^{\sss(0)}_{\zvec}(\wvec))=\sum_{\zvec}\sum_{b:
     \bb=\yvec}\tau(\zvec-\uvec)\,p_bP^{\sss(0)}(\tb,\wvec;\zvec).
    \end{align}
Therefore, an equivalent way of writing \refeq{tilFz} is
    \begin{align}\lbeq{tilFzrep}
    F(\vvec,\yvec;2_{\yvec}^{\sss(1)}(\zvec),2_{\zvec}^{\sss(0)}(\wvec))
    &=\sum_\twop\sum_{\uvec,\yvec}F(\vvec,\yvec;B^\twop(\uvec))\,
     L(\yvec,\uvec;\zvec,2_{\zvec}^{\sss(0)}(\wvec))\nn\\
    &=\sum_\twop\sum_{\zvec}\sum_bF(\vvec,\bb;\ell^\twop(\zvec))\,p_b
     P^{\sss(0)}(\tb,\wvec;\zvec),
    \end{align}
where $\sum_\twop$ is the sum over the admissible lines for $F(\vvec,\yvec)$.
In particular, we obtain the recursion
\begin{align}\lbeq{PArecrep}
P^{\sss(N)}(\vvec,\wvec;\bC)&\equiv P^{\sss(N-1)}(\vvec,\yvec;\bC,
 2_{\yvec}^{\sss(1)}(\zvec),2_{\zvec}^{\sss(0)}(\wvec))\nn\\
&=\sum_\twop\sum_{\zvec}\sum_bP^{\sss(N-1)}(\vvec,\bb;\bC,\ell^\twop(\zvec))\,
 p_bP^{\sss(0)}(\tb,\wvec;\zvec),
\end{align}
where $\sum_\twop$ is the sum over $(N-1)^\text{th}$~admissible lines.

The following lemma states that the diagrams constructed above indeed
bound $B^{\sss(N)}(\vvec,\yvec;\bC)$:

\begin{lem}\label{lem-BNbd}
For $N\ge0$, $\vvec,\yvec\in\Lambda$, and a cluster-realization
$\bC\subset\Lambda$ with $\min_{\cvec\in \bC} t_{\cvec}<t_{\vvec}$,
    \begin{align}\lbeq{piPbd}
    B^{\sss(N)}(\vvec,\yvec;\bC)\le\sum_{b:\tb=\yvec}P^{\sss(N)}
     (\vvec,\bb;\bC)\,p_b.
    \end{align}
\end{lem}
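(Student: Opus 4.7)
The plan is to prove \refeq{piPbd} by induction on $N$. Since $B^{\sss(N)}(\vvec,\yvec;\bC)=\sum_{b:\tb=\yvec}p_b\,M^{\sss(N+1)}_{\vvec,\bb;\bC}(1)$ by \refeq{ANBNdef}, and $P^{\sss(N)}$ satisfies the recursion \refeq{PArecrep}, the task reduces to (i) proving the base diagrammatic bound $\mP(E'(\vvec,\bb;\bC))\le P^{\sss(0)}(\vvec,\bb;\bC)$ and (ii) showing that inserting this bound at the innermost layer of $M^{\sss(N+1)}$ and appealing to the induction hypothesis yields \refeq{piPbd} at level $N$.

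For the base case, the event $E'(\vvec,\bb;\bC)$ from \refeq{E'def} (with $J=\{1\}$) requires $\vvec\ct{\bC}\bb$ together with the absence of any pivotal bond $b'$ for $\vvec\conn\bb$ satisfying $\vvec\ct{\bC}\bar{b'}$. This forces either $\vvec=\bb\in\bC$ (the trivial contribution captured by the $\delta$-piece of \refeq{S00-def}), or, otherwise, that the last sausage in the pivotal-bond decomposition of the $\vvec$-to-$\bb$ connection intersects $\bC$. I would apply the BK inequality to the two bond-disjoint connections branching at the vertex $\wvec$ where this last sausage meets $\bC$: a connection from $\vvec$ to a site $\cvec\in\bC$ (either ending directly at $\wvec=\cvec$, giving $S^{\sss(0,0)}$, or reaching $\wvec\ne\cvec$ through the bond $(\cvec,\wvec)$, giving $S^{\sss(0,1)}$ paired with the bond probability $\lambda\vep D(\wvec-\cvec)$ of \refeq{S0-def}, or, when $(\cvec,\wvec)$ is already part of the cluster-realization $\bC$, the indicator in \refeq{S0C-def}), together with a second connection from $\wvec$ to $\bb$ possibly augmented by a trailing spatial step. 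This second piece is exactly what Construction~$2^{\sss(0)}_{\wvec}(\bb)$ produces, so that summing over $\cvec$ and $\wvec$ reconstructs $P^{\sss(0)}(\vvec,\bb;\bC)$ as defined in \refeq{P0Cdef}.

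For the induction step, I would peel off the innermost factor of $M^{\sss(N+1)}_{\vvec,\bb;\bC}(1)$ via \refeq{MN-rec} with $M=1$:
\begin{equation*}
M^{\sss(N+1)}_{\vvec,\bb;\bC}(1)=\sum_{b_N}p_{b_N}\,M^{\sss(N)}_{\vvec,\bb_N;\bC}\!\Bigl(\mP_N\!\bigl(E'(\tb_N,\bb;\tilde\bC_{N-1})\bigr)\Bigr).
\end{equation*}
Deterministically in the realization $\tilde\bC_{N-1}$, the base case gives $\mP_N(E'(\tb_N,\bb;\tilde\bC_{N-1}))\le P^{\sss(0)}(\tb_N,\bb;\tilde\bC_{N-1})$. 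The outer $M^{\sss(N)}_{\vvec,\bb_N;\bC}$-expectation then acts on a function of $\tilde\bC_{N-1}$. Using the Factorization Lemma (Lemma~\ref{lem-cut1}) and BK, this expectation is controlled by attaching the $P^{\sss(0)}$-diagram to an admissible line of a bounding diagram for $M^{\sss(N)}_{\vvec,\bb_N;\bC}(1)\le P^{\sss(N-1)}(\vvec,\bb_N;\bC)$ (from the induction hypothesis applied to \refeq{piPbd} at level $N-1$). The attachment is realized by Construction~$\ell^\twop(\zvec)$, inserting a line into the outer diagram terminating at the random meeting vertex $\zvec$ in $\tilde\bC_{N-1}$, multiplied by $p_{b_N}P^{\sss(0)}(\tb_N,\bb;\zvec)$; this is precisely the form of the recursion \refeq{PArecrep}. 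Summing over the admissible line $\twop$, the insertion vertex $\zvec$, and the bond $b_N$ reproduces $P^{\sss(N)}(\vvec,\bb;\bC)$, and multiplying by $p_b$ and summing over $b$ with $\tb=\yvec$ yields \refeq{piPbd}.

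The main obstacle is the induction step, specifically justifying that inserting the random-cluster-dependent diagram $P^{\sss(0)}(\tb_N,\bb;\tilde\bC_{N-1})$ into the level-$(N-1)$ bounding diagram produces $P^{\sss(N)}$ via Construction~$E$ (i.e., Construction~$\ell$ followed by Construction~$2^{\sss(0)}$). The crux is to combine BK with the Factorization Lemma in order to translate the random dependence on $\tilde\bC_{N-1}$ into a deterministic attachment of a new line at an admissible vertex of the outer diagram, while correctly accounting for whether the newly attached line is $L$-admissible (as in \refeq{tilF1}) or reduces to the trivial direct-connection contribution (as in \refeq{tilF0}). The hypothesis $\min_{\cvec\in\bC}t_{\cvec}<t_{\vvec}$ ensures that $\bC$ lies strictly before $\vvec$ temporally, so that each diagram line has strictly positive duration and the BK/Factorization decomposition can be legitimately iterated.
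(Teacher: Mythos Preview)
Your overall strategy---induction on $N$, with the base case handled by a BK-type bound on $\mP(E'(\vvec,\bb;\bC))$ and the step driven by the recursion \refeq{PArecrep}---matches the paper's. However, the induction hypothesis you carry is too weak to close the step, and this is precisely the obstacle you flag but do not resolve.

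In the step you write $M^{\sss(N+1)}_{\vvec,\bb;\bC}(1)=\sum_{b_N}p_{b_N}\,M^{\sss(N)}_{\vvec,\bb_N;\bC}\big(\mP_N(E'(\tb_N,\bb;\tilde\bC_{N-1}))\big)$ and then apply the base bound to the inner factor. After that, what sits inside $M^{\sss(N)}_{\vvec,\bb_N;\bC}(\cdot)$ is $P^{\sss(0)}(\tb_N,\bb;\tilde\bC_{N-1})$, which by \refeq{S0C-def}--\refeq{P0Cdef} is a sum over $\cvec$ of deterministic diagrams multiplied by the \emph{random} indicators $\ind{\cvec\in\tilde\bC_{N-1}}$ (and $\ind{(\cvec,\wvec)\in\tilde\bC_{N-1}}$, reducible to the former by the Markov property as in \refeq{MN-indbond}). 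To bound this you therefore need control on
\[
M^{\sss(N)}_{\vvec,\bb_N;\bC}\big(\ind{\cvec\in\tilde\bC_{N-1}}\big),
\]
not merely on $M^{\sss(N)}_{\vvec,\bb_N;\bC}(1)$. A bound on $M^{\sss(N)}(1)$ gives no handle on $M^{\sss(N)}$ applied to a random variable supported on the innermost cluster; you cannot ``attach a line at a random vertex of the outer diagram'' from the hypothesis $M^{\sss(N)}(1)\le P^{\sss(N-1)}$ alone. The paper fixes this by \emph{loading the induction}: it proves simultaneously the pair
\[
M^{\sss(N+1)}_{\vvec,\yvec;\bC}(1)\le P^{\sss(N)}(\vvec,\yvec;\bC),
\qquad
M^{\sss(N+1)}_{\vvec,\yvec;\bC}\big(\ind{\wvec\in\bC_N}\big)\le\sum_\twop P^{\sss(N)}(\vvec,\yvec;\bC,\ell^\twop(\wvec)),
\]
and it is the second of these (at level $N-1$) that, combined with \refeq{PArecrep}, advances the first to level $N$. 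Your sketch should add this second statement to the induction hypothesis; once you do, the ``attachment via Construction~$\ell$'' you describe becomes a genuine inequality rather than a heuristic.

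Two smaller points. First, your reading of the hypothesis $\min_{\cvec\in\bC}t_{\cvec}<t_{\vvec}$ is not quite right: it does not say $\bC$ lies entirely before $\vvec$. Its role in the paper is in the refined inclusion \refeq{E'bd}: when the first-entry vertex $\wvec$ into $\bC$ satisfies $\wvec_-\notin\bC$, the fact that $\bC$ is a cluster-realization started strictly earlier forces a \emph{spatial} bond $(\cvec,\wvec)\in\bC$, producing the crucial extra factor of $\vep$ needed for the bounds to be uniform as $\vep\downarrow 0$. Second, your base case narrative (``the last sausage intersects $\bC$'') is in the right spirit but omits this $\vep$-extraction; the paper's inclusion \refeq{E'bd} is sharper than \refeq{E'bdprep} precisely to capture it.
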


\begin{proof}
A similar bound was proved in \cite[Proposition~6.3]{HHS05b}, and we follow its
proof as closely as possible, paying attention to the powers of $\vep$.

We prove by induction on $N$ the following two statements:
\begin{align}
M_{\vvec,\yvec;\bC}^{\sss(N+1)}(1)&\le P^{\sss(N)}(\vvec,\yvec;
 \bC),\lbeq{MP1}\\[5pt]
M_{\vvec,\yvec;\bC}^{\sss(N+1)}\big(\ind{\wvec\in\bC_N}\big)&\le
 \sum_\twop P^{\sss(N)}(\vvec,\yvec;\bC,\ell^\twop(\wvec)),\lbeq{MPI}
\end{align}
where $\sum_\twop$ is the sum over the $N^\text{th}$~admissible lines.  The
first inequality together with \refeq{ANBNdef} immediately imply \refeq{piPbd}.

To verify \refeq{MP1} for $N=0$, we first prove
\begin{align}\lbeq{E'bd}
E'(\vvec,\yvec;\bC)&\subseteq\Ecal(\vvec,\yvec;\bC)\nn\\
&\equiv\bigcup_{\substack{\cvec,\wvec\in\bC\\ \uvec\in\Lambda}}\bigg\{\Big\{
 \{\vvec\conn\uvec\}\circ\{\uvec\conn\wvec\}\circ\{\wvec\conn\yvec\}\circ\{
 \uvec\conn\yvec\}\Big\}\nn\\
&\hskip4pc\cap\Big\{\{\cvec=\wvec,~\uvec\centernot\conn\wvec_-\}\cup\{\cvec\ne
 \wvec_-,~(\cvec,\wvec)\in\bC\}\Big\}\bigg\},
\end{align}
where $E\circ F$ denotes disjoint
occurrence of the events $E$ and $F$.  It is immediate that (see, e.g.,
\cite[(6.12)]{HHS05b})
\begin{align}\lbeq{E'bdprep}
E'(\vvec,\yvec;\bC)\subseteq\bigcup_{\substack{\cvec\in\bC\\ \uvec\in\Lambda}}
 \Big\{\{\vvec\conn\uvec\}\circ\{\uvec\conn\cvec\}\circ\{\cvec\conn\yvec\}\circ
 \{\uvec\conn\yvec\}\Big\}.
\end{align}
However, when $\vep\ll 1$, the above bound is not good enough, since it does
not produce sufficiently many factors of $\vep$. Therefore, we now improve the
inclusion.  We denote by $\wvec$ the element in $\bC$ with the smallest time
index such that $\vvec\conn \wvec$.  Such an element must exist, since
$E'(\vvec,\yvec;\bC)\subset\{\vvec\ct{\bC}\yvec\}$.  Then, there are two
possibilities, namely, that $\vvec$ is not connected to
$\wvec_-\equiv(w,t_{\wvec}-\vep)$, or that $\wvec_-\nin\bC$.  In the latter
case, since $\bC$ is a cluster-realization with
$\min_{\cvec\in\bC}t_{\cvec}<t_{\vvec}$, there must be a vertex $\cvec\in\bC$
such that the spatial bond $(\cvec,\wvec)$ is a part of $\bC$.  Together with
\refeq{E'bdprep}, it is not hard to see that \refeq{E'bd} holds.

Recall that a spatial bond $b$ has probability $\lamb\vep D(b)$ of being
occupied.  We note that, since $\{\uvec\conn\wvec\}\circ\{\uvec\conn\yvec\}$
occurs, and when $\wvec\ne\uvec$ and $\yvec\ne\uvec$, there must be at least
one spatial bond $b$ with $\bb=\uvec$, such that either $b\conn\wvec$ or
$b\conn\yvec$. Therefore, this produces a factor $\vep$.  Also, when
$\wvec\ne\yvec$ and $\uvec\ne\yvec$, then the disjoint connections in
$\{\wvec\conn\yvec\}\circ\{\uvec\conn\yvec\}$ produce a spatial bond pointing
at $\yvec$.  Taking all of the different possibilities into account, and using
the BK inequality (see, e.g., \cite{g99}), we see that
    \begin{align}\lbeq{pi0bd}
    M_{\vvec,\yvec;\bC}^{\sss(1)}(1)=\mE\big[\indic_{E'(\vvec,\yvec;\bC)}
     \big]\le\Pbold(\Ecal(\vvec,\yvec;\bC))\le P^{\sss(0)}(\vvec,\yvec;
     \bC),
    \end{align}
which is \refeq{MP1} for $N=0$.

To verify \refeq{MPI} for $N=0$, we use the fact that
    \begin{gather}
    M_{\vvec,\yvec;\bC}^{\sss(1)}(\ind{\wvec\in\bC_{\sss0}})=\mE\big[
     \indic_{E'(\vvec,\yvec;\bC)}\ind{\vvec\conn\wvec}\big]\le\mP\big(
     \Ecal(\vvec,\yvec;\bC)\cap\{\vvec\conn\wvec\}\big)\nn\\[7pt]
    \le\sum_\twop P^{\sss(0)}(\vvec,\yvec;\bC,\ell^{\twop}(\wvec))
     \equiv\sum_\twop\sum_{\zvec}P^{\sss(0)}(\vvec,\yvec;\bC,B^{\twop}
     (\zvec))\,\tau(\wvec-\zvec),\lbeq{2pt.100}
    \end{gather}
where $\sum_\twop$ is the sum over the $0^\text{th}$~admissible lines. Indeed,
to relate \refeq{2pt.100} to \refeq{pi0bd}, fix a backward occupied path from
$\wvec$ to $\vvec$.  We note that this must share some part with the occupied
paths from $\vvec$ to $\yvec$. Let $\uvec$ be the vertex with highest time
index of this common part. Then, there must be a spatial bond $b$ with
$\bb=\uvec$.  Recall that the result of Construction~$B^\twop(\zvec)$ is the
sum of $\sum_\twop P^{\sss(0)}(\vvec,\yvec;\bC)\,\tau(\wvec-\yvec)$ and the
results of Construction~$\bspat^\twop(\zvec)$ and
Construction~$\btemp^\twop(\zvec)$. We also recall
\refeq{bspat}--\refeq{btemp}.  Therefore, $\zvec$ in \refeq{2pt.100} is
$\bb=\uvec$ in the contribution due to Construction~$\bspat^\twop(\zvec)$, and
$\zvec=\tb$ in the contribution from Construction~$\btemp^\twop(\zvec)$.  This
completes the proof of \refeq{MPI} for $N=0$.

To advance the induction, we fix $N \ge 1$ and assume that
\refeq{MP1}--\refeq{MPI} hold for $N-1$.  By \refeq{M-def}, \refeq{pi0bd},
\refeq{P0Cdef} and \refeq{S0C-def}, we have
\begin{align}\lbeq{MP1pfafirst}
M_{\vvec,\yvec;\bC}^{\sss(N+1)}(1)&=\sum_bp_bM_{\vvec,\bb;\bC}^{\sss(N)}
 \Big(M_{\tb,\yvec;\tilde\bC_{\sN-1}}^{\sss(1)}(1)\Big)\nn\\
&\le\sum_bp_bM_{\vvec,\bb;\bC}^{\sss(N)}\Big(P^{\sss(0)}(\tb,\yvec;\tilde
 \bC_{\sN-1})\Big)\nn\\
&=\sum_bp_b\sum_{\cvec}\bigg(M_{\vvec,\bb;\bC}^{\sss(N)}\big(\ind{\cvec\in
 \tilde\bC_{N-1}}\big)\,S^{\sss(0,0)}(\tb,\wvec;\cvec,2_{\wvec}^{\sss(0)})\nn\\
&\hskip5pc+M_{\vvec,\bb;\bC}^{\sss(N)}\big(\ind{(\cvec,\wvec)\in\tilde\bC_{\sN
 -1}}\big)\,(1-\delta_{\cvec,\wvec_-})\,S^{\sss(0,1)}(\tb,\wvec;\cvec,
 2_{\wvec}^{\sss(0)})\bigg).
\end{align}
%The term $S^{\sss(0,0)}(\vvec,\wvec;\cvec)$
%is multiplied by an indicator $\ind{\cvec\in\tilde\bC_{\sss N-1}}$, which allows
%to use the induction hypothesis.  The term $S^{\sss(0,1)}(\vvec,\wvec;\cvec)$ is
%multiplied by an indicator
%$\ind{(\cvec,\wvec)\in\tilde\bC_{\sN-1}}(1-\delta_{\cvec,\wvec_-})$ for some
%$\wvec$ with $t_{\wvec}\ge t_{\tb}$, and we are lead to investigate
%$M^{\sss(N)}_{\vvec,\bb;\bC}\big(\ind{(\cvec,\wvec)\in\tilde\bC_{\sN-1}}\big)
%(1-\delta_{\cvec,\wvec_-})$, which is closely
%related, but not exactly equal, to the term bounded in the induction
%hypothesis.  However, since
Note that $t_{\cvec}\ge t_{\bb}$.  By the Markov property, we obtain
\begin{align}\lbeq{MN-indbond}
M^{\sss(N)}_{\vvec,\bb;\bC}\big(\ind{(\cvec,\wvec)\in\tilde\bC_{\sN-1}}\big)
 \,(1-\delta_{\cvec,\wvec_-})=M^{\sss(N)}_{\vvec,\bb;\bC}\big(\ind{\cvec\in
 \tilde\bC_{\sN-1}}\big)\,\lambda\vep D(\wvec-\cvec).
\end{align}
Substitution of \refeq{MN-indbond} into \refeq{MP1pfafirst} and using
\refeq{S0-def} and \refeq{P0cdef}, we arrive at
\begin{align}\lbeq{MP1pfa}
M_{\vvec,\yvec;\bC}^{\sss(N+1)}(1)\le\sum_b\sum_{\cvec}M_{\vvec,\bb;\bC}^{\sss
 (N)}\big(\ind{\cvec\in\tilde\bC_{\sN-1}}\big)p_bP^{\sss(0)}(\tb,\yvec;\cvec).
\end{align}
We apply the induction hypothesis to bound
$M_{\vvec,\bb;\bC}^{\sss(N)}(\ind{\cvec\in\bC_{\sN-1}})$
(\,$\ge M_{\vvec,\bb;\bC}^{\sss(N)}(\ind{\cvec\in\tilde\bC_{\sN-1}})$\,)
and then use \refeq{PArecrep} to conclude \refeq{MP1}.

Similarly, for \refeq{MPI}, we have
    \eqalign\lbeq{MPIa}
    M^{\sss(N+1)}_{\vvec,\yvec;\bC}\big(\ind{\wvec\in\bC_N}\big)
    =\sum_bp_bM^{\sss(N)}_{\vvec,\bb;\bC}\Big(M^{\sss(1)}_{\tb,
    \yvec;\tilde\bC_{N-1}}\big(\ind{\wvec\in\bC_N}\big)\Big),
    \enalign
and substitution of the bound \refeq{MPI} for $N=0$ yields
    \eqalign\lbeq{MPIa1}
    M^{\sss(N+1)}_{\vvec,\yvec;\bC}\big(\ind{\wvec\in\bC_N}\big)
    &\le\sum_bp_b M^{\sss(N)}_{\vvec,\bb;\bC}\bigg(
    \sum_\twop P^{\sss(0)}\big(\tb,
    \yvec;\tilde\bC_{\sN-1},\ell^{\twop}(\wvec)\big)\bigg),
    \enalign
where $\sum_\twop$ is the sum over the admissible lines for
$P^{\sss(0)}(\tb,\yvec;\tilde\bC_{\sN-1})$.  The argument in
\refeq{MP1pfafirst}--\refeq{MP1pfa} then proves that
\begin{align}\lbeq{MPIa2}
M^{\sss(N+1)}_{\vvec,\yvec;\bC}\big(\ind{\wvec\in\bC_N}\big)
 &\le\sum_bp_b\sum_{\cvec}M^{\sss(N)}_{\vvec,\bb;\bC}\big(
 \ind{\cvec\in\tilde\bC_{N-1}}\big)\sum_\twop P^{\sss(0)}(\tb,
 \yvec;\cvec,\ell^{\twop}(\wvec)).
\end{align}
%where $\sum_\twop$ is the sum over the admissible lines for
%$P^{\sss(0)}(\tb,\yvec;\cvec)$.
We use the induction hypothesis \refeq{MPI} to bound
$M^{\sss(N)}_{\vvec,\bb;\bC}(\ind{\cvec\in\bC_{N-1}})$ $(\,\ge
M^{\sss(N)}_{\vvec,\bb;\bC}(\ind{\cvec\in\tilde\bC_{N-1}})\,)$,
as well as the fact that (cf., \refeq{PArecrep})
\begin{align}\lbeq{PArecrep-gen}
P^{\sss(N)}\big(\vvec,\yvec;\bC,\ell^{\twop}(\wvec)\big)=\sum_{\twop'}
 \sum_{\zvec}\sum_bP^{\sss(N-1)}\big(\vvec,\bb;\bC,\ell^{\twop'}(\zvec)\big)
 \,p_bP^{\sss(0)}\big(\tb,\yvec;\zvec,\ell^{\twop}(\wvec)\big),
\end{align}
where $\sum_{\twop'}$ is the sum over the $(N-1)^\text{th}$~admissible lines.
This leads to
\begin{align}
M^{\sss(N+1)}_{\vvec,\yvec;\bC}\big(\ind{\wvec\in\bC_N}\big)
 &\le\sum_\twop P^{\sss(N)}(\vvec,\yvec;\bC,\ell^{\twop}(\wvec)).
\end{align}
%where $\sum_\twop$ is the sum over the $N^\text{th}$~admissible lines.
This completes the advancement of \refeq{MPI}.
\end{proof}

We close this section by listing a few related results that will be used later
on.  First, it is not hard to see that \refeq{MPI} can be generalised to
    \eq
    \lbeq{MPIgen}
    M_{\vvec,\yvec;\bC}^\smallsup{N+1}(\ind{\vec\xvec \in \bC_{\sss N}})
    \le P^{\smallsup{N}}(\vvec,\yvec;\bC, \ell(\vec\xvec)).
    \en
Next, we let
\begin{align}\lbeq{P0zerodef}
P^\smallsup{N}(\xvec)=P^{\sss(N)}(\ovec,\xvec;\ovec),
\end{align}
By \refeq{ABzerodef} and Lemma~\ref{lem-BNbd}, we have
    \eq
    \lbeq{BNPNbd}
    B^{\sss(N)}(\xvec)\le\sum_{b:\tb=\xvec}P^{\sss(N)}(\bb)\,p_b.
    \en
We will use the recursion formula (cf., \refeq{PArecrep})
    \eq\lbeq{recPNM}
    P^{\sss(N+M)}(\xvec)=\sum_\twop\sum_{\avec}\sum_bP^{\sss(N)}(\bb;
    \ell^\twop(\avec))\,p_bP^{\sss(M-1)}(\tb,\xvec;\avec),
    \en
where $\sum_\twop$ is the sum over the $N^\text{th}$~admissible lines.  This
can easily be checked by induction on $M$ (see also
\cite[(6.21)--(6.24)]{HHS05b}).

We will also make use of the following lemma, which generalises
\refeq{recPNM} to cases where more constructions are applied:

\begin{lemma}\label{lem-recPNMthetas}
For every $N,M\ge 0$,
    \eq\lbeq{recPNMthetas}
    \sum_\twop\sum_{\avec}\sum_bP^{\sss(N)}(\bb;\ell^{\twop}(\avec),
    \ell(\vec\xvec))\,p_bP^{\sss(M)}(\tb,\yvec;\avec,\ell(\vec\zvec))
    \le P^{\sss(N+M+1)}(\yvec;\ell(\vec\xvec),\ell(\vec\zvec)),
    \en
where $\sum_\twop$ is the sum over the $N^\text{th}$~admissible lines for
$P^{\sss(N)}(\bb)$.  Recall that Construction~$\ell(\vec\xvec)$ for
$\vec\xvec=(\xvec_1,\dots,\xvec_j)$ is the repeated application of
Construction~$\ell^{\twop_i}(\xvec_i)$ for $i=1,\dots,j$, followed by sums over
\emph{all} possible lines $\twop_i$ for $i=1,\dots,j$.
\end{lemma}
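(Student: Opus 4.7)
The plan is to reduce Lemma~\ref{lem-recPNMthetas} to the already-established recursion \refeq{recPNM} by a straightforward non-negativity argument. First, applying \refeq{recPNM} with $M$ there replaced by $M+1$ yields
$$P^{\sss(N+M+1)}(\yvec)=\sum_{\twop}\sum_{\avec}\sum_bP^{\sss(N)}(\bb;\ell^{\twop}(\avec))\,p_b\,P^{\sss(M)}(\tb,\yvec;\avec),$$
where $\sum_\twop$ ranges over the $N^\text{th}$ admissible lines of $P^{\sss(N)}(\bb)$; thus the full $(N+M+1)$st-order diagram is decomposed into an $N$th-order piece glued to an $M$th-order piece by the pivotal bond $b$.

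Next, I would apply Constructions~$\ell(\vec\xvec)$ and $\ell(\vec\zvec)$ to both sides of this identity. By Definition~\ref{def-constr}, each factor $\ell^{\twop'}(\,\cdot\,)$ is a sum over a choice of admissible line $\twop'$ of the ambient diagram at which to insert a 2-point-function line to the target vertex. Because the admissible lines of the composite diagram on the right-hand side form the disjoint union of the admissible lines of $P^{\sss(N)}(\bb;\ell^\twop(\avec))$ and of $P^{\sss(M)}(\tb,\yvec;\avec)$, the action of $\ell(\vec\xvec)$ and $\ell(\vec\zvec)$ distributes over the composition: writing $\vec\xvec=\vec\xvec_1\uplus\vec\xvec_2$ and $\vec\zvec=\vec\zvec_1\uplus\vec\zvec_2$ for the resulting ordered partitions,
\begin{align*}
&P^{\sss(N+M+1)}(\yvec;\ell(\vec\xvec),\ell(\vec\zvec))\\
&\qquad=\sum_{\twop,\avec,b}\;\sum_{\substack{\vec\xvec_1\uplus\vec\xvec_2=\vec\xvec\\ \vec\zvec_1\uplus\vec\zvec_2=\vec\zvec}} P^{\sss(N)}(\bb;\ell^\twop(\avec),\ell(\vec\xvec_1),\ell(\vec\zvec_1))\,p_b\,P^{\sss(M)}(\tb,\yvec;\avec,\ell(\vec\xvec_2),\ell(\vec\zvec_2)).
\end{align*}

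Since every summand in this identity is non-negative, dropping all terms except the single distribution $\vec\xvec_1=\vec\xvec$, $\vec\zvec_2=\vec\zvec$, $\vec\xvec_2=\vec\zvec_1=\varnothing$ yields a lower bound on the right-hand side that coincides exactly with the left-hand side of \refeq{recPNMthetas}, which gives the claimed inequality.

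The main obstacle is the diagrammatic bookkeeping underlying the second step: one must verify that the admissible lines of the glued diagram on the right-hand side of \refeq{recPNM} are exactly the disjoint union of the admissible lines of its two sub-diagrams $P^{\sss(N)}$ and $P^{\sss(M)}$, and in particular that the pivotal-bond factor $p_b$ carries no admissible label for subsequent $\ell$-insertions. Once this convention is read off from Definition~\ref{def-constr} and the iterated-Construction~$E$ definition of $P^{\sss(N+M+1)}$ in Section~\ref{sss:constr-def}, no further work is needed.
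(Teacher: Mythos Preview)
Your approach is correct and essentially the same as the paper's: both arguments observe that the left-hand side of \refeq{recPNMthetas} arises as a particular (non-negative) sub-collection of the terms one obtains by applying the full Constructions~$\ell(\vec\xvec)$ and $\ell(\vec\zvec)$ to $P^{\sss(N+M+1)}(\yvec)$ via the recursion \refeq{recPNM}. The paper states this in one sentence (``on the right-hand side there are more possibilities for the lines on which the Constructions~$\ell(\vec\xvec)$ and $\ell(\vec\zvec)$ can be performed''), whereas you have written out the partition-over-insertions decomposition explicitly; your version is more detailed but the underlying idea is identical.

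One small comment on wording: in your middle display you call the distribution-over-partitions formula an equality, and in your final paragraph you speak of the \emph{admissible} lines forming a disjoint union. Strictly, Construction~$\ell$ (without superscript) ranges over \emph{all} diagram lines, not just the admissible ones, and successive $\ell$-insertions may land on lines created by earlier ones. None of this endangers your argument---the specific assignment ``all of $\vec\xvec$ into the $P^{\sss(N)}$ factor, all of $\vec\zvec$ into the $P^{\sss(M)}$ factor'' still occurs as one non-negative summand in the full expansion of the right-hand side---but the bookkeeping is cleaner if you phrase the second display as an inequality ($\ge$ the single term you keep) rather than insisting on an exact partition identity.
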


\begin{proof}
The above inequality is similar to \refeq{recPNM}, but now with two extra
construction performed to the arising diagrams.  The equality in
\refeq{recPNM} is replaced by an upper bound in \refeq{recPNMthetas},
since on the right-hand side there are more possibilities for the lines on
which the Constructions~$\ell(\vec\xvec)$ and $\ell(\vec \zvec)$
can be performed.
\end{proof}

\subsubsection{Proof of the bound on \protect $B^{\sss(N)}(\xvec)$}\label{sec-BNbd}
We now specialise to $\vvec=\ovec$ and $\bC=\{\ovec\}$, for which we recall
\refeq{ABzerodef} and \refeq{P0zerodef}--\refeq{BNPNbd}. The main result in
this section is the following bound on $P^{\sss(N)}_t(x)\equiv
P^{\sss(N)}((x,t))$, from which, together with Lemma~\ref{lem-BNbd}, the
inequalities \refeq{Bbd} and \refeq{Bbd<4} easily follow.

\begin{lem}[\textbf{Bounds on $P_t^{\sss(N)}$}]\label{lem-Ptbds}
\begin{enumerate}[(i)]
\item
Let $d>4$ and $L\gg1$.  For $\lamb\le\lambc^{\sss(\vep)}$, $N\ge0$,
$t\in\vep\Zp$ and $q=0,2$,
\begin{align}\lbeq{PN-bd}
\sum_x|x|^q\,P_t^{\sss(N)}(x)&\le\delta_{q,0}\delta_{t,0}\delta_{N,0}+
 \vep^2\frac{O(\beta)^{1\vee N}\sigma^q}{(1+t)^{(d-q)/2}},
\end{align}
where the constant in the $O(\beta)$ term is independent of $\vep,L,N$ and $t$.
\item
Let $d\le4$ with $bd-\frac{4-d}2>0$,
$\hat\beta_\sT=\hatbetaT$ and $L_1\gg1$.  For
$\lamb\le\lambc^{\sss(\vep)}$, $N\ge0$, $t\in\vep\Zp\cap[0,T\log T]$ and $q=0,2$,
\begin{align}\lbeq{PN-bd-lower}
\sum_x|x|^q\,P_t^{\sss(N)}(x)&\le\delta_{q,0}\delta_{t,0}\delta_{N,0}+
 \vep^2\frac{O(\beta_{\sT})\,O(\hat\beta_{\sT})^{0\vee(N-1)}\sigma_{\sT
 }^q}{(1+t)^{(d-q)/2}},
\end{align}
where the constants in the $O(\beta_{\sT})$ and $O(\hat\beta_{\sT})$ terms are
independent of $\vep,T,N$ and $t$.
\end{enumerate}
\end{lem}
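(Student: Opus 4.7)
The plan is to prove Lemma~\ref{lem-Ptbds} by induction on $N\geq 0$, exploiting the recursion \refeq{PArecrep} (specialised to $\vvec=\ovec$ and $\bC=\{\ovec\}$, which can be read as $P^{\sss(N)}=P^{\sss(N-1)}(\,\cdot\,;E_{\,\cdot\,}(\xvec))$) together with the construction-effect estimates in Lemmas~\ref{lem:constr1}--\ref{lem:constr1'}.

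For the base case $N=0$, I appeal to \refeq{P0vyv}: $P^{\sss(0)}(\xvec)=\delta_{\ovec,\xvec}+L(\ovec,\ovec;\xvec)$. The delta-term contributes precisely $\delta_{q,0}\delta_{t,0}\delta_{N,0}$. For $L(\ovec,\ovec;\xvec)$ in the case $\uvec=\vvec$ of \refeq{Lne-def}, each of the two terms carries two factors of $\lambda\vep D$, pulling out an overall $\vep^2$. For $q=0$, the bubble is summed via the $\ell^1$--$\ell^\infty$ splitting combined with \refeq{tau-ass.1}, producing $O(\vep^2\beta/(1+t)^{d/2})$. For $q=2$, I split $|x|^2$ along the shortest path in the diagram (as in the proof of Lemma~\ref{lem:constr1}) and bound each displacement via the $q=2$ bound in \refeq{tau-ass.1}, obtaining the desired $\sigma^2(1+t)$-enhancement.

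For the inductive step, assume \refeq{PN-bd} for $N-1$. Applying Lemma~\ref{lem:constr1'} separately for $q\in\{0,2\}$ to $f=P^{\sss(N-1)}$, with $C_f = \vep^2 \,O(\beta)^{1\vee(N-1)}\sigma^q$, one obtains
\[
\sum_x|x|^qP^{\sss(N)}_t(x)\leq \frac{c\cL\,\vep^2 O(\beta)^{1\vee(N-1)}\sigma^q(1+c\cL\beta)\beta}{(1+t)^{(d-q)/2}}\leq \frac{\vep^2 O(\beta)^{1\vee N}\sigma^q}{(1+t)^{(d-q)/2}},
\]
using $\beta\ll 1$ together with the fact that the number $\cL$ of diagram lines of $P^{\sss(N-1)}$ at any fixed time is uniformly bounded in $N$ (verified by a side induction: each Construction~$E$ adds only a bounded number of new strands). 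This closes part (i). Part (ii) is proved by the same induction, now invoking the low-dimensional version of Lemma~\ref{lem:constr1'}, where the convolution estimate \refeq{convappl<4} (valid because $t\leq T\log T$ and $\alphamin<bd-\tfrac{4-d}{2}$) contributes $\hat\beta_{\sT}$ at each recursion step, rather than $\beta_{\sT}$. Since the single factor $\beta_{\sT}$ persists from the base case (where the sup-bound on $D\sstar \tau_t$ gives $\beta_{\sT}$ directly, before any convolution is invoked), iterating $N-1$ times yields $\beta_{\sT}\cdot\hat\beta_{\sT}^{0\vee(N-1)}$, matching the claim.

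The main obstacle is the careful tracking of the overall $\vep^2$ prefactor: it must appear exactly once—originating from the two spatial bonds of the $L$-block at the base of $P^{\sss(0)}$—and not accumulate with $N$. This relies on the fact that each new Construction~$E$ in the recursion contributes only a single extra factor of $\beta$, because the additional $\vep$-factors its $L$-block carries are consumed by the time-sum in the convolution bound (see the factor $\vep$ appearing in \refeq{convlem}). Ensuring that the admissibility bookkeeping of Section~\ref{sss:constr-def} is respected throughout—so that subsequent constructions act only on designated admissible lines, and no unintended $\vep$-gain occurs when reusing the recursion \refeq{PArecrep}—is the subtlest bookkeeping step of the argument.
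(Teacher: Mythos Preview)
Your approach is essentially correct and close to the paper's, though you take a shortcut. The paper introduces an auxiliary $\Pcal^{\sss(N)}$ built by $N$ applications of Construction~$2^{\sss(1)}$ alone, bounds it by iterating only the $2^{\sss(1)}$-part of Lemma~\ref{lem:constr1'}, and then recovers the bound on $P^{\sss(N)}$ via the binomial identity $P^{\sss(N)}=\sum_{M=0}^N\binom{N}{M}\Pcal^{\sss(N+M)}$ (the extra $2^N$ is harmlessly absorbed into $O(\beta)^N$). You instead apply the full $E$-bound of Lemma~\ref{lem:constr1'} directly to $P^{\sss(N-1)}$, which is equally valid and arguably cleaner; both routes rest on the same lemma and on the same line-count fact (the paper later records that $P^{\sss(N)}$ has at most four lines at any fixed time, confirming your side induction).

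One point to tighten: at the step $N=0\to N=1$, your choice $C_f=\vep^2O(\beta)\sigma^q$ simply drops the $\delta_{q,0}\delta_{t,0}$ contribution in the base-case bound, so the hypothesis of Lemma~\ref{lem:constr1'} is not literally met at $t=0$. The delta part of $P^{\sss(0)}$ must be treated separately: fed into the construction it yields $L(\ovec,\ovec;\xvec)$, and it is the case $\uvec=\vvec$ of \refeq{supL-bd} that supplies the factor $\vep^2$ for this piece---so here the $\vep^2$ comes from the \emph{first added} $L$-block, not from ``the $L$-block at the base of $P^{\sss(0)}$'' as you write. The paper glosses over the same point with equal brevity; the fix is one line, but your description of the $\vep^2$ mechanism should be adjusted accordingly.
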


\begin{proof}
Let
\begin{align}\lbeq{PcalNdef}
\Pcal^{\sss(0)}(\xvec)=P^{\sss(0)}(\xvec),&&
\Pcal^{\sss(N)}(\xvec)=\Pcal^{\sss(N-1)}(\uvec;2_{\uvec}^{\sss(1)}
 (\xvec))\qquad(N\ge1).
\end{align}
We note from \cite[Lemma~4.4]{hsa04} that the inequalities
\refeq{PN-bd}--\refeq{PN-bd-lower} were shown for a similar quantity
to $\Pcal^{\sss(N)}(\xvec)$, where $L(\uvec,\vvec;\xvec)$ in
\cite[(4.18)]{hsa04} was not our $L(\uvec,\vvec;\xvec)$ in
\refeq{Lne-def} (compare \refeq{supL-bd} with
\cite[(4.42)]{hsa04}). The main differences between $L(\uvec,\vvec;\xvec)$ in
\cite[(4.18)]{hsa04} and $L(\uvec,\vvec;\xvec)$ in
\refeq{Lne-def} for $\uvec\neq\vvec$ is that
$\varphi$ in \refeq{vhi-def} has a term $\delta_{\uvec,\xvec}$ less than
the one in \cite[(4.17)]{hsa04}, and, for $\uvec=\vvec$,
our $L(\uvec,\vvec;\xvec)$ has a factor $\lambda\vep$ more than
the one in \cite[(4.18)]{hsa04}.

The proof of \cite[Lemma~4.4]{hsa04} was based on the recursion relation
\cite[(4.24)]{hsa04} that is equivalent to \refeq{PcalNdef}.  Since
$\lamb\le\lambc^{\sss(\vep)}\le2$ when $L$ is sufficiently large, our
$L(\uvec,\vvec;\xvec)$ in \refeq{Lne-def} is smaller than twice
$L(\uvec,\vvec;\xvec)$ in \cite[(4.18)]{hsa04}, so that \cite[Lemma 4.4]{hsa04}
also applies to $\Pcal^{\sss(N)}(\xvec)$.  For $N=0$ with $d>4$, we have (cf.,
\refeq{P0vyv})
\begin{align}\lbeq{Pcal0-bd}
\sum_x\Pcal^{\sss(0)}(x,t)\equiv\sum_xP^{\sss(0)}(x,t)=\sum_x\Big(
 \delta_{x,o}\delta_{t,0}+L((o,0),(o,0);(x,t))\Big)\le\delta_{t,0}
 +\vep^2\frac{O(\beta)}{(1+t)^{d/2}}.
\end{align}
The factor $O(\beta)$ is replaced by $O(\beta_\sT)$ if $d\le4$. For $N\ge1$, we
apply Lemma~\ref{lem:constr1'} to \refeq{Pcal0-bd} $N$ times.

We now relate $\Pcal^{\sss(N)}(\xvec)$ with $P^{\sss(N)}(\xvec)$.
Note that, by \refeq{tilF0}--\refeq{tilFz}, we have
\begin{align}\lbeq{PN2rec}
P^{\sss(N)}(\xvec)=P^{\sss(N-1)}(\uvec;2_{\uvec}^{\sss(1)}(\wvec),
 2_{\wvec}^{\sss(0)}(\xvec))=P^{\sss(N-1)}(\uvec;2_{\uvec}^{\sss
 (1)}(\xvec))+P^{\sss(N-1)}(\uvec;2_{\uvec}^{\sss(1)}(\wvec),
 2_{\wvec}^{\sss(1)}(\xvec)).
\end{align}
It follows by \refeq{PcalNdef} and \refeq{PN2rec} that
\begin{align}\lbeq{PPcalrel}
P^{\sss(N)}(\xvec)=\sum_{M=0}^N\binom{N}{M}\Pcal^{\sss(N+M)}(\xvec)
 \le2^N \sum_{M=0}^N\Pcal^{\sss(N+M)}(\xvec).
\end{align}
where the inequality is due to $\binom{N}{M}\le 2^N$.  By
Lemma~\ref{lem:constr1'}, we have, for $d>4$,
\begin{align}\lbeq{PcalN-bd}
\sum_x|x|^q\Pcal_t^{\sss(N)}(x)\le\delta_{q,0}\delta_{t,0}\delta_{N,0}
 +\vep^2\frac{(c\beta)^{1\vee N}\sigma^q}{(1+t)^{(d-q)/2}}
 \qquad(N\ge0),
\end{align}
for some $c<\infty$.  For $d\le4$, we can simply replace $\beta^{1\vee N}$ by
$\beta_\sT\hat\beta_\sT^{0\vee(N-1)}$ and $\sigma^2$ by $\sigma_\sT^2$.
Therefore,
\begin{align}\lbeq{PNbdconcl}
\sum_x|x|^qP_t^{\sss(N)}(x)\le2^N\sum_{M=0}^N\sum_x|x|^q\Pcal_t^{\sss(N
 +M)}(x)
&\le2^N\sum_{M=0}^N\bigg(\delta_{q,0}\delta_{t,0}\delta_{N+M,0}+\vep^2
 \frac{(c\beta)^{N+M}\sigma^q}{(1+t)^{(d-q)/2}}\bigg)\nn\\
&\le\delta_{q,0}\delta_{t,0}\delta_{N,0}+\vep^2\frac{(2c\beta)^N}{1-c\beta}
 \,\frac{\sigma^q}{(1+t)^{(d-q)/2}}.
\end{align}
This completes the proof of Lemma~\ref{lem-Ptbds}.
\end{proof}

\subsection{Bound on \protect$A(\vec{\xvec}_J)$}\label{ss:Abd}
In this section, we investigate $A(\vec\xvec_J)$.  First, in
Section~\ref{sss:Adiagbd}, we prove a $d$-independent diagrammatic bound on
$A^{\sss(N)}(\vvec,\vec\xvec_J;\bC)$, where we recall
$A^{\sss(N)}(\vec\xvec_J)=A^{\sss(N)}(\ovec,\vec\xvec_J;\{\ovec\})$ in
\refeq{ABzerodef}.  Then, in Section~\ref{sss:Abd}, we prove the bound
\refeq{Abd} for $d>4$ and the bound \refeq{Abd<4} for $d\le4$ simultaneously.

\subsubsection{Diagrammatic bound on \protect $A^{\sss(N)}(\vvec,\vec\xvec_J;\bC)$}\label{sss:Adiagbd}
The main result proved in this section is the following proposition:

\begin{lem}[\textbf{Diagrammatic bound on $A^{\sss(N)}(\vvec,\vec\xvec_J;\bC)$}]
 \label{lem-ANdiagbd}
For $r\ge3$, $\vec\xvec_J\in\Lambda^{r-1}$, $\vvec\in\Lambda$ and
$\bC\subset\Lambda$,
\begin{align}\lbeq{APbd}
&A^{\sss(N)}(\vvec,\vec\xvec_J;\bC)\\
&\le\begin{cases}
 \dpst\sum_{I\ne\vno,J}\bigg(\ind{\vvec\in\bC}\,\mP\big(\{\vvec\conn
  \vec\xvec_I\}\circ\{\vvec\conn\vec\xvec_{J\setminus I}\}\big)+\sum_{
  \zvec\ne\vvec}P^{\sss(0)}(\vvec,\zvec;\bC,\ell(\vec\xvec_I))\,\tau
  (\vec\xvec_{J\setminus I}-\zvec)\bigg)&(N=0),\\[15pt]
 \dpst\sum_{I\ne\vno,J}\sum_{\zvec}\Big(P^{\sss(N)}(\vvec,\zvec;\bC)
  \,\tau(\vec\xvec_I-\zvec)+P^{\sss(N)}(\vvec,\zvec;\bC,\ell(\vec
  \xvec_I))\,\Big)\,\tau(\vec\xvec_{J\setminus I}-\zvec)&(N\ge1).
 \end{cases}\nn
\end{align}
%\begin{align}\lbeq{APbd0}
%A^{\sss(0)}(\vvec,\vec\xvec_J;\bC)\le\sum_{I\ne\vno,J}\bigg(\ind{\vvec
% \in\bC}\,\mP\big(\{\vvec\conn\vec\xvec_I\}\circ\{\vvec\conn\vec\xvec_{
% J\setminus I}\}\big)+\sum_{\zvec\ne\vvec}P^{\sss(0)}(\vvec,\zvec;\bC,
% \ell(\vec\xvec_I))\,\tau(\vec\xvec_{J\setminus I}-\zvec)\bigg),
%\end{align}
%and for $N\ge1$,
%\begin{align}\lbeq{APbd1}
%A^{\sss(N)}(\vvec,\vec\xvec_J;\bC)\le\sum_{I\ne\vno,J}\sum_{\zvec}
% \bigg(P^{\sss(N)}(\vvec,\zvec;\bC)\,\tau(\vec\xvec_I-\zvec)+P^{\sss
% (N)}(\vvec,\zvec;\bC;\ell(\vec\xvec_I))\,\bigg)\,\tau(\vec\xvec_{J
% \setminus I}-\zvec).
%\end{align}
%%\begin{align}
%%&A^{\sss(N)}(\vvec,\vec\xvec_J;\bC)\nn\\
%%&~~\le\sum_{I\ne\vno,J}\sum_{\zvec}\bigg(P^{\sss(N)}(\vvec,\zvec;
%% \bC)\,\mP\big(\{\zvec\conn\vec\xvec_I\}\circ\{\zvec\conn\vec\xvec_{
%% J\setminus I}\}\big)+P^{\sss(N)}(\vvec,\zvec;\bC;\ell(\vec\xvec_I))
%% \,\tau(\vec\xvec_{J\setminus I}-\zvec)\bigg).
%%\end{align}
\end{lem}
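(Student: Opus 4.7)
The plan is to establish the $N=0$ bound by a BK-type probabilistic decomposition of $E'(\vvec,\vec\xvec_J;\bC)$, and then advance to $N\ge 1$ by iterating the recursion \refeq{M-def} and invoking the $M$-to-$P$ bounds developed in Lemma \ref{lem-BNbd}.

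For $N=0$, I would start from $A^{\sss(0)}(\vvec,\vec\xvec_J;\bC)=\mP(E'(\vvec,\vec\xvec_J;\bC))$. The defining property of $E'$ is that there is no bond pivotal for every connection $\vvec\conn\xvec_j$, $j\in J$, with $\vvec\ct{\bC}\bb$; this forces the cluster of connections to branch. Concretely, there exist a nonempty proper $I\subsetneq J$ and a vertex $\zvec$ with $\vvec\conn\zvec$ such that $\{\zvec\conn\vec\xvec_I\}$ and $\{\zvec\conn\vec\xvec_{J\setminus I}\}$ occur bond-disjointly past $\zvec$. The through-$\bC$ requirement then bifurcates into: (i) $\vvec\in\bC$ (allowing $\zvec=\vvec$, which yields the first summand, with the disjoint probability kept intact); or (ii) the backbone from $\vvec$ to some $\zvec\ne\vvec$ must pass through $\bC$, with the $\vec\xvec_I$ branching off along this backbone (bounded by $P^{\sss(0)}(\vvec,\zvec;\bC,\ell(\vec\xvec_I))$, in the spirit of the derivation of \refeq{E'bd} in the proof of Lemma \ref{lem-BNbd}) and $\vec\xvec_{J\setminus I}$ reached from $\zvec$ by the single multi-line $\tau(\vec\xvec_{J\setminus I}-\zvec)$. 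Summing over nonempty proper subsets $I$ produces the stated bound.

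For $N\ge 1$, the recursion \refeq{M-def} yields
\begin{equation*}
A^{\sss(N)}(\vvec,\vec\xvec_J;\bC)=\sum_{b_N}p_{b_N}\,M^{\sss(N)}_{\vvec,\bb_N;\bC}\!\Big(\mP\big(E'(\tb_N,\vec\xvec_J;\tilde\bC_{\sss N-1})\big)\Big),
\end{equation*}
and I would apply the $N=0$ decomposition to the inner probability, with $\vvec\mapsto\tb_N$ and $\bC\mapsto\tilde\bC_{\sss N-1}$. This produces two contributions: one involving $\ind{\tb_N\in\tilde\bC_{\sss N-1}}\,\mP(\{\tb_N\conn\vec\xvec_I\}\circ\{\tb_N\conn\vec\xvec_{J\setminus I}\})$, and one involving $P^{\sss(0)}(\tb_N,\zvec;\tilde\bC_{\sss N-1},\ell(\vec\xvec_I))\,\tau(\vec\xvec_{J\setminus I}-\zvec)$ for $\zvec\ne\tb_N$. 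BK converts the first to a product $\tau(\vec\xvec_I-\tb_N)\tau(\vec\xvec_{J\setminus I}-\tb_N)$. Absorbing the cluster-membership indicators via the generalization \refeq{MPIgen} of the $M$-to-$P$ bound, and then collapsing the outer $M^{\sss(N)}$ composed with $P^{\sss(0)}$ into $P^{\sss(N)}$ via the concatenation identity of Lemma \ref{lem-recPNMthetas} (with the pivotal bond $b_N$ serving as the connecting bond between the two blocks), produces the two $P^{\sss(N)}$ terms stated, with $\zvec=\tb_N$ corresponding to the first summand and $\zvec\ne\tb_N$ to the second.

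The main obstacle will be the $N=0$ decomposition: rigorously identifying the branching vertex $\zvec$ and extracting the correct disjoint-connection structure while simultaneously preserving the through-$\bC$ condition, in a way that yields precisely the $\ell(\vec\xvec_I)$ construction on the bounding diagram. A secondary subtlety in the $N\ge 1$ step is the handling of the random cluster $\tilde\bC_{\sss N-1}$ inside the $P^{\sss(0)}$ diagram: this requires extending \refeq{MPIgen} to cover the cluster-threaded diagrams $P^{\sss(0)}(\cdot,\cdot;\tilde\bC_{\sss N-1},\ell(\vec\xvec_I))$ using a Markov identity analogous to \refeq{MN-indbond}, so that the spatial-bond factors of $\vep$ encoded in $\tilde\bC_{\sss N-1}$-membership (crucial for the powers of $\beta$ demanded by Proposition \ref{prop:BAbds}) are preserved upon passage to the deterministic $P^{\sss(N)}$ diagrams.
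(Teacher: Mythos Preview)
Your proposal is correct and follows essentially the same route as the paper: the $N=0$ case is isolated as a separate lemma (Lemma~\ref{lem-PEbd}) proved via the inclusion \refeq{BKvecx}, whose derivation uses exactly the first-cutting-bond decomposition \refeq{E'supset-pre} to rigorously locate the branching vertex $\zvec$ (the obstacle you flagged); the $N\ge1$ step then substitutes this into the recursion \refeq{ANBNrep} and collapses via the $M$-to-$P$ bounds \refeq{MPIa1}--\refeq{MPIa2} together with \refeq{PArecrep}, handling the random $\tilde\bC_{\sss N-1}$ through the Markov identity \refeq{MN-indbond} just as you anticipated.
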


\bigskip

To prove Lemma \ref{lem-ANdiagbd}, we first note that, by
\refeq{B0def}--\refeq{A0def} and \refeq{M-def}--\refeq{ANBNdef},
\begin{align}\lbeq{ANBNrep}
A^{\sss(N)}(\vvec,\vec\xvec_J;\bC)
%=M^{\sss(N+1)}_{\vvec,\vec\xvec_J;\bC}(1)&=\sum_{b_N}p_{b_N}
% M^{\sss(N)}_{\vvec,\bb_N;\bC}\Big(M^{\sss(1)}_{\tb_N,\vec
% \xvec_J;\tilde\bC_{N-1}}(1)\Big)\nn\\
 =\begin{cases}
 \mP\big(E'(\vvec,\vec\xvec_J;\bC)\big)&(N=0),\\[5pt]
 \sum_{b_N}p_{b_N}M^{\sss(N)}_{\vvec,\bb_N;\bC}\Big(\mP\big(E'(\tb_{\sN},
  \vec\xvec_J;\tilde\bC_{\sss N-1})\big)\Big)&(N\ge1).
\end{cases}
\end{align}
Thus, we are lead to study $\mP\big(E'(\vvec,\vec\xvec_J;\bC)\big)$.
As a result, Lemma~\ref{lem-ANdiagbd} is a consequence of the following lemma:

\begin{lemma}\label{lem-PEbd}
For $r\ge3$, $\vec\xvec_J\in\Lambda^{r-1}$, $\vvec\in\Lambda$ and
$\bC\subset\Lambda$,
\begin{align}\lbeq{PE'vecxbd}
\mP\big(E'(\vvec,\vec\xvec_J;\bC)\big)\le\sum_{I\ne\vno,J}\bigg(
 \ind{\vvec\in\bC}\,\mP\big(\{\vvec\conn\vec\xvec_I\}\circ\{\vvec
 \conn\vec\xvec_{J\setminus I}\}\big)+\sum_{\zvec\ne\vvec}P^{\sss(0)}
 (\vvec,\zvec;\bC,\ell(\vec\xvec_I))\,\tau(\vec\xvec_{J\setminus I}-
 \zvec)\bigg).
\end{align}
\end{lemma}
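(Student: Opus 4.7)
The proof extends the analysis used in the two-point case, Lemma~\ref{lem-BNbd} (see in particular \refeq{E'bd}--\refeq{pi0bd}), to the multi-point setting. On the event $E'(\vvec,\vec\xvec_J;\bC)$, the connections $\vvec\conn\xvec_j$, $j\in J$, share no common pivotal bond $b$ with $\vvec\ct{\bC}\bb$. This forces the cluster of $\vvec$ to \emph{branch} somewhere along its path(s) through $\bC$, and produces a nontrivial partition of $J$. My plan is to identify a canonical branching vertex $\zvec$ together with a nonempty partition $I\sqcup(J\setminus I)=J$, such that the connections to $\vec\xvec_I$ are realised \emph{alongside} the connection from $\vvec$ to $\zvec$, while the connections to $\vec\xvec_{J\setminus I}$ proceed by disjoint paths out of $\zvec$.

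The degenerate case $\zvec=\vvec$ is possible only when $\vvec\in\bC$: here the two bouquets of connections can be taken to be directly disjoint at $\vvec$, and the BK inequality yields the bound $\mP\big(\{\vvec\conn\vec\xvec_I\}\circ\{\vvec\conn\vec\xvec_{J\setminus I}\}\big)$, which is exactly the first term in \refeq{PE'vecxbd} after summation over $I$. In the generic case $\zvec\ne\vvec$, I would adapt the inclusion \refeq{E'bd} from the two-point case: the connection from $\vvec$ to $\zvec$ through $\bC$ is bounded by $P^{\sss(0)}(\vvec,\zvec;\bC)$; the extra branches to each $\xvec_i$ with $i\in I$ are attached to this bounding structure by applying Construction~$\ell(\xvec_i)$ to an admissible line, which in total produces the factor $P^{\sss(0)}(\vvec,\zvec;\bC,\ell(\vec\xvec_I))$; and the remaining connection from $\zvec$ to all of $\vec\xvec_{J\setminus I}$ is bounded by the ordinary $(|J\setminus I|+1)$-point function $\tau(\vec\xvec_{J\setminus I}-\zvec)$. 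A final application of the BK inequality to these (essentially) disjoint sub-events reconstructs the second term in \refeq{PE'vecxbd}.

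The main obstacle will be to formulate a canonical choice of the pair $(\zvec,I)$ so that the resulting union of decompositions actually covers $E'(\vvec,\vec\xvec_J;\bC)$, and to carry out the BK bookkeeping in a form that matches the diagrammatic structure on the right-hand side. A natural candidate is to let $\zvec$ be the latest (in time) vertex with $\vvec\ct{\bC}\zvec$ that is traversed by at least one of the connections $\vvec\conn\xvec_j$, and to declare $J\setminus I$ to be the set of indices $j\in J$ for which the connection $\vvec\conn\xvec_j$ passes through $\zvec$; the assumption that no bond $b$ with $\vvec\ct{\bC}\bb$ is pivotal for all the $\vvec\conn\xvec_j$ should ensure that both $I$ and $J\setminus I$ are nonempty. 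A secondary delicate point is that the auxiliary attachments to the $\xvec_i$, $i\in I$, may genuinely share vertices with the $\vvec\to\zvec$ backbone; this is why Construction~$\ell$ (which inserts a line into an existing admissible line, rather than requiring a fully disjoint connection) produces the correct bound without requiring strict BK-disjointness for the attached branches.
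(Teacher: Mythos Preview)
Your overall strategy---identify a branching vertex $\zvec$ and a partition $I\sqcup(J\setminus I)$, then apply BK with the $\vec\xvec_I$-branches absorbed via Construction~$\ell$---is exactly right, and matches the paper's proof in its final form. The gap is in your specific choice of $(\zvec,I)$. Taking $\zvec$ to be ``the latest vertex with $\vvec\ct{\bC}\zvec$ traversed by at least one connection'' and $J\setminus I$ the indices passing through it does not produce the required disjoint occurrence: a generic vertex has no pivotal structure, so there is no reason the connections to $\vec\xvec_{J\setminus I}$ from $\zvec$ should be bond-disjoint from the part of the cluster realising $\Ecal(\vvec,\zvec;\bC)\cap\{\vvec\conn\vec\xvec_I\}$. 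Nor is it clear why your $I$ is nonempty.

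The paper instead recycles the cutting-bond decomposition already derived in Section~\ref{ss:cutting1}. Recall from \refeq{E'-dec1}--\refeq{E'-F'1-dec1} and Proposition~\ref{prop:settingI} that $E'(\vvec,\vec\xvec_J;\bC)$ splits according to whether or not there is an $\xvec_{j_I}$-cutting bond $b$. When there is, the set $J\setminus I$ is \emph{defined} as those $j$ for which $b$ is pivotal for $\vvec\conn\xvec_j$; then \refeq{partition2} gives the clean factorisation into $\tilde\bC^b(\vvec)$ and its complement, so with $\zvec=\bb$ one obtains
\[
\big\{\Ecal(\vvec,\zvec;\bC)\cap\{\vvec\conn\vec\xvec_I\}\big\}\circ\{\zvec\conn\vec\xvec_{J\setminus I}\}
\]
automatically. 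When there is no cutting bond (the $\Fone$ case), one has $E'(\vvec,\xvec_j;\bC)$ for some $j$, and the paper simply takes $\zvec=\xvec_j$, $I=J_j$, so that $\{\zvec\conn\vec\xvec_{J\setminus I}\}$ is trivial. Both cases land in the same union \refeq{BKvecx}, and from there the BK inequality plus the $P^{\sss(0)}(\cdot\,;\ell(\vec\xvec_I))$ bound (your Construction-$\ell$ observation) finish the job. The moral: rather than inventing a new branching vertex, reuse the pivotal bond $b$ whose existence and properties were already established when deriving the expansion.
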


\begin{proof}[Proof of Lemma~\ref{lem-ANdiagbd} assuming Lemma~\ref{lem-PEbd}]
Since Lemma~\ref{lem-PEbd} and \refeq{ANBNrep} immediately imply \refeq{APbd}
for $N=0$, it thus suffices to prove \refeq{APbd} for $N\ge1$.

Substituting \refeq{PE'vecxbd} with $\vvec=\tb_{\sN}$,
$\bC=\tilde\bC_{\sss N-1}$ into \refeq{ANBNrep} and then using
\refeq{MPIa1}--\refeq{MPIa2}, we obtain
\begin{align}
&A^{\sss(N)}(\vvec,\vec\xvec_J;\bC)\nn\\
&\le\sum_{I\ne\vno,J}\sum_{b_N}p_{b_N}\bigg(M^{\sss(N)}_{\vvec,\bb_N;
 \bC}\big(\ind{\tb_{\sN}\in\tilde\bC_{N-1}}\big)\,\mP\big(\{\tb_{\sN}
 \conn\vec\xvec_I\}\circ\{\tb_{\sN}\conn\vec\xvec_{J\setminus I}\}\big)
 \nn\\
&\qquad\qquad+\sum_{\zvec\ne\tb_N}M^{\sss(N)}_{\vvec,\bb_N;\bC}\Big(
 P^{\sss(0)}(\tb_{\sN},\zvec;\tilde\bC_{\sN-1},\ell(\vec\xvec_I))\Big)\,
 \tau(\vec\xvec_{J\setminus I}-\zvec)\bigg)\nn\\
&\le\sum_{I\ne\vno,J}\sum_{\zvec}\Bigg(\bigg(\underbrace{\sum_\twop
 \sum_{b_N}P^{\sss(N-1)}(\vvec,\bb_{\sN};\bC;\ell^\twop(\tb_{\sN}))\,p_{
 b_N}\delta_{\tb_N,\zvec}}_{X}\bigg)\,\mP\big(\{\zvec\conn\vec\xvec_I
 \}\circ\{\zvec\conn\vec\xvec_{J\setminus I}\}\big)\nn\\
&\qquad\qquad+\bigg(\underbrace{\sum_\twop\sum_{\cvec}\sum_{b_N:\tb_N
 \ne\zvec}P^{\sss(N-1)}(\vvec,\bb_{\sN};\bC;\ell^\twop(\cvec))\,p_{
 b_N}P^{\sss(0)}(\tb_{\sN},\zvec;\cvec,\ell(\vec\xvec_I))}_{Y}\bigg)\,
 \tau(\vec\xvec_{J\setminus I}-\zvec)\Bigg),
\end{align}
where $\sum_\twop$ is the sum over the $(N-1)^\text{th}$~admissible lines for
$P^{\sss(N-1)}(\vvec,\bb_{\sN};\bC)$.  Ignoring the restriction
$\tb_{\sN}\ne\zvec$ and using an extension of \refeq{PArecrep-gen}, we obtain
\begin{align}
Y\le P^{\sss(N)}(\vvec,\zvec;\bC,\ell(\vec\xvec_I)).
\end{align}
For $X$, we use \refeq{P0vyv} and \refeq{PArecrep} to obtain
\begin{align}
X&\le\sum_\twop\sum_{b_N}P^{\sss(N-1)}(\vvec,\bb_{\sN};\bC;\ell^\twop
 (\tb_{\sN}))\,p_{b_N}P^{\sss(0)}(\tb_{\sN},\zvec;\tb_{\sN})\nn\\
&\le\sum_\twop\sum_{\yvec}\sum_{b_N}P^{\sss(N-1)}(\vvec,\bb_{\sN};\bC;
 \ell^\twop(\yvec))\,p_{b_N}P^{\sss(0)}(\tb_{\sN},\zvec;\yvec)=P^{\sss
 (N)}(\vvec,\zvec;\bC).
\end{align}
Finally, we use the BK inequality to bound $\mP(\{\zvec\conn\vec\xvec_I\}
\circ\{\zvec\conn\vec\xvec_{J\setminus I}\})$ by
$\tau(\vec\xvec_I-\zvec)\,\tau(\vec\xvec_{J\setminus I}-\zvec)$.  This
completes the proof.
\end{proof}

\begin{proof}[Proof of Lemma~\ref{lem-PEbd}]
Recall \refeq{E'bd}.  We show below that
\begin{align}\lbeq{BKvecx}
E'(\vvec,\vec\xvec_J;\bC)\subset\bigcup_{I\ne\vno,J}\bigcup_{\zvec}
 \Big\{\big\{\Ecal(\vvec,\zvec;\bC)\cap\{\vvec\conn\vec\xvec_I\}\big\}
 \circ\{\zvec\conn\vec\xvec_{J\setminus I}\}\Big\}.
\end{align}

First, we prove \refeq{PE'vecxbd} assuming \refeq{BKvecx}.
Substituting \refeq{BKvecx} into
$\mP(E'(\vvec,\vec\xvec_J;\bC))$, we have
\begin{align}
&\mP\big(E'(\vvec,\vec\xvec_J;\bC)\big)\nn\\
&\le\sum_{I\ne\vno,J}\sum_{\zvec}\mP\Big(\big\{\Ecal(\vvec,\zvec;\bC)
 \cap\{\vvec\conn\vec\xvec_I\}\big\}\circ\{\zvec\conn\vec\xvec_{J
 \setminus I}\}\Big)\\
&=\sum_{I\ne\vno,J}\bigg(\ind{\vvec\in\bC}~\mP\big(\{\vvec\conn\vec
 \xvec_I\}\circ\{\vvec\conn\vec\xvec_{J\setminus I}\}\big)+\sum_{\zvec
 \ne\vvec}\mP\Big(\big\{\Ecal(\vvec,\zvec;\bC)\cap\{\vvec\conn\vec
 \xvec_I\}\big\}\circ\{\zvec\conn\vec\xvec_{J\setminus I}\}\Big)\bigg).
 \nn
\end{align}
For the sum over $\zvec\ne\vvec$, we use the BK inequality to extract
$\mP(\zvec\conn\vec\xvec_{J\setminus I})\equiv\tau(\vec\xvec_{J\setminus
I}-\zvec)$ and apply the following inequality that is a result of an
extension of the argument around \refeq{2pt.100}:
\begin{align}
\mP\big(\Ecal(\vvec,\zvec;\bC)\cap\{\vvec\conn\vec\xvec_I\}\big)\le
 P^{\sss(0)}(\vvec,\zvec;\bC,\ell(\vec\xvec_I)).
\end{align}
This completes the proof of \refeq{PE'vecxbd}.

It remains to prove \refeq{BKvecx}.
Summarising \refeq{F'1-def}--\refeq{partition2}, we can rewrite
$E'(\vvec,\vec\xvec_J;\bC)$ as
\begin{align}\lbeq{E'supset-pre}
E'(\vvec,\vec\xvec_J;\bC)&=\bigg\{\BDcup{j\in J}\Big\{\{\vvec\conn
 \vec\xvec_J\}\cap\big\{\vvec\ct{\bC}(\xvec_1,\dots,\xvec_{j-1})\big\}
 ^{\rm c}\cap E'(\vvec,\xvec_j;\bC)\Big\}\nn\\
&\hspace{3pc}\cap\big\{\nexists\text{ pivotal bond $b$ for }\vvec\conn
 \xvec_i~\forall i\text{ such that }\vvec\ct{\bC}\bb\big\}\bigg\}\nn\\
&\quad\DDcup~\Bigg\{\BDcup{\vno\ne I\subsetneq J}\BDcup{b}\bigg\{\Big\{\{\vvec
 \conn\vec\xvec_I\}\cap\big\{\vvec\ct{\bC}(\xvec_1,\dots,\xvec_{j_I-1})
 \big\}^{\rm c}\cap E'(\vvec,\bb;\bC)\text{ in }\tilde\bC^b(\vvec)\Big\}\nn\\
&\hspace{8pc}\cap\Big\{b\text{ is occupied},~~\tb\conn\vec\xvec_{J
 \setminus I}\text{ in }\Lambda\setminus\tilde\bC^b(\vvec)\Big\}
 \bigg\}\Bigg\}.
\end{align}
Ignoring $\{\vvec\ct{\bC}(\xvec_1,\dots,\xvec_{j-1})\}^{\rm c}$
and using $E'(\vvec,\zvec;\bC)\subset\Ecal(\vvec,\zvec;\bC)$, we have
\begin{align}
E'(\vvec,\vec\xvec_J;\bC)&\subset\bigg\{\bigcup_{j\in J}\big\{\Ecal(
 \vvec,\xvec_j;\bC)\cap\{\vvec\conn\vec\xvec_{J_j}\}\big\}\bigg\}\nn\\
&\quad~\cup\;\bigg\{\bigcup_{\vno\ne I\subsetneq J}\,\bigcup_{\zvec}\Big\{
 \big\{\Ecal(\vvec,\zvec;\bC)\cap\{\vvec\conn\vec\xvec_I\}\big\}\circ\{\zvec
 \conn\vec\xvec_{J\setminus I}\}\Big\}\bigg\}.
\end{align}
Note that the first event on the right-hand side is a subset of the second
event, when $I=J_j$ and $\zvec=\xvec_j$, for which $J\setminus I=\{j\}$
and $\{\zvec\conn\vec\xvec_{J\setminus I}\}=\{\xvec_j\conn\xvec_j\}$ is the
trivial event.  This completes the proof of \refeq{BKvecx} and hence of
Lemma~\ref{lem-PEbd}.
\end{proof}

\subsubsection{Proof of the bound on \protect$A^{\sss(N)}(\vec{\xvec}_J)$}
\label{sss:Abd}
%We first use Lemma~\ref{lem-ANdiagbd} to see that we need
%to bound
%    \eq
%    \sum_{\vec x} A^{\smallsup{N}}_{\vec t_J} (\vec x_J)
%    \le \sum_{I\neq \varnothing, J} \sum_{\vec x} \sum_{\zvec}
%    P^{\sss (N)}(\zvec;\ell(\vec\xvec_I))\tau(\vec\xvec_{J\backslash I}-\zvec).
%    \en
%When $\zvec=0$, we can improve the bound above, since one bond
%emanating from $\ovec$ must be spatial. Therefore,
%we obtain
%    \eq
%    A^{\smallsup{N}}(\vec \xvec_J)
%    \le \sum_{I\neq \varnothing, J}\Big(\lamb \vep (D\sstar \tau)(\vec\xvec_I)
%    \tau(\vec\xvec_{J\backslash I})+\sum_{\zvec\neq \ovec}
%    P^{\sss (N)}(\zvec;\ell(\vec\xvec_I))\tau(\vec\xvec_{J\backslash I}-\zvec)\Big).
%    \en
%
%\[\]
%
We prove \refeq{Abd} for $d>4$ and \refeq{Abd<4} for $d\le4$ simultaneously,
using Lemmas~\ref{lem:constr1} and \ref{lem-Ptbds}--\ref{lem-ANdiagbd}.

Below, we will frequently use
\begin{align}\lbeq{treegraph}
\sum_{\vec x_I}\tau_{\vec t_I}(\vec x_I)\le
 O\big((1+\bar t_I)^{|I|-1}\big),
\end{align}
where we recall $\max_{i\in I}t_i\le T\log T$ for $d\le4$.  For simplicity, let
$I=\{1,\dots,i\}$.  Then, \refeq{treegraph} is an easy consequence of
Lemma~\ref{lem:constr1} and the tree-graph inequality \cite{an84}:
\begin{align}\lbeq{an-tree}
\sum_{\vec x_I}\tau_{\vec t_I}(\vec x_I)\le\sum_{\vec x_{I_i},x_i}\tau_{
 \vec t_{I_i}}(\vec x_{I_i};\ell(x_i,t_i))\le\cdots\le\sum_{x_1,\dots,
 x_i}\tau_{t_1}\big(x_1;\ell(x_2,t_2),\cdots,\ell(x_i,t_i)\big).
\end{align}

First we prove \refeq{Abd}, for which $d>4$, for $N\ge1$. By
Lemma~\ref{lem-ANdiagbd}, we have
\begin{align}\lbeq{ANgeq1-prebd}
A^{\sss(N)}(\vec\xvec_J)\equiv A^{\sss(N)}(\ovec,\vec\xvec_J;\{\ovec\})
 \le\sum_{I\ne\vno,J}\sum_{\zvec}\Big(P^{\sss(N)}(\zvec)\,\tau(\vec
 \xvec_I-\zvec)+P^{\sss(N)}(\zvec;\ell(\vec\xvec_I))\,\Big)\,\tau(\vec
 \xvec_{J\setminus I}-\zvec).
\end{align}
Note that the number of lines contained in each diagram for
$P^{\sss(N)}(\zvec)$ at any fixed time between 0 and $t_{\zvec}$ is bounded,
say, by $\cL$, due to its construction.
%It is not hard to see (see also \cite[below (7.24)]{HHS05b}) that
%$P^{\sss(N)}(\zvec)$ has at most 4 lines at any fixed time.
Therefore, by Lemmas~\ref{lem:constr1}
and \ref{lem-Ptbds}, we obtain
\begin{align}\lbeq{PN-constr-bd:l1}
\sum_{z,x_1}P^{\sss(N)}((z,s);\ell(x_1,t_1))\le\cL\frac{\vep^2O(\beta)^N}{(1
 +s)^{d/2}}\,(1+s\wedge t_1)\le\cL\frac{\vep^2O(\beta)^N}{(1+s)^{(d-2)/2}},
\end{align}
and further that
\begin{align}\lbeq{PN-constr-bd:l2}
\sum_{z,x_1,x_2}P^{\sss(N)}\big((z,s);\ell(x_1,t_1),\ell(x_2,t_2)\big)
%&\le\vep^2\frac{O(\beta)^N((1+s\wedge t_2)+(1+t_1\wedge t_2))}{(1+s)
% ^{(d-2)/2}}\nn\\
&\le\cL(\cL+1)\frac{\vep^2O(\beta)^N}{(1+s)^{(d-2)/2}}\,\big(1+(s\vee
t_1)\wedge
 t_2\big).
\end{align}
More generally, by denoting the second-largest element of $\{s,\vec t_I\}$ by
$\bar s_{\vec t_I}$, we have
\begin{align}\lbeq{PN-constr-bd}
\sum_{z,\vec x_I}P^{\sss(N)}\big((z,s);\ell(\vec x_I,\vec t_I)\big)\le
 \frac{(\cL+|I|-1)!}{(\cL-1)!}\,\frac{\vep^2O(\beta)^N}{(1+s)^{(d-2)/
 2}}\,(1+\bar s_{\vec t_I})^{|I|-1},
\end{align}
where the combinatorial factor $\frac{(\cL+|I|-1)!}{(\cL-1)!}$ is independent
of $\beta$ and $N$.  Substituting this and \refeq{PN-bd} into \refeq{ANgeq1-prebd} and
using \refeq{treegraph}, we obtain that, since $(d-2)/2>1$,
\begin{align}\lbeq{AN-conv-bd}
\sum_{\vec x_J}A_{\vec t_J}^{\sss(N)}(\vec x_J)&\le\vep O(\beta)^N\sum_{I\ne
 \vno,J}\Bigg(\vep\ddsum_{s\le\underline t_J}\frac1{(1+s)^{d/2}}\,O\big((\bar
 t_I-s)^{|I|-1}\big)\,O\big((\bar t_{J\setminus I}-s)^{|J\setminus I|-1}\big)
 \nn\\
&\hspace{7pc}+\vep\ddsum_{s\le\underline{t}_{J\setminus I}}\frac{O((1+
 \bar s_{\vec t_I})^{|I|-1})}{(1+s)^{(d-2)/2}}\,O\big((\bar t_{J\setminus
 I}-s)^{|J\setminus I|-1}\big)\Bigg)\nn\\
&\le\vep O(\beta)^NO\big((1+\bar t)^{|J|-2}\big),
% \equiv\vep O(\beta)^NO(N+1)^{r-2}O\big((1+\bar t)^{r-3}\big).
\end{align}
where $\bar t=\bar t_J$.  This proves \refeq{Abd} for $N\ge1$.

To prove \refeq{Abd<4}, for which $d\le4$, for $N\ge1$, we simply replace
$O(\beta)^N$ in \refeq{PN-constr-bd} by
$O(\beta_{\sT})\,O(\hat\beta_\sT)^{N-1}$ using Lemma~\ref{lem-Ptbds}(ii)
instead of Lemma~\ref{lem-Ptbds}(i). Then, we use the factor $\beta_{\sT}$ to
control the sums over $s\in\vep\Zp$ in \refeq{AN-conv-bd}, as in
\refeq{convappl<4}.  Since $\underline t_{J\setminus I}\le T\log T$,
$\beta_\sT\equiv\beta_1T^{-bd}$ and $\hat\beta_\sT\equiv\hatbetaT$ with
$\alphamin<bd-\frac{4-d}2$, we have
\begin{align}\lbeq{PN-constr-bd:d<4}
\beta_\sT\,\vep\ddsum_{s\le\underline t_{J\setminus I}}(1+s)^{-(d-2)/2}\le
 O(\beta_\sT)(1+\underline t_{J\setminus I})^{(4-d)/2}\big(\log(1+\underline
 t_{J\setminus I})\big)^{\delta_{d,4}}\le O(\hat\beta_\sT).
\end{align}
This completes the proof of \refeq{Abd<4} for $N\ge1$.

Next we consider the case of $N=0$.  Similarly to the above computation, the
contribution from the latter sum in \refeq{APbd} over $\zvec\ne\vvec~(=\ovec$
in the current setting) equals $\vep O(\beta(1+\bar t)^{r-3})$ for $d>4$ and
$\vep O(\hat\beta_{\sT}(1+\bar t)^{r-3})$ for $d\le4$.  It remains to
estimate the contribution from
$\mP(\{\ovec\conn\vec\xvec_I\}\circ\{\ovec\conn\vec\xvec_{J\setminus I}\})$ in
\refeq{APbd}.

If $\vep$ is large (e.g., $\vep=1$), then we simply use the BK inequality
to obtain
\begin{align}\lbeq{PE'vecx-prebd3}
\mP\big(\{\ovec\conn\vec\xvec_I\}\circ\{\ovec\conn\vec\xvec_{J\setminus
 I}\}\big)\le\tau(\vec\xvec_I)\,\tau(\vec\xvec_{J\setminus I}).
\end{align}
Therefore, by \refeq{treegraph}, we have
\begin{align}
\sum_{\vec x_J}A_{\vec t_J}^{\sss(0)}(\vec x_J)\le O\big((1+\bar
 t)^{r-3}\big).
\end{align}

If $\vep\ll1$, then we should be more careful.  Since
$\{\ovec\conn\vec\xvec_I\}$ and $\{\ovec\conn\vec\xvec_{J\setminus
I}\}$ occur bond-disjointly, and since there is only one temporal bond growing
out of $\ovec$, there must be a nonempty subset $I'$ of $I$ or $J\setminus I$
and a spatial bond $b$ with $\bb=\ovec$ such that
$\{b\conn\vec\xvec_{I'}\}\circ\{\ovec\conn\vec\xvec_{J\setminus I'}\}$ occurs.
Then, by the BK inequality and
\refeq{treegraph}, we obtain
\begin{align}\lbeq{PE'vecx-prebd4}
\sum_{\vec x_J}\mP\big(\{\ovec\conn\vec\xvec_I\}\circ\{\ovec\conn\vec\xvec_{J
 \setminus I}\}\big)&\le\sum_{\vec x_J}\sum_{\vno\ne I'\subsetneq J}\,
 \sum_{\substack{b\text{ spatial}\\ \bb=\ovec}}\mP\big(\{b\conn\vec
 \xvec_{I'}\}\circ\{\ovec\conn\vec\xvec_{J\setminus I'}\}\big)\nn\\
&\le\sum_{\vec x_J}\sum_{\vno\ne I'\subsetneq J}(\lamb\vep D\sstar\tau)(\vec
 \xvec_{I'})\,\tau(\vec\xvec_{J\setminus I'})\nn\\
&\le\vep\,O\Big((1+\bar t_{I'})^{|I'|-1}(1+\bar t_{J\setminus I'})^{|J\setminus
 I'|-1}\Big)\le\vep\,O\big((1+\bar t)^{|J|-2}\big).
\end{align}
This completes the proof of \refeq{Abd} for $d>4$ and \refeq{Abd<4} for
$d\le4$. \qed

%%%%%%%%%%%%%%%%%%%%%%%%%%%%%%%%%%%%%%%%%%%%%%%%%%%%%%%%%%%%%%%%%%%%%%%%
%%%%%%%%%%%%%%%%%%%%%%%%%%%%%%%%%%%%%%%%%%%%%%%%%%%%%%%%%%%%%%%%%%%%%%%%
%%%%%%%%%%%%%%%%%%%%%%%%%%%%%%%%%%%%%%%%%%%%%%%%%%%%%%%%%%%%%%%%%%%%%%%%
%%%%%%%%%%%%%%%%%%%%%%%%%%%%%%%%%%%%%%%%%%%%%%%%%%%%%%%%%%%%%%%%%%%%%%%%
%\input{rpt5}

% June 2, 2007, RvdH

\section{Bound on \protect$\phi(\yvec_1,\yvec_2)_{\sss\pm}$}\label{sec-Cbd}
To prove the bound on $\hat\psi_{s_1,s_2}(k_1,k_2)$ in
Proposition~\ref{thm-psivphibd}, we first recall \refeq{psidef} and
\refeq{Cdef}:
    \begin{align}
    \psi(\yvec_1,\yvec_2)=\sum_{\vvec}p_\vep(\vvec)\,C(\yvec_1-\vvec,
     \yvec_2-\vvec),\qquad
    C(\yvec_1,\yvec_2)=\phi(\yvec_1,\yvec_2)_{\sss+}+\phi(\yvec_2,
     \yvec_1)_{\sss+}-\phi(\yvec_2,\yvec_1)_{\sss-},
    \end{align}
hence
    \begin{align}\lbeq{psi-phi-id}
    \hat\psi_{s_1,s_2}(k_1,k_2)=\hat p_\vep(k_1+k_2)\Big(\hat\phi_{s_1-\vep,
     s_2-\vep}(k_1,k_2)_{\sss+}+\hat\phi_{s_2-\vep,s_1-\vep}(k_2,k_1)_{\sss+}
     -\hat\phi_{s_2-\vep,s_1-\vep}(k_2,k_1)_{\sss-}\Big).
    \end{align}
Therefore, to show the bound on $\hat\psi_{s_1,s_2}(k_1,k_2)$, it suffices to
investigate $\phi(\yvec_1,\yvec_2)_{\sss\pm}$.

Recall the definition of $\phi^{\sss(N)}(\yvec_1,\yvec_2)_{\sss\pm}$ in
\refeq{phipm-def}, where $B_\delta(\tb_{\sss N+1},\yvec_1;\bC_{\sN})$ and
$B_\delta(\te,\yvec_2;\tilde\bC^e_{\sN})$ appear.  We also recall
$B^{\sss(N)}(\vvec,\yvec;\bC)$ in \refeq{AB-alt} and $B_\delta(\vvec,\yvec;\bC)$ in
\refeq{Bdelta-def}.  Let
    \begin{align}\lbeq{BdeltaNdef}
    B_\delta^{\sss(N)}(\vvec,\yvec;\bC)=\begin{cases}
     \delta_{\vvec,\yvec}&(N=0),\\
     B^{\sss(N-1)}(\vvec,\yvec;\bC)&(N\ge1),
     \end{cases}
    \end{align}
so that $B_\delta^{\sss(N)}(\vvec,\yvec;\bC)\ge0$ and
    \begin{align}\lbeq{Bdeltaalt}
    B_\delta(\vvec,\yvec;\bC)=\sum_{N=0}^\infty (-1)^NB_\delta^{\sss(N)}
     (\vvec,\yvec;\bC).
    \end{align}
Let $\phi^{\sss(N,N_1,N_2)}(\yvec_1,\yvec_2)_{\sss\pm}$ be the contribution to
$\phi^{\sss(N)}(\yvec_1,\yvec_2)_{\sss\pm}$ from
$B_\delta^{\sss(N_1)}(\tb_{\sss N+1},\yvec_1;\bC_{\sN})$ and
$B_\delta^{\sss(N_2)}(\te,\yvec_2;\tilde\bC^e_{\sN})$. Then,
$\phi^{\sss(N,N_1,N_2)}(\yvec_1,\yvec_2)_{\sss\pm}\ge0$ and
    \begin{align}\lbeq{phialtbd}
    \phi(\yvec_1,\yvec_2)_{\sss\pm}=\sum_{N,N_1,N_2=0}^\infty(-1)^{N+N_1+N_2}
     \phi^{\sss(N,N_1,N_2)}(\yvec_1,\yvec_2)_{\sss\pm}.
    \end{align}

Now we state the bound on $\phi_{s_1,s_2}^{\sss(N,N_1,N_2)}(y_1,y_2)_{\sss\pm}$ in the
following proposition. Since we have already shown in Section~\ref{sec-Cvepvep} that
    \begin{align}\lbeq{phi-vep-vep-sum}
    \phi_{\vep,\vep}^{\sss(N,N_1,N_2)}(y_1,y_2)_{\sss\pm}=
    \begin{cases}
    p_\vep(y_1)\,p_\vep(y_2)\,(1-\delta_{y_1,y_2})&\text{if }N=N_1=N_2=0,\\
    0&\text{otherwise},
    \end{cases}
    \end{align}
we only need to bound $\phi_{s_1,s_2}^{\sss(N,N_1,N_2)}(y_1,y_2)_{\sss\pm}$ for
$s_2\ge s_1\ge\vep$ with $(s_1,s_2)\neq(\vep,\vep)$.  For $j=1,2$, we let (cf.,
\refeq{bs-def}--\refeq{ns-def})
\begin{align}
\tilde n_{s_1,s_2}^{\sss(j)}&=n_{s_1+j\vep,s_2+j\vep}\equiv3-\delta_{s_1,s_2}
 -\delta_{s_1,(2-j)\vep}\delta_{s_2,(2-j)\vep},\\[5pt]
\tilde b_{s_1,s_2}^{\sss(j)}&=\frac{\vep^{\tilde n_{s_1,s_2}^{(j)}}\ind{s_1\le
 s_2}}{(1+s_1)^{(d-2)/2}}\times
 \begin{cases}
 (1+s_2-s_1)^{-(d-2)/2}\quad&(d>2),\\
 \log(1+s_2)&(d=2),\\
 (1+s_2)^{(2-d)/2}&(d<2),
 \end{cases}\lbeq{tildebj-def}
\end{align}
where $\tilde n_{s_1,s_2}^{\sss(0)}=n_{s_1,s_2}$ and $\tilde
b_{s_1,s_2}^{\sss(0)}=b_{s_1,s_2}^{\sss(\vep)}$.
%Also, for $j=0,1$, since
%$\tilde n_{s_1-\vep,s_2-\vep}^{\sss(j+1)}=\tilde n_{s_1,s_2}^{\sss(j)}$,
%we have
%\begin{align}
%\tilde b_{s_1-\vep,s_2-\vep}^{\sss(j+1)}=(1+\vep)^{0\vee\frac{d-2}2}\,
% \tilde b_{s_1,s_2}^{\sss(j)}.
%\end{align}
Then, the bound on $\phi^{\sss(N,N_1,N_2)}_{s_1,s_2}$ proved in this section
reads as follows:

\begin{prop}\label{prop-phibds}
Let $\lamb=\lambc$ for $d>4$, and $\lamb=\lamb_\sT$ for $d\le4$.
Let $s_2\ge s_1\ge\vep$ with $(s_1,s_2)\ne(\vep,\vep)$
and $s_2\le T\log T$ if $d\le4$.  For $q=0,2$ and $N,N_1,N_2\ge0$ ($N\ge1$ for
$\phi_{s_1,s_2}^{\sss(N,N_1,N_2)}(y_1,y_2)_{\sss-}$),
\begin{align}\lbeq{phi1bdN}
&\sum_{y_1,y_2}|y_i|^q\phi^{\sss(N,N_1,N_2)}_{s_1,s_2}(y_1,y_2)_{\sss\pm}\nn\\
&\le(1+s_i)^{q/2}\,\tildeb_{s_1,s_2}^{\sss(1)}\times
 \begin{cases}
 (\delta_{s_1,s_2}\delta_{N_2,0}+\beta)\,O(\beta)^{\sss1\vee(N+N_1)+0\vee(N_2
  -1)}\sigma^q&(d>4),\\
 (\delta_{s_1,s_2}\delta_{N_2,0}+\beta_\sT)\,O(\beta_{\sT})O(\hat\beta_{\sT})
  ^{\sss0\vee(N+N_1-1)+0\vee(N_2-1)}\sigma_\sT^q&(d\le4).
 \end{cases}
\end{align}
\end{prop}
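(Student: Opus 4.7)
\medskip

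\noindent\textbf{Proof proposal for Proposition~\ref{prop-phibds}.}

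The plan is to reduce the estimate of $\phi^{\sss(N,N_1,N_2)}_{s_1,s_2}(y_1,y_2)_{\sss\pm}$ to sums of nonnegative diagrammatic quantities built from the $P^{\sss(M)}$ already controlled in Section~\ref{s:bounds}, and then to extract the time factor $\tilde b^{\sss(1)}_{s_1,s_2}$ by the same convolution estimates used for $L$ in Lemma~\ref{lem:constr1'}. First, recall the definition \refeq{phipm-def}. The two factors $B_\delta^{\sss(N_1)}(\tb_{\sss N+1},\yvec_1;\bC_{\sN})$ and $B_\delta^{\sss(N_2)}(\te,\yvec_2;\tilde\bC^e_{\sN})$ are either a Kronecker delta (when $N_i=0$) or are bounded by the diagrams $\sum_{b}P^{\sss(N_i-1)}(\cdot,\bb;\cdot)p_b$ via Lemma~\ref{lem-BNbd}. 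The indicator $\ind{\Ftwo_{t_{\yvec_1}}(\tb_N,\eb;\bC_\pm)\text{ in }\tilde\bC^e_N}$ restricts to the event that $\tb_N$ is connected \emph{through} $\bC_\pm$ to $\eb$ with no second-cutting bond before time $t_{\yvec_1}$, which, via the BK-type argument in the proof of Lemma~\ref{lem-PEbd} (ignoring the no-cutting-bond restriction and using $E'\subset\Ecal$), can be bounded by a nonnegative diagram with the same topology as $P^{\sss(0)}(\tb_N,\eb;\bC_\pm)$ but with the ``line to $\eb$'' of length at least $t_{\yvec_1}-t_{\tb_N}$. The nested expectations $\tilde M^{\sss(N+1)}$ are then controlled, exactly as in Lemma~\ref{lem-BNbd} via \refeq{MPIgen} and \refeq{recPNM}, by $P^{\sss(N)}$ with an extra application of Construction~$\ell$ corresponding to the indicator $\ind{\cdot\in\tilde\bC_{\sss N-1}}$ produced when passing through $\bC_-$.

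Second, the resulting bounding diagram for $\phi^{\sss(N,N_1,N_2)}_{s_1,s_2}(y_1,y_2)_{\sss\pm}$ has the schematic structure of a diagram from $\ovec$ that splits at some vertex at time $u\le s_1\wedge s_2$ into two branches: one running (through a $P^{\sss(N_1-1)}$-shaped tail for $N_1\ge 1$, or by a single bond for $N_1=0$) to $(y_1,s_1)$, and the other (analogously for $N_2$) to $(y_2,s_2)$, with the $L$-line produced by the second cutting bond $e$ carrying the ``opening'' between the two branches. Applying the bound \refeq{supL-bd} for this $L$-factor, followed by the convolution estimate \refeq{convlem}--\refeq{convappl<4}, produces precisely the quantity $\tilde b^{\sss(1)}_{s_1,s_2}$ in \refeq{tildebj-def}; the shift of the time argument by $\vep$ (relative to $b^{\sss(\vep)}_{s_1,s_2}$) reflects the fact that $B_\delta^{\sss(N_i)}$ has time variables of the form $\yvec_i=(y_i,s_i)$ with $s_i\geq t_{\tb_{\sss N+1}}+\vep$. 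The exponent $\tilde n^{\sss(1)}_{s_1,s_2}$ in the $\vep$-power is obtained by counting one $\vep$ for each spatial bond in the diagram: one for $e$, one for the bond carrying $B^{\sss(N_1-1)}$-tail or terminating at $y_1$, and one for the $B^{\sss(N_2-1)}$-tail or terminating at $y_2$, with one $\vep$ removed in the coincident cases $s_1=s_2$ and $(s_1,s_2)=(\vep,\vep)$.

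Third, the factor $(\delta_{s_1,s_2}\delta_{N_2,0}+\beta)$ is obtained as follows. When $s_1=s_2$ and $N_2=0$, the diagonal $\delta$ from $B_\delta^{\sss(0)}(\te,\yvec_2;\tilde\bC^e_{\sN})=\delta_{\te,\yvec_2}$ forces $\te=\yvec_2$, so no extra triangle diagram appears and the $\beta$-factor is not incurred. In all other cases, the bounding diagram contains at least one extra ``loop'' (produced either by the $N_2\ge1$ tail or by the mismatch $s_1\ne s_2$ requiring an extra spatial bond); this loop contributes a factor $\beta$ by \refeq{tau-ass.1}. The remaining powers $O(\beta)^{1\vee(N+N_1)+0\vee(N_2-1)}$ come from Lemma~\ref{lem-Ptbds} applied to each of the $P^{\sss(N)}$ and $P^{\sss(N_1-1)}$ factors (which share a common construction and thus combine into a single $1\vee(N+N_1)$ power) and independently to $P^{\sss(N_2-1)}$. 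For the moment bounds ($q=2$), we invoke Lemma~\ref{lem:constr1}: when we distribute $|y_i|^2$ along the shortest path of length $\le O(N+N_i)$ from $\ovec$ to $(y_i,s_i)$, each summand contributes at most $\sigma^2$ times the length of the corresponding segment, giving the total factor $O(\sigma^2 s_i)=O(\sigma^2)(1+s_i)$, matching the right-hand side of \refeq{phi1bdN}. For $d\le 4$, $\beta$ is everywhere replaced by $\beta_\sT$ or $\hat\beta_\sT$ using the sharper estimates in Lemma~\ref{lem-Ptbds}(ii) combined with \refeq{convappl<4}, exactly as in the proof of \refeq{Abd<4}.

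The main obstacle will be the careful bookkeeping of factors of $\vep$ (to reach the sharp exponent $\tilde n^{\sss(1)}_{s_1,s_2}$) and of $\beta$ (distinguishing the special diagonal-and-$N_2=0$ case from the generic one). The $+/-$ distinction requires separate treatment of $\bC_{\sss+}=\{\tb_N\}$ and $\bC_{\sss-}=\tilde\bC_{\sss N-1}$: in the ``$-$'' case the restricted cluster $\bC_-$ imposes one extra $\ind{\cdot\in\tilde\bC_{\sss N-1}}$-type construction (handled by Construction~$\ell$ applied to $P^{\sss(N-1)}$), which is consistent with the requirement $N\ge 1$ for the ``$-$'' version. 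The $\tilde\mP^b$ conditioning in \refeq{tildeMN-def} only removes one bond and hence is harmless for all the $l_1$/$l_\infty$-type norm estimates on the diagram lines.
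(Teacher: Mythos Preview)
Your overall strategy (reduce to nonnegative diagrams built from $P^{\sss(M)}$, then use convolution bounds) is the same as the paper's, but several of the key steps are treated incorrectly or too loosely to constitute a proof.

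\medskip
\noindent\textbf{The event $\Ftwo_{t_{\yvec_1}}$ is not handled correctly.}
You propose to bound $\ind{\Ftwo_{t_{\yvec_1}}(\tb_N,\eb;\bC_\pm)}$ by ``ignoring the no-cutting-bond restriction'' and appealing to $E'\subset\Ecal$, yielding a $P^{\sss(0)}(\tb_N,\eb;\bC_\pm)$-type diagram with ``the line to $\eb$ of length at least $t_{\yvec_1}-t_{\tb_N}$.'' This is not what the event implies, and in any case a single long line does not produce the crucial decoupling $(1+s_1)^{-(d-2)/2}(1+s_2-s_1)^{-(d-2)/2}$ in $\tilde b^{\sss(1)}_{s_1,s_2}$. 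The actual content of the ``no $t_{\yvec_1}$-cutting bond'' restriction is a geometric dichotomy (the paper's Lemma~\ref{lem-inclG}): either there is a \emph{double} connection to $\eb$ starting from some vertex at time $\le t_{\yvec_1}-\vep$ (the event $V_{t_{\yvec_1}-\vep}$), or the connection from $\tb_N$ to $\eb$ passes through $\bC_\pm$ at some vertex with time coordinate $\ge t_{\yvec_1}$ (the event $\Ecal_{t_{\yvec_1}}$). The first alternative produces a new construction $V_t$ (adding a branch point at the fixed time level $t_{\bb_{N+1}}$ followed by Construction~$2^{\sss(0)}$), leading to the diagram $R^{\sss(N)}$; the second is only possible when $\bC_\pm=\tilde\bC_{\sss N-1}$, produces a different construction $\Ecal_t$, and leads to the diagram $Q^{\sss(N)}$. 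Without isolating the fixed time level $t_{\bb_{N+1}}$ you cannot separate the $s_1$- and $(s_2-s_1)$-dependences.

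\medskip
\noindent\textbf{The ``$-$'' case needs more than one extra Construction~$\ell$.}
You say the $-$ case merely ``imposes one extra $\ind{\cdot\in\tilde\bC_{\sss N-1}}$-type construction.'' That accounts for the $V$-branch of the dichotomy above, but not the $\Ecal$-branch: in the latter the connection from $\tb_N$ to $\eb$ reaches $\tilde\bC_{\sss N-1}$ at a vertex $\avec$ with $t_{\avec}\ge t_{\yvec_1}$, and the bounding diagram ($Q^{\sss(N)}$, see \refeq{QN-def}) has $\avec$ sitting on a line of the $(N-1)$th sausage rather than on the $N$th one. This is the essential difference between the $+$ and $-$ bounds, and it is why the paper's Lemma~\ref{lem-phibdslow} has the extra term $Q^{\sss(N+N_1,N_2)}$ only for $\phi_{\sss-}$.

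\medskip
\noindent\textbf{Removing the tilde requires monotonicity.}
The statement ``$\tilde\mP^b$ only removes one bond and hence is harmless'' is not enough. The event $\Ftwo_{t}$ contains the restriction ``$\nexists\,t$-cutting bond,'' which is \emph{not} increasing, so one cannot pass from $\tilde\mE^{b}$ to $\mE$ directly. The paper first replaces $\Ftwo_t$ by the increasing superevents $V_{t-\vep}$ or $\Ecal_t$ (via Lemma~\ref{lem-inclG}) and only then applies $\tilde\mE^b[X\indic_F]\le\mE[X\indic_F]$ for $F$ increasing (Lemma~\ref{lem-remtilde}).

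\medskip
\noindent\textbf{The origin of $(\delta_{s_1,s_2}\delta_{N_2,0}+\beta)$ is misidentified.}
That factor does not come from whether ``$\te=\yvec_2$ is forced.'' It arises from the structure of the Construction~$2^{\sss(0)}$ in $R^{\sss(N,N')}$: the identity term $F(\vvec,\wvec)$ in \refeq{tilF0} contributes only when $t_{\yvec_1}=t_{\yvec_2}$ and $N'=0$ (giving $\tilde R^{\sss(N)}$ in \refeq{Rtildedef}, hence $\delta_{s_1,s_2}\delta_{N_2,0}$), while every other term carries an additional factor $L$ and thus an extra $\beta$ by \refeq{supL-bd}. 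This is exactly how Lemma~\ref{lem-PQNbds} is proved.

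\medskip
In short, the missing idea is the $V/\Ecal$ dichotomy for $\Ftwo_t$ and the resulting pair of bounding diagrams $R^{\sss(N,N')}$ and $Q^{\sss(N,N')}$; once those are in place, the moment bounds and $\vep$-counting you describe go through essentially as in your proposal.
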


The bound on $\hat\psi_{s_1,s_2}(k_1,k_2)$ in Proposition~\ref{thm-psivphibd}
now follows from Proposition~\ref{prop-phibds} as well as \refeq{psi-phi-id},
\refeq{phialtbd}--\refeq{phi-vep-vep-sum} and
\begin{align}\lbeq{Cmainbd}
\big|\nabla_{k_i}^q\hat\phi_{s_1,s_2}^{\sss(N,N_1,N_2)}(k_1,k_2)_{\sss
 \pm}\big|\le\sum_{y_1,y_2}|y_i|^q\phi_{s_1,s_2}^{\sss(N,N_1,N_2)}(y_1,
 y_2)_{\sss\pm}.
\end{align}

The remainder of this section is organised as follows.  In
Section~\ref{sec-diagramsphi}, we define bounding diagrams for
$\phi_{s_1,s_2}^{\sss(N,N_1,N_2)}(y_1,y_2)_{\sss\pm}$.  In
Section~\ref{sec-diagramsphi-bd}, we prove that those diagrams are so bounded
as to imply Proposition~\ref{prop-phibds}.  In Section~\ref{sec-redphi}, we
prove that $\phi_{s_1,s_2}^{\sss(N,N_1,N_2)}(y_1,y_2)_{\sss\pm}$ are indeed
bounded by those diagrams.

\subsection{Constructions: II}\label{sec-diagramsphi}
To define bounding diagrams for
$\phi_{s_1,s_2}^{\sss(N,N_1,N_2)}(y_1,y_2)_{\sss\pm}$, we first introduce two
more constructions:

\begin{defn}[\textbf{Constructions~$V_t$ and $\Ecal_t$}]\label{def-VEcal}
Given a diagram $F(\yvec_1)$ with
two vertices carrying labels $\ovec$ and $\yvec_1$, Construction~$V_t(\yvec_2)$
and Construction~$\Ecal_t(\yvec_2)$ produce the diagrams
\begin{align}
F\big(\yvec_1;V_t(\yvec_2)\big)&=\sum_{\vvec:t_{\vvec}=t}F\big(\yvec_1;\ell
 (\vvec),2_{\vvec}^{\sss(0)}(\yvec_2)\big),\lbeq{FV}\\
F\big(\yvec_1;\Ecal_t(\yvec_2)\big)&=\sum_{\zvec}\sum_{\avec:t_{\avec}\ge t}
 F\big(\yvec_1;B(\zvec),\ell(\avec)\big)\,P^{\sss(0)}(\zvec,\yvec_2;\avec).
 \lbeq{FEcal}
\end{align}
\end{defn}

\begin{rem}
Recall that Construction~$\ell(\vvec)$ (resp., Construction~$B(\vvec)$) is the
result of applying Construction~$\ell^\twop(\vvec)$ (resp.,
Construction~$B^\twop(\vvec)$) followed by a sum over \emph{all} possible lines
$\twop$.  Construction~$2^{\sss(0)}_{\vvec}(\yvec_2)$ in \refeq{FV} is applied
to a certain set of admissible lines for $F(\yvec_1)$ (e.g., the $N^\text{th}$
admissible lines for $P^{\sss(N)}(\yvec_1)$) and the line added due to
Construction~$\ell(\vvec)$.
%Construction~$\Ecal_t(\yvec_2)$ is equivalent to a
%Construction~$2^{\sss (1)}(\avec)$ with $t_{\avec}\ge t$,
%followed by a Construction~$2^{\smallsup{0}}_{\avec}(\yvec_2)$.
\end{rem}

Now we use the above constructions to define bounding diagrams for
$\phi^{\sss(N)}(\yvec_1,\yvec_2)_{\sss\pm}$.  Define
    \begin{align}
    R^{\sss(N)}(\yvec_1,\yvec_2)&=P^{\sss(N)}\big(\yvec_1;V_{t_{\yvec_1}}
     (\yvec_2)\big)\equiv\sum_{\vvec:t_{\vvec}=t_{\yvec_1}}P^{\sss(N)}
     \big(\yvec_1;\ell(\vvec),2^{\sss(0)}_{\vvec}(\yvec_2)\big),\lbeq{PN0def}\\
    Q^{\sss(N)}(\yvec_1,\yvec_2)&=P^{\sss(N)}\big(\yvec_1;\Ecal_{t_{\yvec_1}}
     (\yvec_2)\big)\equiv\sum_{\zvec}\sum_{\avec:t_{\avec}\ge t_{\yvec_1}}
     P^{\sss(N)}\big(\yvec_1;B(\zvec),\ell(\avec)\big)\,P^{\sss(0)}(\zvec,\yvec_2;
     \avec).\lbeq{QN-def}
    \end{align}
In Figure~\ref{fig-2ndexp-main}, we see a close resemblance between the
shown example of $\phi^{\sss(1)}(\yvec_1,\yvec_2)_{\sss+}$ (as well as the
first figure for $\phi^{\sss(1)}(\yvec_1,\yvec_2)_{\sss-}$) and
the bounding diagram
    \begin{align}
    \sum_{\substack{b:\tb=\yvec_1\\ b':\tbsp=\yvec_2}}p_bp_{b'}\sum_e\sum_{\cvec}
     R^{\sss(2)}(\bb,\eb;\ell(\cvec))\,p_eP^{\sss(0)}(\te,\bb';\cvec),
    \end{align}
and between the shown example of $\phi^{\sss(1)}(\yvec_1,\yvec_2)_{\sss-}$ and
the bounding diagram
    \begin{align}
    \sum_{\substack{b:\tb=\yvec_1\\ b':\tbsp=\yvec_2}}p_bp_{b'}\sum_e\sum_{\cvec}
     Q^{\sss(2)}(\bb,\eb;\ell(\cvec))\,p_eP^{\sss(0)}(\te,\bb';\cvec).
    \end{align}
Let $R^{\sss(N,N')}(\yvec_1,\yvec_2)$ (resp.,
$Q^{\sss(N,N')}(\yvec_1,\yvec_2)$) be the result of $N'$ applications of
Construction~$E$ applied to the second argument $\vvec$ of
$R^{\sss(N)}(\yvec_1,\vvec)$ (resp., $Q^{\sss(N)}(\yvec_1,\vvec)$).  By
convention, we write
$R^{\sss(N,0)}(\yvec_1,\yvec_2)=R^{\sss(N)}(\yvec_1,\yvec_2)$ and
$Q^{\sss(N,0)}(\yvec_1,\yvec_2)=Q^{\sss(N)}(\yvec_1,\yvec_2)$.
%These diagrams depend only on $N,N_1$ through $N+N_1$.
%By convention, $R^{\sss(N,0)}(\yvec_1,\yvec_2)=R^{\sss(N)}(\yvec_1,\yvec_2)$
%and $Q^{\sss(N,0)}(\yvec_1,\yvec_2)=Q^{\sss(N)}(\yvec_1,\yvec_2)$.

In Section~\ref{sec-redphi}, we will prove the following diagrammatic
bounds on $\phi^{\sss(N,N_1,N_2)}(\yvec_1,\yvec_2)_{\sss\pm}$:

\begin{lemma}[\textbf{Bounding diagrams for $\phi^{\sss(N,N_1,N_2)}(\yvec_1,\yvec_2)_{\sss\pm}$}]\label{lem-phibdslow}
Let $\yvec_1,\yvec_2\in\Lambda$ with $t_{\yvec_2}\ge t_{\yvec_1}>0$, and let
$N_1,N_2\ge0$.  For $N\ge 0$,
\begin{align}\lbeq{phi1bd}
\phi^{\sss(N,N_1,N_2)}(\yvec_1,\yvec_2)_{\sss+}\le\sum_{\uvec_1,\uvec_2}
 R^{\sss(N+N_1,N_2)}(\uvec_1,\uvec_2)~p_{\vep}(\yvec_1-\uvec_1)\,p_{\vep}
 (\yvec_2-\uvec_2),
\end{align}
and, for $N\ge1$,
\begin{align}\lbeq{phi3bd}
\phi^{\sss(N,N_1,N_2)}(\yvec_1,\yvec_2)_{\sss-}\le\sum_{\uvec_1,\uvec_2}
 \Big(R^{\sss(N+N_1,N_2)}(\uvec_1,\uvec_2)+Q^{\sss(N+N_1,N_2)}(\uvec_1,
 \uvec_2)\Big)\,p_{\vep}(\yvec_1-\uvec_1)\,p_{\vep}(\yvec_2-\uvec_2).
\end{align}
\end{lemma}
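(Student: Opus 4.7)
The plan is to imitate the strategy used for $B^{\sss(N)}$ in Section~\ref{sec-BNbd}, extending the BK-type diagrammatic inclusion to the extra branching created by the second cutting bond $e$. Starting from \refeq{phipm-def}, I would first replace $B_\delta(\tb_{\sss N+1},\yvec_1;\bC_{\sN})$ and $B_\delta(\te,\yvec_2;\tilde\bC^e_{\sN})$ by $(-1)^{N_i} B_\delta^{\sss(N_i)}$ (cf.\ \refeq{Bdeltaalt}) to isolate the contribution $\phi^{\sss(N,N_1,N_2)}$. By \refeq{BdeltaNdef} and Lemma~\ref{lem-BNbd}, each $B_\delta^{\sss(N_i)}$ is bounded by $\sum_{b_i':\tb_i'=\yvec_i}P^{\sss(N_i-1)}(\cdot,\bb_i';\cdot)\,p_{b_i'}$ (with the $N_i=0$ case contributing a bare $\delta_{\yvec_i,\cdot}$). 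The two resulting sums over $b_1'$ and $b_2'$ produce precisely the outer convolutions with $p_\vep(\yvec_1-\uvec_1)$ and $p_\vep(\yvec_2-\uvec_2)$ that appear on the right-hand sides of \refeq{phi1bd}--\refeq{phi3bd}.

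After this step, the remaining task is to bound a nested $(N{+}1)$-fold expectation involving the indicators of the events $E'(\tb_i,\bb_{i+1};\tilde\bC_{i-1})$ for $i=0,\dots,N$ together with the extra factor $\ind{\Ftwo_{t_{\yvec_1}}(\tb_N,\eb;\bC_\pm)\text{ in }\tilde\bC^e_N}$ and the two residual $P^{\sss(N_i-1)}$--pieces attached to $\yvec_1,\yvec_2$. For the \emph{$+$ case}, where $\bC_+=\{\tb_N\}$, the event $\Ftwo_{t_{\yvec_1}}(\tb_N,\eb;\{\tb_N\})$ reduces to $\{\tb_N\conn\eb\}$ with no pivotal bond $b$ whose top has time $\ge t_{\yvec_1}$. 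Exactly as in the proof of \refeq{E'bd}--\refeq{pi0bd}, this forces a double connection through time $t_{\yvec_1}$, which is the defining feature of the construction $V_{t_{\yvec_1}}$ in \refeq{FV}. Combining this $V$--insertion with the $N+N_1$ nested $P^{\sss(0)}$ blocks coming from the $M^{\sss(N+1)}$ expectation and from the $B^{\sss(N_1-1)}$-bound, and applying the recursion \refeq{recPNM} (together with \refeq{MPIgen} to accommodate the additional $\ell$-insertions produced by the $P^{\sss(N_2-1)}$-piece attached to $\yvec_2$), yields exactly $R^{\sss(N+N_1,N_2)}(\uvec_1,\uvec_2)$, proving \refeq{phi1bd}.

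For the \emph{$-$ case}, where $\bC_-=\tilde\bC_{N-1}$ (only defined when $N\ge 1$), the same argument applies except that the connection $\tb_N\ct{\tilde\bC_{N-1}}\eb$ must in addition pass through the previous sausage. As in the proof of Lemma~\ref{lem-PEbd}, I would decompose on the first vertex $\cvec\in\tilde\bC_{N-1}$ encountered (or on a spatial bond $(\cvec,\wvec)$ of $\tilde\bC_{N-1}$, producing an extra $\lambda\vep D$ factor) and then apply the Markov property (cf.\ \refeq{MN-indbond}) to convert $\mathbf 1\{\cvec\in\tilde\bC_{N-1}\}$ into a new $\ell(\cvec)$ insertion on the $(N-1)$-st block, via \refeq{MPIgen}. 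The two subcases--no $t_{\yvec_1}$-cutting bond with $\bC_-$-contact above/below or at the cutting bond--give rise either to the $V$-structure already encountered (contributing $R^{\sss(N+N_1,N_2)}$) or to an $\Ecal_{t_{\yvec_1}}$-structure (contributing $Q^{\sss(N+N_1,N_2)}$, using \refeq{FEcal}). The main obstacle is the bookkeeping in this last step: one must check that the combinatorial split of ``which sausage contains the contact with $\tilde\bC_{N-1}$'' together with the BK-type disjoint-occurrence split at the branching point $e$ lines up with the precise definitions of Construction~$V$ and Construction~$\Ecal$, and that the extra $N_2$ applications of Construction~$E$ are correctly absorbed into the repeated application of \refeq{recPNMthetas} for the block growing from $\te$ toward $\yvec_2$.
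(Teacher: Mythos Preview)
Your outline is on the right track and parallels the paper's argument closely, but there is one genuine technical step you skip. The quantity $\phi^{\sss(N,N_1,N_2)}(\yvec_1,\yvec_2)_{\sss\pm}$ in \refeq{phiNNN} is defined via $\tilde M_{b_{N+1}}^{\sss(N+1)}$, i.e.\ the \emph{conditional} expectation $\tilde{\mathbb E}^{b_{N+1}}[\cdot]=\mathbb E[\cdot\mid b_{N+1}\text{ vacant}]$ (cf.\ \refeq{tildeM-def}, \refeq{tildeMN-def}). All the diagrammatic bounds you invoke---\refeq{MPIgen}, \refeq{recPNM}, \refeq{recPNMthetas}---are proved for the unconditional operator $M^{\sss(N+1)}$. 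The paper closes this gap via Lemma~\ref{lem-remtilde}: one first replaces the event $\Ftwo_{t_{\yvec_1}}(\tb_{\sN},\eb;\bC_\pm)$ by a larger \emph{increasing} event ($V_{t_{\yvec_1}-\vep}(\tb_{\sN},\eb)$ in the $+$ case, and either $V_{t_{\yvec_1}-\vep}$ or $\mathcal E_{t_{\yvec_1}}(\tb_{\sN},\eb;\tilde\bC_{\sN-1})$ in the $-$ case, via Lemma~\ref{lem-inclG}), and then uses that $\indic_{E'(\tb_{\sN},\bb_{\sN+1};\tilde\bC_{\sN-1})}$ and $B_\delta^{\sss(N_1)}(\tb_{\sN+1},\yvec_1;\tilde\bC_{\sN})$ are independent of the occupation status of $b_{\sN+1}$ to drop the conditioning. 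Without this step you cannot legitimately pass from $\tilde M$ to the bounds you cite.

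A second, smaller point concerns your split in the $-$ case. You propose to decompose on the first vertex $\cvec\in\tilde\bC_{\sN-1}$ encountered along the connection $\tb_{\sN}\to\eb$. The paper instead partitions $H_{t_{\yvec_1}}(\tb_{\sN},\eb;\tilde\bC_{\sN-1})$ according to whether $V_{t_{\yvec_1}-\vep}(\tb_{\sN},\eb)$ occurs or not (the events $G^{\sss(1)}$ and $G^{\sss(2)}$ of \refeq{Gi1def}). On $G^{\sss(1)}$ one lands immediately on the $+$ bound (contributing $R^{\sss(N+N_1,N_2)}$); on $G^{\sss(2)}$, the absence of any double connection below time $t_{\yvec_1}$ forces the sausage containing $\eb$ to straddle $\tilde\bC_{\sN-1}$ \emph{above} time $t_{\yvec_1}$, which is exactly the event $\mathcal E_{t_{\yvec_1}}$ of \refeq{Ecaldefrep} and yields $Q^{\sss(N+N_1,N_2)}$. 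This dichotomy is cleaner than tracking ``above/below the cutting bond'' and, crucially, both $V$ and $\mathcal E$ are increasing, so Lemma~\ref{lem-remtilde} applies to each piece. After this, the remaining bookkeeping---absorbing the $P^{\sss(N_1-1)}$ and $P^{\sss(N_2-1)}$ pieces via \refeq{PArecrep} and \refeq{recPNMthetas} into the definitions \refeq{PN0def}--\refeq{QN-def} of $R^{\sss(N+N_1,N_2)}$ and $Q^{\sss(N+N_1,N_2)}$---is essentially what you describe, and is carried out in Lemma~\ref{lem-bdsphi}.
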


\subsection{Bounds on \protect$\phi_{s_1,s_2}^{\sss(N,N_1,N_2)}
 (y_1,y_2)_{\sss\pm}$ assuming their diagrammatic bounds}
 \label{sec-diagramsphi-bd}
In this section, we prove the following bounds on $R^{\sss(N,N')}$ and
$Q^{\sss(N,N')}$:

\begin{lemma}\label{lem-PQNbds}
Let $\lamb=\lambc$ for $d>4$, and $\lamb=\lamb_\sT$ for $d\le4$.
Let $s_2\ge s_1\ge0$ with $(s_1,s_2)\ne(0,0)$ and $s_2\le
T\log T$ if $d\le4$, and let $q=0,2$ and $N'\ge0$.  For $N\ge0$,
\begin{align}\lbeq{goal2}
&\sum_{y_1,y_2}|y_i|^qR^{\sss(N,N')}_{s_1,s_2}(y_1,y_2)\nn\\
&\le(1+s_i)^{q/2}\,\tildeb_{s_1,s_2}^{\sss(2)}\times
 \begin{cases}
 (\delta_{s_1,s_2}\delta_{N',0}+\beta)\,O(\beta)^{\sss1\vee N+0\vee(N'-1)}
  \sigma^q&(d>4),\\
 (\delta_{s_1,s_2}\delta_{N',0}+\beta_\sT)\,O(\beta_{\sT})O(\hat
  \beta_{\sT})^{\sss0\vee(N-1)+0\vee(N'-1)}\sigma_\sT^q&(d\le4),
 \end{cases}
\end{align}
and, for $N\ge1$,
\begin{align}\lbeq{Qgoal}
\sum_{y_1,y_2}|y_i|^qQ^{\sss(N,N')}_{s_1,s_2}(y_1,y_2)&\le
 (1+s_i)^{q/2}\,\tildeb_{s_1,s_2}^{\sss(2)}\times
 \begin{cases}
 O(\beta)^{N+N'+1}\sigma^q&(d>4),\\
 O(\beta_{\sT})^2O(\hat\beta_{\sT})^{N+N'-1}\sigma_\sT^q&(d\le4).
 \end{cases}
\end{align}
\end{lemma}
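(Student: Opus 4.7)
\smallskip

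My plan is to prove \refeq{goal2} and \refeq{Qgoal} by decomposing each diagram into three temporal regions: the portion from the origin up to time $s_1$ (where $y_1$ sits), the bridging portion from $s_1$ to $s_2$ that carries the $V_{t_{y_1}}$ or $\Ecal_{t_{y_1}}$ construction, and the additional $N'$ applications of Construction~$E$ that propagate to $y_2$. For the first region, I invoke Lemma~\ref{lem-Ptbds} on $P^{\sss(N)}$ together with the Construction~$\ell$ bound \refeq{constr1-bd-b} from Lemma~\ref{lem:constr1}, which inserts the extra line needed to produce $R^{\sss(N)}$ or $Q^{\sss(N)}$. This yields the factor $\vep^2\,O(\beta)^{1\vee N}/(1+s_1)^{(d-2)/2}$ for $d>4$ (respectively $\vep^2\,O(\beta_\sT)\,O(\hat\beta_\sT)^{0\vee(N-1)}/(1+s_1)^{(d-2)/2}$ for $d\le4$), which is exactly the first decay factor entering $\tilde b^{\sss(2)}_{s_1,s_2}$.

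For the bridging region, I unfold the definitions \refeq{PN0def}--\refeq{QN-def} and \refeq{FV}--\refeq{FEcal}. For $R^{\sss(N)}$, Construction~$V_{s_1}(y_2)$ splits into a ``diagonal'' piece from the $F(v,w)$ term of \refeq{tilF0} that forces $t_{y_2}=s_1$ (producing the $\delta_{s_1,s_2}\delta_{N',0}$ contribution when $N'=0$), and an off-diagonal piece with an $L$-factor propagating from $s_1$ to $s_2$, which I estimate by \refeq{supL-bd} to obtain $\vep\,O(\beta)/(1+s_2-s_1)^{(d-2)/2}$ for $d>4$. For $Q^{\sss(N)}$, the bridging is always off-diagonal because $\Ecal_{s_1}$ inserts the block $P^{\sss(0)}(z,y_2;a)$ with $t_a\ge s_1$, which contributes an additional $\vep^2\,O(\beta)/(1+s_2-s_1)^{(d-2)/2}$ by Lemma~\ref{lem-Ptbds} together with the Construction~$B$ bound \refeq{constr1-bd-a}; this explains the absence of a $\delta_{s_1,s_2}$ term in \refeq{Qgoal} and the extra factor of $\beta$ compared to $R^{\sss(N,0)}$.

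Next I iterate Lemma~\ref{lem:constr1'} to absorb the $N'$ applications of Construction~$E$ near $y_2$, each of which supplies a further factor $O(\beta)$ for $d>4$ or $O(\hat\beta_\sT)$ for $d\le4$, without altering the $(1+s_2-s_1)^{-(d-2)/2}$ time decay (as in \refeq{PcalN-bd}). Summing over the intermediate times using the convolution estimate \refeq{convlem} in the high-dimensional case, and using \refeq{convappl<4} to trade a power of $\beta_\sT$ for a power of $\hat\beta_\sT$ in the low-dimensional case, yields the product structure $\tilde b^{\sss(2)}_{s_1,s_2}$ as in \refeq{tildebj-def}. For the $q=2$ case, the displacement factor $|y_i|^q$ is handled by splitting $|y_i|^2$ along the shortest path from $\ovec$ to $y_i$ as in the proof of Lemma~\ref{lem:constr1}, distributing the resulting factors $\sigma^2$ (or $\sigma_\sT^2$) to the line on which the displacement is captured, and using the crude bound $O(s_i)\le O(1+s_i)$ on each segment.

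The main obstacle will be bookkeeping the powers of $\vep$ so that the exponent $\tilde n_{s_1,s_2}^{\sss(2)}$ in \refeq{tildebj-def} comes out correctly across all boundary configurations, in particular distinguishing the cases $s_1=s_2$ (where one fewer convolution is performed) and $s_1=s_2=0$ (where the trivial diagonal piece in $P^{\sss(0)}$ via \refeq{P0vyv} contributes no $\vep$). A secondary technical point is that in the $Q$-case the block $P^{\sss(0)}(z,y_2;a)$ has its source $z$ at an unconstrained earlier time, so one must use the full iterative recursion \refeq{PArecrep}--\refeq{recPNM} (and its decorated extension Lemma~\ref{lem-recPNMthetas}) to merge it with the preceding $P^{\sss(N)}$ diagram; once merged, the combined diagram is again of the form $P^{\sss(N+1)}$ with a Construction~$\ell$ applied, and the prior estimates apply, completing the proof of \refeq{Qgoal}.
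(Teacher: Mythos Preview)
Your overall strategy matches the paper's: reduce to bounds on the base diagrams at time $s_1$, handle the bridging to $s_2$ via the $L$-bound \refeq{supL-bd}, then iterate Lemma~\ref{lem:constr1'} for the $N'$ applications of Construction~$E$. The paper makes this explicit by introducing the intermediate quantities $\tilde R^{\sss(N)}(\yvec_1,\yvec_2)=P^{\sss(N)}(\yvec_1;\ell(\yvec_2))\,\delta_{t_{\yvec_1},t_{\yvec_2}}$ and $\tilde Q^{\sss(N)}(\yvec_1,\yvec_2)=\sum_{\zvec,\wvec}P^{\sss(N)}(\yvec_1;B(\zvec),B(\wvec))\,L(\zvec,\wvec;\yvec_2)$, proving the bounds on these first, and then writing $R^{\sss(N,N')}$ and $Q^{\sss(N,N')}$ as $\tilde R^{\sss(N)}$ and $\tilde Q^{\sss(N)}$ followed by Construction~$2^{\sss(0)}$ and $N'$ Constructions~$E$ acting on a well-defined set of admissible lines.

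There is one genuine gap in your $Q$-argument. You propose to merge the block $P^{\sss(0)}(\zvec,\yvec_2;\avec)$ with the preceding $P^{\sss(N)}$ via the recursions \refeq{PArecrep}--\refeq{recPNM} and Lemma~\ref{lem-recPNMthetas}, asserting that the result is a $P^{\sss(N+1)}$ diagram with a Construction~$\ell$ applied. That merging does not go through: in $Q^{\sss(N)}$ the block $P^{\sss(0)}$ is attached at a Construction~$B$ point $\zvec$ located anywhere inside $P^{\sss(N)}(\yvec_1;\cdots)$, there is no bond factor $p_b$, and $\yvec_1$ is fixed rather than being a $\bb$ summed over. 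The recursion \refeq{PArecrep} and Lemma~\ref{lem-recPNMthetas} require all three features (attachment at the endpoint via a bond, and $\ell^{\twop}$ on the $N^{\rm th}$ admissible lines), so $Q^{\sss(N)}$ is \emph{not} dominated by any $P^{\sss(N+1)}(\yvec_2;\ell(\cdot))$. The paper avoids this by bounding $\tilde Q^{\sss(N)}$ directly: two applications of the Construction~$B$ estimate \refeq{constr1-bd-a} to $P^{\sss(N)}(\yvec_1;B(\zvec),B(\wvec))$ followed by the $L$-bound \refeq{supL-bd} give the factor $(1+s_1)^{-(d-q)/2}$ times a double sum in $s',s''\le s_1$ of $(\delta_{s_1,s'}+\vep C_1)(\delta_{s_1,s''}+\vep C_1)(1+s_2-s'\wedge s'')^{-d/2}$, which produces exactly $\tilde b_{s_1,s_2}^{\sss(2)}$ with the extra factor of $\beta$ (or $\beta_\sT$) that distinguishes \refeq{Qgoal} from \refeq{goal2}. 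Once $\tilde Q^{\sss(N)}$ is bounded, the $N'$ Constructions~$E$ are handled by Lemma~\ref{lem:constr1'} exactly as you describe.
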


Proposition~\ref{prop-phibds} is an immediate consequence of
Lemmas~\ref{lem-phibdslow}--\ref{lem-PQNbds}.

\begin{proof}[Proof of Lemma~\ref{lem-PQNbds}]
Let
\begin{align}
\tilde R^{\sss(N)}(\yvec_1,\yvec_2)&=P^{\sss(N)}(\yvec_1;\ell(\yvec_2))\,
 \delta_{t_{\yvec_1},t_{\yvec_2}},\lbeq{Rtildedef}\\
\tilde Q^{\sss(N)}(\yvec_1,\yvec_2)&=\sum_{\zvec,\wvec}P^{\sss(N)}\big(
 \yvec_1;B(\zvec),B(\wvec)\big)\,L(\zvec,\wvec;\yvec_2).\lbeq{SN-def}
\end{align}
By \refeq{PN0def}--\refeq{QN-def} and \refeq{tilFz}, we have
\begin{align}
R^{\sss(N,N')}(\yvec_1,\yvec_2)&=\tilde R^{\sss(N)}\Big(\yvec_1,\vvec_0;
 2_{\vvec_0}^{\sss(0)}(\vvec_1),E_{\vvec_1}(\vvec_2),\cdots,E_{\vvec_{N'}}
 (\vvec_{\sss N'+1})\Big)\,\delta_{\vvec_{N'+1},\yvec_2},\lbeq{RtilR}\\
Q^{\sss(N,N')}(\yvec_1,\yvec_2)&=\tilde Q^{\sss(N)}\Big(\yvec_1,\vvec_0;
 2_{\vvec_0}^{\sss(0)}(\vvec_1),E_{\vvec_1}(\vvec_2),\cdots,E_{\vvec_{N'}}
 (\vvec_{\sss N'+1})\Big)\,\delta_{\vvec_{N'+1},\yvec_2},\lbeq{QtilQ}
\end{align}
where Construction~$2_{\vvec_0}^{\sss(0)}(\vvec_1)$ in \refeq{RtilR} is applied
to the $N^\text{th}$\,admissible lines for $P^{\sss(N)}(\yvec_1)$ and the added
line due to Construction~$\ell(\vvec_0)$ in the definition of $\tilde
R^{\sss(N)}(\yvec_1,\vvec_0)$, while
Construction~$2_{\vvec_0}^{\sss(0)}(\vvec_1)$ in \refeq{QtilQ} is applied to
the $L$-admissible lines of the factor $L(\zvec,\wvec;\yvec_2)$ in the
definition of $\tilde Q^{\sss(N)}(\yvec_1,\vvec_0)$ in \refeq{SN-def}.  We
will show below that, for $s_2\ge s_1\ge0$ with $(s_1,s_2)\ne(0,0)$ and $s_2\le
T\log T$ if $d\le4$, and for $N\ge0$,
\begin{align}
\sum_{y_1,y_2}|y_i|^q\tilde R^{\sss(N)}_{s_1,s_2}(y_1,y_2)&\le
 (1+s_i)^{q/2}\,\tildeb_{s_1,s_2}^{\sss(2)}\delta_{s_1,s_2}\times
 \begin{cases}
 O(\beta)^{1\vee N}\sigma^q&(d>4),\\
 O(\beta_{\sT})O(\hat\beta_{\sT})^{0\vee(N-1)}\sigma_\sT^q&(d\le4),
 \end{cases}\lbeq{tilR-suffbd}
\end{align}
and, for $N\ge1$,
\begin{align}
\sum_{y_1,y_2}|y_i|^q\tilde Q^{\sss(N)}_{s_1,s_2}(y_1,y_2)&\le
 (1+s_i)^{q/2}\,\tildeb_{s_1,s_2}^{\sss(2)}\times
 \begin{cases}
 O(\beta)^{N+1}\sigma^q&(d>4),\\
 O(\beta_{\sT})^2O(\hat\beta_{\sT})^{N-1}\sigma_\sT^q&(d\le4).
 \end{cases}\lbeq{tilQ-suffbd}
\end{align}
These bounds are sufficient for \refeq{goal2}--\refeq{Qgoal}, due to
Lemma~\ref{lem:constr1'}.  For example, consider \refeq{RtilR} for
$2<d\le4$ with $N'=1$ and $0<s_1\le s_2\le T\log T$.  By
\refeq{tilF0}--\refeq{tilFz},
\begin{align}\lbeq{tilR-suffbd-ex}
R^{\sss(N,1)}_{s_1,s_2}(y_1,y_2)&=\tilde R^{\sss(N)}\big((y_1,s_1),\vvec;2_{
 \vvec}^{\sss(1)}(y_2,s_2)\big)+2\tilde R^{\sss(N)}\Big((y_1,s_1),\vvec;2_{
 \vvec}^{\sss(1)}(\vvec'),2_{\vvec'}^{\sss(1)}(y_2,s_2)\Big)\nn\\
&\quad+\tilde R^{\sss(N)}\Big((y_1,s_1),\vvec;2_{\vvec}^{\sss(1)}(\vvec'),2_{
 \vvec'}^{\sss(1)}(\vvec''),2_{\vvec''}^{\sss(1)}(y_2,s_2)\Big).
\end{align}
By Lemma~\ref{lem:constr1} and \refeq{effBl1-1stblock}--\refeq{supL-bd},
we obtain
\begin{align}\lbeq{tilR-suffbd-exbd0}
&\sum_{y_1,y_2}\tilde R^{\sss(N)}\big((y_1,s_1),\vvec;2_{\vvec}^{\sss(1)}(y_2,
 s_2)\big)\nn\\
&\le\sum_{y_1,y_2}\sum_\twop\sum_{(v,t),(w,s)}\tilde R^{\sss(N)}\big((y_1,s_1),
 (v,t);B^\twop(w,s)\big)\,L\big((v,t),(w,s);(y_2,s_2)\big)\nn\\
&\le O(\beta_{\sT})\,O(\hat\beta_{\sT})^{0\vee(N-1)}\,\tildeb_{s_1,s_1}^{
 \sss(2)}\ddsum_{s<s_1}\sum_\twop(\delta_{s,t_\eta}+\vep C_1)\frac{c'\vep
 \beta_{\sT}}{(1+s_2-s)^{d/2}}\nn\\
&\le O(\beta_{\sT})\,O(\hat\beta_{\sT})^{0\vee(N-1)}\underbrace{\frac{c'\vep^3
 \beta_\sT}{(1+s_1)^{(d-2)/2}}\bigg(\frac{\cL_1}{(1+s_2-s_1)^{d/2}}+\ddsum_{s<
 s_1}\frac{\vep C_1\cL_2}{(1+s_2-s)^{d/2}}\bigg)}_{\le~O(\beta_\sT)\,
 \tildeb_{s_1,s_2}^{\sss(2)}},
\end{align}
where we have used the fact that the number $\cL_1$ of admissible lines $\twop$
is finite and that $\tilde R^{\sss(N)}$ has a finite number $\cL_2$ of lines at
any fixed time.  In fact, since $P^{\sss(N)}$ has at most 4 lines at any
fixed time, by \refeq{Rtildedef}, $\tilde R^{\sss(N)}$ has at most 5 lines at
any fixed time, so that $\cL_2=5$.  Furthermore, the sum over $y_1,y_2$ of
$\tilde R^{\sss(N)}(\yvec_1,\vvec;2_{\vvec}^{\sss(1)}(\vvec'),2_{\vvec'}^{\sss
(1)}(\yvec_2))$ in \refeq{tilR-suffbd-ex}, where $\yvec_1=(y_1,s_1)$ and
$\yvec_2=(y_2,s_2)$, is bounded similarly as
\begin{align}\lbeq{tilR-suffbd-exbd1}
&\sum_{y_1,y_2}\tilde R^{\sss(N)}\Big((y_1,s_1),\vvec;2_{\vvec}^{\sss(1)}(v',
 t'),2_{(v',t')}^{\sss(1)}(y_2,s_2)\Big)\nn\\
&\le O(\beta_{\sT})^2O(\hat\beta_{\sT})^{0\vee(N-1)}\ddsum_{t'<s_2}
 \tildeb_{s_1,t'}^{\sss(2)}\ddsum_{s\le t'}\sum_\twop(\delta_{s,t_\eta}+\vep
 C_1)\frac{c'\vep\beta_{\sT}}{(1+s_2-s)^{d/2}}\nn\\
&\le O(\beta_{\sT})^2O(\hat\beta_{\sT})^{0\vee(N-1)}\underbrace{\frac1{(1
 +s_1)^{(d-2)/2}}\ddsum_{s_1\le t'<s_2}\frac{\vep^{n_{s_1,t'}^{\sss(2)}}}{(1
 +t'-s_1)^{(d-2)/2}}\,\frac{\vep O(\beta_{\sT})}{(1+s_2-t')^{(d-2)/2}}}_{\le~
 O(\hat\beta_\sT)\,\tildeb_{s_1,s_2}^{\sss(2)}}.
\end{align}
The contribution from the third term in \refeq{tilR-suffbd-ex} can be estimated
similarly and is further smaller than the bound \refeq{tilR-suffbd-exbd1} by a
factor of $\hat\beta_{\sT}$.  We have shown \refeq{goal2} for $2<d\le4$
with $q=0$, $N'=1$ and $0<s_1\le s_2\le T\log T$.

Now it remains to show \refeq{tilR-suffbd}--\refeq{tilQ-suffbd}.  First we
prove \refeq{tilR-suffbd}, which is trivial when $s_2>s_1=0$ because $\tilde
R_{0,s_2}^{\sss(N)}(y_1,y_2)\equiv0$.  Let $s_2\ge s_1>0$.  By applying
\refeq{constr1-bd-b} for $q=0$ to the bounds in
\refeq{PN-bd}--\refeq{PN-bd-lower}, we obtain that, for $q=0,2$,
\begin{align}\lbeq{tildeR+bd1}
\sum_{y_1,y_2}|y_1|^q\tilde R_{s_1,s_2}^{\sss(N)}(y_1,y_2)\le\frac{C_2(1+s_2)
 \vep^2\delta_{s_1,s_2}}{(1+s_1)^{(d-q)/2}}\times
 \begin{cases}
 O(\beta)^{1\vee N}\sigma^q&(d>4),\\
 O(\beta_\sT)O(\hat\beta_\sT)^{0\vee(N-1)}\sigma_\sT^q&(d\le4).
 \end{cases}
\end{align}
To bound $\sum_{y_1,y_2}|y_2|^2\tilde R_{s_1,s_2}^{\sss(N)}(y_1,y_2)$, we apply
\refeq{constr1-bd-b} for $q=2$ to the bounds in
\refeq{PN-bd}--\refeq{PN-bd-lower} for $q=0$.  Then, we obtain
\begin{align}\lbeq{tildeR+bd2}
\sum_{y_1,y_2}|y_2|^2\tilde R_{s_1,s_2}^{\sss(N)}(y_1,y_2)\le\frac{C_2(N+1)s_2
 (1+s_2)\vep^2\delta_{s_1,s_2}}{(1+s_1)^{d/2}}\times
 \begin{cases}
 O(\beta)^{1\vee N}\sigma^q&(d>4),\\
 O(\beta_\sT)O(\hat\beta_\sT)^{0\vee(N-1)}\sigma_\sT^q&(d\le4),
 \end{cases}
\end{align}
where we have used the fact that the number of diagram lines to which
Construction~$\ell(y_2,s_2)$ is applied is at most $N+1$.  Absorbing the factor
$N+1$ into the geometric term, we can summarise
\refeq{tildeR+bd1}--\refeq{tildeR+bd2} as
\begin{align}\lbeq{tildeR+bd}
\sum_{y_1,y_2}|y_i|^q\tilde R_{s_1,s_2}^{\sss(N)}(y_1,y_2)\le(1+s_i)^{q/2}\,
 \tilde b_{s_1,s_2}^{\sss(2)}\delta_{s_1,s_2}\times
 \begin{cases}
 O(\beta)^{1\vee N}\sigma^q&(d>4),\\
 O(\beta_\sT)O(\hat\beta_\sT)^{0\vee(N-1)}\sigma_\sT^q&(d\le4).
 \end{cases}
\end{align}
This completes the proof of \refeq{tilR-suffbd}.

Next we prove \refeq{tilQ-suffbd} for $N\ge1$ (hence $s_1>0$).  For $i=1$ and
$q=0,2$, we have
\begin{align}\lbeq{tildeQy1qprebd}
\sum_{y_1,y_2}|y_1|^q\tilde Q^{\sss(N)}_{s_1,s_2}(y_1,y_2)\le\ddsum_{s',s''
 =0}^{s_1}\bigg(\sum_{y_1}|y_1|^qP^{\sss(N)}\big((y_1,s_1);B(s'),B(s'')\big)
 \bigg)\nn\\
\times\bigg(\sup_{z,w}\sum_{y_2}L\big((z,s'),(w,s'');(y_2,s_2)\big)\bigg).
\end{align}
We bound the sum over $y_1$ in the right-hand side by applying
\refeq{constr1-bd-a} for $q=0$ to \refeq{PN-bd}--\refeq{PN-bd-lower}, and bound
the sum over $y_2$ by using \refeq{supL-bd}.  Then, we obtain
\begin{align}\lbeq{tildeQy1qbd}
\refeq{tildeQy1qprebd}&\le\frac{\vep^3}{(1+s_1)^{(d-q)/2}}\,\ddsum_{s',s''=0}
 ^{s_1}\frac{(\delta_{s_1,s'}+\vep C_1)(\delta_{s_1,s''}+\vep C_1)}{(1+s_2
 -s'\wedge s'')^{d/2}}\times
 \begin{cases}
 O(\beta)^{N+1}\sigma^q&(d>4)\\
 O(\beta_\sT)^2O(\hat\beta_\sT)^{N-1}\sigma_\sT^q&(d\le4)
 \end{cases}\nn\\
&\le(1+s_1)^{q/2}\,\tilde b_{s_1,s_2}^{\sss(2)}\times
 \begin{cases}
 O(\beta)^{N+1}\sigma^q&(d>4),\\
 O(\beta_\sT)^2O(\hat\beta_\sT)^{N-1}\sigma_\sT^q&(d\le4).
 \end{cases}
\end{align}
For $i=2$ and $q=2$, we have
\begin{align}\lbeq{tildeQy22prebd}
\sum_{y_1,y_2}|y_2|^2\tilde Q^{\sss(N)}_{s_1,s_2}(y_1,y_2)\le\ddsum_{s',s''
 =0}^{s_1}\,\sum_{\substack{y_1,y_2\\ w,z}}&(|w|^2+|y_2-w|^2)\,P^{\sss(N)}
 \big((y_1,s_1);B(z,s'),B(w,s'')\big)\nn\\
&\times L\big((z,s'),(w,s'');(y_2,s_2)\big),
\end{align}
where, by applying \refeq{constr1-bd-a} to \refeq{PN-bd}--\refeq{PN-bd-lower}
for $q=0$ and using \refeq{supL-bd}, the contribution from $|w|^2$ is bounded
as
\begin{align}\lbeq{tildeQy22prebd-1}
&\ddsum_{s',s''=0}^{s_1}\bigg(\sum_{y_1,w}|w|^2P^{\sss(N)}\big((y_1,s_1);B(s'),
 B(w,s'')\big)\bigg)\bigg(\sup_{z,w}\sum_{y_2}L\big((z,s'),(w,s'');(y_2,s_2)
 \big)\bigg)\nn\\
&\le\frac{(N+1)\vep^3}{(1+s_1)^{d/2}}\,\ddsum_{s',s''=0}^{s_1}\frac{s''
 (\delta_{s_1,s'}+\vep C_1)(\delta_{s_1,s''}+\vep C_1)}{(1+s_2-s'\wedge s'')
 ^{d/2}}\times
 \begin{cases}
 O(\beta)^{N+1}\sigma^2&(d>4),\\
 O(\beta_\sT)^2O(\hat\beta_{\sT})^{N-1}\sigma_{\sT}^2&(d\le4).
 \end{cases}
\end{align}
On the other hand, by using \refeq{PN-bd}--\refeq{PN-bd-lower} for $q=0$ and
\refeq{fbd}--\refeq{tau-ass.1}, the contribution from $|y_2-w|^2$ in
\refeq{tildeQy22prebd} is bounded as
\begin{align}\lbeq{tildeQy22prebd-2}
&\ddsum_{s',s''=0}^{s_1}\bigg(\sum_{y_1}P^{\sss(N)}\big((y_1,s_1);B(s'),B(s'')
 \big)\bigg)\bigg(\sup_{z,w}\sum_{y_2}|y_2-w|^2L\big((z,s'),(w,s'');(y_2,s_2)
 \big)\bigg)\nn\\
&\le\frac{\vep^3}{(1+s_1)^{d/2}}\,\ddsum_{s',s''=0}^{s_1}\frac{(s_2-s'')
 (\delta_{s_1,s'}+\vep C_1)(\delta_{s_1,s''}+\vep C_1)}{(1+s_2-s'\wedge
 s'')^{d/2}}\times
 \begin{cases}
 O(\beta)^{N+1}\sigma^2&(d>4),\\
 O(\beta_\sT)^2O(\hat\beta_{\sT})^{N-1}\sigma_{\sT}^2&(d\le4).
 \end{cases}
\end{align}
Summing \refeq{tildeQy22prebd-1} and \refeq{tildeQy22prebd-2} and absorbing the
factor $N+1$ into the geometric term, we obtain
\begin{align}\lbeq{tildeQy22bd}
\refeq{tildeQy22prebd}&\le s_2\,\tilde b_{s_1,s_2}^{\sss(2)}\times
 \begin{cases}
 O(\beta)^{N+1}\sigma^2&(d>4),\\
 O(\beta_\sT)^2O(\hat\beta_{\sT})^{N-1}\sigma_{\sT}^2&(d\le4).
 \end{cases}
\end{align}
Summarising \refeq{tildeQy1qbd} and \refeq{tildeQy22bd} yields
\refeq{tilQ-suffbd} for $N\ge1$.  This completes the proof of
Lemma~\ref{lem-PQNbds}.
\end{proof}

\subsection{Diagrammatic bounds on \protect$\phi^{\sss(N,N_1,N_2)}(\yvec_1,
 \yvec_2)_{\sss\pm}$}\label{sec-redphi}
In this section, we prove Lemma~\ref{lem-phibdslow}.  First we recall the
convention \refeq{convention} and the definition
\refeq{phipm-def} and \refeq{BdeltaNdef}--\refeq{phialtbd}:
\begin{align} \lbeq{phiNNN}
&\phi^{\sss(N,N_1,N_2)}(\yvec_1,\yvec_2)_{\sss\pm}\nn\\
&\quad=\sum_{\substack{b_{N+1},e\\ b_{N+1}\ne e}}p_{b_{N+1}}p_e\,\tilde
 M_{b_{N+1}}^{\sss(N+1)}\Big(\ind{\Ftwo_{t_{\yvec_1}}(\tb_N,\eb;\bC_\pm)
 \text{ in }\tilde\bC^e_N}\,B_\delta^{\sss(N_1)}(\tb_{\sss N+1},\yvec_1;
 \bC_{\sN})\,B_\delta^{\sss(N_2)}(\te,\yvec_2;\tilde\bC^e_{\sN})\Big),
\end{align}
where we recall $\Ftwo_t(\vvec,\xvec;\bA)=\{\vvec\ct{\bA}\xvec\}\cap\{\nexists
t$-cutting bond for $\vvec\ctx{\bA}\xvec\}$, as defined in \refeq{F'2-1comp},
and $\bC_{\sss+}=\{\tb_{\sN}\}$ and $\bC_{\sss-}=\tilde\bC_{\sss N-1}$.  If the
factors $\ind{\Ftwo_{t_{\yvec_1}}(\tb_N,\eb;\bC_\pm)\text{ in }\tilde\bC^e_N}$
and $B_\delta^{\sss(N_2)}(\te,\yvec_2;\tilde\bC^e_{\sN})$ were absent, then
\refeq{phiNNN} would simplify to
$\pi^{\sss(N+N_1)}(\yvec_1)\le P^{\sss(N+N_1)}(\yvec_1)$.
Therefore, our task is to investigate the effect of these changes.

We will prove Lemma~\ref{lem-phibdslow} using the following three lemmas:

\begin{lemma}\label{lem-inclG}
For $\vvec,\xvec\in\Lambda$ and $t_{\vvec}<t\le t_{\xvec}$, (cf.,
Figure~\ref{fig-VEcal})
    \begin{align}\lbeq{H1incl}
    H_t(\vvec,\xvec;\{\vvec\})\subset V_{t-\vep}(\vvec,\xvec)\equiv
     \bigcup_{\zvec:t_{\zvec}\le t-\vep}\{\vvec\conn\zvec\dbc\xvec\}.
    \end{align}
Moreover, for $\bA\subset\Lambda$, let
    \begin{align}\lbeq{Gi1def}
    G_t^{\sss(1)}(\vvec,\xvec;\bA)=H_t(\vvec,\xvec;\bA)\cap V_{t-\vep}(\vvec,
     \xvec),&&
    G_t^{\sss(2)}(\vvec,\xvec;\bA)=H_t(\vvec,\xvec;\bA)\setminus V_{t-\vep}
     (\vvec,\xvec).
    \end{align}
Then,
\begin{align}\lbeq{Ecaldef}
G_t^{\sss(1)}(\vvec,\xvec;\bA)\subseteq V_{t-\vep}(\vvec,\xvec),&&
G_t^{\sss(2)}(\vvec,\xvec;\bA)\subseteq{\cal E}_t(\vvec,\xvec;\bA),
\end{align}
where
\begin{align}\lbeq{Ecaldefrep}
{\cal E}_t(\vvec,\xvec;\bA)&=\bigcup_{\avec,\wvec\in\bA}~\bigcup_{\substack{
 \zvec\in\Lambda\\ t_{\zvec}\ge t}}\bigg\{\Big\{\{\vvec\conn
 \zvec\}\circ\{\zvec\conn\wvec\}\circ\{\wvec\conn
 \xvec\}\circ\{\zvec\conn\xvec\}\Big\}\nn\\
&\hskip6pc\cap\Big\{\{\avec=\wvec,~\zvec\centernot\conn\wvec_-\}\cup\{\avec\ne
 \wvec_-,~(\avec,\wvec)\in\bA\}\Big\}\bigg\}.
\end{align}
\end{lemma}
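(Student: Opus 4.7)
The strategy is to perform a careful pivotal-bond analysis, closely paralleling the derivation of the event $\mathcal{E}(\vvec,\yvec;\bC)$ in \refeq{E'bd}--\refeq{E'bdprep}, and to extract the desired time lower bound $t_\zvec\ge t$ by re-rooting the BK-type decomposition at the top endpoint of a carefully chosen pivotal bond. For \refeq{H1incl}, observe that the choice $\bA=\{\vvec\}$ makes $\vvec\ct{\bA}\bb$ automatic whenever $\vvec\conn\bb$, so that $H_t(\vvec,\xvec;\{\vvec\})$ reduces to $\{\vvec\conn\xvec\}$ together with the assertion that no occupied pivotal bond $e$ for $\vvec\conn\xvec$ satisfies $t_\te\ge t$. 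Thus every such pivotal bond has $t_\te\le t-\vep$; taking $\zvec$ to be the top endpoint of the latest pivotal bond (or $\zvec=\vvec$ if none exists) yields $\vvec\conn\zvec\dbc\xvec$ with $t_\zvec\le t-\vep$, which is precisely the event $V_{t-\vep}(\vvec,\xvec)$.

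For the second assertion, the containment $G_t^{\sss(1)}\subseteq V_{t-\vep}$ is built into \refeq{Gi1def}. For $G_t^{\sss(2)}\subseteq\mathcal{E}_t$, the failure of $V_{t-\vep}$ together with the argument just given forces the existence of an occupied pivotal bond for $\vvec\conn\xvec$ whose top endpoint lies at time $\ge t$; let $e$ be the first such. The no-cutting-bond hypothesis of $\Ftwo_t$ then gives $\vvec\centernot\ct{\bA}\bb$, hence a path $P_1:\vvec\to\bb$ avoiding $\bA$. For any path $P_2:\te\to\xvec$, the concatenation $P_1\cdot e\cdot P_2$ is a path $\vvec\to\xvec$ that by $\vvec\ct{\bA}\xvec$ must visit $\bA$; since $P_1$ does not, the $\bA$-vertex lies on $\{\te\}\cup P_2$. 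This gives $\te\ct{\bA}\xvec$.

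Next I would apply the argument of \refeq{E'bd}--\refeq{E'bdprep} rooted at $\te$ rather than at $\vvec$: select $\wvec\in\bA$ of smallest time index with $\te\conn\wvec$ (so $t_\wvec\ge t_\te\ge t$) and perform the standard two-case split, depending on whether $\te\centernot\conn\wvec_-$ (in which case $\avec=\wvec$) or $\wvec_-\notin\bA$ (in which case one selects a spatial bond $(\avec,\wvec)\in\bA$ with $\avec\ne\wvec_-$, using the cluster-realization structure of $\bA$), to obtain a branching vertex $\zvec$ realising
\begin{align*}
\{\te\conn\zvec\}\circ\{\zvec\conn\wvec\}\circ\{\wvec\conn\xvec\}\circ\{\zvec\conn\xvec\}.
\end{align*}
All three connections emanating from $\zvec$ lie in the forward cluster of $\te$ and hence use only bonds at times $\ge t_\te$, while the concatenation $P_1\cdot e$ uses bonds at times $\le t_\te$; prepending $\vvec\to\te$ via $P_1\cdot e$ therefore preserves bond-disjointness with the other three occurrences and converts $\{\te\conn\zvec\}$ into $\{\vvec\conn\zvec\}$. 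Since $t_\zvec\ge t_\te\ge t$, the resulting configuration is precisely the defining event of $\mathcal{E}_t(\vvec,\xvec;\bA)$.

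The main obstacle will be the extraction of the time lower bound $t_\zvec\ge t$ on the branching vertex, since the BK-type argument of \refeq{E'bd} a priori provides no control over when the branching occurs. The crucial device is to re-root the decomposition at $\te$ (itself at time $\ge t$), which requires first using the failure of $V_{t-\vep}$ together with the no-cutting-bond hypothesis to establish the intermediate assertion $\te\ct{\bA}\xvec$; once this is in place, the temporal separation between the $\vvec\to\te$ segment and the forward cluster of $\te$ automatically secures the bond-disjointness needed to convert the $\te$-rooted decomposition into the $\vvec$-rooted one.
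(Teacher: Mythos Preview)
Your approach is correct and matches what the paper intends; the paper itself omits this proof, deferring to \cite[Lemmas~7.15 and 7.17]{HHS05b}, and your argument is precisely the re-rooting strategy used there. One step deserves to be made explicit, however: invoking the argument of \refeq{E'bd}--\refeq{E'bdprep} at $\te$ requires not just $\te\ct{\bA}\xvec$ but the full event $E'(\te,\xvec;\bA)$, since the branching vertex $\zvec$ in that argument comes from the ``no-pivotal'' part of $E'$, not from the through-connection alone. This does hold: any pivotal bond $b'$ for $\te\conn\xvec$ is also pivotal for $\vvec\conn\xvec$ (as every $\vvec\to\xvec$ path passes through $e$ and hence through $\te$), and has $t_{\tb'}>t_{\te}\ge t$; if in addition $\te\ct{\bA}\bb'$, then every $\vvec\to\bb'$ path, which must traverse $e$ and then a $\te\to\bb'$ path, touches $\bA$, so $\vvec\ct{\bA}\bb'$ and $b'$ would be a $t$-cutting bond for $\vvec\ctx{\bA}\xvec$, contradicting the hypothesis of $\Ftwo_t$. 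With $E'(\te,\xvec;\bA)$ in hand, your re-rooting via the temporally disjoint segment $P_1\cdot e$ goes through exactly as you describe.

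A minor remark on \refeq{H1incl}: your assertion that $\vvec\ct{\{\vvec\}}\eb$ is automatic glosses over the degenerate case $\eb=\vvec$ (possible only when $t=t_{\vvec}+\vep$), where the interpretation of $\vvec\ct{\{\vvec\}}\vvec$ is convention-dependent; this does not affect the applications in Sections~\ref{sec-diagramsphi-bd}--\ref{sec-redphi}, but is worth noting.
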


%%%%%FIGFIGFIGFIGFIGFIGFIGFIGFIGFIGFIGFIGFIGFIGFIGFIGFIGFIGFIGFIG
%%%%%FIGFIGFIGFIGFIGFIGFIGFIGFIGFIGFIGFIGFIGFIGFIGFIGFIGFIGFIGFIG
\begin{figure}[t]
\begin{center}
\setlength{\unitlength}{0.0005in}
{
\begin{picture}(6000,3000)(3000,1200)
{ \put(4200,2900){\makebox(0,0)[lb]{$t$}}
\qbezier[50](2700,3000)(3400,3000)(4100,3000)

\thinlines
\path(3400,900)(3400, 2700)
\put(3400, 3000){\ellipse{300}{600}}
\put(3330,3400){\makebox(0,0)[lb]{$\xvec$}}
\put(3330,700){\makebox(0,0)[lb]{$\vvec$}}

%Second figure
\put(9130,2900){\makebox(0,0)[lb]{$t$}}

\put(8430,700){\makebox(0,0)[lb]{$\vvec$}}
\put(8620,3500){\makebox(0,0)[lb]{{$\sss \bullet$}}}
\put(8800,3450){\makebox(0,0)[lb]{${\scriptstyle\avec\in\bA}$}}

\thinlines
\put(8500, 3500){\ellipse{300}{600}}

\path(8500,900)(8500,3200) \put(8430,3900){\makebox(0,0)[lb]{$\xvec$}}
\qbezier[50](7600,3000)(8300,3000)(9000,3000) }
\end{picture}
}
\end{center}
\caption{\label{fig-VEcal}Schematic representations of the events
(a)~$V_{t-\vep}(\vvec,\xvec)$ and (b)~$\Ecal_t(\vvec,\xvec;\bA)$.}
\end{figure}
%%%%%FIGFIGFIGFIGFIGFIGFIGFIGFIGFIGFIGFIGFIGFIGFIGFIGFIGFIGFIG
%%%%%FIGFIGFIGFIGFIGFIGFIGFIGFIGFIGFIGFIGFIGFIGFIGFIGFIGFIGFIG

\begin{lemma}\label{lem-remtilde}
Let $X$ be a non-negative random variable which is independent of the
occupation status of the bond $b$, while $F$ is an increasing event.  Then,
    \begin{align}
    \tilde\mE^b[X\indic_{F}]\le\mE[X\indic_F].
    \end{align}
\end{lemma}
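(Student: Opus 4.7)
The plan is to decompose the bond configuration as $\omega=(\omega',\omega_b)$, where $\omega'$ records the status of every bond other than $b$ and $\omega_b\in\{0,1\}$ records whether $b$ is occupied. The hypothesis that $X$ is independent of the occupation status of $b$ means $X$ is a function of $\omega'$ alone. Under $\mP$, the two coordinates $\omega'$ and $\omega_b$ are independent, with $\mP(\omega_b=1)=p_b$, while under $\tilde\mP^b$ the law of $\omega'$ is unchanged but $\omega_b$ is set equal to $0$ deterministically; this is the content of \refeq{condexp}.

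The key step is to exploit monotonicity in the single coordinate $\omega_b$. Since $F$ is an increasing event in the full configuration, for each fixed $\omega'$ the map $\omega_b\mapsto\indic_F(\omega',\omega_b)$ is nondecreasing, so
\begin{align*}
\indic_F(\omega',0)\;\leq\;(1-p_b)\,\indic_F(\omega',0)+p_b\,\indic_F(\omega',1)\;=\;\mE\!\left[\indic_F\,\big|\,\omega'\right](\omega').
\end{align*}
Multiplying by the nonnegative, $\omega'$-measurable factor $X(\omega')$ and averaging over $\omega'$ under its common marginal yields
\begin{align*}
\tilde\mE^b[X\,\indic_F]\;=\;\mE\!\left[X(\omega')\,\indic_F(\omega',0)\right]\;\leq\;\mE\!\left[X(\omega')\,\mE[\indic_F\mid\omega']\right]\;=\;\mE[X\,\indic_F],
\end{align*}
which is the desired inequality.

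I do not anticipate a substantive obstacle: the argument reduces to pointwise monotonicity in the single binary coordinate $\omega_b$, multiplied by a nonnegative $\omega'$-measurable weight $X$, and no FKG- or BK-type machinery is required precisely because the independence of $X$ from $\omega_b$ trivialises the coupling. The only point to watch is that $\tilde\mP^b$, as defined in \refeq{condexp}, is the conditional law given $\{b\text{ vacant}\}$, so that the marginal of $\omega'$ is the same under $\mP$ and under $\tilde\mP^b$; this is what legitimises identifying $\tilde\mE^b[X\,\indic_F]$ with $\mE[X(\omega')\,\indic_F(\omega',0)]$ in the display above.
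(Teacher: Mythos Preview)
Your proof is correct. The paper does not actually give its own proof of this lemma; it simply states that the result is identical to \cite[Lemma~7.16]{HHS05b} and omits the argument. Your decomposition $\omega=(\omega',\omega_b)$ together with the pointwise monotonicity $\indic_F(\omega',0)\le\indic_F(\omega',1)$ and the fact that $X$ is $\omega'$-measurable and nonnegative is exactly the natural (and standard) way to establish the inequality, and is presumably what the cited reference does as well.
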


%Before stating Lemma \ref{lem-bdsphi}, we introduce a final construction.
%Indeed, for a bond $b$, Construction $\ell(b)$ is the result of
%Construction $\ell(\bb)$ followed by Construction $\ell_{\bb}(\tb)$.
%Construction $\ell(b)$ is equivalent to Construction $\ell(\bb)$ followed
%by a multiplication with $p(b)$.

\begin{lemma}\label{lem-bdsphi}
Let $\yvec_1,\yvec_2\in\Lambda$ and $\vec\xvec\in\Lambda^j$ for some $j\ge0$.
For $N,N_1\ge0$,
    \begin{align}\lbeq{Vdiagrbd}
    \sum_{b_{N+1}}p_{b_{N+1}}M^{\sss(N+1)}_{\bb_{N+1}}\Big(\indic_{V_{t_{\yvec_1}
     -\vep}(\tb_N,\yvec_2)\,\cap\,\{\vec\xvec\in\tilde\bC_N\}}B_\delta^{\sss(N_1)}
     (\tb_{\sss N+1},\yvec_1;\tilde\bC_{\sss N})\Big)\le\sum_{b:\tb=
     \yvec_1}R^{\sss(N+N_1)}(\bb,\yvec_2;\ell(\vec\xvec))\,p_b,
    \end{align}
and for $N\ge1$ and $N_1\ge0$,
    \begin{align}\lbeq{Ediagrbd}
    \sum_{b_{N+1}}p_{b_{N+1}}M^{\sss(N+1)}_{\bb_{N+1}}\Big(\indic_{\Ecal_{t_{
     \yvec_1}}(\tb_N,\yvec_2;\tilde\bC_{N-1})\,\cap\,\{\vec\xvec\in\tilde\bC_N
     \}}B_\delta^{\sss(N_1)}(\tb_{\sss N+1},\yvec_1;\tilde\bC_{\sss N})\Big)
     \le\sum_{b:\tb=\yvec_1}Q^{\sss(N+N_1)}(\bb,\yvec_2;\ell(\vec\xvec))\,p_b.
    \end{align}
%The bounds in \refeq{Vdiagrbd} and \refeq{Ediagrbd} are also valid
%when $\{\vec\xvec\in\tilde\bC_N\}$ on the left-hand sides are replaced
%by $\{b\in\tilde\bC_N\}\cap \{\vec\xvec\in\tilde\bC_N\}$, and
%Construction~$\ell(\vec\xvec)$ on the right-hand sides are
%replaced by Construction~$\ell(b)$ followed
%by Construction~$\ell(\vec\xvec)$.
\end{lemma}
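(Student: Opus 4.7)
My plan is to prove both \refeq{Vdiagrbd} and \refeq{Ediagrbd} by induction on $N$ for each fixed $N_1 \geq 0$, following the inductive strategy used in the proof of Lemma~\ref{lem-BNbd}. The key idea is that the additional factor $\indic_{V_{t_{\yvec_1}-\vep}(\tb_N,\yvec_2)}$ (resp.\ $\indic_{\Ecal_{t_{\yvec_1}}(\tb_N,\yvec_2;\tilde\bC_{N-1})}$) inside the nested expectations produces precisely the extra branch to $\yvec_2$ that is encoded by Construction~$V_{t_{\yvec_1}}$ (resp.\ Construction~$\Ecal_{t_{\yvec_1}}$) in the definitions \refeq{FV}--\refeq{FEcal} of $R^{\sss(N+N_1)}$ and $Q^{\sss(N+N_1)}$.

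Base case ($N=0$ for \refeq{Vdiagrbd}; $N=1$ for \refeq{Ediagrbd}): The left-hand side reduces to a single application of $M^{\sss(1)}$ around $B_\delta^{\sss(N_1)}(\tb_1,\yvec_1;\tilde\bC_0)$, together with the $V$- or $\Ecal$-indicator and the cluster indicator $\{\vec\xvec\in\tilde\bC_0\}$. I would first invoke Lemma~\ref{lem-remtilde} to replace $\tilde\mE^b$ by $\mE$, which is valid because the connection and cluster-membership events are increasing. Next, I would extend the event-inclusion \refeq{E'bd} to account for the extra branch: the intersection $E'(\ovec,\bb_1;\{\ovec\}) \cap V_{t_{\yvec_1}-\vep}(\ovec,\yvec_2)$ is contained in $\Ecal(\ovec,\bb_1;\{\ovec\}) \cap \{\ovec\conn\zvec\dbc\yvec_2\}$ for some $\zvec$ with $t_\zvec \leq t_{\yvec_1}-\vep$, and similarly for $\Ecal_{t_{\yvec_1}}$ the extra branch matches the disjoint-connection structure of \refeq{Ecaldefrep}. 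Applying the BK inequality to these additional disjoint connections and combining with the argument around \refeq{MPI}--\refeq{MPIgen} for the cluster indicator yields exactly the diagrammatic bound $\sum_{b:\tb=\yvec_1} P^{\sss(N_1)}(\bb;V_{t_{\yvec_1}}(\yvec_2),\ell(\vec\xvec))\,p_b = \sum_{b:\tb=\yvec_1} R^{\sss(N_1)}(\bb,\yvec_2;\ell(\vec\xvec))\,p_b$ (resp.\ the $Q$-analogue).

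Induction step: To advance the induction in $N$, I would apply the recursion \refeq{MN-rec} to split the outer $M^{\sss(N+1)}$ as $\sum_{b_N}p_{b_N} M^{\sss(N)}_{\bb_N}\big(M^{\sss(1)}_{\tb_N,\bb_{N+1};\tilde\bC_{N-1}}(X)\big)$. The inner $M^{\sss(1)}$, which carries the $V$- or $\Ecal$-event and $B_\delta^{\sss(N_1)}$, is bounded by the base-case argument as an $R^{\sss(N_1)}$ (resp.\ $Q^{\sss(N_1)}$) diagram with $\ell$-decorations coming from any $\tilde\bC_{N-1}$-indicators pulled inward. The outer $M^{\sss(N)}$ is then controlled by the bound \refeq{MPIgen} of Lemma~\ref{lem-BNbd}, yielding $P^{\sss(N-1)}$ with compatible $\ell$-decorations. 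Lemma~\ref{lem-recPNMthetas} then combines these two diagrammatic pieces into $P^{\sss(N+N_1)}(\bb;V_{t_{\yvec_1}}(\yvec_2),\ell(\vec\xvec))$, which is $R^{\sss(N+N_1)}(\bb,\yvec_2;\ell(\vec\xvec))$ by \refeq{PN0def}; the $\Ecal$-variant gives $Q^{\sss(N+N_1)}$ via \refeq{QN-def}.

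The main obstacle will be the careful event-level analysis in the base case: I must verify that the bonds forced by the $V$- or $\Ecal$-branch are genuinely disjoint from those forced by $E'(\ovec,\bb_1;\{\ovec\})$ (taking into account that the branch-point $\zvec$ may coincide with an existing diagram vertex), and that every spatial bond produced by the cluster identity $(\cvec,\wvec)\in\tilde\bC$ carries its factor $\lambda\vep D$ as in \refeq{MN-indbond}. These are precisely the same delicate points handled in Lemma~\ref{lem-BNbd}, now adapted to accommodate the additional $V$- or $\Ecal$-structure that drives the appearance of $R$ and $Q$ in place of $P$.
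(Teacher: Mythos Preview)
Your plan is correct in spirit and uses the same ingredients as the paper (the event inclusions for $V$ and $\Ecal$, the BK inequality, the diagrammatic bounds from Lemma~\ref{lem-BNbd}, and the gluing identity of Lemma~\ref{lem-recPNMthetas}), but the paper organizes the argument differently. Rather than fixing $N_1$ and inducting on $N$, the paper first treats $N_1=0$ for \emph{all} $N$ simultaneously: since $B_\delta^{\sss(0)}=\delta$, the claim reduces---via the nested structure of $M^{\sss(N+1)}$ and the recursion \refeq{PArecrep}---to a single innermost bound (for $V$: one layer $M^{\sss(1)}$ against $P^{\sss(0)}(\,\cdot\,;V_{t_{\bb_{N+1}}}(\yvec_2),\ell(\vec\xvec))$; for $\Ecal$: two layers $M^{\sss(2)}$ against $P^{\sss(1)}(\,\cdot\,;\Ecal_{t_{\bb_{N+1}}}(\yvec_2),\ell(\vec\xvec))$, since $\Ecal$ depends on $\tilde\bC_{\sss N-1}$). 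The case $N_1\ge1$ is then handled in one stroke by bounding $B_\delta^{\sss(N_1)}\le\sum_b P^{\sss(N_1-1)}(\,\cdot\,,\bb;\tilde\bC_{\sss N})\,p_b$ via \refeq{piPbd} and gluing onto the $N_1=0$ result through Lemma~\ref{lem-recPNMthetas}. Your induction on $N$ reaches the same endpoint, but your base case for general $N_1\ge1$ is no simpler than the full argument, so the paper's split on $N_1$ is the cleaner route.

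Two small slips to correct. First, Lemma~\ref{lem-remtilde} is not needed here: the statement already involves $M^{\sss(N+1)}$ with no tilde; that lemma is invoked one level up, in the proof of Lemma~\ref{lem-phibdslow}, before Lemma~\ref{lem-bdsphi} is applied. Second, in your induction step for \refeq{Ediagrbd}, the inner $M^{\sss(1)}$ cannot by itself produce a $Q^{\sss(N_1)}$-diagram: the event $\Ecal_{t_{\yvec_1}}(\tb_{\sss N},\yvec_2;\tilde\bC_{\sss N-1})$ depends on $\tilde\bC_{\sss N-1}$, which is random for the \emph{next} layer out. You must keep two layers together (as in \refeq{Ediagrbd:N1=0rewr}) and carry the indicators $\ind{\avec\in\tilde\bC_{\sss N-1}}$ and $\ind{(\avec,\wvec)\in\tilde\bC_{\sss N-1}}$ through the chain \refeq{Ediagrbd:N1=0rewr-bd1}--\refeq{Ediagrbd:N1=0rewr-bd26} before absorbing them via \refeq{MPIgen} and the Markov step \refeq{MN-indbond}.
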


The remainder of this subsection is organised as follows.  In
Section~\ref{sec-phibd1}, we prove Lemma~\ref{lem-phibdslow} assuming
Lemmas~\ref{lem-inclG}--\ref{lem-bdsphi}.  Lemma~\ref{lem-inclG} is an
adaptation of \cite[Lemmas~7.15 and 7.17]{HHS05b} for oriented percolation,
which applies here as the discretized contact process is an oriented
percolation model. The origin of the event
$\{\zvec\centernot\conn\wvec_-\}\cup\{\wvec_-\notin\bA\}$ in
\refeq{Ecaldefrep} is similar to the intersection with the second line
in \refeq{E'bd}, for which we refer to the proof of \refeq{E'bd}.
Lemma~\ref{lem-remtilde} is identical to \cite[Lemma~7.16]{HHS05b}.
We omit the proofs of these two lemmas.  In Section~\ref{sec-pfProp67},
we prove Lemma~\ref{lem-bdsphi}.

\subsubsection{Proof of Lemma~\ref{lem-phibdslow} assuming
Lemmas~\ref{lem-inclG}--\ref{lem-bdsphi}}\label{sec-phibd1}

\begin{proof}[Proof of Lemma~\ref{lem-phibdslow} for $N_2=0$]
First we prove the bound on $\phi^{\sss(N,N_1,0)}(\yvec_1,\yvec_2)_{\sss+}$,
where, by \refeq{tildeM-def} and \refeq{tildeMN-def},
\begin{align}\lbeq{tobeboundedphi1}
&\phi^{\sss(N,N_1,0)}(\yvec_1,\yvec_2)_{\sss+}=\sum_{\substack{b_{N+1},e\\
 b_{N+1}\ne e}}p_{b_{N+1}}p_e\,\tilde M_{b_{N+1}}^{\sss(N+1)}\Big(\indic_{
 \Ftwo_{t_{\yvec_1}}(\tb_N,\eb;\{\tb_N\})}B_\delta^{\sss(N_1)}(\tb_{\sss
 N+1},\yvec_1;\bC_{\sN})\Big)\,\delta_{\te,\yvec_2}\nn\\
&=\sum_{\substack{b_N,b_{N+1},e\\ b_{N+1}\ne e}}p_{b_N}p_{b_{N+1}}p_e\,
 M_{\bb_N}^{\sss(N)}\bigg(\tilde\mE^{b_{N+1}}\Big[\indic_{E'(\tb_N,\bb_{
 N+1};\tilde\bC_{N-1})}\indic_{\Ftwo_{t_{\yvec_1}}(\tb_N,\eb;\{\tb_N\})}
 B_\delta^{\sss(N_1)}(\tb_{\sss N+1},\yvec_1;\bC_{\sN})\Big]\bigg)\,
 \delta_{\te,\yvec_2}.
\end{align}
Note that, by Lemma~\ref{lem-inclG}, $H_{t_{\yvec_1}}(\tb_{\sss
N},\eb;\{\tb_{\sss N}\})$ is a subset of $V_{t_{\yvec_1}-\vep}(\tb_{\sss
N},\eb)$, which is an increasing event.  We also note that the event
$E'(\tb_{\sss N},\bb_{\sss N+1};\tilde\bC_{\sss N-1})$ and the random variable
$B_{\delta}^{\sss(N_1)}(\tb_{\sss N+1},\yvec_1;\tilde\bC_{\sss N})$, where
$\tilde\bC_{\sss N}=\tilde\bC^{b_{\sss N+1}}(\tb_{\sss N})$, are independent of
the occupation status of $b_{\sss N+1}$.  By Lemma~\ref{lem-remtilde} and using
\refeq{B0def} and \refeq{M-def}, we obtain
\begin{align}
\refeq{tobeboundedphi1}&\le\sum_{\substack{b_N,b_{N+1},e\\ b_{N+1}\ne e}}
 p_{b_N}p_{b_{N+1}}p_e\,M_{\bb_N}^{\sss(N)}\bigg(\mE\Big[\indic_{E'(\tb_N,
 \bb_{N+1};\tilde\bC_{N-1})}\indic_{V_{t_{\yvec_1}-\vep}(\tb_{\sss N},\eb)}
 B_\delta^{\sss(N_1)}(\tb_{\sss N+1},\yvec_1;\bC_{\sN})\Big]\bigg)\,
 \delta_{\te,\yvec_2}\nn\\
&=\sum_{\substack{b_{N+1},e\\ b_{N+1}\ne e}}p_{b_{N+1}}p_eM_{\bb_{N+1}}
 ^{\sss(N+1)}\Big(\indic_{V_{t_{\yvec_1}-\vep}(\tb_{\sss N},\eb)}B_\delta
 ^{\sss(N_1)}(\tb_{\sss N+1},\yvec_1;\bC_{\sN})\Big)\,\delta_{\te,\yvec_2}.
 \lbeq{remtildephi1}
\end{align}
The bound \refeq{phi1bd} for $N_2=0$ now follows from Lemma~\ref{lem-bdsphi}.

Next we prove the bound on $\phi^{\sss(N,N_1,0)}(\yvec_1,\yvec_2)_{\sss-}$,
where, similarly to \refeq{tobeboundedphi1},
\begin{align}\lbeq{remtildephi2}
&\phi^{\sss(N,N_1,0)}(\yvec_1,\yvec_2)_{\sss-}\\%=\sum_{\substack{b_{N+1},
% e\\ b_{N+1}\ne e}}p_{b_{N+1}}p_e\,\tilde M_{b_{N+1}}^{\sss(N+1)}\Big(
% \indic_{\Ftwo_{t_{\yvec_1}}(\tb_N,\eb;\tilde\bC_{N-1})}B_\delta^{\sss
% (N_1)}(\tb_{\sss N+1},\yvec_1;\bC_{\sN})\Big)\,\delta_{\te,\yvec_2}
&=\sum_{\substack{b_N,b_{N+1},e\\ b_{N+1}\ne e}}p_{b_N}p_{b_{N+1}}p_e\,
 M_{\bb_N}^{\sss(N)}\bigg(\tilde\mE^{b_{N+1}}\Big[\indic_{E'(\tb_N,\bb_{
 N+1};\tilde\bC_{N-1})}\indic_{\Ftwo_{t_{\yvec_1}}(\tb_N,\eb;\tilde
 \bC_{N-1})}B_\delta^{\sss(N_1)}(\tb_{\sss N+1},\yvec_1;\bC_{\sN})\Big]
 \bigg)\,\delta_{\te,\yvec_2}.\nn
\end{align}
By \refeq{Gi1def}, we have the partition
\begin{align}\lbeq{Hisplit}
H_{t_{\yvec_1}}(\tb_{\sss N},\eb;\tilde\bC_{\sss N-1})=G_{t_{\yvec_1}}
 ^{\sss(1)}(\tb_{\sss N},\eb;\tilde\bC_{\sss N-1})\DDcup~G_{t_{\yvec_1}}
 ^{\sss(2)}(\tb_{\sss N},\eb;\tilde\bC_{\sss N-1}).
\end{align}
See Figure~\ref{fig-5} for schematic representations of the events
$E'(\tb_{\sss N},\bb_{\sss N+1};\tilde\bC_{\sss N-1})\,\cap\,
G_{t_{\yvec_1}}^{\sss(i)}(\tb_{\sss N},\eb;\tilde\bC_{\sss N-1})$ for
$i=1,2$.  By Lemma~\ref{lem-inclG}, we have
\begin{align}\lbeq{inclG2V}
\indic_{E'(\tb_N,\bb_{N+1};\tilde\bC_{N-1})}\indic_{G_{t_{\yvec_1}}
 ^{\sss(1)}(\tb_{\sss N},\eb;\tilde\bC_{\sss N-1})}\le\indic_{
 E'(\tb_N,\bb_{N+1};\tilde\bC_{N-1})}\indic_{V_{t_{\yvec_1}-\vep}
 (\tb_{\sss N},\eb)},
\end{align}
so that, by \refeq{remtildephi1}, the contribution from
$G_{t_{\yvec_1}}^{\sss(1)}(\tb_{\sss N},\eb;\tilde\bC_{\sss N-1})$ obeys the
same bound as $\phi^{\sss(N,N_1,0)}(\yvec_1,\yvec_2)_{\sss+}$, which is the
term in \refeq{phi3bd} proportional to $R^{\sss(N+N_1,0)}$.

%%%%%FIGFIGFIGFIGFIGFIGFIGFIGFIGFIGFIGFIGFIGFIGFIGFIGFIGFIGFIGFIG
%%%%%FIGFIGFIGFIGFIGFIGFIGFIGFIGFIGFIGFIGFIGFIGFIGFIGFIGFIGFIGFIG
\begin{figure}[t]
\begin{center}
\setlength{\unitlength}{0.0005in} {
\begin{picture}(6000,3700)(2000,700)
{\put(3500, 1700){\ellipse{200}{400}} \path(3500,1350)(3500,1500) \put(3500,
1150){\ellipse{200}{400}} \put(4130,2900){\makebox(0,0)[lb]{$t_{\yvec_1}$}}

\put(3430,600){\makebox(0,0)[lb]{$\tb_{\sss N}$}}
\put(1930,800){\makebox(0,0)[lb]{$\tilde\bC_{\sss N-1}$}}

\put(3830,1850){\makebox(0,0)[lb]{$b_{\sss N+1}$}}

\qbezier[90](1000,3000)(2500,3000)(4000,3000)

\Thicklines \qbezier(4200,500)(4200,1350)(3350,1830)
\qbezier(2800,500)(2600,1350)(3200,2130)

\path(3400,1900)(3600,1900) \path(3400,2050)(3600,2050)

\thinlines \path(3400,1750)(2590, 2760) \put(2300, 3000){
\begin{rotate}{130}
\ellipse{600}{300}
\end{rotate}
}
%\path(2100,3175)(2250, 3305)
%\path(2050,3225)(2200, 3355)
\put(1870,3200){\makebox(0,0)[lb]{$\eb$}}
%\qbezier(2125,3290)(1600,4000)(1500,5000)

%\path(1000,5000)(4000,5000)
%\put(4050,4950){\makebox(0,0)[lb]{$n$}}

%Second figure
\put(8500, 1700){\ellipse{200}{400}} \path(8500,1350)(8500,1500) \put(8500,
1150){\ellipse{200}{400}} \put(9130,2900){\makebox(0,0)[lb]{$t_{\yvec_1}$}}

\put(8430,600){\makebox(0,0)[lb]{$\tb_{\sss N}$}}
\put(6530,800){\makebox(0,0)[lb]{$\tilde\bC_{\sss N-1}$}}

\put(8830,1850){\makebox(0,0)[lb]{$b_{\sss N+1}$}}

\qbezier[90](6000,3000)(7500,3000)(9000,3000)

\Thicklines \qbezier(9200,600)(9200,1350)(8400,1830)
\path(8400,1900)(8600,1900) \path(8400,2050)(8600,2050)
\qbezier(7800,600)(6800,1350)(6900,3830)
%\path(6625,3710)(6775, 3840)
%\path(6675,3665)(6825, 3790)
\thinlines \put(6880, 3500){
\begin{rotate}{130}
\ellipse{600}{300}
\end{rotate}
} \path(8400,1750)(7133,3265) \put(6370,3700){\makebox(0,0)[lb]{$\eb$}}
%\qbezier(6715,3790)(6400,4300)(6500,5000)
%\path(6000,5000)(9000,5000)
%\put(9050,4950){\makebox(0,0)[lb]{$n$}}

}
\end{picture}
}
\end{center}
\caption{\label{fig-5} Schematic representations of the events
(a)~$E'(\tb_{\sss N}, \bb_{\sss N+1};\tilde\bC_{\sss N-1})\cap
G_{t_{\yvec_1}}^{\sss(1)}(\tb_{\sss N},\eb;\tilde\bC_{\sss N-1})$ and
(b)~$E'(\tb_{\sss N},\bb_{\sss N+1}; \tilde\bC_{\sss N-1})\cap
G_{t_{\yvec_1}}^{\sss(2)}(\tb_{\sss N},\eb; \tilde\bC_{\sss N-1})$.}
\end{figure}
%%%%%FIGFIGFIGFIGFIGFIGFIGFIGFIGFIGFIGFIGFIGFIGFIGFIGFIGFIGFIG
%%%%%FIGFIGFIGFIGFIGFIGFIGFIGFIGFIGFIGFIGFIGFIGFIGFIGFIGFIGFIG

For the contribution to $\phi^{\sss(N,N_1,0)}(\yvec_1,\yvec_2)_{\sss-}$ from
$G_{t_{\yvec_1}}^{\sss(2)}(\tb_{\sss N},\eb;\tilde\bC_{\sss N-1})$, we can
assume that $N\ge1$ because $G^{\sss(2)}_{t_{\yvec_1}}(\tb_{\sss0},\eb;
\bC_{\sss-1})=\vno$ when $N=0$ (cf., \refeq{convention}).  Note that, by
Lemma~\ref{lem-inclG}, $G_{t_{\yvec_1}}^{\sss(2)}(\tb_{\sss N},\eb;\tilde
\bC_{\sss N-1})$ is a subset of ${\cal E}_{t_{\yvec_1}}(\tb_{\sss N},
\eb;\tilde\bC_{\sss N-1})$, which is an increasing event. Therefore, similarly
to the analysis in \refeq{remtildephi1}, we use Lemma~\ref{lem-remtilde} to
obtain
\begin{align}\lbeq{remtildephi3}
&\sum_{\substack{b_N,b_{N+1},e\\ b_{N+1}\ne e}}p_{b_N}p_{b_{N+1}}p_e
 \,M_{\bb_N}^{\sss(N)}\bigg(\tilde\mE^{b_{N+1}}\Big[\indic_{E'(\tb_N,
 \bb_{N+1};\tilde\bC_{N-1})}\indic_{G_{t_{\yvec_1}}^{\sss(2)}(\tb_{\sN},
 \eb;\tilde\bC_{\sN-1})}B_\delta^{\sss(N_1)}(\tb_{\sss N+1},\yvec_1;
 \bC_{\sN})\Big]\bigg)\,\delta_{\te,\yvec_2}\nn\\
&\le\sum_{\substack{b_N,b_{N+1},e\\ b_{N+1}\ne e}}
 p_{b_N}p_{b_{N+1}}p_e\,M_{\bb_N}^{\sss(N)}\bigg(\mE\Big[\indic_{E'(\tb_N,
 \bb_{N+1};\tilde\bC_{N-1})}\indic_{{\cal E}_{t_{\yvec_1}}(\tb_{\sss N},
 \eb;\tilde\bC_{\sss N-1})}B_\delta^{\sss(N_1)}(\tb_{\sss N+1},\yvec_1;
 \bC_{\sN})\Big]\bigg)\,\delta_{\te,\yvec_2}\nn\\
&=\sum_{\substack{b_{N+1},e\\ b_{N+1}\ne e}}p_{b_{N+1}}p_eM_{\bb_{N+1}}
 ^{\sss(N+1)}\Big(\indic_{{\cal E}_{t_{\yvec_1}}(\tb_{\sss N},\eb;\tilde
 \bC_{\sss N-1})}B_\delta^{\sss(N_1)}(\tb_{\sss N+1},\yvec_1;\bC_{\sN})
 \Big)\,\delta_{\te,\yvec_2}.
\end{align}
The bound \refeq{phi3bd} for $N_2=0$ now follows from Lemma~\ref{lem-bdsphi}.
This completes the proof of Lemma~\ref{lem-phibdslow} for $N_2=0$.
\end{proof}

\begin{proof}[Proof of Lemma~\ref{lem-phibdslow} for $N_2\ge1$]
First we prove the bound on $\phi^{\sss(N,N_1,1)}(\yvec_1,\yvec_2)_{\sss+}$,
where, by \refeq{phiNNN}--\refeq{H1incl}, \refeq{BdeltaNdef} and \refeq{piPbd},
\begin{align}\lbeq{varphiA1repp}
\phi^{\sss(N,N_1,1)}(\yvec_1,\yvec_2)_{\sss+}\le\sum_{\substack{b_{N+1},e\\
 b_{N+1}\ne e}}p_{b_{N+1}}p_e\,\tilde M_{b_{N+1}}^{\sss(N+1)}\Big(\indic_{
 V_{t_{\yvec_1}-\vep}(\tb_N,\eb)}\,B_\delta^{\sss(N_1)}(\tb_{\sss N+1},
 \yvec_1;\bC_{\sN})\,B_\delta^{\sss(0)}(\te,\yvec_2;\tilde\bC^e_{\sN})\Big).
\end{align}
Following the argument around \refeq{MP1pfafirst}--\refeq{MP1pfa} , we have
\begin{align}\lbeq{varphiA1rep}
\refeq{varphiA1repp}\le\sum_{\substack{b_{N+1},e,e'\\ \te'=\yvec_2}}\sum_{
 \cvec}p_{b_{N+1}}M_{\bb_{N+1}}^{\sss(N+1)}\Big(\indic_{V_{t_{\yvec_1}-
 \vep}(\tb_N,\eb)\,\cap\,\{\cvec\in\tilde\bC^e_{\sN}\}}\,B_\delta^{\sss(N_1)}
 (\tb_{\sss N+1},\yvec_1;\tilde\bC_{\sN})\Big)\,p_eP^{\sss(0)}(\te,\eb';\cvec)\,
 p_{e'},
\end{align}
where $\tilde\bC_{\sss N}=\tilde\bC^{b_{\sss N+1}}(\tb_{\sss N})$.
%By \refeq{P0Cdef}, \refeq{S0C-def} and \refeq{S00-def}--\refeq{S00-def},
%there are two contributions to $P^{\sss(0)}(\te,\eb';\tilde\bC^e_{\sN})$,
%namely, one containing an indicator of $\cvec\in \tilde\bC^e_{\sN}$ and
%one containing an indicator $(\cvec, \wvec)\in \tilde\bC^e_{\sN}$.
By \refeq{Vdiagrbd} with $\vec\xvec=\cvec$
%, resp.\ $\vec\xvec=\varnothing$ and $b=(\cvec, \wvec)$,
and \refeq{tilFzrep}, we obtain
\begin{align}\lbeq{varphiA1rep2}
\refeq{varphiA1rep}&\le\sum_{\substack{b:\tb=\yvec_1\\ e':\te'=\yvec_2}}
 p_b\,p_{e'}\underbrace{\sum_\twop\sum_{\cvec}\sum_eR^{\sss(N+N_1)}(\bb,
 \eb;\ell^\twop(\cvec))\,p_eP^{\sss(0)}(\te,\eb';\cvec)}_{R^{\sss(N+N_1)}
 \big(\bb,\yvec;2_{\yvec}^{\sss(1)}(\cvec),2_{\cvec}^{\sss(0)}(\eb')\big)}
 =\sum_{\substack{b:\tb=\yvec_1\\ e':\te'=\yvec_2}}p_b\,p_{e'}
 R^{\sss(N+N_1)}(\bb,\yvec;E_{\yvec}(\eb')).
\end{align}
%Here we note that the Construction~$\ell((\cvec, \wvec))$ in
%\refeq{Vdiagrbd} for $\vec\xvec=\varnothing$ and $b=(\cvec, \wvec)$ produces
%the factor $\lambda \vep D(\wvec-\cvec)$ that is present in \refeq{S0-def}.
This shows that
\begin{align}\lbeq{varphiA1rep3}
\phi^{\sss(N,N_1,1)}(\yvec_1,\yvec_2)_{\sss+}\le\sum_{\uvec_1,\uvec_2}
 p_\vep(\yvec_1-\uvec_1)\,p_\vep(\yvec_2-\uvec_2)\,R^{\sss(N+N_1,1)}
 (\uvec_1,\uvec_2),
\end{align}
as required.

To extend the proof of \refeq{phi1bd} to all $N_2$, we estimate
$B_{\delta}^{\sss (N_2)}(\te, \yvec_2;\tilde{\bC}_{\sss N}^e)$ using
\refeq{piPbd}. Since the bound on $B_{\delta}^{\sss(N_2)}(\te,\yvec_2;
\tilde\bC_{\sN}^e)$ is the same as $N_2-1$ applications of Construction~$E$
to $P^{\sss(0)}(\te,\uvec_2;\tilde\bC^e_{\sN})$, the bound follows by the
definition of $R^{\sss(N+N_1,N_2)}(\yvec_1,\yvec_2)$.

The proof of \refeq{phi3bd} for
$\phi^{\sss(N,N_1,N_2)}(\yvec_1,\yvec_2)_{\sss-}$ proceeds similarly,
when we use \refeq{Ediagrbd} rather than \refeq{Vdiagrbd}.  This
completes the proof of Lemma~\ref{lem-phibdslow}.
\end{proof}

\subsubsection{Proof of Lemma~\ref{lem-bdsphi}}\label{sec-pfProp67}
\begin{proof}[Proof of Lemma~\ref{lem-bdsphi} for $N_1=0$]
Since $B_\delta^{\sss(0)}(\tb_{\sss N+1},\yvec_1;\tilde\bC_{\sss
N})=\delta_{\tb_{N+1},\yvec_1}$, the sums over $b_{\sss N+1}$ on the left-hand
sides of \refeq{Vdiagrbd}--\refeq{Ediagrbd} are identical to the sums on the
right-hand sides over $b$ with $\tb=\yvec_1$.  We also note that
$t_{\yvec_1}=t_{\bb_{N+1}}+\vep$ in this case.  By the definitions of
$R^{\sss(N)}$ and $Q^{\sss(N)}$ in \refeq{PN0def}--\refeq{QN-def}, to prove
Lemma~\ref{lem-bdsphi} for $N_1=0$ it suffices to show
\begin{align}
M_{\bb_{N+1}}^{\sss(N+1)}\Big(\indic_{V_{t_{\bb_{N+1}}}(\tb_N,\yvec_2)\,\cap\,
 \{\vec\xvec\in\tilde\bC_N\}}\Big)&\le P^{\sss(N)}\big(\bb_{\sN+1};V_{t_{\bb_{N
 +1}}}(\yvec_2),\ell(\vec\xvec)\big)\qquad(N\ge0),\lbeq{Vdiagrbd:N1=0}\\
M_{\bb_{N+1}}^{\sss(N+1)}\Big(\indic_{\Ecal_{t_{\bb_{N+1}}+\vep}(\tb_N,\yvec_2;
 \tilde\bC_{N-1})\,\cap\,\{\vec\xvec\in\tilde\bC_N\}}\Big)&\le P^{\sss(N)}\big(
 \bb_{\sN+1};\Ecal_{t_{\bb_{N+1}}}(\yvec_2),\ell(\vec\xvec)\big)\qquad(N\ge1).
 \lbeq{Ediagrbd:N1=0}
\end{align}
%and a similar bound with $\{\vec\xvec\in\tilde\bC_{\sN}\}$ on the left-hand
%side replaced by $\{b\in\tilde\bC_{\sN}\}\cap\{\vec\xvec\in\tilde\bC_{\sN}\}$
%and with Construction~$\ell(\vec\xvec)$ on the right-hand side replaced by
%Construction~$\ell(b)$ followed with Construction~$\ell(\vec\xvec)$.  Below,
%we shall only deal with the cases in
%\refeq{Vdiagrbd:N1=0}--\refeq{Ediagrbd:N1=0}, the extensions to deal with
%$\{b\in\tilde\bC_{\sN}\}$ are minor and will be omitted.
%
By the nested structure of $M^{\sss(N+1)}_{\bb_{N+1}}$ (cf.,
\refeq{MN-rec}),
    \begin{align}
    \text{LHS of \refeq{Vdiagrbd:N1=0}}&=\sum_{b_N}p_{b_N}M^{\sss(N)}_{\bb_N}\bigg(
     M_{\tb_N,\bb_{N+1};\tilde\bC_{N-1}}^{\sss(1)}\Big(\indic_{V_{t_{\bb_{N+1}}}
     (\tb_N,\yvec_2)\,\cap\,\{\vec\xvec\in\tilde\bC_N\}}\Big)\bigg),\\
    \text{LHS of \refeq{Ediagrbd:N1=0}}&=\sum_{b_{N-1}}p_{b_{N-1}}M^{\sss(N-1)}_{
     \bb_{N-1}}\bigg(M_{\tb_{N-1},\bb_{N+1};\tilde\bC_{N-2}}^{\sss(2)}\Big(\indic_{
     \Ecal_{t_{\bb_{N+1}}+\vep}(\tb_N,\yvec_2;\tilde\bC_{N-1})\,\cap\,\{\vec\xvec
     \in\tilde\bC_N\}}\Big)\bigg).
    \end{align}
On the other hand, by the recursive definition of $P^{\sss(N)}$ (cf.,
\refeq{recPNM}),
    \begin{align}
    \text{RHS of \refeq{Vdiagrbd:N1=0}}&=\sum_{b_N}p_{b_N}\sum_{\cvec}P^{\sss(N
     -1)}(\bb_{\sss N};\ell(\cvec))\,P^{\sss(0)}\big(\tb_{\sss N},\bb_{\sss N+1}
     ;\cvec,V_{t_{\bb_{N+1}}}(\yvec_2),\ell(\vec\xvec)\big),
     \lbeq{Vdiagrbd:N1=0rhs}\\
    \text{RHS of \refeq{Ediagrbd:N1=0}}&=\sum_{b_{N-1}}p_{b_{N-1}}\sum_{\cvec}
     P^{\sss(N-2)}(\bb_{\sss N-1};\ell(\cvec))\,P^{\sss(1)}\big(\tb_{\sss N-1},
     \bb_{\sss N+1};\cvec,\Ecal_{t_{\bb_{\sss N+1}}}(\yvec_2),\ell(\vec\xvec)
     \big),\lbeq{Ediagrbd:N1=0rhs}
    \end{align}
where Construction~$\ell(\cvec)$ in \refeq{Vdiagrbd:N1=0rhs} is applied to
the $(N-1)^\text{th}$ admissible lines of $P^{\sss(N)}$ and that in
\refeq{Ediagrbd:N1=0rhs} is applied to the $(N-2)^\text{th}$ admissible
lines.  By comparing the above expressions and following the argument around
\refeq{MP1pfafirst}--\refeq{MP1pfa}, it thus suffices
to prove
    \begin{align}
    M_{\tb_N,\bb_{N+1};\tilde\bC_{N-1}}^{\sss(1)}\Big(\indic_{V_{t_{\bb_{N+1}}}
     (\tb_N,\yvec_2)\,\cap\,\{\vec\xvec\in\tilde\bC_N\}}\Big)&\le P^{\sss(0)}\big(
     \tb_{\sss N},\bb_{\sss N+1};\tilde\bC_{\sss N-1},V_{t_{\bb_{N+1}}}(\yvec_2),
     \ell(\vec\xvec)\big),\lbeq{Vdiagrbd:N1=0rewr}\\
    M_{\tb_{N-1},\bb_{N+1};\tilde\bC_{N-2}}^{\sss(2)}\Big(\indic_{\Ecal_{t_{\bb_{N
     +1}}+\vep}(\tb_N,\yvec_2;\tilde\bC_{N-1})\,\cap\,\{\vec\xvec\in\tilde\bC_N\}}
     \Big)&\le P^{\sss(1)}\big(\tb_{\sss N-1},\bb_{\sss N+1};\tilde\bC_{\sss N-2},
     \Ecal_{t_{\bb_{\sss N+1}}}(\yvec_2),\ell(\vec\xvec)\big).
     \lbeq{Ediagrbd:N1=0rewr}
    \end{align}

First we prove \refeq{Vdiagrbd:N1=0rewr}.  Note that, by \refeq{B0def},
    \begin{align}\lbeq{Vincl}
    \text{LHS of \refeq{Vdiagrbd:N1=0rewr}}=\mP\Big(E'(\tb_{\sN},\bb_{\sss N+1};
     \tilde\bC_{\sss N-1})\cap V_{t_{\bb_{N+1}}}(\tb_{\sN},\yvec_2)\cap\{\vec
     \xvec\in\tilde\bC_{\sN}\}\Big).
    \end{align}
Using \refeq{H1incl}, we obtain
    \begin{align}
    V_{t_{\bb_{N+1}}}(\tb_{\sss N},\yvec_2)\subseteq\bigcup_{\vvec:t_{\vvec}=t_{
     \bb_{N+1}}}\bigcup_{\zvec}\Big\{\{\tb_{\sN}\conn\zvec\}\circ\{\zvec\conn
     \vvec\}\circ\{\vvec\conn\yvec_2\}\circ\{\zvec\conn\yvec_2\}\Big\},
    \end{align}
hence
\begin{align}\lbeq{Vincl-bd}
\refeq{Vincl}\le\sum_{\vvec:t_{\vvec}=t_{\bb_{N+1}}}\mP\bigg(&E'(\tb_{\sN},
 \bb_{\sss N+1};\tilde\bC_{\sss N-1})\cap\{\vec\xvec\in\tilde\bC_{\sN}\}\nn\\
&\quad\cap\bigcup_{\zvec}\Big\{\{\tb_{\sN}\conn\zvec\}\circ\{\zvec\conn\vvec\}
 \circ\{\vvec\conn\yvec_2\}\circ\{\zvec\conn\yvec_2\}\Big\}\bigg).
\end{align}
The event $E'(\tb_{\sN},\bb_{\sss N+1};\tilde\bC_{\sss N-1})$ implies that
there are disjoint connections necessary to obtain the bounding diagram
$P^{\sss(0)}(\tb_{\sss N},\bb_{\sss N+1};\tilde\bC_{\sss N-1})$.  The event
$\{\tb_{\sN}\conn\vvec\}$ ($=\bigcup_{\zvec}
\{\{\tb_{\sN}\conn\zvec\}\circ\{\zvec\conn\vvec\}\}$) can be accounted for by
an application of Construction~$\ell(\vvec)$, and then $\{\vvec\conn\yvec_2\}
\circ\{\zvec\conn\yvec_2\}$ can be accounted for by an application of
Construction~$2^{\sss(0)}_{\vvec}(\yvec_2)$.  The event
$\{\vec\xvec\in\tilde\bC_{\sN}\}$ implies additional connections, accounted for
by an application of Construction~$\ell(\vec\xvec)$.  By \refeq{PN0def}, this
completes the proof of \refeq{Vdiagrbd:N1=0rewr}.

Next, we prove \refeq{Ediagrbd:N1=0rewr}.  Note that, by \refeq{M-def},
\begin{align}\lbeq{Ediagrbd:N1=0rewr-bd1}
\text{LHS of \refeq{Ediagrbd:N1=0rewr}}=\sum_{b_N}p_{b_N}M_{\tb_{N-1},\bb_N;
 \tilde\bC_{N-2}}^{\sss(1)}\bigg(\mP\Big(E'(\tb_{\sN},\bb_{\sN+1};\tilde
 \bC_{\sN-1})\cap\Ecal_{t_{\bb_{N+1}}+\vep}(\tb_{\sN},\yvec_2;\tilde\bC_{\sN
 -1})\cap\{\vec\xvec\in\tilde\bC_{\sN}\}\Big)\bigg).
\end{align}
Using \refeq{Ecaldefrep} and following the argument below \refeq{Vincl-bd}, we
obtain
\begin{align}
&\mP\Big(E'(\tb_{\sN},\bb_{\sss N+1};\tilde\bC_{\sss N-1})\cap\Ecal_{t_{\bb_{N
 +1}}+\vep}(\tb_{\sN},\yvec_2;\tilde\bC_{\sss N-1})\cap\{\vec\xvec\in\tilde
 \bC_{\sN}\}\Big)\nn\\
&\le\mP\Bigg(E'(\tb_{\sN},\bb_{\sss N+1};\tilde\bC_{\sss N-1})\cap\bigcup_{
 \cvec,\wvec\in\tilde\bC_{N-1}}\bigcup_{\substack{\zvec\in\Lambda\\ t_{\zvec}
 >t_{\bb_{N+1}}}}\bigg\{\Big\{\{\tb_{\sN}\conn\zvec\}\circ\{\zvec\conn\wvec\}
 \circ\{\wvec\conn\yvec_2\}\circ\{\zvec\conn\yvec_2\}\Big\}\nn\\
&\hskip7pc\cap\Big\{\{\cvec=\wvec,~\zvec\centernot\conn\wvec_-\}\cup\{\cvec\ne
 \wvec_-,~(\cvec,\wvec)\in\tilde\bC_{\sN-1}\}\Big\}\bigg\}\cap\{\vec\xvec\in
 \tilde\bC_{\sN}\}\Bigg).
\end{align}
Similarly to the above,
$E'(\tb_{\sN},\bb_{\sss N+1};\tilde\bC_{\sss N-1})$ implies the
existence of disjoint connections necessary to obtain the bounding diagram
$P^{\sss(0)}(\tb_{\sss N},\bb_{\sss N+1};\tilde\bC_{\sss N-1})$.  The event
subject to the union over $\zvec$ is accounted for by an application of
Construction~$B(\uvec)$ followed by multiplication of
$\sum_{\wvec:t_{\wvec}>t_{\bb_{N+1}}}S^{\sss(0)}(\uvec,\wvec;\tilde\bC_{\sN-1},
2_{\wvec}^{\sss(0)}(\yvec_2))$, resulting in the bounding diagram
\begin{align}\lbeq{FC-def}
%F(\tb_{\sN},\bb_{\sN+1},\yvec_2;\tilde\bC_{\sN-1})=
\sum_{\substack{\uvec,
 \wvec\\ t_{\wvec}>t_{\bb_{N+1}}}}P^{\sss(0)}\big(\tb_{\sN},\bb_{\sN+1};
 \tilde\bC_{\sN-1},B(\uvec)\big)~S^{\sss(0)}\big(\uvec,\wvec;\tilde\bC_{\sN
 -1},2_{\wvec}^{\sss(0)}(\yvec_2)\big).
\end{align}
The event $\{\vec\xvec\in\tilde\bC_{\sN}\}$ is accounted for by applying
Construction~$\ell(\vec\xvec_I)$ to
$P^{\sss(0)}(\tb_{\sN},\bb_{\sN+1};\tilde\bC_{\sN-1},B(\uvec))$ and Construction~$\ell(\vec\xvec_{J\setminus I})$ to
$S^{\sss(0)}(\uvec,\wvec;\tilde\bC_{\sN-1},2_{\wvec}^{\sss(0)}(\yvec_2))$,
followed by the summation over $I\subset J$.  %We denote the resulting diagram
%by $F(\tb_{\sN},\bb_{\sN+1},\yvec_2;\tilde\bC_{\sN-1},\ell(\vec\xvec))$.
Then, by \refeq{S0C-def} and \refeq{P0Cdef}, we have
\begin{align}\lbeq{Ediagrbd:N1=0rewr-bd2}
\refeq{Ediagrbd:N1=0rewr-bd1}\le\sum_{I\subset J}\sum_{\substack{\avec,\uvec,
 \wvec\\ t_{\wvec}>t_{\bb_{N+1}}}}\sum_{b_N}p_{b_N}&\bigg(M_{\tb_{N-1},\bb_N;
 \tilde\bC_{N-2}}^{\sss(1)}\Big(P^{\sss(0)}\big(\tb_{\sN},\bb_{\sN+1};\tilde
 \bC_{\sN-1},B(\uvec),\ell(\vec\xvec_I)\big)\,\ind{\avec\in\tilde\bC_{N-1}}
 \Big)\nn\\
&\qquad\times S^{\sss(0,0)}\big(\uvec,\wvec;\avec,2_{\wvec}^{\sss(0)}(\yvec_2),
 \ell(\vec\xvec_{J\setminus I})\big)\nn\\[5pt]
&+M_{\tb_{N-1},\bb_N;\tilde\bC_{N-2}}^{\sss(1)}\Big(P^{\sss(0)}\big(\tb_{\sN},
 \bb_{\sN+1};\tilde\bC_{\sN-1},B(\uvec),\ell(\vec\xvec_I)\big)\,\ind{(\avec,
 \wvec)\in\tilde\bC_{N-1}}\Big)\nn\\
&\qquad\times(1-\delta_{\avec,\wvec_-})\,S^{\sss(0,1)}\big(\uvec,\wvec;\avec,
 2_{\wvec}^{\sss(0)}(\yvec_2),\ell(\vec\xvec_{J\setminus I})\big)\bigg).
\end{align}
Note that $P^{\sss(0)}(\tb_{\sN},\bb_{\sN+1};\tilde\bC_{\sN-1},B(\uvec))$ is a
random variable (since $\tilde\bC_{\sN-1}$ is random) which depends only on
bonds in the time interval $[t_{\tb_N},t_{\bb_{N+1}}]$, and that $t_{\avec}\ge
t_{\bb_{\sN+1}}$, which is due to \refeq{S00-def}--\refeq{S01-def} and the
restriction on $t_{\wvec}$. Therefore, by the Markov property (cf.,
\refeq{MN-indbond}) and \refeq{P0cdef},
\begin{align}\lbeq{Ediagrbd:N1=0rewr-bd21}
\refeq{Ediagrbd:N1=0rewr-bd2}\le\sum_{I\subset J}\sum_{\substack{\avec,\uvec\\
 t_{\avec}\ge t_{\bb_{N+1}}}}&\sum_{b_N}p_{b_N}M_{\tb_{N-1},\bb_N;\tilde\bC_{N
 -2}}^{\sss(1)}\Big(P^{\sss(0)}\big(\tb_{\sN},\bb_{\sN+1};\tilde\bC_{\sN-1},
 B(\uvec),\ell(\vec\xvec_I)\big)\,\ind{\avec\in\tilde\bC_{N-1}}\Big)\nn\\
&\times P^{\sss(0)}\big(\uvec,\yvec_2;\avec,\ell(\vec\xvec_{J\setminus I})
 \big).
\end{align}

We need some care to estimate $M_{\tb_{N-1},\bb_N;\tilde\bC_{N-2}}^{\sss(1)}
\big(P^{\sss(0)}(\tb_{\sN},\bb_{\sN+1};\tilde\bC_{\sN-1},B(\uvec),\ell(\vec
\xvec_I))\ind{\avec\in\tilde\bC_{N-1}}\big)$ in \refeq{Ediagrbd:N1=0rewr-bd21}.
First, by \refeq{S0C-def} and $t_{\vvec}\le t_{\bb_{N+1}}\le t_{\avec}$, we
obtain
\begin{align}\lbeq{Ediagrbd:N1=0rewr-bd22}
&M_{\tb_{N-1},\bb_N;\tilde\bC_{N-2}}^{\sss(1)}\Big(P^{\sss(0)}\big(\tb_{\sN},
 \bb_{\sN+1};\tilde\bC_{\sN-1},B(\uvec),\ell(\vec\xvec_I)\big)\,\ind{\avec\in
 \tilde\bC_{N-1}}\Big)\nn\\
&\le\sum_{\substack{\cvec,\vvec\\ t_{\vvec}\le t_{\avec}}}\bigg(M_{\tb_{N-1},
 \bb_N;\tilde\bC_{N-2}}^{\sss(1)}\big(\ind{\cvec,\avec\in\tilde\bC_{N-1}}\big)
 \,S^{\sss(0,0)}\big(\tb_{\sN},\vvec;\cvec,2_{\vvec}^{\sss(0)}(\bb_{\sN+1}),
 B(\uvec),\ell(\vec\xvec_I)\big)\\
&\qquad+M_{\tb_{N-1},\bb_N;\tilde\bC_{N-2}}^{\sss(1)}\big(\ind{(\cvec,\vvec)\in
 \tilde\bC_{N-1}}\ind{\avec\in\tilde\bC_{N-1}}\big)\,(1-\delta_{\cvec,\vvec_-})
 \,S^{\sss(0,1)}\big(\tb_{\sN},\vvec;\cvec,2_{\vvec}^{\sss(0)}(\bb_{\sN+1}),
 B(\uvec),\ell(\vec\xvec_I)\big)\bigg).\nn
\end{align}
Then, by the BK inequality, we have
\begin{align}
&M_{\tb_{N-1},\bb_N;\tilde\bC_{N-2}}^{\sss(1)}\big(\ind{(\cvec,\vvec)\in\tilde
 \bC_{N-1}}\,\ind{\avec\in\tilde\bC_{N-1}}\big)\,(1-\delta_{\cvec,\vvec_-})
 \nn\\
&\le M_{\tb_{N-1},\bb_N;\tilde\bC_{N-2}}^{\sss(1)}\Big(\ind{\cvec\in\tilde
 \bC_{N-1}}\,\big(\ind{(\cvec,\vvec)\text{ occupied}\}\circ\{\avec\in\tilde
 \bC_{N-1}}+\ind{(\cvec,\vvec)\conn\avec}\big)\Big)\,(1-\delta_{\cvec,
 \vvec_-})\nn\\
&\le\Big(M_{\tb_{N-1},\bb_N;\tilde\bC_{N-2}}^{\sss(1)}\big(\ind{\cvec,\avec
 \in\tilde\bC_{N-1}}\big)+M_{\tb_{N-1},\bb_N;\tilde\bC_{N-2}}^{\sss(1)}\big(
 \ind{\cvec\in\tilde\bC_{N-1}}\big)\,\tau(\avec-\vvec)\Big)\,\lamb\vep D(\vvec
 -\cvec).
\end{align}
However, by a version of \refeq{MPIgen}, we have
\begin{align}
M_{\tb_{N-1},\bb_N;\tilde\bC_{N-2}}^{\sss(1)}(\ind{\cvec\in\tilde\bC_{N-1}})
 &\le\sum_\twop P^{\sss(0)}\big(\tb_{\sN-1},\bb_{\sN};\tilde\bC_{\sN-2},
 \ell^\twop(\cvec)\big),\lbeq{Ediagrbd:N1=0rewr-bd22sp1}\\
M_{\tb_{N-1},\bb_N;\tilde\bC_{N-2}}^{\sss(1)}(\ind{\cvec,\avec\in\tilde\bC_{N
 -1}})&\le\sum_\twop P^{\sss(0)}\big(\tb_{\sN-1},\bb_{\sN};\tilde\bC_{\sN-2},
 \ell^\twop(\cvec),\ell(\avec)\big),\lbeq{Ediagrbd:N1=0rewr-bd22sp2}
\end{align}
where $\sum_\twop$ is the sum over the admissible lines of the diagram
$P^{\sss(0)}(\tb_{\sN-1},\bb_{\sN};\tilde\bC_{\sN-2})$.  Therefore, the sum of
the second line on the right-hand side of \refeq{Ediagrbd:N1=0rewr-bd22} is
bounded by
\begin{align}\lbeq{Ediagrbd:N1=0rewr-bd23}
&\sum_\twop\bigg(\sum_{\cvec,\vvec}P^{\sss(0)}\big(\tb_{\sN-1},\bb_{\sN};\tilde
 \bC_{\sN-2},\ell^\twop(\cvec),\ell'(\avec)\big)\,\lamb\vep D(\vvec-\cvec)\,
 S^{\sss(0,1)}\big(\tb_{\sN},\vvec;\cvec,2_{\vvec}^{\sss(0)}(\bb_{\sN+1}),
 B(\uvec),\ell(\vec\xvec_I)\big)\nn\\
&\quad+\sum_{\vvec}\tau(\avec-\vvec)\underbrace{\sum_{\cvec}P^{\sss(0)}\big(
 \tb_{\sN-1},\bb_{\sN};\tilde\bC_{\sN-2},\ell^\twop(\cvec)\big)\,\lamb\vep
 D(\vvec-\cvec)\,S^{\sss(0,1)}\big(\tb_{\sN},\vvec;\cvec,2_{\vvec}^{\sss(0)}
 (\bb_{\sN+1}),B(\uvec),\ell(\vec\xvec_I)\big)}_{\equiv\,\Dcal(\vvec)}\bigg).
\end{align}
By the definition of Construction~$\ell^\twop(\cvec)$, the diagram
function $\Dcal(\vvec)$ can be written as
\begin{align}
\Dcal(\vvec)=\sum_{\cvec'}P^{\sss(0)}\big(\tb_{\sN-1},\bb_{\sN};\tilde\bC_{\sN
 -2},B^\twop(\cvec')\big)~(\tau*\lamb\vep D)(\vvec-\cvec')~S^{\sss(0,1)}\big(
 \tb_{\sN},\vvec;\cvec,2_{\vvec}^{\sss(0)}(\bb_{\sN+1}),B(\uvec),\ell(\vec
 \xvec_I)\big).
\end{align}
Thanks to this identity, the second line of \refeq{Ediagrbd:N1=0rewr-bd23} is
regarded as the result of applying Construction~$B^\twop(\vvec)$ to the diagram
line $(\tau*\lamb\vep D)(\vvec-\cvec')$ \emph{at} $\vvec$, followed by a
multiplication of $\tau(\avec-\vvec)$ and a summation over $\vvec$.  This is
not accounted for in the first line of \refeq{Ediagrbd:N1=0rewr-bd23} and is
the difference between the result of Construction~$\ell(\avec)$ and that of
Construction~$\ell'(\avec)$ in the first line of
\refeq{Ediagrbd:N1=0rewr-bd23}.  By this observation, we obtain
\begin{align}\lbeq{Ediagrbd:N1=0rewr-bd24}
\refeq{Ediagrbd:N1=0rewr-bd23}=\sum_\twop\sum_{\cvec,\vvec}P^{\sss(0)}\big(
 \tb_{\sN-1},\bb_{\sN};\tilde\bC_{\sN-2},\ell^\twop(\cvec),\ell(\avec)\big)
 \,\lamb\vep D(\vvec-\cvec)\,S^{\sss(0,1)}\big(\tb_{\sN},\vvec;\cvec,
 2_{\vvec}^{\sss(0)}(\bb_{\sN+1}),B(\uvec),\ell(\vec\xvec_I)\big).
\end{align}
Therefore, by applying the bounds \refeq{Ediagrbd:N1=0rewr-bd22sp2} and
\refeq{Ediagrbd:N1=0rewr-bd24} to \refeq{Ediagrbd:N1=0rewr-bd22} and using
\refeq{S0-def} and \refeq{P0cdef},
\begin{align}\lbeq{Ediagrbd:N1=0rewr-bd25}
\refeq{Ediagrbd:N1=0rewr-bd21}\le\sum_{I\subset J}\sum_{\substack{\avec,\uvec
 \\ t_{\avec}\ge t_{\bb_{N+1}}}}\bigg(&\sum_\twop\sum_{\cvec}\sum_{b_N}P^{\sss
 (0)}\big(\tb_{\sN-1},\bb_{\sN};\tilde\bC_{\sN-2},\ell^\twop(\cvec),\ell(\avec)
 \big)\,p_{b_N}P^{\sss(0)}\big(\tb_{\sN},\bb_{\sN+1};\cvec,B(\uvec),\ell(\vec
 \xvec_I)\big)\bigg)\nn\\
&\times P^{\sss(0)}\big(\uvec,\yvec_2;\avec,\ell(\vec\xvec_{J\setminus I})
 \big).
\end{align}
Finally, by Lemma~\ref{lem-recPNMthetas} and a version of \refeq{QN-def}, we
obtain
\begin{align}\lbeq{Ediagrbd:N1=0rewr-bd26}
\refeq{Ediagrbd:N1=0rewr-bd25}&\le\sum_{I\subset J}\sum_{\substack{\avec,\uvec
 \\ t_{\avec}\ge t_{\bb_{N+1}}}}P^{\sss(1)}\big(\tb_{\sN-1},\bb_{\sN+1};\tilde
 \bC_{\sN-2},B(\uvec),\ell(\avec),\ell(\vec\xvec_I)\big)~P^{\sss(0)}\big(\uvec,
 \yvec_2;\avec,\ell(\vec\xvec_{J\setminus I})\big)\nn\\
&\le P^{\sss(1)}\big(\tb_{\sN-1},\bb_{\sN+1};\tilde\bC_{\sN-2},\Ecal_{t_{\bb_{
 \sN+1}}}(\yvec_2),\ell(\vec\xvec)\big).
\end{align}
This completes the proof of \refeq{Ediagrbd:N1=0rewr}.
\end{proof}

\begin{proof}[Proof of Lemma~\ref{lem-bdsphi} for $N_1\ge1$]
First we recall that, by \refeq{BdeltaNdef} and \refeq{piPbd},
\begin{align}\lbeq{BdeltaMbd}
B_\delta^{\sss(N_1)}(\tb_{\sN+1},\yvec_1;\tilde{\bC}_{\sN})&\le\sum_{b:\tb=
 \yvec_1}P^{\sss(N_1-1)}(\tb_{\sN+1},\bb;\tilde\bC_{\sN})\,p_b,%=\sum_{b:\tb=
% \yvec_1}S^{\sss(N_1-1)}\big(\tb_{\sN+1},\wvec;\tilde\bC_{\sN},2_{\wvec}^{\sss
% (0)}(\bb)\big)\,p_b,
%&=\sum_{b,e:\tb=\yvec_1}\sum_\twop\sum_{\zvec}P^{\sss(0)}\big(\tb_{\sN+1},
% \eb;\tilde\bC_{\sN},\ell^\twop(\zvec)\big)\,p_eP^{\sss(N_1-2)}(\te,\bb;
% \zvec)\,p_b.
\end{align}
where, by \refeq{PArecrep},
\begin{align}\lbeq{Vdiagrbd-prepr1}
P^{\sss(N_1-1)}(\tb_{\sN+1},\bb;\tilde\bC_{\sN})
 \begin{cases}
 =P^{\sss(0)}(\tb_{\sN+1},\bb;\tilde\bC_{\sN})&(N_1=1),\\[7pt]
 \dpst\le\sum_\twop\sum_{\zvec}\sum_eP^{\sss(0)}\big(\tb_{\sN+1},\eb;\tilde
  \bC_{\sN},\ell^\twop(\zvec)\big)p_eP^{\sss(N_1-2)}(\te,\bb;\zvec)\quad
  &(N_1\ge2).
 \end{cases}
\end{align}
Then, by following the argument between \refeq{Ediagrbd:N1=0rewr-bd21} and
\refeq{Ediagrbd:N1=0rewr-bd26} and using versions of \refeq{Vdiagrbd:N1=0} and
\refeq{recPNMthetas}, we obtain that, for $N_1\ge2$,
\begin{align}\lbeq{Vdiagrbd-prepr2}
&\sum_{b_{N+1}}p_{b_{N+1}}M^{\sss(N+1)}_{\bb_{N+1}}\Big(\indic_{V_{t_{\yvec_1}
 -\vep}(\tb_N,\yvec_2)\,\cap\,\{\vec\xvec\in\tilde\bC_N\}}\,P^{\sss(0)}\big(
 \tb_{\sN+1},\eb;\tilde\bC_{\sN},\ell^\twop(\zvec)\big)\Big)\nn\\
&\le\sum_{b_{N+1}}\sum_{\twop'}\sum_{\cvec}P^{\sss(N)}\big(\bb_{\sN+1};
 \ell^{\twop'}(\cvec),V_{t_{\yvec_1}-\vep}(\yvec_2),\ell(\vec\xvec)\big)\,
 p_{b_{N+1}}P^{\sss(0)}\big(\tb_{\sN+1},\eb;\cvec,\ell^\twop(\zvec)\big)\nn\\
&\le P^{\sss(N+1)}\big(\eb;V_{t_{\yvec_1}-\vep}(\yvec_2),\ell(\vec\xvec),
 \ell^\twop(\zvec)\big)=R^{\sss(N+1)}\big(\eb,\yvec_2;\ell(\vec\xvec),
 \ell^\twop(\zvec)\big).
\end{align}
For $N_1=1$, we simply ignore $\ell^\twop(\zvec)$ and replace $\eb$ by $\bb$,
which immediately yields \refeq{Vdiagrbd}.  For $N_1\ge2$, by a version of
\refeq{recPNMthetas}, we obtain
\begin{align}\lbeq{Vdiagrbd-pr}
\text{LHS of \refeq{Vdiagrbd}}&\le\sum_{b:\tb=\yvec_1}\sum_\twop\sum_{\zvec}
 \sum_eR^{\sss(N+1)}\big(\eb,\yvec_2;\ell(\vec\xvec),\ell^\twop(\zvec)\big)\,
 p_eP^{\sss(N_1-2)}(\te,\bb;\zvec)\,p_b\nn\\
&\le\sum_{b:\tb=\yvec_1}R^{\sss(N+N_1)}\big(\bb,\yvec_2;\ell(\vec\xvec)\big)\,
 p_b,
\end{align}
as required.

The inequality \refeq{Ediagrbd} for $N_1\ge1$ can be proved similarly.  This
completes the proof of Lemma~\ref{lem-bdsphi}.
\end{proof}

\section{Bound on $a(\vec{\xvec}_J)$}\label{ss:ebd}

From now on, we assume $r\equiv|J|+1\ge3$.  The Fourier transform of the
convolution equation \refeq{phidef} is
\begin{align}\lbeq{varphi-fourier-id}
\hat\zetav_{\vec t_J}(\vec k_J)=\hat A_{\vec t_J}(\vec k_J)+\ddsum_{s=
 \vep}^{\underline t}\widehat{(\tau_{s-\vep}*p_\vep)}(k)~\hat a_{\vec
 t_J-s}(\vec k_J),
\end{align}
where $\underline t=\underline t_J\equiv\min_{j\in J}t_j$ and
$k=\sum_{j\in J}k_j$.  We have already shown in Proposition~\ref{prop:BAbds}
and \refeq{treegraph} that
\begin{align}\lbeq{varphi-fourier-bd}
\big|\hat A_{\vec t_J}(\vec k_J)\big|\le\|A_{\vec t_J}\|_1\le\vep O\big((1+\bar
 t)^{r-3}\big),&&&&
\big|\widehat{(\tau_{s-\vep}*p_\vep)}(k)\big|\le\|\tau_{s-\vep}\|_1\,\|p_\vep
 \|_1\le O(1),
\end{align}
where $\bar t\equiv\bar t_J$ is the second-largest element of
$\vec t_J$.  To complete the proof of \refeq{vphibd}, we investigate
the sum $\dsum_{\vep\le s\le\underline t}|\hat a_{\vec t_J-s}(\vec k_J)|$.

First we recall \refeq{edef} and \refeq{aNxI-def} to see that
\begin{align}\lbeq{a1234-id}
a^{\sss(N)}(\vec\xvec_J)=a^{\sss(N)}(\vec\xvec_J;1)+\sum_{\vno\ne I
 \subsetneq J}&\bigg(a^{\sss(N)}(\vec\xvec_{J\setminus I},\vec\xvec_I;2)\nn\\
&+\sum_{\yvec_1}\Big(a^{\sss(N)}(\yvec_1,\vec\xvec_I;3)+a^{\sss(N)}(\yvec_1,
 \vec\xvec_I;4)\Big)\,\tau(\vec\xvec_{J\setminus I}-\yvec_1)\bigg).
\end{align}
Let
\begin{align}\lbeq{Deltat-def}
\Delta_t=
 \begin{cases}
 1&(d>6),\\
 \log(1+t)&(d=6),\\
 (1+t)^{1\wedge\frac{6-d}2}&(d<6).
 \end{cases}
\end{align}
The main estimate on the error terms are the following bounds:

\begin{prop}[\textbf{Bounds on the error terms}]\label{prop-abds}
Let $d>4$ and $\lamb=\lambce$.  For $r\equiv|J|+1\ge3$ and $N\ge0$,
\begin{align}
\bigg|\sum_{\vec x_J}a_{\vec t_J}^{\sss(N)}(\vec x_J;1)\bigg|&\le\bigg(
 \delta_{N,0}\sum_{j\in J}\delta_{t_j,0}+\vep^2\frac{O(\beta)^{1\vee N}}
 {1+\underline t}\bigg)O\big((1+\bar t)^{r-3}\big),\lbeq{a1bdN}\\
\bigg|\sum_{\vec x_J}a_{\vec t_{J\setminus I},\vec t_I}^{\sss(N)}(\vec x_{J
 \setminus I},\vec x_I;2)\bigg|&\le\vep\frac{O(\beta)^N(1+\beta\Delta_{\bar
 t})}{1+\underline t}\,O\big((1+\bar t)^{r-3}\big),\lbeq{a2bdN}\\
\bigg|\sum_{\vec x_J}\sum_{\yvec_1}a^{\sss(N)}(\yvec_1,\vec\xvec_I;3)\,\tau
 (\vec\xvec_{J\setminus I}-\yvec_1)\bigg|&\le\vep\frac{O(\beta)^{N+1}}{1+
 \underline t}\,\Delta_{\bar t}(1+\bar t)^{r-3},\lbeq{a3bdN}\\
\bigg|\sum_{\vec x_J}\sum_{\yvec_1}a^{\sss(N)}(\yvec_1,\vec\xvec_I;4)\,\tau
 (\vec\xvec_{J\setminus I}-\yvec_1)\bigg|&\le\vep\frac{O(\beta)^{1\vee N}}
 {1+\underline t}\,\Delta_{\bar t}(1+\bar t)^{r-3}.\lbeq{a4bdN}
\end{align}
For $d\le4$ and $\lamb=\lamb_\sT$, the same bounds with $\beta$ replaced by
$\hat\beta_\sT\equiv\beta_1T^{-\alphamin}$ hold.
\end{prop}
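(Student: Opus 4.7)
The plan is to mirror the two-stage analysis of Sections~\ref{s:bounds}--\ref{sec-Cbd}. For each of the four error terms $a^{\sss(N)}(\cdot;i)$, $i\in\{1,2,3,4\}$, I would first derive a diagrammatic upper bound in terms of the diagrams $P^{\sss(N)}$, $R^{\sss(N)}$ and $Q^{\sss(N)}$ already introduced in Sections~\ref{sss:Bdiagbd} and \ref{sec-diagramsphi}, decorated with an extra Construction~$\ell(\vec\xvec_I)$ or $\ell(\vec\xvec_{J\setminus I})$ that accounts for the remaining connections to the external endpoints.  Second, I would estimate the resulting diagrammatic expressions using Lemmas~\ref{lem:constr1}, \ref{lem:constr1'}, \ref{lem-Ptbds} and \ref{lem-PQNbds}, together with the tree-graph inequality \refeq{treegraph}, keeping a careful count of the powers of $\vep$ that emerge at each step so that the final bounds are uniform in $\vep\in(0,1]$.

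For $a^{\sss(N)}(\vec\xvec_J;1)$, the absence of a common cutting bond built into the event $\Fone(\vvec,\vec\xvec_J;\bC)$ forces the connections to the various $\xvec_j$'s to branch at some vertex $\zvec$ with $t_\zvec<\underline t$, and at least one spatial bond must emerge from $\zvec$ since $\zvec$ has only a single temporal out-bond.  Adapting the inclusion used in the proof of Lemma~\ref{lem-inclG}, I would establish a diagrammatic bound of the form $\sum_{\zvec\,:\,t_{\zvec}<\underline t} P^{\sss(N)}(\vvec,\zvec;\bC,\ell(\vec\xvec_J))$ multiplied by a $\lamb\vep D$ factor for the mandatory spatial bond out of $\zvec$.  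Applying Lemma~\ref{lem:constr1} to the $\ell(\vec\xvec_J)$-construction yields the factor $(1+\bar t)^{r-3}$ via \refeq{treegraph}, Lemma~\ref{lem-Ptbds} contributes the decay $(1+t_\zvec)^{-d/2}$, and the constrained convolution $\sum_{t_\zvec<\underline t}$ produces the claimed $\frac{\vep^2}{1+\underline t}$ factor; the boundary term $\delta_{N,0}\sum_j\delta_{t_j,0}$ corresponds to the degenerate case $\zvec=\ovec$.

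For $a^{\sss(N)}(\vec\xvec_{J\setminus I},\vec\xvec_I;2)$, the defining expression \refeq{aN2-def} already carries the factor $A(\tb_{N+1},\vec\xvec_{J\setminus I};\tilde\bC_N)$, which I would estimate via Lemma~\ref{lem-ANdiagbd}; this produces a second $P$-type diagram with Construction~$\ell(\vec\xvec_{J\setminus I})$, and the remaining indicator $\ind{\tb_N\conn\vec\xvec_I}$ is absorbed by further $\ell(\vec\xvec_I)$.  For $a^{\sss(N)}(\yvec_1,\vec\xvec_I;3)$ and $a^{\sss(N)}(\yvec_1,\vec\xvec_I;4)$, the analysis parallels that of $\phi^{\sss(N)}(\yvec_1,\yvec_2)_{\sss\pm}$ in Section~\ref{sec-Cbd}, with the single endpoint $\yvec_2$ replaced by the multi-endpoint $\vec\xvec_I$: Lemmas~\ref{lem-phibdslow}--\ref{lem-bdsphi} go through essentially verbatim, with Construction~$\ell(\vec\xvec_{I\setminus\{i\}})$ inserted after the final $2^{\sss(0)}_\vvec(\xvec_i)$, and the $A$-factor appearing in case (4) is handled exactly as in case (2).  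After convolving with $\tau(\vec\xvec_{J\setminus I}-\yvec_1)$ and summing over $\yvec_1$, the time integration of products of diagram lines with individual decay $(1+s)^{-(d-2)/2}$ produces the factor $\Delta_{\bar t}$, matching the definition \refeq{Deltat-def} in each dimensional regime.

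The main obstacle is the delicate power-of-$\vep$ bookkeeping for $a(\cdot;1)$: the extra $\vep$ beyond what Lemma~\ref{lem-Ptbds} gives must come from the spatial bond that creates the branching at $\zvec$, and one must verify that this $\vep$ survives the BK bound and the sum over $\zvec$ to yield precisely $\vep^2/(1+\underline t)$ rather than the $O(\vep)$ that a naive BK estimate would give; the same careful tracking of spatial versus temporal bonds used in \refeq{S0C-def}, \refeq{P0vyv} and \refeq{supL-bd} is needed here.  A secondary difficulty is controlling the combinatorial growth as $N$ increases, for which one leans on the same geometric sum in $\beta$ exploited in \refeq{PNbdconcl}.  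The low-dimensional bounds then follow by substituting $\hat\beta_\sT=\beta_1T^{-\alphamin}$ for $\beta$ throughout and applying the estimate \refeq{convappl<4} to absorb the extra polynomial/logarithmic factors produced by the low-dimensional versions of Lemmas~\ref{lem:constr1'} and \ref{lem-Ptbds}, using the restriction $\max_{j\in J}t_j\le T\log T$ built into the statement.
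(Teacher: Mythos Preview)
Your overall strategy---diagrammatic bounds followed by estimation via the lemmas of Sections~\ref{s:bounds}--\ref{sec-Cbd}---is correct, and for $a^{\sss(N)}(\cdot;2)$, $a^{\sss(N)}(\cdot;3)$ and $a^{\sss(N)}(\cdot;4)$ your plan matches the paper's approach closely (though the paper does more case analysis, e.g.\ distinguishing $|J\setminus I|=1$ from $|J\setminus I|\ge2$ for \refeq{a2bdN}, and introduces auxiliary estimates, Lemmas~\ref{lem:constr1-sup} and \ref{lem-bsumbd}, to handle the time sums that produce $\Delta_{\bar t}$).

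However, your treatment of $a^{\sss(N)}(\vec\xvec_J;1)$ contains a genuine gap.  You have misread the event $\Fone(\vvec,\vec\xvec_J;\bC)$: it does \emph{not} assert the absence of a common pivotal bond forcing a branching at some $\zvec$ with $t_{\zvec}<\underline t$.  Rather, by \refeq{F'1-def} it says that for the selected index $j$ there is \emph{no $\xvec_j$-cutting bond at all}, i.e.\ $E'(\vvec,\xvec_j;\bC)$ holds.  The paper therefore uses the inclusion
\[
\Fone(\vvec,\vec\xvec_J;\bC)\subseteq\bigcup_{j\in J}\big\{E'(\vvec,\xvec_j;\bC)\cap\{\vvec\conn\vec\xvec_{J_j}\}\big\},
\]
which via \refeq{MPIgen} yields $|a^{\sss(N)}(\vec\xvec_J;1)|\le\sum_{j\in J}P^{\sss(N)}(\xvec_j;\ell(\vec\xvec_{J_j}))$.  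The point is that the $P^{\sss(N)}$-diagram extends all the way to $\xvec_j$, and Lemma~\ref{lem-Ptbds} combined with \refeq{PN-constr-bd} gives decay $\vep^2(1+t_j)^{-(d-2)/2}(1+\bar t)^{|J_j|-1}$; since $t_j\ge\underline t$ and $(d-2)/2>1$ for $d>4$, this immediately yields $\vep^2(1+\underline t)^{-1}(1+\bar t)^{r-3}$.  No extra spatial bond is needed: the $\vep^2$ you are worried about is already supplied by Lemma~\ref{lem-Ptbds}.

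By contrast, your proposed diagram $\sum_{\zvec:t_{\zvec}<\underline t}P^{\sss(N)}(\vvec,\zvec;\bC,\ell(\vec\xvec_J))$ would, via \refeq{PN-constr-bd} with $|I|=|J|=r-1$, produce the factor $(1+t_{\zvec})^{-(d-2)/2}(1+\bar t)^{r-2}$; summing over $t_{\zvec}<\underline t$ gives only $O(1)\,(1+\bar t)^{r-2}$ for $d>4$, not $(1+\underline t)^{-1}(1+\bar t)^{r-3}$.  The ``obstacle'' you identify is therefore an artifact of the wrong mechanism, and the bound you would obtain is too weak by a factor $(1+\underline t)(1+\bar t)$.
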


The bounds \refeq{a1bdN}--\refeq{a4bdN} are proved in
Sections~\ref{sec-aN1bd}--\ref{sec-aN4bd}, respectively.

\begin{proof}[Proof of \refeq{vphibd} assuming Proposition~\ref{prop-abds}]
By \refeq{a1234-id} and \refeq{a1bdN}--\refeq{a4bdN}, we have that, for $d>4$,
\begin{align}
|\hat a_{\vec t_J}(\vec k_J)|\le\sum_{N\ge0}\bigg|\sum_{\vec x_J}a_{\vec t_J}
 ^{\sss(N)}(\vec x_J)\bigg|\le O\big((1+\bar t)^{r-3}\big)\bigg(\sum_{j\in
 J}\delta_{t_j,0}+\vep\frac{1+\beta\Delta_{\bar t}}{1+\underline t}\bigg),
\end{align}
hence, for any $\alphaminn<1\wedge\frac{d-4}2$,
\begin{align}\lbeq{a-fourier-sumbd:d>4}
\ddsum_{s=\vep}^{\underline t}|\hat a_{\vec t_J-s}(\vec k_J)|&\le O\big((1+
 \bar t)^{r-3}\big)\bigg(1+\vep\ddsum_{s=\vep}^{\underline t}\frac{1+\beta
 \Delta_{\bar t}}{1+\underline t-s}\bigg)\nn\\
&\le O\Big((1+\bar t)^{r-3}\log(1+\bar t)\Big)\,(1+\beta\Delta_{\bar t})
 \le O\big((1+\bar t)^{r-2-\alphaminn}\big),
\end{align}
which implies \refeq{vphibd}, due to
\refeq{varphi-fourier-id}--\refeq{varphi-fourier-bd}.

For $d\le4$, $\beta$ in \refeq{a-fourier-sumbd:d>4} is replaced by
$\hat\beta_\sT$, and $\Delta_{\bar t}=1+\bar t$.  Therefore, for any
$\alphaminn<\alphamin$,
\begin{align}\lbeq{a-fourier-sumbd:d<4}
\ddsum_{s=\vep}^{\underline t}|\hat a_{\vec t_J-s}(\vec k_J)|\le O\Big((1+
 \bar t)^{r-2}\log(1+\bar t)\Big)\,\hat\beta_\sT\le O(T^{r-2-\alphaminn}),
 \qquad\text{as }T\uaw\infty.
\end{align}
This completes the proof of \refeq{vphibd}.
\end{proof}

\subsection{Proof of \refeq{a1bdN}}\label{sec-aN1bd}
By the notation $J_j=J\setminus\{j\}$ and the definition \refeq{F'1-def} of
$\Fone(\vvec,\vec\xvec_J;\bC)$, we have
\begin{align}
\Fone(\vvec,\vec\xvec_J;\bC)\subseteq\bigcup_{j\in J}\big\{E'(\vvec,\xvec_j;
\bC)\cap\{\vvec\conn\vec\xvec_{J_j}\}\big\},
\end{align}
which, by \refeq{aN1-def}, intuitively explains why
$a^{\sss(N)}(\vec\xvec_J;1)$ is small (cf., Figure~\ref{fig-E1capP}).

Let $d>4$.  By \refeq{MPIgen}, we obtain that, for $N\ge1$,
\begin{align}\lbeq{a1bd-proof1}
|a^{\sss(N)}(\vec\xvec_J;1)|&\le\sum_{j\in J}\sum_{b_N}p_{b_N}M_{\bb_N}^{\sss
 (N)}\Big(\mP\big(E'(\tb_{\sN},\xvec_j;\tilde\bC_{\sss N-1})\cap\{\tb_{\sN}
 \conn\vec\xvec_{J_j}\}\big)\Big)\nn\\
&=\sum_{j\in J}M^{\sss(N+1)}_{\xvec_j}\big(\ind{\vec\xvec_{J_j}\in\bC_N}\big)
 \le\sum_{j\in J}P^{\sss(N)}(\xvec_j;\ell(\vec\xvec_{J_j})).
\end{align}
The same bound holds for $N=0$, due to \refeq{a01-def}.  By
Lemma~\ref{lem-Ptbds} and repeated applications of Lemma~\ref{lem:constr1}, we
have that, for $d>4$ (cf., \refeq{PN-constr-bd:l1}--\refeq{PN-constr-bd} for
$d>4$ and $N\ge1$),
\begin{align}\lbeq{constr1-sup1-bd4}
\sum_{z,\vec x_I}P^{\sss(N)}\big((z,s);\ell(\vec x_I,\vec t_I)\big)&\le
 \delta_{s,0}\delta_{N,0}O\big((1+\bar t_I)^{|I|-1}\big)+\vep^2\frac{O
 (\beta)^{N\vee1}}{(1+s)^{(d-2)/2}}\,(1+\bar s_{\vec t_I})^{|I|-1},
\end{align}
where $\bar s_{\vec t_I}$ is the second-largest element of $\{s,\vec t_I\}$,
hence
\begin{align}\lbeq{a1bd-proof2}
\sum_{\vec x_J}P^{\sss(N)}\big((x_j,t_j);\ell(\vec x_{J_j},\vec t_{J_j})\big)
 &\le\bigg(\delta_{t_j,0}\delta_{N,0}+\frac{\vep^2O(\beta)^{N\vee1}}{(1+t_j)
 ^{(d-2)/2}}\bigg)O\big((1+\bar t)^{|J_j|-1}\big)\nn\\
&\le\bigg(\delta_{t_j,0}\delta_{N,0}+\frac{\vep^2O(\beta)^{N\vee1}}{1+
 \underline t}\bigg)O\big((1+\bar t)^{r-3}\big).
\end{align}

For $d\le4$, we only need to replace $O(\beta)^{N\vee1}$ in
\refeq{constr1-sup1-bd4} by $O(\beta_\sT)O(\hat\beta_\sT)^{(N-1)\vee0}$ and
use $\beta_\sT(1+t_j)^{(2-d)/2}\le O(\hat\beta_\sT)(1+\underline t)^{-1}$
for $\underline t\le t_j\le T\log T$ to obtain \refeq{a1bd-proof2} with
$O(\beta)^{N\vee1}$ replaced by $O(\hat\beta_\sT)^{N\vee1}$.  This completes
the proof of \refeq{a1bdN}.
\qed

\subsection{Proof of \refeq{a2bdN}}\label{sec-aN2bd}
Let
\begin{align}
\tilde a^{\sss(N,N')}(\vec\xvec_{J\setminus I},\vec\xvec_I;2)=\sum_{b_{N+1}}
 p_{b_{N+1}}M^{\sss(N+1)}_{\bb_{N+1}}\Big(\ind{\tb_N\conn\vec\xvec_I}\,
 A^{\sss(N')}(\tb_{\sss N+1},\vec\xvec_{J\setminus I};\tilde\bC_{\sN})\Big).
\end{align}
Then, by \refeq{aN2-def}, we have
\begin{align}\lbeq{tildea2-bd0}
|a^{\sss(N)}(\vec\xvec_{J\setminus I},\vec\xvec_I;2)|\le\sum_{N'=0}^\infty
 \tilde a^{\sss(N,N')}(\vec\xvec_{J\setminus I},\vec\xvec_I;2).
\end{align}
To prove \refeq{a2bdN}, it thus suffices to show that the sum of
$\tilde a^{\sss(N,N')}(\vec\xvec_{J\setminus I},\vec\xvec_I;2)$ over $N'$
satisfies \refeq{a2bdN}.

We discuss the following three cases separately: (i)~$|J\setminus I|=1$,
(ii)~$|J\setminus I|\ge2$ and $N'=0$, and (iii)~$|J\setminus I|\ge2$ and
$N'\ge1$.  The reason why $a^{\sss(N)}(\vec\xvec_j,\vec\xvec_{J_j};2)$ for
some $j$ is small is the same as that for $a^{\sss(N)}(\vec\xvec_J;1)$
explained in Section~\ref{sec-aN1bd}.  However, as seen in
Figure~\ref{fig-E1capP}, the reason for general
$a^{\sss(N)}(\vec\xvec_{J\setminus I},\vec\xvec_I;2)$ with
$|J\setminus I|\ge2$ to be small is different.  It is because there are at
least \emph{three} disjoint branches coming out of a ``bubble'' started at
$\ovec$.

(i) If $I=J_j$ for some $j$ (hence $|J\setminus I|=1$), then we use
$A^{\sss(N')}(\tb_{\sN+1},\xvec_j;\tilde\bC_{\sN})=M_{\tb_{\sN+1},\xvec_j;
\tilde\bC_{\sN}}^{\sss(N'+1)}\!(1)$ and \refeq{a1bd-proof2} to obtain
\begin{align}\lbeq{tildea2j-bd}
\sum_{\vec x_J}\tilde a^{\sss(N,N')}(\xvec_j,\vec\xvec_{J_j};2)&=\sum_{\vec
 x_J}\sum_{b_{N+1}}p_{b_{N+1}}M^{\sss(N+1)}_{\bb_{N+1}}\Big(\ind{\tb_N\conn\vec
 \xvec_{J_j}}~M_{\tb_{\sN+1},\xvec_j;\tilde\bC_{\sN}}^{\sss(N'+1)}(1)\Big)\nn\\
&=\sum_{\vec x_J}M_{\xvec_j}^{\sss(N+N'+2)}\big(\ind{\tb_N\conn\vec\xvec_{J_j}}
 \big)\nn\\
&\le\sum_{\vec x_J}P^{\sss(N+N'+1)}(\xvec_j;\ell(\vec\xvec_{J_j}))
 \le\vep^2\frac{O(\beta)^{N+N'+1}}{1+\underline t}\,(1+\bar t)^{r-3},
\end{align}
where $\beta$ is replaced by $\hat\beta_\sT$ for $d\le4$.

 (ii) If $|J\setminus I|\ge2$ and $N'=0$, then we use \refeq{APbd} to obtain
\begin{align}\lbeq{tildea20-bd1}
&\tilde a^{\sss(N,0)}(\vec\xvec_{J\setminus I},\vec\xvec_I;2)\nn\\
&\le\sum_{\vno\ne I'\subsetneq {J\setminus I}}\bigg(\sum_{b_{N+1}}
 M^{\sss(N+1)}_{\bb_{N+1}}\big(\ind{\tb_N\conn\{\vec\xvec_I,\tb_{\sss
 N+1}\}}\big)p_{b_{N+1}}\mP\big(\{\tb_{\sss N+1}\conn\vec\xvec_{I'}\}
 \circ\{\tb_{\sss N+1}\conn\vec\xvec_{J\setminus(I\Dot\cup I')}\}\big)
 \nn\\
&\hspace{5pc}+\sum_{\zvec}\sum_{\substack{b_{N+1}:\\ \tb_{N+1}\ne\zvec}}
 p_{b_{N+1}}M^{\sss(N+1)}_{\bb_{N+1}}\Big(\ind{\tb_N\conn\vec\xvec_I}
 P^{\sss(0)}\big(\tb_{\sss N+1},\zvec;\tilde\bC_{\sN},\ell(\vec\xvec_{I'})
 \big)\Big)\,\tau\big(\vec\xvec_{J\setminus(I\Dot\cup I')}-\zvec\big)
 \bigg).
\end{align}
%Moreover, by \refeq{S0-def}--\refeq{S0C-def} and the argument around
%\refeq{MP1pfafirst}--\refeq{MP1pfa}, we obtain that, for any $N'\ge0$
%in general,
%\begin{align}\lbeq{tildea20-bd1:Cl}
%M^{\sss(N+1)}_{\bb_{N+1}}\Big(\ind{\tb_N\conn\vec\xvec_I}P^{\sss(N')}
% \big(\tb_{\sss N+1},\zvec;\tilde\bC_{\sN},\ell(\vec\xvec_{I'})\big)
% \Big)&\le\sum_{\cvec}M^{\sss(N+1)}_{\bb_{N+1}}\big(\ind{\tb_N\conn
% \{\vec\xvec_I,\cvec\}}\big)\,P^{\sss(N')}\big(\tb_{\sss N+1},\zvec;
% \cvec,\ell(\vec\xvec_{I'})\big).
%\end{align}
%However, by \refeq{MPIgen}, we have
%\begin{align}\lbeq{tildea20-bd2}
%M^{\sss(N+1)}_{\bb_{N+1}}\big(\ind{\tb_N\conn\{\vec\xvec_I,\cvec\}}
% \big)\le\sum_\twop P^{\sss(N)}\big(\bb_{\sN+1};\ell^\twop(\cvec),\ell
% (\vec\xvec_I)\big),
%\end{align}
%where $\sum_\twop$ is the sum over the $N^\text{th}$ admissible lines for $P^{\sss(N)}(\bb_{\sN+1})$.
Following the argument between \refeq{Ediagrbd:N1=0rewr-bd21} and
\refeq{Ediagrbd:N1=0rewr-bd26} (see also \refeq{Vdiagrbd-prepr2}), we obtain
\begin{align}
\sum_{\substack{b_{N+1}:\\ \tb_{N+1}\ne\zvec}}p_{b_{N+1}}M^{\sss(N+1)}_{\bb_{N
 +1}}\Big(\ind{\tb_N\conn\vec\xvec_I}P^{\sss(0)}\big(\tb_{\sN+1},\zvec;\tilde
 \bC_{\sN},\ell(\vec\xvec_{I'})\big)\Big)\le P^{\sss(N+1)}\big(\zvec;\ell(\vec
 \xvec_I),\tilde\ell_{\sss\le t_{\zvec}}(\vec\xvec_{I'})\big),
\end{align}
where $\tilde\ell_{\sss\le t_{\zvec}}(\vec\xvec_{I'})$ means that we apply
Construction~$\ell(\vec\xvec_{I'})$ to the lines contained in
$P^{\sss(N+1)}(\zvec;\ell(\vec\xvec_I))$, but at least one of $|I'|$
constructions is applied \emph{before} time $t_{\zvec}$.  This excludes the
possibility that there is a common branch point for $\vec\xvec_{I\Dot\cup I'}$
\emph{after} time $t_{\zvec}$.  Let
\begin{align}
\tilde a^{\sss(N,0)}\big(\vec\xvec_{J\setminus(I\Dot\cup I')},\vec\xvec_I,
 \vec\xvec_{I'};2\big)_1&=\sum_bP^{\sss(N)}\big(\bb;\ell
 (\tb),\ell(\vec\xvec_I)\big)p_b\mP\big(\{\tb\conn\vec\xvec_{I'}\}\circ\{
 \tb\conn\vec\xvec_{J\setminus(I\Dot\cup I')}\}\big),\lbeq{tildea201-def}\\
\tilde a^{\sss(N,0)}\big(\vec\xvec_{J\setminus(I\Dot\cup I')},\vec\xvec_I,
 \vec\xvec_{I'};2\big)_2&=\sum_{\zvec}P^{\sss(N+1)}\big(\zvec;\ell(\vec
 \xvec_I),\tilde\ell_{\sss\le t_{\zvec}}(\vec\xvec_{I'})\big)\,\tau\big(
 \vec\xvec_{J\setminus(I\Dot\cup I')}-\zvec\big).\lbeq{tildea202-def}
%\tilde a^{\sss(N,0)}\big(\vec\xvec_{J\setminus(I\Dot\cup I')},\vec\xvec_I,
% \vec\xvec_{I'};2\big)_2&=\sum_{\zvec}P^{\sss(N+1)}\big(\zvec;\tilde\ell_{
% \sss\le}(\vec\xvec_I),\tilde\ell_{\sss>}(\vec\xvec_{I'})\big)\,\tau\big(
% \vec\xvec_{J\setminus(I\Dot\cup I')}-\zvec\big).\lbeq{tildea202-def}
\end{align}
Then, by \refeq{MPIgen}, we obtain
\begin{align}\lbeq{tildea20-bd3}
\tilde a^{\sss(N,0)}(\vec\xvec_{J\setminus I},\vec\xvec_I;2)\le\sum_{\vno
 \ne I'\subsetneq J\setminus I}\bigg(\tilde a^{\sss(N,0)}\big(\vec\xvec_{J
 \setminus(I\Dot\cup I')},\vec\xvec_I,\vec\xvec_{I'};2\big)_1+\tilde a^{
 \sss(N,0)}\big(\vec\xvec_{J\setminus(I\Dot\cup I')},\vec\xvec_I,\vec
 \xvec_{I'};2\big)_2\bigg).
\end{align}

To estimate the sums of \refeq{tildea201-def}--\refeq{tildea202-def} over $\vec
x_J\in\mZ^{d|J|}$, we use the following extensions of \refeq{constr1-sup1-bd4}:

\begin{lem}\label{lem:constr1-sup}
For $N\ge0$, $s<s'$ and $d\le4$,
\begin{align}
\sup_w\sum_zP^{\sss(N)}\big((z,s);\ell(w,s'),\ell(\vec t_I)\big)&\le\delta_{s,
 0}\delta_{N,0}O\big((1+\bar t_I)^{|I|-1}\big)+\vep^2\frac{O(\beta_\sT)\,O(\hat
 \beta_\sT)^{(N-1)\vee0}}{(1+s)^{(d-2)/2}}\,(1+\bar s_{\vec t_I})^{|I|-1},
 \lbeq{constr1-sup1}\\
\sum_zP^{\sss(N+1)}\big((z,s);\ell(\vec t_I),\tilde\ell_{\sss\le s}(\vec
 t_{I'})\big)&\le\vep^2\frac{O(\beta_\sT)\,O(\hat\beta_\sT)^N}{(1+s)^{(d-2)/2}}
 \Big(1+s\wedge\max_{i\in I\Dot\cup I'}t_i\Big)(1+\bar s_{\vec t_{I\Dot\cup
 I'}})^{|I|+|I'|-2}.\lbeq{constr1-sup2}
\end{align}
For $d>4$, both $\beta_\sT$ and $\hat\beta_\sT$ are replaced by $\beta$.
\end{lem}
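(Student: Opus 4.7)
My plan is to extend the proof of \refeq{constr1-sup1-bd4}, which was obtained by combining the diagrammatic bound on $P^{\sss(N)}_s$ from Lemma~\ref{lem-Ptbds} with repeated applications of Lemma~\ref{lem:constr1} to the Construction~$\ell$ factors. For both parts of Lemma~\ref{lem:constr1-sup}, the skeleton of the argument is unchanged; only the handling of the extra combinatorial data on the left-hand side differs. For \refeq{constr1-sup1}, I will expand the additional Construction~$\ell(w,s')$ as $\sum_\zvec B(\zvec)\,\tau((w,s')-\zvec)$ and take $\sup_w$ of the inserted two-point function, obtaining $\|\tau_{s'-t_\zvec}\|_\infty$. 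By \refeq{tausup} for $d>4$, or \refeq{tausup-lowdim} for $d\le4$, this is at most $e^{-(s'-t_\zvec)}+O(\beta_\sT)(1+s'-t_\zvec)^{-d/2}$. Combined with the Lemma~\ref{lem:constr1} estimate $(\delta_{t_\zvec,t_\eta}+\vep C_1)$ for the result of Construction~$B$ and summed over $t_\zvec\in[0,s\wedge t_\eta]$, this produces only an $O(1)$ multiplicative constant, thanks to integrability in $t$ of $\|\tau_t\|_\infty$ for $d>2$ (the mild logarithmic or subpolynomial loss in marginal dimensions being absorbed into $\hat\beta_\sT$ via \refeq{convappl<4} on the time range $s\le T\log T$). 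Hence the extra $\ell(w,s')$ is harmless in the $\sup_w$ sense, and \refeq{constr1-sup1-bd4} delivers the required bound.

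For \refeq{constr1-sup2}, I handle the constraint $\tilde\ell_{\le s}(\vec t_{I'})$ by designating which of the $|I'|$ applications of Construction~$\ell$ for $\vec\xvec_{I'}$ is the one whose line $\eta$ has terminal time $t_\eta\le s$ (such an application exists by the very definition of $\tilde\ell_{\le s}$). For this designated $\ell(\xvec_{i'},t_{i'})$, Lemma~\ref{lem:constr1} supplies the factor $(1+t_{i'}\wedge t_\eta)\le(1+s\wedge\max_{i\in I\Dot\cup I'}t_i)$, which is precisely the new factor in the bound. The remaining $|I|+|I'|-1$ applications of Construction~$\ell$ are estimated exactly as in the derivation of \refeq{PN-constr-bd}: one of them contributes a factor bounded by $(1+s)$ that absorbs one power of $(1+s)^{-1}$ from the base estimate $\vep^2O(\beta_\sT)O(\hat\beta_\sT)^N/(1+s)^{d/2}$ on $P^{\sss(N+1)}_s$ supplied by Lemma~\ref{lem-Ptbds}(ii), converting it to $(1+s)^{-(d-2)/2}$; the remaining $|I|+|I'|-2$ applications yield the factor $(1+\bar s_{\vec t_{I\Dot\cup I'}})^{|I|+|I'|-2}$.

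The main obstacle will be the combinatorics of the $\tilde\ell_{\le s}$ case: verifying that the ``designate one application'' strategy does not over-count, by ensuring that the sum over the choice of designated $i'\in I'$ and the choice of admissible line $\eta$ with $t_\eta\le s$ absorbs into the usual combinatorial constant of the form $\frac{(\cL+|I|+|I'|-1)!}{(\cL-1)!}$, and verifying that the restriction $t_\eta\le s$ propagates correctly through the nested admissible lines generated by the Construction~$E$ recursions hidden inside $P^{\sss(N+1)}$. A secondary accounting task is tracking the $\beta_\sT$ versus $\hat\beta_\sT$ powers as in Lemma~\ref{lem-Ptbds}(ii), namely that one $\beta_\sT$ comes from the first Construction on the base diagram while each subsequent Construction contributes a $\hat\beta_\sT$ via \refeq{convappl<4}.
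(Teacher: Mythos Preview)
Your treatment of \refeq{constr1-sup1} is correct and matches the paper's proof: expand the extra $\ell(w,s')$ as $\sum_{\zvec}B(\zvec)\,\tau((w,s')-\zvec)$, replace $\sup_w\tau$ by $\|\tau_{s'-t_{\zvec}}\|_\infty$, and use Lemma~\ref{lem:constr1} on the $B$ factor; the time-sum of $(\delta_{t_{\zvec},t_\eta}+\vep C_1)\|\tau\|_\infty$ is then $O(1)$ in every dimension (for $d\le 4$ the extra $\beta_\sT$ in $\|\tau\|_\infty$ absorbs the sub-polynomial loss, exactly as the paper notes).

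For \refeq{constr1-sup2} there is a genuine gap in your justification. You characterize the constraint in $\tilde\ell_{\le s}$ as ``the line $\eta$ on which the designated construction acts has $t_\eta\le s$'', and then invoke Lemma~\ref{lem:constr1} to get $(1+t_{i'}\wedge t_\eta)\le(1+s\wedge\max_i t_i)$. But that is \emph{not} the definition: $\tilde\ell_{\le s}$ requires only that the $B$-insertion point of at least one $I'$-construction lies at time $\le s$, not that the line it is applied to terminates by time $s$. These differ. For instance, take $I=\{0\}$, $I'=\{1\}$ with $t_0,t_1>s$: the branch $\ell(\xvec_0)$ attaches to $P^{\sss(N+1)}$ at some time $\le s$ and runs up to $t_0$; then $\ell(\xvec_1)$ may attach to the $\ell(\xvec_0)$-line at a time $\le s$ (so the constraint is satisfied), yet the line $\eta$ for $\ell(\xvec_1)$ has $t_\eta=t_0>s$. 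In that configuration your inequality $(1+t_{1}\wedge t_\eta)\le(1+s\wedge\max_i t_i)$ fails, and the diagrams of this type are not covered by your argument.

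The fix is minor and brings you back to the paper's route. The paper designates $j\in I'$ as the one with \emph{insertion time} $\le s$, first bounds $\sum_z P^{\sss(N+1)}((z,s);\ell(\vec t_{I\Dot\cup I'_j}))$ by the \refeq{PN-constr-bd}-type estimate while keeping the first factor as $(1+s\wedge\max_i t_i)$ rather than absorbing it into $(1+s)$, and then handles the designated $\tilde\ell_{\le s}(t_j)$ last: since we sum over $x_j$ one gets $\|\tau\|_1\le K$, and the sum over insertion times $s''\le s$ contributes $O(1+s)$, which is what converts $(1+s)^{-d/2}$ to $(1+s)^{-(d-2)/2}$. Equivalently, in your ordering you may apply the unrestricted $\ell$'s first (one of them then legitimately contributes the $(1+s)$ you claimed), and for the designated construction use the insertion-time restriction inside the proof of Lemma~\ref{lem:constr1} to get the sharper factor $(1+\min(t_{i'},t_\eta,s))\le(1+s\wedge\max_i t_i)$. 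Either way, the missing ingredient is that the factor associated to the designated construction comes from the range $s''\le s$ of the insertion variable, not from any a priori bound on $t_\eta$.
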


We will prove this lemma at the end of this subsection.

Now we assume Lemma~\ref{lem:constr1-sup} and prove \refeq{a2bdN}.  To discuss
both $d>4$ and $d\le4$ simultaneously, we for now interpret $\beta_\sT$
and $\hat\beta_\sT$ below as $\beta$ for $d>4$.  First, by \refeq{treegraph}
and \refeq{PE'vecx-prebd4} and using $\bar t_{J\setminus I}\le\bar t$ for
$|J\setminus I|\ge2$, $\|p_\vep\|_1=O(1)$ and \refeq{constr1-sup1}, we obtain
\begin{align}\lbeq{tildea20-bd4}
\sum_{\vec x_J}\tilde a^{\sss(N,0)}\big(\vec\xvec_{J\setminus(I\Dot\cup I')},
 \vec\xvec_I,\vec\xvec_{I'};2\big)_1&\le\vep\,O\big((1+\bar t_{J\setminus
 I})^{|J\setminus I|-2}\big)\sum_{b:t_{\bb}<\underline t}P^{\sss(N)}
 \big(\bb;\ell(\tb),\ell(\vec t_I)\big)p_b\nn\\
&\le\vep\,O\big((1+\bar t)^{|J\setminus I|-2}\big)\ddsum_{s<\underline t}
 \sup_w\sum_zP^{\sss(N)}\big((z,s);\ell(w,s+\vep),\ell(\vec t_I)
 \big)\nn\\
&\le\vep\,O\big((1+\bar t)^{|J|-3}\big)\,O(\hat\beta_\sT)^N,
\end{align}
where, for $d\le4$, we have used
\begin{align}\lbeq{betaTbd}
\beta_\sT(1+T\log T)^{(4-d)/2}~\Big(\times\log(1+T\log T)~
 \text{ when necessary}\Big)\le O(\hat\beta_\sT).
\end{align}
Moreover, by \refeq{treegraph} and \refeq{constr1-sup2} and using
\refeq{Deltat-def} and \refeq{betaTbd}, we obtain that, if
$J\setminus(I\,\Dot\cup\,I')=\{j\}$ (i.e., $I\,\Dot\cup\,I'=J_j$) and
$t_j=\max_{i\in J}t_i$, then $\max_{i\in J_j}t_i=\bar t$ and thus
\begin{align}\lbeq{tildea20-bd5}
\sum_{\vec x_J}\tilde a^{\sss(N,0)}\big(\xvec_j,\vec\xvec_I,\vec\xvec_{I'};2
 \big)_2&\le\vep\,O(\beta_\sT)\,O(\hat\beta_\sT)^N\bigg(\ddsum_{s\le\bar t}
 \frac{\vep}{(1+s)^{(d-4)/2}}\,(\underbrace{1+\bar s_{\vec t_{J_j}}}_{\le\,1+
 \bar t})^{|J|-3}\nn\\
&\hskip7pc+\ddsum_{\bar t\le s\le t_j}\frac{\vep}{(1+s)^{(d-2)/2}}\,(1+\bar
 t)\,(\underbrace{1+\bar s_{\vec t_{J_j}}}_{=\,1+\bar t})^{|J|-3}\bigg)\nn\\
&\le\vep\,O(\hat\beta_\sT)^{N+1}\Delta_{\bar t}\,(1+\bar t)^{|J|-3},
\end{align}
otherwise we use
$(s\equiv t_{\zvec}\le)~\underline t_{J\setminus(I\Dot\cup I')}\le\bar t=\bar
t_{I\Dot\cup I'}$ and thus $\bar s_{\vec t_{I\Dot\cup I'}}=\bar t$ and simply
bound $s\wedge\max_{i\in I\Dot\cup I'}t_i$ by $s$, so that
\begin{align}\lbeq{tildea20-bd5'}
\sum_{\vec x_J}\tilde a^{\sss(N,0)}\big(\vec\xvec_{J\setminus(I\Dot\cup I')},
 \vec\xvec_I,\vec\xvec_{I'};2\big)_2&\le\ddsum_{s\le\bar t}\vep^2\frac{O
 (\beta_\sT)\,O(\hat\beta_\sT)^N}{(1+s)^{(d-4)/2}}\,(1+\bar t)^{|I|+|I'|+|J
 \setminus(I\Dot\cup I')|-3}\nn\\
&\le\vep\,O(\hat\beta_\sT)^{N+1}\Delta_{\bar t}\,(1+\bar t)^{|J|-3}.
\end{align}
By \refeq{tildea20-bd3} and \refeq{tildea20-bd4}--\refeq{tildea20-bd5'} and
using $(1+\bar t)^{-1}\le(1+\underline t)^{-1}$, we thus obtain
\begin{align}\lbeq{tildea20-bd6}
\sum_{\vec x_J}\tilde a_{\vec t_{J\setminus I},\vec t_I}^{\sss(N,0)}\big(\vec
 x_{J\setminus I},\vec x_I;2\big)\le\vep\frac{O(\hat\beta_\sT)^N(1+\hat
 \beta_\sT\Delta_{\bar t})}{1+\underline t}\,O\big((1+\bar t)^{r-3}\big).
\end{align}
For $d>4$, we only need to replace $\hat\beta_\sT$ by $\beta$, as mentioned
earlier.

(iii) If $|J\setminus I|\ge2$ and $N'\ge1$, then, by \refeq{APbd}, we obtain
\begin{align}\lbeq{tildea2-bd1}
&\tilde a^{\sss(N,N')}(\vec\xvec_{J\setminus I},\vec\xvec_I;2)\nn\\
&\le\sum_{\vno\ne I'\subsetneq {J\setminus I}}\sum_{\zvec}\Bigg(\sum_{b_{N
 +1}}p_{b_{N+1}}M^{\sss(N+1)}_{\bb_{N+1}}\Big(\ind{\tb_N\conn\vec\xvec_I}
 P^{\sss(N')}(\tb_{\sss N+1},\zvec;\tilde\bC_{\sN})\Big)\,\tau(\vec
 \xvec_{I'}-\zvec)\nn\\
&\hspace{6pc}+\sum_{b_{N+1}}p_{b_{N+1}}M^{\sss(N+1)}_{\bb_{N+1}}\Big(
 \ind{\tb_N\conn\vec\xvec_I}P^{\sss(N')}\big(\tb_{\sss N+1},\zvec;\tilde
 \bC_{\sN},\ell(\vec\xvec_{I'})\big)\Big)\Bigg)\,\tau\big(\vec\xvec_{J
 \setminus(I\Dot\cup I')}-\zvec\big).
\end{align}
Let
\begin{align}
\tilde a^{\sss(N,N')}\big(\vec\xvec_{J\setminus(I\Dot\cup I')},\vec\xvec_I,
 \vec\xvec_{I'};2\big)_1&=\sum_{\zvec}P^{\sss(N+N'+1)}\big(\zvec;\ell(\vec
 \xvec_I)\big)\,\tau(\vec\xvec_{I'}-\zvec)\,\tau\big(\vec\xvec_{J\setminus
 (I\Dot\cup I')}-\zvec\big),\lbeq{tildea21-def}\\
\tilde a^{\sss(N,N')}\big(\vec\xvec_{J\setminus(I\Dot\cup I')},\vec\xvec_I,
 \vec\xvec_{I'};2\big)_2&=\sum_{\zvec}P^{\sss(N+N'+1)}\big(\zvec;\ell(\vec
 \xvec_I),\tilde\ell_{\sss\le t_{\zvec}}(\vec\xvec_{I'})\big)\,\tau
 \big(\vec\xvec_{J\setminus(I\Dot\cup I')}-\zvec\big).\lbeq{tildea22-def}
\end{align}
Similarly to the case of $N'=0$ above, we obtain
\begin{align}\lbeq{tildea2-bd2}
\tilde a^{\sss(N,N')}(\vec\xvec_{J\setminus I},\vec\xvec_I;2)\le\sum_{\vno
 \ne I'\subsetneq J\setminus I}\Big(\tilde a^{\sss(N,N')}\big(\vec\xvec_{J
 \setminus(I\Dot\cup I')},\vec\xvec_I,\vec\xvec_{I'};2\big)_1+\tilde
 a^{\sss(N,N')}(\vec\xvec_{J\setminus(I\Dot\cup I')},\vec\xvec_I,\vec
 \xvec_{I'};2\big)_2\Big).
\end{align}
However, by \refeq{treegraph}, \refeq{constr1-sup1-bd4}--\refeq{a1bd-proof2}
and \refeq{betaTbd}, we have
\begin{align}\lbeq{tildea2-bd3}
\sum_{\vec x_J}\tilde a^{\sss(N,N')}\big(\vec\xvec_{J\setminus(I\Dot\cup I')},
 \vec\xvec_I,\vec\xvec_{I'};2\big)_1&\le\ddsum_{s\le\underline t}\vep^2\frac{O
 (\beta_\sT)\,O(\hat\beta_\sT)^{N+N'}}{(1+s)^{(d-2)/2}}\,(1+\bar t)^{|I|+|J
 \setminus I|-3}\nn\\
&\le\vep\,O(\hat\beta_\sT)^{N+N'+1}(1+\bar t)^{|J|-3}.
\end{align}
Moreover, by \refeq{tildea20-bd5}--\refeq{tildea20-bd5'}, we have
\begin{align}\lbeq{tildea2-bd4}
\sum_{\vec x_J}\tilde a^{\sss(N,N')}\big(\vec\xvec_{J\setminus(I\Dot\cup I')},
 \vec\xvec_I,\vec\xvec_{I'};2\big)_2\le\vep\,O(\hat\beta_\sT)^{N+N'+1}
 \Delta_{\bar t}\,(1+\bar t)^{|J|-3}.
\end{align}
Summarising the above results and using
$(1+\bar t)^{-1}\le(1+\underline t)^{-1}$ (since $|J|\ge2$), we obtain that,
for $|J\setminus I|\ge2$ and $N'\ge1$,
\begin{align}\lbeq{tildea2-bd5}
\sum_{\vec x_J}\tilde a_{\vec t_{J\setminus I},\vec t_I}^{\sss(N,N')}\big(
 \vec x_{J\setminus I},\vec x_I;2\big)\le\vep\frac{O(\hat\beta_\sT)^{N+N'+1}
 \Delta_{\bar t}}{1+\underline t}\,(1+\bar t)^{r-3},
\end{align}
for $d\le4$, and the same bound with $\hat\beta_\sT$ replaced by $\beta$ holds
for $d>4$.

The proof of \refeq{a2bdN} is now completed by summing \refeq{tildea2j-bd} over
$N'\ge0$ or summing \refeq{tildea20-bd6} and the sum of \refeq{tildea2-bd5}
over $N'\ge1$, depending on whether $|J\setminus I|=1$ or $|J\setminus I|\ge2$,
respectively.
\qed

\begin{proof}[Proof of Lemma~\ref{lem:constr1-sup}]
First we prove \refeq{constr1-sup1}.  By the definition of
Construction~$\ell(w,s')$, we have (cf., \refeq{constr1-Btau})
\begin{align}\lbeq{constr1-sup1-bd1}
\sup_w\sum_zP^{\sss(N)}\big((z,s);\ell(w,s'),\ell(\vec
 t_I)\big)&\le\sup_w\ddsum_{s''\le(s\vee\bar t_I)\wedge s'}\,\sum_{y,z}
 P^{\sss(N)}\big((z,s);\ell(\vec t_I),B(y,s'')\big)\,\tau_{s'
 -s''}(w-y)\nn\\
&\le\ddsum_{s''\le(s\vee\bar t_I)\wedge s'}\|\tau_{s'-s''}\|_\infty\sum_z
 P^{\sss(N)}\big((z,s);\ell(\vec t_I),B(s'')\big).
\end{align}
Moreover, by Lemma~\ref{lem:constr1},
\begin{align}
\refeq{constr1-sup1-bd1}\le\bigg(\text{Bound on }\sum_zP^{\sss(N)}
 \big((z,s);\ell(\vec t_I)\big)\bigg)\times\ddsum_{s''\le(s\vee\bar
 t_I)\wedge s'}\sum_\twop(\delta_{s'',t_\twop}+\vep C_1)\,\|\tau_{s'-s''}
 \|_\infty,\lbeq{constr1-sup1-bd2}
\end{align}
where we have ignored the fact that $\sum_\twop$ depends on
$P^{\sss(N)}((z,s);\ell(\vec t_I))$ (as $\twop$ runs over all possible
lines in $P^{\sss(N)}((z,s);\ell(\vec t_I))$).  However, since (cf.,
\cite[(4.45)--(4.46)]{hsa04})
\begin{align}
\|\tau_{s''}\|_\infty\le(1-\vep)^{s''/\vep}+(1+s'')^{-d/2}\times
 \begin{cases}
 O(\beta)&(d>4),\\
 O(\beta_\sT)&(d\le4),
 \end{cases}
\end{align}
the sum over $s''$ in \refeq{constr1-sup1-bd2} is bounded in any dimension
(due to the excess power of $\beta_\sT$ when $d\le4$), hence
\refeq{constr1-sup1-bd1} obeys the same bound (modulo a constant) as
$\sum_zP^{\sss(N)}((z,s);\ell(\vec t_I))$, which is given in
\refeq{constr1-sup1-bd4}.  This completes the proof of \refeq{constr1-sup1}.

Next we prove \refeq{constr1-sup2}.  Due to Construction~$\tilde\ell_{\sss\le
s}(\vec t_{I'})$ (see below \refeq{tildea202-def}), the left-hand
side of \refeq{constr1-sup2} is bounded by
\begin{align}\lbeq{constr1-sup1-bd5}
\sum_{j\in I'}\sum_zP^{\sss(N+1)}\big((z,s);\ell(\vec t_{I\Dot\cup I'_j}),
 \tilde\ell_{\sss\le s}(t_j)\big).
\end{align}
By repeated applications of Lemma~\ref{lem:constr1} as in
\refeq{PN-constr-bd:l1}--\refeq{PN-constr-bd}, but bounding $s\wedge t_1$ by
$s\wedge\max_{i\in I\Dot\cup I'}t_i$ instead of by $s$ as in
\refeq{PN-constr-bd:l1}, and then using
$\bar s_{\vec t_{I\Dot\cup I'_j}}\le\bar s_{\vec t_{I\Dot\cup I'}}$, we have
\begin{align}\lbeq{constr1-sup1-bd6}
\sum_zP^{\sss(N+1)}\big((z,s);\ell(\vec t_{I\Dot\cup I'_j})\big)\le\vep^2
 \frac{O(\beta_\sT)\,O(\hat\beta_\sT)^N}{(1+s)^{d/2}}\Big(1+s\wedge\max_{i
 \in I\Dot\cup I'}t_i\Big)(1+\bar s_{\vec t_{I\Dot\cup I'}})^{|I|+|I'|-2},
\end{align}
for $d\le4$, and the same bound with $\beta_\sT$ and $\hat\beta_\sT$ both
replaced by $\beta$ for $d>4$.  Applying Lemma~\ref{lem:constr1} to this
bound, we can estimate \refeq{constr1-sup1-bd5}, similarly to
\refeq{constr1-sup1-bd2}.  However, due to the sum (not the supremum) over
$x_i$ in \refeq{constr1-sup1-bd5}, $\|\tau_{s'-s''}\|_\infty$ in
\refeq{constr1-sup1-bd2} is replaced by $\|\tau_{s'-s''}\|_1\le K$, where
the running variable $s''$ is at most $s$, due to the restriction in
Construction~$\tilde\ell_{\sss\le s}(x_i,t_i)$.  Therefore,
\refeq{constr1-sup1-bd5} is bounded by \refeq{constr1-sup1-bd6} multiplied
by $O(s)$, which reduces the power of the denominator to $(d-2)/2$, as
required. This completes the proof of Lemma~\ref{lem:constr1-sup}.
\end{proof}

\subsection{Proof of \refeq{a3bdN}}\label{sec-aN3bd}
Recall the definition \refeq{a3pm-def} of
$a^{\sss(N)}(\yvec_1,\vec\xvec_I;3)_{\sss\pm}$ and denote by
$a^{\sss(N,N')}(\yvec_1,\vec\xvec_I;3)_{\sss\pm}$ the contribution from
$B_\delta^{\sss(N')}(\tb_{\sN+1},\yvec_1;\bC_{\sN})$ (cf.,
Figure~\ref{fig-2ndexp-error}).  We note that
$a^{\sss(N,N')}(\yvec_1,\vec\xvec_I;3)_{\sss\pm}\ge0$ for every $N,N'\ge0$.
Similarly to the argument around \refeq{Vdiagrbd-prepr2}, we have
\begin{align}\lbeq{a3pm-rewr}
a^{\sss(N,N')}(\yvec_1,\vec\xvec_I;3)_{\sss\pm}\le\sum_{b_{N+1}}\sum_{\cvec,
 \vvec}&\bigg(\text{Diagrammatic bound on }\tilde M_{b_{N+1}}^{\sss(N+1)}\Big(
 \indic_{\Ftwo_{t_{\yvec_1}}(\tb_N,\vec\xvec_I;\bC_\pm)\,\cap\,\{\tb_N\conn
 \cvec\}}\Big)\bigg)\nn\\
&\times p_{b_{N+1}}P^{\sss(N')}(\tb_{\sN+1},\vvec;\cvec)\,p_{\vvec,\yvec_1},
\end{align}
where we recall $\bC_{\sss+}=\{\tb_{\sN}\}$ and $\bC_{\sss-}=\tilde\bC_{\sN-1}$
and define
\begin{align}
p_{\vvec,\yvec_1}=p_\vep(\yvec_1-\vvec).
\end{align}
We discuss the following two cases separately: (i)~$|I|=1$ and (ii)~$|I|\ge2$.

(i) Suppose that $I=\{j\}$ for some $j$.  If $t_j\le t_{\vvec}~(=t_{\yvec_1}
-\vep)$, we use $\Ftwo_{t_{\yvec_1}}(\tb_{\sN},\xvec_j;\bC_\pm)\subseteq
\{\tb_{\sN}\conn\xvec_j\}$.  If $t_j>t_{\vvec}$, the bubble that terminates at
$\xvec_j$ (cf., \refeq{H1incl}--\refeq{Ecaldef}) is cut by
$\Zd\times\{t_{\vvec}\}$ (i.e., $V_{t_{\vvec}}(\tb_{\sN},\xvec_j)$
occurs) or cut by $\bC_\pm=\tilde\bC_{\sss N-1}$ if $N\ge1$ (i.e.,
$\Ecal_{t_{\vvec}+\vep}(\tb_{\sN},\xvec_j;\tilde\bC_{\sss N-1})$ occurs).
Therefore,
\begin{align}\lbeq{a3i1stbd}
&\tilde M_{b_{N+1}}^{\sss(N+1)}\Big(\indic_{\Ftwo_{t_{\yvec_1}}(\tb_N,
 \xvec_j;\bC_\pm)\,\cap\,\{\tb_N\conn\cvec\}}\Big)\\
&\le\begin{cases}
 \tilde M_{b_{N+1}}^{\sss(N+1)}\big(\indic_{\{\tb_N\conn\{\cvec,\xvec_j
  \}\}}\big)&(t_j\le t_{\vvec}),\\[5pt]
 \tilde M_{b_{N+1}}^{\sss(N+1)}\big(\indic_{V_{t_{\vvec}}(\tb_N,\xvec_j)
  \,\cap\,\{\tb_N\conn\cvec\}}\big)+\tilde M_{b_{N+1}}^{\sss(N+1)}\big(
  \indic_{\Ecal_{t_{\vvec}+\vep}(\tb_N,\xvec_j;\tilde\bC_{N-1})\,\cap\,
  \{\tb_N\conn\cvec\}}\big)\indic_{\{N\ge1\}}&(t_j>t_{\vvec}).\nn
 \end{cases}
\end{align}
By Lemma~\ref{lem-remtilde} and the argument around
\refeq{tobeboundedphi1}--\refeq{remtildephi1} and \refeq{remtildephi3} and
using \refeq{Vdiagrbd:N1=0}--\refeq{Ediagrbd:N1=0}, we have
\begin{align}
\tilde M_{b_{N+1}}^{\sss(N+1)}\big(\indic_{\{\tb_N\conn\{\cvec,\xvec_j\}\}}
 \big)&\le M_{\bb_{N+1}}^{\sss(N+1)}\big(\indic_{\{\cvec,\xvec_j\in\tilde
 \bC_N\}}\big)\le\sum_\twop P^{\sss(N)}\big(\bb_{\sN+1};\ell^\twop(\cvec),
 \ell(\xvec_j)\big),\lbeq{a3i1stbd1}\\
\tilde M_{b_{N+1}}^{\sss(N+1)}\Big(\indic_{V_{t_{\vvec}}(\tb_N,\xvec_j)\,\cap
 \,\{\tb_N\conn\cvec\}}\Big)&\le M_{\bb_{N+1}}^{\sss(N+1)}\big(\indic_{V_{t_{
 \vvec}}(\tb_N,\xvec_j)\,\cap\,\{\cvec\in\tilde\bC_N\}}\big)\le\sum_\twop P^{
 \sss(N)}\big(\bb_{\sN+1};V_{t_{\vvec}}(\xvec_j),\ell^\twop(\cvec)\big),
 \lbeq{a3i1stbd2}\\
\tilde M_{b_{N+1}}^{\sss(N+1)}\Big(\indic_{\Ecal_{t_{\vvec}+\vep}(\tb_N,\xvec_j
 ;\tilde\bC_{N-1})\,\cap\,\{\tb_N\conn\cvec\}}\Big)&\le M_{\bb_{N+1}}^{\sss(N+
 1)}\big(\indic_{\Ecal_{t_{\vvec}+\vep}(\tb_N,\xvec_j;\tilde\bC_{N-1})\,\cap\,
 \{\cvec\in\tilde\bC_N\}}\big)\nn\\[5pt]
&\le\sum_\twop P^{\sss(N)}\big(\bb_{\sN+1};\Ecal_{t_{\vvec}}(\xvec_j),
 \ell^\twop(\cvec)\big),\lbeq{a3i1stbd3}
\end{align}
where $\sum_\twop$ is the sum over the $N^\text{th}$ admissible lines of
$P^{\sss(N)}(\bb_{\sN+1})$. Therefore, by \refeq{recPNMthetas} and
\refeq{PN0def}--\refeq{QN-def}, we obtain
\begin{align}\lbeq{a3pm:|I|=1}
a^{\sss(N,N')}(\yvec_1,\xvec_j;3)_{\sss\pm}\le\sum_{\vvec}p_{\vvec,\yvec_1}
 \times\begin{cases}
 P^{\sss(N+N'+1)}\big(\vvec;\ell(\xvec_j)\big)&(t_{\yvec_1}>t_j),\\[5pt]
 R^{\sss(N+N'+1)}(\vvec,\xvec_j)+Q^{\sss(N+N'+1)}(\vvec,\xvec_j)&(t_{\yvec_1}
  \le t_j).
 \end{cases}
\end{align}

We use \refeq{a3pm:|I|=1} to estimate $\sum_{\vec x_J}\sum_{\yvec_1}a^{\sss
(N,N')}(\yvec_1,\xvec_j;3)_{\sss\pm}\,\tau(\vec\xvec_{J_j}-\yvec_1)$.  By
\refeq{treegraph} and \refeq{constr1-sup1-bd4}, the contribution from the
case of $t_{\yvec_1}>t_j$ in \refeq{a3pm:|I|=1} is bounded as
\begin{align}\lbeq{a3pm:|I|=1prebd0p}
\sum_{\vec x_J}\sum_{\substack{\vvec,\yvec_1\\ t_{\yvec_1}>t_j}}P^{\sss(N+N'+
 1)}\big(\vvec;\ell(\xvec_j)\big)p_{\vvec,\yvec_1}\tau(\vec\xvec_{J_j}-\yvec_1)
 &\le O\big((1+\bar t_{J_j})^{|J_j|-1}\big)\ddsum_{s=t_j}^{\underline t_{J_j}}
 \sum_vP^{\sss(N+N'+1)}\big((v,s);\ell(t_j)\big)\nn\\
&\le\vep\,O(\hat\beta_\sT)^{N+N'}\,(1+\bar t_{J_j})^{|J_j|-1}\ddsum_{s=
 t_j}^{\underline t_{J_j}}\vep\frac{O(\beta_\sT)}{(1+s)^{(d-2)/2}}%\nn\\
%&\le\vep\frac{O(\hat\beta_\sT)^{N+N'+1}\Delta_{\bar t}}{1+\underline t}\,
% (1+\bar t)^{r-3},
\end{align}
where $(1+\bar t_{J_j})^{|J_j|-1}~(=(1+\bar t_{J_j})^{r-3})$ can be replaced
by $(1+\bar t)^{r-3}$, since $(1+\bar t_{J_j})^{|J_j|-1}=1$ if $J_j=\{i\}$ and
$t_i=\max_{i'\in J}t_{i'}$.  The sum in \refeq{a3pm:|I|=1prebd0p} is bounded
by $O(\hat\beta_\sT)$ when $d\le4$, and by
\begin{align}
\frac{O(\beta)}{(1+t_j)^{(d-4)/2}}=O(\beta)\,\frac{(1+t_j)^{(6-d)/2}}{1+t_j}
 \le O(\beta)\,\frac{(1+\bar t)^{0\vee(6-d)/2}}{1+\underline t}\le\frac{O
 (\beta)\,\Delta_{\bar t}}{1+\underline t},
\end{align}
when $d>4$.  Therefore, we obtain
\begin{align}\lbeq{a3pm:|I|=1prebd0}
\refeq{a3pm:|I|=1prebd0p}\le\vep\frac{O(\hat\beta_\sT)^{N+N'+1}\Delta_{\bar
 t}}{1+\underline t}\,(1+\bar t)^{r-3},
\end{align}
where $\hat\beta_\sT$ must be interpreted as $\beta$ when $d>4$.

Next we investigate the contribution from the case of $t_{\yvec_1}\le t_j$ in
\refeq{a3pm:|I|=1}.  By \refeq{treegraph} and \refeq{goal2}--\refeq{Qgoal},
we obtain
\begin{align}\lbeq{a3pm:|I|=1prebd1}
&\sum_{\vec x_J}\sum_{\substack{\vvec,\yvec_1\\ t_{\yvec_1}\le t_j}}\Big(
 R^{\sss(N+N'+1)}(\vvec,\xvec_j)+Q^{\sss(N+N'+1)}(\vvec,\xvec_j)\Big)\,
 p_{\vvec,\yvec_1}\tau(\vec\xvec_{J_j}-\yvec_1)\nn\\
&\quad\le O(\hat\beta_\sT)^{N+N'}O\big((1+\bar t_{J_j})^{|J_j|-1}\big)
 \ddsum_{s\le\underline t}\tilde b^{\sss(2)}_{s,t_j}(\delta_{s,t_j}+
 \beta_\sT)\beta_\sT.
\end{align}
We note that $(1+\bar t_{J_j})^{|J_j|-1}$ can be replaced by
$(1+\bar t)^{r-3}$, as explained below \refeq{a3pm:|I|=1prebd0p}.  To bound
the sum over $s$ in \refeq{a3pm:|I|=1prebd1}, we use the following lemma:

\begin{lem}[\textbf{Bounds on sums involving $\tilde b_{s,s'}^{\sss(2)}$}]
 \label{lem-bsumbd}
Let $r\equiv|J|+1\ge3$.  For any $j\in J$ and any $I,I'\subsetneq J$ such that
$\vno\ne I'\subsetneq I$,
\begin{align}
\ddsum_{s\le\underline t}\tilde b_{s,t_j}^{\sss(2)}(\delta_{s,t_j}+\beta_\sT)
 \beta_\sT&\le\vep\frac{O(\hat\beta_\sT)\,\Delta_{\bar t}}{1+\underline t},
 \lbeq{singlesumbd1}\\
%\ddsum_{s\le\underline t}\tilde b_{s,t_j}^{\sss(2)}\beta_\sT^2&\le\vep
% \frac{O(\hat\beta_\sT)^2}{1+\underline t},\lbeq{singlesumbd2}\\
\ddsum_{\substack{s\le\underline t\\ s\le s'\le\underline t_I}}\tilde b_{s,
 s'}^{\sss(2)}(\delta_{s,s'}+\beta_\sT)\beta_\sT&\le\vep\,O(\hat\beta_\sT)\,
 \Delta_{\bar t},\lbeq{doublesumbd1}\\
\ddsum_{\substack{s\le\underline t_{J\setminus I}\\ s\le s'\le\underline
 t_{I\setminus I'}}}\Big(1+s'\wedge\max_{i\in I'}t_i\Big)\,\tilde b_{s,
 s'}^{\sss(2)}\beta_\sT^2&\le\vep\,O(\hat\beta_\sT)^2\Delta_{\bar t}.
 \lbeq{doublesumbd2}
\end{align}
All $\beta_\sT$ and $\hat\beta_\sT$ in the above inequalities must be
interpreted as $\beta$ when $d>4$.
\end{lem}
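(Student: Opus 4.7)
The three bounds are routine convolution estimates in the spirit of \refeq{convlem} and \refeq{convappl<4}. Throughout, I treat $d>4$ with $\beta_\sT\equiv\beta$ and $\hat\beta_\sT\equiv\beta$ in parallel with $d\le4$, using the dimension-dependent power in \refeq{bs-def} and \refeq{tildebj-def}, and splitting into the cases $d>6$, $d=6$ and $d<6$ governed by \refeq{Deltat-def}. The overall strategy is to split each sum according to whether the diagonal $\delta$-contribution or the $\beta_\sT$-contribution is taken, to evaluate the resulting single or double sum using standard power-counting, and finally, in the low-dimensional regime, to invoke the conversion \refeq{betaTbd} to turn any leftover factor $\beta_\sT(1+t)^{(4-d)/2}$ into $O(\hat\beta_\sT)$.

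For \refeq{singlesumbd1}, the diagonal contribution forces $s=t_j=\underline t$; using $\tilde b_{\underline t,\underline t}^{\sss(2)}=O(\vep^2)(1+\underline t)^{-(d-2)/2}$ for $d>2$ (and the analogous expression for $d\le2$), multiplying by $\beta_\sT$ and comparing with the RHS reduces to showing $(1+\underline t)^{(6-d)/2}\le\Delta_{\bar t}$ for $d>4$ (which is immediate from \refeq{Deltat-def} since $\underline t\le\bar t$) and to \refeq{betaTbd} for $d\le4$. For the $\beta_\sT$-contribution, the sum $\ddsum_{s\le\underline t}\tilde b_{s,t_j}^{\sss(2)}$ is estimated by splitting the range into $s\le t_j/2$ and $s>t_j/2$; the first half is bounded via $\sum_s\vep(1+s)^{-(d-2)/2}$, the second via $\sum_s\vep(1+t_j-s)^{-(d-2)/2}$, yielding $\vep\,O((1+t_j)^{-(d-4)/2})$ for $d>4$ (times a logarithm for $d=4$, times $(1+t_j)^{(4-d)/2}$ for $d<4$). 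Multiplying by an extra $\beta_\sT$ and using $(1+t_j)^{-1}\le(1+\underline t)^{-1}$ together with $(1+t_j)^{(6-d)/2}\le(1+\bar t)^{(6-d)\vee 0}\le\Delta_{\bar t}$ gives the claimed bound.

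For \refeq{doublesumbd1} the argument is essentially the same, except the outer sum is over $s'\le\underline t_I$ rather than fixed at $t_j$; the diagonal contribution reduces to $\ddsum_{s\le\underline t}\tilde b_{s,s}^{\sss(2)}\beta_\sT$, which is $O(\vep\hat\beta_\sT)$ via a direct single-sum estimate, and the $\beta_\sT$-contribution uses the same two-step convolution as in \refeq{convlem} to produce $\vep\,O(\beta_\sT^2)(1+\underline t_I)^{(6-d)\vee0}$ (with a log at the crossover), then \refeq{betaTbd} and the definition of $\Delta_{\bar t}$. The absence of the $(1+\underline t)^{-1}$ on the RHS makes this estimate strictly easier than \refeq{singlesumbd1}.

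For \refeq{doublesumbd2}, the extra factor $1+s'\wedge\max_{i\in I'}t_i$ is bounded using $s'\wedge\max_{i\in I'}t_i\le s'\le\underline t_{I\setminus I'}\le\bar t$; when extracted this costs at most one additional power of $(1+\bar t)$, which precisely matches the maximal growth of $\Delta_{\bar t}$ in the regime $d<6$, and is inactive for $d\ge6$ since then both $s\wedge\max_{i\in I'}t_i$ and the convolution sum are already controlled by $\vep\,O(1)$. The resulting double sum is then bounded as in \refeq{doublesumbd1}, and the extra factor $\beta_\sT$ yields the second power of $\hat\beta_\sT$ by a second application of \refeq{betaTbd}. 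The main technical obstacle is the careful case-splitting on $d\in\{>6,=6,(4,6),=4,<4\}$ and tracking which factor of $\vep^{\tilde n_{s,s'}^{\sss(2)}}$ is present at the coincidence $s=s'$ and at the origin $s=s'=0$; these bookkeeping details are analogous to those already carried out in the proofs of Lemmas~\ref{lem:constr1'} and \ref{lem-PQNbds}, so no conceptually new input is required.
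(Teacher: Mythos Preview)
Your plan for \refeq{singlesumbd1} and \refeq{doublesumbd1} is essentially the paper's argument (modulo a slip: the convolution $\sum_{s\le\underline t}(1+s)^{-(d-2)/2}(1+t_j-s)^{-(d-2)/2}$ is $O((1+t_j)^{-(d-2)/2})$ for $d>4$, not $O((1+t_j)^{-(d-4)/2})$; with the correct exponent your chain of inequalities works without needing $t_j\le\bar t$).

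For \refeq{doublesumbd2}, however, there is a genuine gap in the range $4<d\le 6$. First, your chain $s'\le\underline t_{I\setminus I'}\le\bar t$ is false when $I\setminus I'$ is a singleton containing the index of the maximal $t_i$; in that case $\underline t_{I\setminus I'}$ is the maximum of $\vec t_J$, not $\le\bar t$. Second, and more seriously, even if you use the correct observation $s'\wedge T_{I'}\le\bar t$ (where $T_{I'}=\max_{i\in I'}t_i$; this follows since either the maximum lies in $I'$, forcing $s'\le\underline t_{I\setminus I'}\le\bar t$, or it does not, forcing $T_{I'}\le\bar t$), extracting a full factor $(1+\bar t)$ and bounding the rest by $\vep\,O(\beta)^2$ yields $\vep(1+\bar t)O(\beta)^2$, which for $4<d<6$ exceeds the target $\vep\,O(\beta)^2\Delta_{\bar t}=\vep\,O(\beta)^2(1+\bar t)^{(6-d)/2}$ by a factor $(1+\bar t)^{(d-4)/2}$. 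Your claim that ``one additional power of $(1+\bar t)$ precisely matches the maximal growth of $\Delta_{\bar t}$ in the regime $d<6$'' is only correct for $d\le 4$.

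The paper avoids this by \emph{not} extracting the factor. After summing over $s$ one is left with $\sum_{s'\le\underline t_{I\setminus I'}}\vep(1+s'\wedge T_{I'})(1+s')^{-(d-2)/2}\beta_\sT$, and this sum is split at $s'=\bar t$ and at $s'=T_{I'}$: for $s'\le\bar t$ use $s'\wedge T_{I'}\le s'$, giving $\sum_{s'\le\bar t}\vep(1+s')^{-(d-4)/2}\le O(\Delta_{\bar t})$; for $T_{I'}\le s'\le\underline t_{I\setminus I'}$ (nonempty only if $T_{I'}\le\underline t_{I\setminus I'}$, which forces $T_{I'}\le\bar t$) use $s'\wedge T_{I'}=T_{I'}$, giving $(1+T_{I'})\sum_{s'\ge T_{I'}}\vep(1+s')^{-(d-2)/2}\le O((1+T_{I'})^{(6-d)/2})\le O(\Delta_{\bar t})$. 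This range-splitting is the missing idea.
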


We postpone the proof of Lemma~\ref{lem-bsumbd} to the end of this subsection.

By \refeq{singlesumbd1}, we immediately conclude that
\refeq{a3pm:|I|=1prebd1} obeys the same bound as \refeq{a3pm:|I|=1prebd0}, and
therefore,
\begin{align}\lbeq{a3pm:|I|=1bd}
\sum_{\vec x_J}\sum_{\yvec_1}a^{\sss(N,N')}(\yvec_1,\xvec_j;3)_{\sss\pm}\tau
 (\vec\xvec_{J_j}-\yvec_1)\le\vep\frac{O(\hat\beta_\sT)^{N+N'+1}\Delta_{\bar
 t}}{1+\underline t}\,(1+\bar t)^{r-3}.
\end{align}
This completes the proof of \refeq{a3bdN} for $|I|=1$.

(ii) Suppose $|I|\ge2$ and that $\Ftwo_{t_{\yvec_1}}(\tb_{\sN},\vec\xvec_I;
\bC_\pm)\cap\{\tb_{\sN}\conn\cvec\}$ occurs.  Then, there are
$\uvec\in\Zd\times\Zp$ and a nonempty $I'\subsetneq I$ such that
$\{\tb_{\sN}\conn\{\cvec,\uvec\}\}\circ\{\uvec\conn\vec\xvec_{I'}\}\circ
\{\uvec\conn\vec\xvec_{I\setminus I'}\}$ occurs.  If such a $\uvec$ does not
exist before or at time $t_{\vvec}$, then $\bC_{\sss\pm}=\tilde\bC_{\sss N-1}$
(hence $N\ge1$) and the event
$\Ecal_{t_{\vvec}+\vep}(\tb_{\sN},\vec\xvec_I;\tilde\bC_{\sss N-1})$ occurs,
where
\begin{align}
\Ecal_{t_{\vvec}+\vep}(\tb_{\sN},\vec\xvec_I;\tilde\bC_{\sss N-1})=
 \bigcup_{\vno\ne I'\subsetneq I}\,\bigcup_{\zvec:t_{\zvec}>t_{\vvec}}\Big\{
 \big\{\Ecal_{t_{\vvec}+\vep}(\tb_{\sN},\zvec;\tilde\bC_{\sss N-1})\cap\{
 \tb_{\sN}\conn\vec\xvec_{I'}\}\big\}\circ\{\zvec\conn\vec\xvec_{I\setminus
 I'}\}\Big\}.
\end{align}
Since
\begin{align}
&\big\{\Ftwo_{t_{\yvec_1}}(\tb_{\sN},\vec\xvec_I;\bC_\pm)\cap\{\tb_{\sN}\conn
 \cvec\}\big\}\setminus\Ecal_{t_{\vvec}+\vep}(\tb_{\sN},\vec\xvec_I;\tilde
 \bC_{\sN-1})\nn\\
&\quad\subset\bigcup_{\vno\ne I'\subsetneq I}\,\bigcup_{\uvec:t_{\uvec}\le
 t_{\vvec}}\Big\{\big\{\tb_{\sN}\conn\{\cvec,\uvec\}\big\}\circ\{\uvec\conn
 \vec\xvec_{I'}\}\circ\{\uvec\conn\vec\xvec_{I\setminus I'}\}\Big\},
\end{align}
we obtain that, by the BK inequality,
\begin{align}\lbeq{M3midbd}
&\tilde M_{b_{N+1}}^{\sss(N+1)}\Big(\indic_{\Ftwo_{t_{\yvec_1}}(\tb_N,\vec
 \xvec_I;\bC_\pm)\,\cap\,\{\tb_N\conn\cvec\}}\Big)\nn\\
&\le\sum_{\vno\ne I'\subsetneq I}\bigg(\sum_{\uvec:t_{\uvec}\le t_{\vvec}}
 \tilde M_{b_{N+1}}^{\sss(N+1)}\big(\indic_{\{\tb_N\conn\{\cvec,\uvec\}\}}
 \big)~\mP\big(\{\uvec\conn\vec\xvec_{I'}\}\circ\{\uvec\conn\vec\xvec_{I
 \setminus I'}\}\big)\nn\\
&\hskip4pc+\indic_{\{N\ge1\}}\sum_{\zvec:t_{\zvec}>t_{\vvec}}\tilde M_{b_{N
 +1}}^{\sss(N+1)}\Big(\indic_{\Ecal_{t_{\vvec}+\vep}(\tb_N,\zvec;\tilde\bC_{N
 -1})\,\cap\,\{\tb_N\conn\{\cvec,\vec\xvec_{I'}\}\}}\Big)\,\tau(\vec\xvec_{I
 \setminus I'}-\zvec)\bigg).
\end{align}

First we investigate the contribution to \refeq{a3bdN} from the sum over
$\uvec$ in \refeq{M3midbd}, which is, by \refeq{a3pm-rewr}, \refeq{a3i1stbd1}
and Lemma~\ref{lem-recPNMthetas},
\begin{align}\lbeq{M3midbd-sumubd1}
\sum_{\vec x_J}\sum_{\substack{\uvec,\vvec,\yvec_1\\ t_{\uvec}\le t_{\vvec}}}
 &\bigg(\underbrace{\sum_\twop\sum_{\cvec}\sum_{b_{N+1}}P^{\sss(N)}\big(
 \bb_{\sN+1};\ell^\twop(\cvec),\ell(\uvec)\big)\,p_{b_{N+1}}P^{\sss(N')}
 (\tb_{\sN+1},\vvec;\cvec)}_{\le\,P^{\sss(N+N'+1)}(\vvec;\ell(\uvec))}\bigg)
 p_{\vvec,\yvec_1}\tau(\vec\xvec_{J\setminus I}-\yvec_1)\nn\\
&\times\mP\big(\{\uvec\conn\vec\xvec_{I'}\}\circ\{\uvec\conn\vec\xvec_{I
 \setminus I'}\}\big).
\end{align}
Note that $|I|\ge2$.
By \refeq{treegraph} and \refeq{PE'vecx-prebd4} and using
$\sum_{\yvec_1}p_{\vvec,\yvec_1}=O(1)$ and
$t_{\vvec}<\underline t_{J\setminus I}$, we can perform the sums over
$\vec x_J$ and $\yvec_1$ to obtain
\begin{align}\lbeq{M3midbd-sumubd2}
\refeq{M3midbd-sumubd1}\le\vep\,O\Big(\underbrace{(1+\bar t_{J\setminus
 I})^{|J\setminus I|-1}(1+\bar t_I)^{|I|-2}}_{\le\,(1+\bar t)^{|J|-3}}\Big)
 \sum_{\substack{\uvec,\vvec\\ t_{\vvec}<\underline t_{J\setminus I}\\
 t_{\uvec}\le t_{\vvec}\wedge\underline t_I}}P^{\sss(N+N'+1)}\big(\vvec;\ell
 (\uvec)\big).
\end{align}
Then, by $(1+\bar t)^{-1}\le(1+\underline t)^{-1}$ for $|J|\ge2$ and using
\refeq{constr1-bd-b}, we obtain
\begin{align}\lbeq{M3midbd-sumubd4p}
\refeq{M3midbd-sumubd2}&\le\vep\frac{O(\hat\beta_{\sT})^{N+N'}}{1+\underline
 t}\,(1+\bar t)^{r-3}\ddsum_{s'<\underline t_{J\setminus I}}\vep\frac{O
 (\beta_{\sT})}{(1+s')^{d/2}}\ddsum_{s\le s'\wedge\underline t_I}\vep\,(1+s)
 \nn\\
&\le\vep\frac{O(\hat\beta_{\sT})^{N+N'}}{1+\underline t}\,(1+\bar t)^{r-3}
 \bigg(~\ddsum_{s'<\underline t}\vep\frac{O(\beta_{\sT})}{(1+s')^{(d-4)/2}}
 +\ddsum_{\underline t_I<s'<\underline t_{J\setminus I}}\vep\frac{O
 (\beta_{\sT})(1+\underline t_I)^2}{(1+s')^{d/2}}\bigg),
\end{align}
where the first sum is readily bounded by $O(\hat\beta_\sT)\Delta_{\bar t}$.
The second sum is bounded as
\begin{align}
\ddsum_{\underline t_I<s'<\underline t_{J\setminus I}}\vep\frac{O(\beta_{\sT})
 (1+\underline t_I)^2}{(1+s')^{d/2}}&\le O(\beta_\sT)\,(1+\underline t_I)^2
 \times
 \begin{cases}
 (1+\underline t_I)^{-(d-2)/2}&(d>2),\\
 \log(1+\underline t_{J\setminus I})&(d=2),\\
 (1+\underline t_{J\setminus I})^{(2-d)/2}&(d<2),
 \end{cases}
\end{align}
which is further bounded by $O(\hat\beta_\sT)\Delta_{\bar t}$, using $|I|\ge2$
and thus $\underline t_I\le\bar t$.  Therefore,
\begin{align}\lbeq{M3midbd-sumubd4}
\refeq{M3midbd-sumubd4p}\le\vep\frac{O(\hat\beta_{\sT})^{N+N'+1}
 \Delta_{\underline t_{J\setminus I}}}{1+\underline t}\,(1+\bar t)^{r-3}.
\end{align}

Next we investigate the contribution to \refeq{a3bdN} from the sum over
$\zvec$ in \refeq{M3midbd}, which is, by \refeq{a3pm-rewr}, a version of
\refeq{a3i1stbd3} and \refeq{Ediagrbd},
\begin{align}\lbeq{M3midbd-sumzbd1}
\sum_{\vec x_J}\sum_{\substack{\vvec,\zvec,\yvec_1\\ t_{\zvec}>t_{\vvec}}}
 &\bigg(\underbrace{\sum_\twop\sum_{\cvec}\sum_{b_{N+1}}P^{\sss(N)}\big(
 \bb_{\sN+1};\Ecal_{t_{\vvec}}(\zvec),\ell^\twop(\cvec),\ell(\vec\xvec_{I'})
 \big)\,p_{b_{N+1}}P^{\sss(N')}(\tb_{\sN+1},\vvec;\cvec)}_{\le\,Q^{\sss(N+N'
 +1)}(\vvec,\zvec;\ell(\vec\xvec_{I'}))}\bigg)\nn\\
&\times p_{\vvec,\yvec_1}\tau(\vec\xvec_{J\setminus I}-\yvec_1)\,\tau(\vec
 \xvec_{I\setminus I'}-\zvec).
\end{align}
By \refeq{treegraph} and $\sum_{\yvec_1}p_{\vvec,\yvec_1}=O(1)$ and using the
fact that $t_{\vvec}<t_{\zvec}\le\underline{t}_{I\setminus I'}$ and
$t_{\vvec}<\underline t_{J\setminus I}$, we can perform the sums over
$\vec x_{J\setminus I'}$ and $\yvec_1$ to obtain
\begin{align}\lbeq{M3midbd-sumzbd2}
\refeq{M3midbd-sumzbd1}&\le O\Big(\underbrace{(1+\bar t_{J\setminus I})^{|J
 \setminus I|-1}(1+\bar t_{I\setminus I'})^{|I\setminus I'|-1}}_{\le\,(1+\bar
 t)^{|J\setminus I'|-2}}\Big)\sum_{\vec x_{I'}}\sum_{\substack{\vvec,\zvec\\
 t_{\vvec}<\underline t_{J\setminus I}\\ t_{\vvec}<t_{\zvec}\le\underline t_{I
 \setminus I'}}}Q^{\sss(N+N'+1)}\big(\vvec,\zvec;\ell(\vec\xvec_{I'})\big).
\end{align}
By repeatedly applying \refeq{constr1-bd-b} to \refeq{Qgoal}, we have
\begin{align}\lbeq{M3midbd-sumzbd4-}
\sum_{v,z}Q_{s,s'}^{\sss(N+N'+1)}(v,z;\ell(\vec t_{I'}))\le O(\beta_{\sT})^2
 O(\hat\beta_{\sT})^{N+N'}\tilde b_{s,s'}^{\sss(2)}\Big(1+s'\wedge\max_{i\in
 I'}t_i\Big)\,(1+\bar s'_{\vec t_{I'}})^{|I'|-1}.
\end{align}
Since $s'\le\underline t_{I\setminus I'}$, we have
$\bar s'_{\vec t_{I'}}\le\bar t$.  Therefore, by \refeq{doublesumbd2},
\begin{align}\lbeq{M3midbd-sumzbd4}
\refeq{M3midbd-sumzbd2}&\le O(\hat\beta_{\sT})^{N+N'}O\big((1+\bar t)^{|J|-3}
 \big)\ddsum_{\substack{s<\underline t_{J\setminus I}\\ s<s'\le\underline t_{I
 \setminus I'}}}\Big(1+s'\wedge\max_{i\in I'}t_i\Big)\,\tilde b_{s,s'}^{\sss
 (2)}\beta_{\sT}^2\nn\\
&\le\vep\frac{O(\hat\beta_{\sT})^{N+N'+2}\Delta_{\underline t_{I\setminus I'}}}
 {1+\underline t}\,(1+\bar t)^{r-3}.
\end{align}
When $d>4$, the above $\hat\beta_\sT$ is replaced by $\beta$.

Summarising \refeq{M3midbd}, \refeq{M3midbd-sumubd4} and
\refeq{M3midbd-sumzbd4}, we now conclude that \refeq{a3bdN} for $|I|\ge2$ also
holds.  This together with \refeq{a3pm:|I|=1bd} completes the proof of
\refeq{a3bdN}. \qed

\begin{proof}[Proof of Lemma~\ref{lem-bsumbd}]
As we have done so far, $\beta_\sT$ and $\hat\beta_\sT$ below are both
replaced by $\beta$ when $d>4$.

First we prove \refeq{singlesumbd1}.  By
$(1+\underline t)^{0\vee(2-d)/2}\le\Delta_{\underline t}$ and
$\underline t\le\bar t$ for $|J|\ge2$ and using \refeq{betaTbd}, we obtain
\begin{align}\lbeq{singlebd-pr1}
\ddsum_{s\le\underline t}\tilde b_{s,t_j}^{\sss(2)}\,\delta_{s,t_j}\beta_\sT
&\le\vep\frac{(1+t_j)^{0\vee(2-d)/2}(\log(1+t_j))^{\delta_{d,2}}}{(1+t_j)^{(d
 -2)/2}}\,\delta_{\underline t,t_j}\beta_\sT\nn\\
&=\vep\frac{(1+\underline t)^{\frac{4-d}2\vee(3-d)}(\log(1+\underline t))^{
 \delta_{d,2}}}{1+\underline t}\,\beta_\sT\nn\\
&\le\vep\frac{(1+\underline t)^{0\vee(2-d)/2}}{1+\underline t}\,O(\hat
 \beta_\sT)\le\vep\frac{\Delta_{\bar t}}{1+\underline t}\,O(\hat\beta_\sT).
\end{align}
For $d>2$, we use
$(1+\underline t)^{-(d-2)/2}\le(1+\underline t)^{-1}(1+t_j)^{0\vee(4-d)/2}$
and \refeq{betaTbd} if $d\in(2,4]$, so that
\begin{align}\lbeq{singlebd-pr2}
\ddsum_{s\le\underline t}\tilde b_{s,t_j}^{\sss(2)}\beta_\sT^2&\le\ddsum_{s\le
 \underline t}\frac{\vep^{2-\delta_{s,t_j}}}{(1+s)^{(d-2)/2}(1+t_j-s)^{(d-2)/
 2}}\,\beta_\sT^2\nn\\
&\le\vep\frac{(1+t_j)^{0\vee(4-d)/2}(\log(1+t_j))^{\delta_{d,4}}}{(1+\underline
 t)^{(d-2)/2}}\,O(\beta_\sT^2)\nn\\
&\le\vep\frac{(1+t_j)^{0\vee(4-d)}(\log(1+t_j))^{\delta_{d,4}}}{1+\underline t}
 \,O(\beta_\sT^2)\le\vep\frac{O(\hat\beta_\sT)^2}{1+\underline t}.
\end{align}
For $d\le2$, on the other hand, we use \refeq{betaTbd} and
$(1+t_j)^{-1}\le(1+\underline t)^{-1}$ to obtain
\begin{align}\lbeq{singlebd-pr3}
\ddsum_{s\le\underline t}\tilde b_{s,t_j}^{\sss(2)}\beta_\sT^2&\le
 (1+t_j)^{(2-d)/2}(\log(1+t_j))^{\delta_{d,2}}\ddsum_{s\le\underline t}\frac{
 \vep^{2-\delta_{s,t_j}}}{(1+s)^{(d-2)/2}}\,\beta_\sT^2\nn\\
&\le\vep\,O(\hat\beta_\sT)\,(1+t_j)^{(2-d)/2}\beta_\sT\nn\\
&\le\vep\frac{O(\hat\beta_\sT)}{1+\underline t}\,(1+t_j)^{(4-d)/2}
 \beta_\sT\le\vep\frac{O(\hat\beta_\sT)^2}{1+\underline t}.
\end{align}
Since $\Delta_{\bar t}\ge1$, this completes the proof of \refeq{singlesumbd1}.

To prove \refeq{doublesumbd1}, we simply use \refeq{betaTbd} and
$\underline t\le\bar t$ to obtain
\begin{align}
\ddsum_{s\le\underline t}\tilde b_{s,s}^{\sss(2)}\beta_\sT&\le\ddsum_{s\le
 \underline t}\vep^{2-\delta_{s,2\vep}}\frac{(1+s)^{0\vee(2-d)/2}(\log(1+
 s))^{\delta_{d,2}}}{(1+s)^{(d-2)/2}}\,\beta_\sT\le\vep\,O(\hat\beta_\sT)\,
 (1+\underline t)^{0\vee(2-d)/2}\le\vep\,O(\hat\beta_\sT)\,\Delta_{\bar t},
\end{align}
and use $\underline t\le\underline t_I\le\bar t$ for $|I|\ge2$ and
use \refeq{betaTbd} twice to obtain
\begin{align}
\ddsum_{\substack{s\le\underline t\\ s\le s'\le\underline t_I}}\tilde b_{s,
 s'}^{\sss(2)}\beta_\sT^2\le\vep\,O(\hat\beta_\sT)\ddsum_{s\le\underline t}
 \frac{\vep^{1-\delta_{s,2\vep}}}{(1+s)^{(d-2)/2}}\,\beta_\sT\le\vep\,O(\hat
 \beta_\sT)^2.
\end{align}
This completes the proof of \refeq{doublesumbd1}.

Finally we prove \refeq{doublesumbd2}, for $d>2$ and $d\le2$ separately (the
latter is easier).  For brevity, we introduce the notation
\begin{align}
T_{I'}=\max_{i\in I'}t_i.
\end{align}
Note that $\underline t_{I\setminus I'}\wedge T_{I'}\le\bar t$ since $I'$ and
$I\setminus I'$ are both nonempty.  Then, for $d>2$,
\begin{align}\lbeq{doublesumbd-d>2}
\ddsum_{\substack{s\le\underline t_{J\setminus I}\\ s\le s'\le\underline t_{I
 \setminus I'}}}(1+s'\wedge T_{I'})\,\tilde b_{s,s'}^{\sss(2)}\beta_\sT^2&=
 \ddsum_{s'\le\underline t_{I\setminus I'}}(1+s'\wedge T_{I'})\ddsum_{s\le s'
 \wedge\underline t_{J\setminus I}}\frac{\vep^{3-\delta_{s,s'}-\delta_{s,2\vep}
 \delta_{s',2\vep}}}{(1+s)^{(d-2)/2}(1+s'-s)^{(d-2)/2}}\,\beta_\sT^2\nn\\
&\le\vep\,O(\hat\beta_\sT)\ddsum_{s'\le\underline t_{I\setminus I'}}\vep^{1-
 \delta_{s',2\vep}}\frac{1+s'\wedge T_{I'}}{(1+s')^{(d-2)/2}}\,\beta_\sT
 \nn\\
&\le\vep\,O(\hat\beta_\sT)\bigg(\ddsum_{s'\le\bar t}\frac{\vep^{1-\delta_{s',2
 \vep}}\beta_\sT}{(1+s')^{(d-4)/2}}+\ddsum_{T_{I'}\le s'\le\underline t_{I
 \setminus I'}}\frac{\vep^{1-\delta_{s',2\vep}}(1+T_{I'})\beta_\sT}{(1+
 s')^{(d-2)/2}}\bigg),
\end{align}
where the second sum in the last line is interpreted as zero if
$T_{I'}>\underline t_{I\setminus I'}$.  The first sum is readily bounded by
$O(\hat\beta_\sT)\Delta_{\bar t}$, whereas the second sum, if it is nonzero
(so that, in particular, $T_{I'}\le\bar t$), is bounded by
\begin{align}
\ddsum_{T_{I'}\le s'\le\underline t_{I\setminus I'}}\frac{\vep^{1-\delta_{s',
 2\vep}}(1+T_{I'})\beta_\sT}{(1+s')^{(d-2)/2}}&\le O(\beta_\sT)\,(1+T_{I'})
 \times
 \begin{cases}
 (1+T_{I'})^{-(d-4)/2}&(d>4)\\
 \log(1+\underline t_{I\setminus I'})&(d=4)\\
 (1+\underline t_{I\setminus I'})^{(4-d)/2}&(d<4)
 \end{cases}\nn\\
&\le O(\hat\beta_\sT)\,\Delta_{\bar t}.
\end{align}
Therefore, the right-hand side of \refeq{doublesumbd-d>2} is bounded by
$\vep O(\hat\beta_\sT)^2\Delta_{\bar t}$, as required.

For $d\le2$, we use \refeq{betaTbd} twice and
$1+\underline t_{I\setminus I'}\wedge T_{I'}\le1+\bar t=\Delta_{\bar t}$
to obtain
\begin{align}
\ddsum_{\substack{s\le\underline t_{J\setminus I}\\ s\le s'\le\underline t_{I
 \setminus I'}}}(1+s'\wedge T_{I'})\,\tilde b_{s,s'}^{\sss(2)}\beta_\sT^2
% &\le(1+\bar t)\ddsum_{s'\le\underline t_{I\setminus I'}}\frac{(\log(1+
% s'))^{\delta_{d,2}}}{(1+s')^{(d-2)/2}}\ddsum_{s\le s'\le\underline t_{I
% \setminus I'}}\frac{\vep^{3-\delta_{s,s'}-\delta_{s,2\vep}\delta_{s',2
% \vep}}}{(1+s)^{(d-2)/2}}\,\beta_\sT^2\nn\\
&\le\vep\,O(\hat\beta_\sT)\,\Delta_{\bar t}\ddsum_{s'\le\underline t_{I
 \setminus I'}}\frac{\vep^{1-\delta_{s',2\vep}}\beta_\sT}{(1+s')^{(d-2)/2}}
 \le\vep\,O(\hat\beta_\sT)^2\Delta_{\bar t}.
\end{align}
This completes the proof of \refeq{doublesumbd2} and hence of
Lemma~\ref{lem-bsumbd}.
\end{proof}

\subsection{Proof of \refeq{a4bdN}}\label{sec-aN4bd}
Recall the definition \refeq{a4pm-def} of
$a^{\sss(N)}(\yvec_1,\vec\xvec_I;4)_{\sss\pm}$ and denote by
$a^{\sss(N,N_1,N_2)}(\yvec_1,\vec\xvec_I;4)_{\sss\pm}$ the contribution from
$B_\delta^{\sss(N_1)}(\tb_{\sN+1},\yvec_1;\bC_{\sN})$ and
$A^{\sss(N_2)}(\te,\vec\xvec_I;\tilde\bC^e_{\sN})$, i.e.,
\begin{align}\lbeq{a4NNN}
&-a^{\sss(N,N_1,N_2)}(\yvec_1,\vec\xvec_I;4)_{\sss\pm}\nn\\
&\quad=\sum_{\substack{b_{N+1},e\\ b_{N+1}\ne e}}p_{b_{N+1}}p_e\,\tilde
 M_{b_{N+1}}^{\sss(N+1)}\Big(\ind{\Ftwo_{t_{\yvec_1}}(\tb_N,\eb;\bC_\pm)
 \text{ in }\tilde\bC^e_N}~B_\delta^{\sss(N_1)}(\tb_{\sss N+1},\yvec_1;
 \bC_{\sN})~A^{\sss(N_2)}(\te,\vec\xvec_I;\tilde\bC^e_{\sN})\Big).
\end{align}
Compare \refeq{a4NNN} with $\phi^{\sss(N,N_1,N_2)}(\yvec_1,\yvec_2)_{\sss\pm}$
in \refeq{phiNNN} and note that the only difference is that
$A^{\sss(N_2)}(\te,\vec\xvec_I;\tilde\bC^e_{\sN})$ in \refeq{a4NNN}
is replaced by $B_\delta^{\sss(N_2)}(\te,\yvec_2;\tilde\bC^e_{\sN})$ in
\refeq{phiNNN} (cf., Figure~\ref{fig-2ndexp-main}).

Similarly to the proof of \refeq{a2bdN} in Section~\ref{sec-aN2bd}, we discuss
the following three cases separately: (i)~$|I|=1$, (ii)~$|I|\ge2$ and
$N_2=0$, and (iii)~$|I|\ge2$ and $N_2\ge1$.

(i) Let $I=\{j\}$ for some $j\in J$.  Then, by the similarity of \refeq{a4NNN}
and \refeq{phiNNN}, we can follow the same proof of Lemma~\ref{lem-phibdslow}
and obtain
\begin{align}
\big|a^{\sss(N,N_1,N_2)}(\yvec_1,\xvec_j;4)_{\sss\pm}\big|&\le\sum_{\uvec_1}
 \Big(R^{\sss(N+N_1,N_2)}(\uvec_1,\xvec_j)+\ind{N\ge1}~Q^{\sss(N+N_1,N_2)}
 (\uvec_1,\xvec_j)\Big)\,p_{\uvec_1,\yvec_1}.
\end{align}
By \refeq{treegraph} and \refeq{goal2}--\refeq{Qgoal}, we obtain
\begin{align}
&\bigg|\sum_{N_1,N_2\ge0}\sum_{\vec x_J}\sum_{\yvec_1}a^{\sss(N,N_1,N_2)}
 (\yvec_1,\xvec_j;4)_{\sss\pm}\,\tau(\vec\xvec_{J_j}-\yvec_1)\bigg|\nn\\
&\le O\big(\underbrace{(1+\bar t_{J_j})^{|J_j|-1}}_{\le\,(1+\bar t)^{|J_j|-1}}
 \big)\sum_{N_1,N_2\ge0}\ddsum_{s\le\underline t}\sum_{u_1,x_j}\Big(R_{s,
 t_j}^{\sss(N+N_1,N_2)}(u_1,x_j)+\ind{N\ge1}~Q_{s,t_j}^{\sss(N+N_1,N_2)}(u_1,
 x_j)\Big)\nn\\
&\le O(\hat\beta_\sT)^{0\vee(N-1)}O\big((1+\bar t)^{r-3}\big)\ddsum_{s\le
 \underline t}\tilde b_{s,t_j}^{\sss(2)}(\delta_{s,t_j}+\beta_\sT)\beta_\sT.
\end{align}
By \refeq{singlesumbd1}, we conclude that, for $I=\{j\}$,
\begin{align}\lbeq{a4NNNibdcompl}
\bigg|\sum_{\vec x_J}\sum_{\yvec_1}a^{\sss(N)}
 (\yvec_1,\xvec_j;4)_{\sss\pm}\,\tau(\vec\xvec_{J_j}-\yvec_1)\bigg|\le\vep
 \frac{O(\hat\beta_\sT)^{1\vee N}\Delta_{\bar t}}{1+\underline t}\,(1+\bar
 t)^{r-3}.
\end{align}

(ii) Let $|I|\ge2$ and $N_2=0$.  Then, by \refeq{APbd} and following the
argument around \refeq{Vdiagrbd-prepr2}, we have
\begin{align}\lbeq{a4NN0ii-bd1}
&\big|a^{\sss(N,N_1,0)}(\yvec_1,\vec\xvec_I;4)_{\sss\pm}\big|\nn\\
&\le\sum_{\vvec}\sum_ep_e
 \sum_{\vno\ne I'\subsetneq I}\bigg(\delta_{\vvec,\te}\;\mP\big(\{\te\conn\vec
 \xvec_{I'}\}\circ\{\te\conn\vec\xvec_{I\setminus I'}\}\big)+\sum_{\zvec\ne\te}
 P^{\sss(0)}\big(\te,\zvec;\vvec,\ell(\vec\xvec_{I'})\big)\,\tau(\vec\xvec_{I
 \setminus I'}-\zvec)\bigg)\nn\\
&\qquad\times\sum_{b_{N+1}\ne e}p_{b_{N+1}}\bigg(\text{Bound on }\tilde M_{b_{N
 +1}}^{\sss(N+1)}\Big(\ind{\Ftwo_{t_{\yvec_1}}(\tb_N,\eb;\bC_\pm)\,\cap\,\{
 \vvec\in\tilde\bC_N\}\text{ in }\tilde\bC^e_N}\,B_\delta^{\sss(N_1)}(\tb_{\sN
 +1},\yvec_1;\bC_{\sN})\Big)\bigg).
\end{align}
By Lemmas~\ref{lem-inclG}--\ref{lem-bdsphi} and following the proof of
Lemma~\ref{lem-phibdslow} for $N_2=0$ in Section~\ref{sec-phibd1}, we obtain
\begin{align}\lbeq{a4NNN-1stlinebd}
&\sum_{b_{N+1}}p_{b_{N+1}}\tilde M_{b_{N+1}}^{\sss(N+1)}\Big(\ind{\Ftwo_{t_{
 \yvec_1}}(\tb_N,\eb;\bC_\pm)\,\cap\,\{\vvec\in\tilde\bC_N\}\text{ in }\tilde
 \bC^e_N}~B_\delta^{\sss(N_1)}(\tb_{\sss N+1},\yvec_1;\bC_{\sN})\Big)\nn\\
&\le\sum_\eta\sum_{\uvec}\Big(R^{\sss(N+N_1)}\big(\uvec,\eb;\ell^\eta
 (\vvec)\big)+\ind{N\ge1}\,Q^{\sss(N+N_1)}\big(\uvec,\eb;\ell^\eta(\vvec)\big)
 \Big)p_{\uvec,\yvec_1}.
\end{align}
Therefore, similarly to \refeq{tildea20-bd3}, we have
\begin{align}
\big|a^{\sss(N,N_1,0)}(\yvec_1,\vec\xvec_I;4)_{\sss\pm}\big|\le\sum_{\vno\ne
 I'\subsetneq I}\Big(\tilde a^{\sss(N,N_1,0)}(\yvec_1,\vec\xvec_{I'},\vec
 \xvec_{I\setminus I'};4)_1+\tilde a^{\sss(N,N_1,0)}(\yvec_1,\vec\xvec_{I'},
 \vec\xvec_{I\setminus I'};4)_2\Big),
\end{align}
where
\begin{align}
\tilde a^{\sss(N,N_1,0)}(\yvec_1,\vec\xvec_{I'},\vec\xvec_{I\setminus I'};4)_1
 =\sum_{\uvec,e}&\Big(R^{\sss(N+N_1)}\big(\uvec,\eb;\ell(\te)\big)+\ind{N\ge1}
 \,Q^{\sss(N+N_1)}\big(\uvec,\eb;\ell(\te)\big)\Big)\nn\\
&\times p_{\uvec,\yvec_1}p_e\,\mP\big(\{\te\conn\vec\xvec_{I'}\}\circ\{\te\conn
 \vec\xvec_{I\setminus I'}\}\big),
\end{align}
and (cf., \refeq{varphiA1rep2}--\refeq{varphiA1rep3})
\begin{align}
\tilde a^{\sss(N,N_1,0)}(\yvec_1,\vec\xvec_{I'},\vec\xvec_{I\setminus I'};4)_2=
 \sum_{\uvec,\vvec}&\Big(R^{\sss(N+N_1,1)}\big(\uvec,\vvec;\ell(\vec\xvec_{I'})
 \big)+\ind{N\ge1}\,Q^{\sss(N+N_1,1)}\big(\uvec,\vvec;\ell(\vec\xvec_{I'})\big)
 \Big)\nn\\
&\times p_{\uvec,\yvec_1}\tau(\vec\xvec_{I\setminus I'}-\vvec),
\end{align}

First, we estimate the contribution to \refeq{a4bdN} from $\tilde
a^{\sss(N,N_1,0)}(\yvec_1,\vec\xvec_{I'},\vec\xvec_{I\setminus I'};4)_1$.  By
\refeq{treegraph} and \refeq{PE'vecx-prebd4} and following the argument around
\refeq{constr1-sup1-bd2}, we obtain
\begin{align}\lbeq{tildea4NN01-bd1}
&\sum_{\vec x_J}\sum_{\yvec_1}\tilde a^{\sss(N,N_1,0)}(\yvec_1,\vec\xvec_{I'},
 \vec\xvec_{I\setminus I'};4)_1~\tau(\vec\xvec_{J\setminus I}-\yvec_1)\nn\\
&\le\ddsum_{\substack{
 s<\underline t\\ s\le s'<\underline t_I}}\sup_w\sum_{u,v}\Big(R_{s,s'}^{
 \sss(N+N_1)}\big(u,v;\ell(w,s'+\vep)\big)+\ind{N\ge1}\,Q_{s,s'}^{\sss(N+
 N_1)}\big(u,v;\ell(w,s'+\vep)\big)\Big)\nn\\[-5pt]
&\qquad\times\vep\,O\Big((1+\bar t_I)^{|I|-2}(1+\bar t_{J\setminus I})^{|J
 \setminus I|-1}\Big)\nn\\[10pt]
&\le\vep O\big((1+\bar t)^{|J|-3}\big)\ddsum_{\substack{s<\underline t\\
 s\le s'<\underline t_I}}\bigg(\text{Bound on }\sum_{u,v}\Big(R_{s,s'}^{\sss(N+
 N_1)}(u,v)+\ind{N\ge1}\,Q_{s,s'}^{\sss(N+N_1)}(u,v)\Big)\bigg),
\end{align}
where we have used $\bar t_I\le\bar t$ for $|I|\ge2$ and
$(1+\bar t_{J\setminus I})^{|J\setminus I|-1}=1$ if $J\setminus I=\{j\}$ and
$t_j=\max_{i\in J}t_i$ (otherwise we use $\bar t_{J\setminus I}\le\bar t$).  By
\refeq{doublesumbd1}, we obtain
\begin{align}\lbeq{tildea4NN01-bd2}
\refeq{tildea4NN01-bd1}\le\vep^2\frac{O(\hat\beta_\sT)^{1\vee(N+N_1)}\,
 \Delta_{\bar t}}{1+\underline t}\,(1+\bar t)^{r-3}.
\end{align}

Next, we estimate the contribution to \refeq{a4bdN} from $\tilde
a^{\sss(N,N_1,0)}(\yvec_1,\vec\xvec_{I'},\vec\xvec_{I\setminus I'};4)_2$.  By
\refeq{treegraph} and repeatedly applying \refeq{constr1-bd-b} to
\refeq{goal2}--\refeq{Qgoal}, we obtain
\begin{align}\lbeq{tildea4NN02-bd1}
&\sum_{\vec x_J}\sum_{\yvec_1}\tilde a^{\sss(N,N_1,0)}(\yvec_1,\vec\xvec_{I'},
 \vec\xvec_{I\setminus I'};4)_2~\tau(\vec\xvec_{J\setminus I}-\yvec_1)\nn\\
&\le\ddsum_{\substack{s<\underline t_{J\setminus I}\\ s\le s'<\underline
 t_{I\setminus I'}}}\sum_{u,v}\Big(R_{s,s'}^{\sss(N+N_1,1)}\big(u,v;\ell
 (\vec t_{I'})\big)+\ind{N\ge1}\,Q_{s,s'}^{\sss(N+N_1,1)}\big(u,v;\ell
 (\vec t_{I'})\big)\Big)\nn\\
&\qquad\times O\Big(\underbrace{(1+\bar t_{I\setminus I'})^{|I\setminus I'|-1}
 (1+\bar t_{J\setminus I})^{|J\setminus I|-1}}_{\le\,(1+\bar t)^{|J\setminus
 I'|-2}}\Big)\nn\\
&\le O(\hat\beta_\sT)^{0\vee(N+N_1-1)}O\big((1+\bar t)^{|J|-3}\big)
 \ddsum_{\substack{s<\underline t_{J\setminus I}\\ s\le s'<\underline t_{I
 \setminus I'}}}\Big(1+s'\wedge\max_{i\in I'}t_i\Big)\,\tilde b_{s,s'}^{\sss
 (2)}\beta_\sT^2.
\end{align}
By \refeq{doublesumbd2}, we arrive at
\begin{align}\lbeq{tildea4NN02-bd2}
\refeq{tildea4NN02-bd1}\le\vep\frac{O(\hat\beta_\sT)^{1\vee(N+N_1)+1}
 \Delta_{\bar t}}{1+\underline t}\,(1+\bar t)^{r-3}.
\end{align}

Summarizing \refeq{tildea4NN01-bd2} and \refeq{tildea4NN02-bd2} yields that,
for $|I|\ge2$ and $N_2=0$,
\begin{align}\lbeq{a4NNNiibdcompl}
\bigg|\sum_{\vec x_J}\sum_{\yvec_1}a^{\sss(N,N_1,0)}(\yvec_1,\vec\xvec_I;
 4)_{\sss\pm}\,\tau(\vec\xvec_{J\setminus I}-\yvec_1)\bigg|\le\vep\frac{O(\hat\beta_\sT)^{1\vee(N+N_1)}\Delta_{\bar t}}
 {1+\underline t}\,(1+\bar t)^{r-3}.
\end{align}

(iii) Let $|I|\ge2$ and $N_2\ge1$.  By \refeq{APbd} and
\refeq{a4NNN-1stlinebd}, we have
\begin{align}\lbeq{a4NNNiii-bd1}
&\big|a^{\sss(N,N_1,N_2)}(\yvec_1,\vec\xvec_I;4)_{\sss\pm}\big|\nn\\
&\le\sum_{\vvec,\zvec}\sum_ep_e\sum_{\vno\ne I'\subsetneq I}\Big(P^{\sss(N_2)}
 (\te,\zvec;\vvec)~\tau(\vec\xvec_{I'}-\zvec)+P^{\sss(N_2)}\big(\te,\zvec;
 \vvec,\ell(\vec\xvec_{I'})\big)\Big)\,\tau(\vec\xvec_{I\setminus I'}-\zvec)
 \nn\\
&\qquad\times\sum_{\substack{b_{N+1},e\\ b_{N+1}\ne e}}p_{b_{N+1}}\bigg(
 \text{Bounds on }\tilde M_{b_{N+1}}^{\sss(N+1)}\Big(\ind{\Ftwo_{t_{\yvec_1}}
 (\tb_N,\eb;\bC_\pm)\,\cap\,\{\vvec\in\tilde\bC_N\}\text{ in }\tilde\bC^e_N}\,
 B_\delta^{\sss(N_1)}(\tb_{\sss N+1},\yvec_1;\bC_{\sN})\Big)\bigg)\nn\\
&\le\sum_{\vno\ne I'\subsetneq I}\sum_{\uvec,\zvec}\bigg(\sum_\eta\sum_{\vvec}
 \sum_e\Big(R^{\sss(N+N_1)}\big(\uvec,\eb;\ell^\eta(\vvec)\big)+\ind{N\ge1}\,
 Q^{\sss(N+N_1)}\big(\uvec,\eb;\ell^\eta(\vvec)\big)\Big)p_{\uvec,\yvec_1}p_e
 \nn\\
&\hskip7pc\times\Big(P^{\sss(N_2)}(\te,\zvec;\vvec)~\tau(\vec\xvec_{I'}-\zvec)
 +P^{\sss(N_2)}\big(\te,\zvec;\vvec,\ell(\vec\xvec_{I'})\big)\Big)\bigg)\,\tau
 (\vec\xvec_{I\setminus I'}-\zvec),
\end{align}
where, by \refeq{P0Cdef}, \refeq{PArecrep} and \refeq{RtilR}--\refeq{QtilQ},
\begin{align}\lbeq{a4NNNiii-bd2}
&\sum_\eta\sum_{\vvec}\sum_e\Big(R^{\sss(N+N_1)}\big(\uvec,\eb;\ell^\eta(\vvec)
 \big)+\ind{N\ge1}\,Q^{\sss(N+N_1)}\big(\uvec,\eb;\ell^\eta(\vvec)\big)\Big)p_e
 \nn\\
&\hskip4pc\times\Big(P^{\sss(N_2)}(\te,\zvec;\vvec)\,\tau(\vec\xvec_{I'}-\zvec)
 +P^{\sss(N_2)}\big(\te,\zvec;\vvec,\ell(\vec\xvec_{I'})\big)\Big)\nn\\
&=\Big(R^{\sss(N+N_1,N_2)}(\uvec,\zvec)+\ind{N\ge1}\,Q^{\sss(N+N_1,N_2)}(\uvec,
 \zvec)\Big)\,\tau(\vec\xvec_{I'}-\zvec)\nn\\
&\quad+R^{\sss(N+N_1,N_2)}\big(\uvec,\zvec;\ell(\vec\xvec_{I'})\big)+\ind{N\ge
 1}\,Q^{\sss(N+N_1,N_2)}\big(\uvec,\zvec;\ell(\vec\xvec_{I'})\big).
\end{align}
Then, by repeatedly applying \refeq{constr1-bd-b} to
\refeq{goal2}--\refeq{Qgoal} and using \refeq{treegraph} and
\refeq{doublesumbd2}, we obtain
\begin{align}\lbeq{a4NNNiiibdcompl}
&\bigg|\sum_{\vec x_J}\sum_{\yvec_1}a^{\sss(N,N_1,N_2)}(\yvec_1,\vec\xvec_I;
 4)_{\sss\pm}\,\tau(\vec\xvec_{J\setminus I}-\yvec_1)\bigg|\nn\\
&\le\sum_{\vno\ne I'\subsetneq I}\bigg(\sum_{\uvec,\zvec}\Big(R^{\sss(N+N_1,
 N_2)}(\uvec,\zvec)+\ind{N\ge1}\,Q^{\sss(N+N_1,N_2)}(\uvec,\zvec)\Big)\,O\big(
 \underbrace{(1+\bar t_{I'})^{|I'|-1}}_{\le\,(1+\bar t)^{|I'|-1}}\big)\nn\\
&\hskip5pc+\sum_{\uvec,\zvec}\Big(R^{\sss(N+N_1,N_2)}\big(\uvec,\zvec;\ell(\vec
 t_{I'})\big)+\ind{N\ge1}\,Q^{\sss(N+N_1,N_2)}\big(\uvec,\zvec;\ell(\vec
 t_{I'})\big)\Big)\bigg)\nn\\
&\hskip3pc\times O\Big(\underbrace{(1+\bar t_{I\setminus I'})^{|I\setminus I'|
 -1}(1+\bar t_{J\setminus I})^{|J\setminus I|-1}}_{\le\,(1+\bar t)^{|J\setminus
 I'|-2}}\Big)\nn\\
&\le O(\hat\beta_\sT)^{1\vee(N+N_1)+N_2-2}O\big((1+\bar t)^{|J|-3}\big)
 \sum_{\vno\ne I'\subsetneq I}~\ddsum_{\substack{s<\underline t_{J\setminus
 I}\\ s\le s'\le\underline t_{I\setminus I'}}}(1+s')\,\tilde b_{s,s'}^{\sss(2)}
 \beta_\sT^2\nn\\
&\le\vep\frac{O(\hat\beta_\sT)^{1\vee(N+N_1)+N_2}\Delta_{\bar t}}{1+\underline
 t}\,(1+\bar t)^{r-3}.
\end{align}

Finally, by summing \refeq{a4NNNiibdcompl} and the sum of
\refeq{a4NNNiiibdcompl} over $N_2\ge1$, we conclude that, for $|I|\ge2$,
\begin{align}
\bigg|\sum_{\vec x_J}\sum_{\yvec_1}a^{\sss(N)}(\yvec_1,\vec\xvec_I;4)_{\sss\pm}
 \,\tau(\vec\xvec_{J\setminus I}-\yvec_1)\bigg|\le\vep\frac{O(\hat\beta_\sT)^{1
 \vee N}\Delta_{\bar t}}{1+\underline t}\,(1+\bar t)^{r-3}.
\end{align}
This together with \refeq{a4NNNibdcompl} completes the proof of \refeq{a4bdN}.
\qed

%See Proposition \ref{prop-phibdslow} to see that $\phi^{\sss(N,M,K)}(\yvec_1,\yvec_2)_{\sss\pm}$
%is bounded by
%    \eq
%    T^{\sss(N,M,K)}(\yvec_1,\yvec_2)=
%    \sum_{\uvec} p(\yvec_1-\uvec)[R^{\sss(N,M,K)}(\uvec,\yvec_2)+
%    Q^{\sss(N,M,K)}(\uvec,\yvec_2)].
%    \en
%The bounding diagram for $A^{\sss(K)}(\te,\vec\xvec_I;\tilde\bC^e_{\sN})$
%in Proposition \ref{prop-ANbd} is explicitly described in terms of
%the bounding diagram for $B^{\sss(K)}(\te,\yvec_2;\tilde\bC^e_{\sN})$.
%Therefore, it follows that
%    \eq
%    |a^{\sss(N,M,K)}(\yvec_1,\vec\xvec_I;4)|
%    \leq \sum_{I'\neq \varnothing, I} \sum_{\yvec_2}
%    T^{\sss(N,M,K)}(\yvec_1,\yvec_2; \ell(\vec\xvec_{I'}))
%    \tau(\vec\xvec_{I\backslash I'}-\yvec_2).
%    \en
%To bound this diagram, we use \refeq{treegraph}, as well as Lemma \ref{lem:constr1}(b)
%to deduce that, with $\yvec_1=(s_1,y_1)$ and $\yvec_2=(s_2,y_2)$,
%in an identical way as in \refeq{a3diagrambound}, to obtain
%    \eq
%    \sum_{\yvec_2}\sum_{y_1}
%    \sum_{\vec x_{I}}T^{\sss(N,M,K)}(\yvec_1,\yvec_2; \ell(\vec\xvec_{I'}))
%    \tau(\vec\xvec_{I\backslash I'}-\yvec_2)
%    \leq  (C\beta)^{N+M+K} \bar t^{|I|-2} \ddsum_{s_2=0}^{\bar t} s_2 b_{s_1,s_2}.
%    \en
%The bound in \refeq{bdDekta} then yields that
%    \eq
%    |a^{\sss(N,M,K)}(\yvec_1,\vec\xvec_I;4)|
%    \leq C_{\sss I} \vep^2 (C\beta)^{N+M+K} \big(\vep\delta_{s_1,\vep}+\vep^2 (s_1+1)^{-(d-2)/2}\big)
%    (s_1+\Delta_{\bar t})
%    \bar t^{|I|-2}.
%    \en
%The proof of \refeq{a4bdN} follows by performing the
%sum over $M,K$.
%\qed

\paragraph{Acknowledgements.}
The work of RvdH was carried out in part at Delft University of Technology and
was supported in part by Netherlands Organisation for Scientific Research
(NWO).  The work of AS was carried out in part at University of British Columbia,
EURANDOM, Eindhoven University of Technology and the University of Bath, and
was supported in part by NSERC of Canada and in part by NWO.  This project was
initiated during an extensive visit of RvdH to the University of British
Columbia, Canada.  We thank Ed Perkins for useful conversations during the
initial stages of the project, and Gordon Slade for various discussions
throughout the project.

\end{document}